\titleformat*{\section}{\large\bfseries}
\titleformat*{\subsection}{\normalsize\bfseries}
\titleformat*{\subsubsection}{\normalsize\bfseries}
\titleformat*{\paragraph}{\normalsize\bfseries}
\titleformat*{\subparagraph}{\large\bfseries}
\tikzset{mydescription/.style={anchor=center,fill=white}}
\tikzset{curve/.style={settings={#1},to path={(\tikztostart)
    .. controls ($(\tikztostart)!\pv{pos}!(\tikztotarget)!\pv{height}!270:(\tikztotarget)$)
    and ($(\tikztostart)!1-\pv{pos}!(\tikztotarget)!\pv{height}!270:(\tikztotarget)$)
    .. (\tikztotarget)\tikztonodes}},
    settings/.code={\tikzset{quiver/.cd,#1}
        \def\pv##1{\pgfkeysvalueof{/tikz/quiver/##1}}},
    quiver/.cd,pos/.initial=0.35,height/.initial=0}
\newtheorem{theorem}{Theorem}[section]
\newtheorem*{thm*}{Theorem}
\newtheorem{proposition}[theorem]{Proposition}
\newtheorem*{prop*}{Proposition}
\newtheorem{corollary}[theorem]{Corollary}
\newtheorem*{cor*}{Corollary}
\newtheorem{lemma}[theorem]{Lemma}
\newtheorem*{lemma*}{Lemma}
\theoremstyle{definition}
\newtheorem{definition}[theorem]{Definition}
\newtheorem*{defn*}{Definition}
\newtheorem{example}[theorem]{Example}
\newtheorem{observation}[theorem]{Observation}
\newtheorem{remark}[theorem]{Remark}
\newtheorem*{rem*}{Remark}
\newtheorem{notation}[theorem]{Notation}
\newcommand\nparagraph{\@startsection{paragraph}{4}{\z@}%
                                    {3.25ex \@plus1ex \@minus.2ex}%
                                    {-1em}%
                                    {\normalfont\normalsize}}
\newtheorem*{rep@theorem}{\rep@title}
\newcommand{\newreptheorem}[2]{%
\newenvironment{rep#1}[1]{%
 \def\rep@title{#2 \ref{##1}}%
 \begin{rep@theorem}}%
 {\end{rep@theorem}}}
\mathchardef\mhyphen="2D 
\newcommand{\Aa}{\mathcal{A}}
\newcommand{\Ba}{\mathcal{B}}
\newcommand{\Ca}{\mathcal{C}}
\newcommand{\Da}{\mathcal{D}}
\newcommand{\Ea}{\mathcal{E}}
\newcommand{\Fa}{\mathcal{F}}
\newcommand{\Ga}{\mathcal{G}}
\newcommand{\Va}{\mathcal{V}}
\newcommand{\Ma}{\mathcal{M}}
\newcommand{\Na}{\mathcal{N}}
\newcommand{\Ia}{\mathcal{I}}
\newcommand{\Oa}{\mathcal{O}}
\newcommand{\Ra}{\mathcal{R}}
\newcommand{\Ua}{\mathcal{U}}
\newcommand{\Aaa}{\mathsf{A}}
\newcommand{\Caa}{\mathsf{C}}
\newcommand{\Daa}{\mathsf{D}}
\newcommand{\Eaa}{\mathsf{E}}
\newcommand{\Pois}{\mathsf{aP}}
\newcommand{\BD}{\mathsf{BD}}
\newcommand{\E}{\mathsf{E}}
\newcommand{\fE}{\mathsf{fE}}
\newcommand{\Cat}{\mathsf{Cat}}
\newcommand{\Pres}{\mathsf{Pres}}
\newcommand{\PCat}[1]{\mathsf{aP}_{#1}\mhyphen\mathsf{Cat}}
\newcommand{\BDCat}[1]{\mathsf{BD}_{#1}\mhyphen\mathsf{Cat}}
\newcommand{\VCat}{\mathcal{V}\mhyphen\mathsf{Cat}}
\newcommand{\VPres}{\mathcal{V}\mhyphen\mathsf{Pres}}
\newcommand{\VRex}{\mathcal{V}\mhyphen\mathsf{Rex}}
\newcommand{\VGph}{\mathcal{V}\mhyphen\mathsf{Gph}}
\newcommand{\lmod}{\mhyphen\textnormal{Mod}}
\newcommand{\Rep}{\operatorname{Rep}}
\newcommand{\Obs}{\operatorname{Obs}}
\newcommand{\cl}{\operatorname{cl}}
\newcommand{\loc}{\operatorname{loc}}
\newcommand{\Hom}{\operatorname{Hom}}
\newcommand{\Fun}{\operatorname{Fun}}
\newcommand{\End}{\operatorname{End}}
\newcommand{\Map}{\operatorname{Map}}
\newcommand{\colim}{\operatorname{colim}}
\newcommand{\Obj}{\operatorname{Ob}}
\newcommand{\id}{\operatorname{id}}
\newcommand{\op}{\operatorname{op}}
\newcommand{\ori}{\operatorname{or}}
\newcommand{\st}{\operatorname{st}}
\newcommand{\ad}{\operatorname{ad}}
\newcommand{\act}{\operatorname{act}}
\newcommand{\QCoh}{\operatorname{QCoh}}
\newcommand{\Man}{\mathsf{Man}}
\newcommand{\Disk}{\mathsf{Disk}}
\newcommand{\Comp}[1]{\widehat{#1}} 
\newcommand{\BalFunA}{\operatorname{Bal}_{\Aa}} 
\newcommand{\IEAsymbol}{\underline{\mathrm{A}}}
\newcommand{\IEAlong}[2]{\mathrm{\underline{End}}_{#1}{(#2)}}
\font\maljapanese=dmjhira at 2.5ex
\newcommand{\yo}{\textrm{\!\maljapanese\char"48}}
\newcommand{\R}{\mathbb{R}}
\newcommand{\C}{\mathbb{C}}
\newcommand{\Ce}{{\mathbb{C}_\varepsilon}}
\newcommand{\Ch}{{\mathbb{C}[[\hbar]]}}
\newcommand{\ootimesi}{
  \mathchoice
    {\mathbin{\raisebox{-1.1pt}{{\includegraphics[width=0.75em]{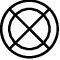}}}}}
    {\mathbin{\raisebox{-1.1pt}{{\includegraphics[width=0.75em]{pics/ootimesi}}}}}        
    {\mathbin{\raisebox{-1.1pt}{{\includegraphics[width=0.60em]{pics/ootimesi}}}}}
    {\mathbin{\raisebox{-1.1pt}{{\includegraphics[width=0.45em]{pics/ootimesi}}}}}}
\newcommand{\monprod}[1][{}]{\otimes^{#1}}
\newcommand{\catprod}{\ootimesi}
\newcommand{\relcatprod}[1]{\catprod_{\!#1}}
\newcommand{\coendtens}[1]{\relcatprod{{#1}}^{\int}}
\newcommand{\Tambaratens}[1]{
\relcatprod{{#1}}^{\!\textnormal{T}}}
\newcommand{\CompProd}{\widehat{\otimes}}
\newcommand{\SkC}[1]{\mathbf{Sk}_{\Ca}(#1)}
\newcommand{\SkCat}[2]{\mathbf{Sk}_{#1}(#2)}
\newcommand{\SkCM}{\mathbf{Sk}_{\Ca}(M)}
\newcommand{\hra}{\hookrightarrow}
\newcommand{\ra}{\rightarrow}
\newcommand{\trm}{\textrm}
\newcommand{\cat}[1]{\mathcal{#1}}
\definecolor{Blue} {rgb} {0.282352,0.239215,0.803921}
\definecolor{Green} {rgb} {0.133333,0.545098,0.133333}
\definecolor{Red}   {rgb} {0.803921,0.000000,0.000000}
\definecolor{Violet}{rgb} {0.580392,0.000000,0.827450}
\definecolor{darkspringgreen}{rgb}{0.09, 0.45, 0.27}
\definecolor{contrastblue} {RGB} {0, 167, 212}
\definecolor{contrastred} {RGB} {212, 0, 0}
\newcommand{\pics}[1]{\raisebox{-0.3\depth}{\scaleobj{2}{\scalerel*{\includegraphics{pics/#1}}{|}}}}
\newcommand{\Vect}{\operatorname{Vect}_{\mathbb{k}}}
\newcommand{\im}{\operatorname{Im}}
\newcommand{\iotaman}{\iota_{m' \triangleleft a,n}^{m', a \triangleright n} }
\newcommand{\iotamanInv}{\left(\iota_{m \triangleleft a,n'}^{m, a \triangleright n'} \right)^{-1}}
\newcommand{\Free}{\mathrm{Free}}
\newcommand{\fgt}{\mathrm{fgt}}
\newcommand{\infy}{\(\infty\)}
\begin{document}

	\vspace*{0.5cm}
	\begin{center}	\textbf{\large{Deformation Quantization via Categorical Factorization Homology}}\\	\vspace{1cm}	{\large  Eilind Karlsson,  Corina Keller, Lukas Müller and J\'{a}n Pulmann }\\ 	\vspace{5mm}

	\end{center}	\vspace{0.3cm}	
	\begin{abstract}\noindent 
			This paper develops an approach to categorical deformation quantization via factorization homology. We show that a  quantization of the local coefficients for factorization homology is equivalent to consistent quantizations of its value on manifolds. To formulate our results we introduce the concepts of shifted almost Poisson and $\BD$ categories. 

        Our main example is the character stack of flat principal bundles for a reductive algebraic group $G$, where we show that applying the general framework to the Drinfeld category reproduces deformations previously introduced by Li-Bland and \v{S}evera. As a direct consequence, we can conclude a precise relation between their quantization and those introduced by Alekseev, Grosse, and Schomerus. 

        To arrive at our results we compute factorization homology with values in a ribbon category enriched over complete $\C[[\hbar]]$-modules. More generally, we define enriched skein categories which compute factorization homology for ribbon categories enriched over a general closed symmetric monoidal category $\Va$. 
	\end{abstract}

\setcounter{tocdepth}{2}
\tableofcontents
\section{Introduction} 
This paper investigates and utilizes the interplay between factorization homology and categorical deformation quantization. An important inspiration for us is the construction of quantizations of character varieties through factorization homology for representation categories of quantum groups by Ben-Zvi, Brochier, and Jordan~\cite{BZBJIntegrating, BZBJ2}. Our results can be seen as a further development of the ideas presented in their papers considering formal deformations, for example those arising from Drinfeld associators or Drinfeld-Jimbo quantum groups. In addition, this work provides a systematic framework in which to interpret their results.   

Before discussing the content of this paper we describe the physical motivation for our work.
A classical field theory on a spacetime manifold $M$ assigns to every open region $\mathcal{U}\subset M$ a `space' 
of physical field configurations on \(\mathcal{U}\), denoted $\mathcal{F}(\mathcal{U})$. Usually, $\mathcal{F}(\mathcal{U})$ is the derived 
space of solutions to a set of non-linear partial differential equations. This assignment 
is local in $\mathcal{U}$, i.e.\ $\mathcal{F}$ forms a sheaf on $M$, or more generally on the 
category of all manifolds the theory is defined on.

Classical observables $\mathcal{O}_{\mathcal{F}(\mathcal{U})}$ localized on $\mathcal{U}$ are functions on $\mathcal{F}(\mathcal{U})$. Making this mathematically precise is a notoriously hard problem. In their two books~\cite{CG1, CG2} Costello and Gwilliam work out the details in classical perturbation theory, i.e.\ they study the formal neighborhood of a fixed classical solution. The locality of $\mathcal{F}$ implies that the functions on this formal neighborhood form a factorization algebra $\Obs^{\cl}$ with values in the category of chain complexes. 
The `space' $\mathcal{F}(\mathcal{U})$ is usually described as the critical values of an action functional, which equips the algebra of classical observables $\mathcal{O}_{\mathcal{F}(\mathcal{U})}$ with a local (shifted) Poisson structure, in the sense 
that $\Obs^{\cl}$ is a factorization algebra with values in $P_0$-algebras~\cite[Theorem~5.2.2]{CG2}. A perturbative quantization consists of a deformation of $\Obs^{\cl}$ to a factorization algebra with values in $BD_0$-algebras (for Beilinson-Drinfeld). The main result of~\cite{CG2} is a construction of such quantizations using Feynman graph techniques. 

The locality of a factorization algebra suggest a different approach to quantization. Namely, to construct compatible deformation quantizations of the values on disks and glue those back together. This approach is particularly promising for topological field theories, corresponding to locally constant factorization algebras. 
If we assume that the theory under consideration can be defined on all $n$-dimensional manifolds and is functorial with respect to embeddings, we can use factorization homology to implement this approach. More concretely, we consider theories where the classical observables give rise to a (symmetric monoidal) functor $\Obs^{\cl} \colon \Man_n\to \operatorname{P}_0\text{-Alg}$ from the topological category of manifolds and embeddings to the category of $P_0$-algebras. The local to global property for factorization algebras is equivalent to this being a homology theory in the sense of Ayala-Francis~\cite{AFfh,primer}. According to \cite[Theorem~3.24]{AFfh} every homology theory can be recovered via factorization homology from its restriction to $n$-dimensional disks $\Obs^{\cl}_{\loc} \colon \Disk_n\to \operatorname{P}_0\text{-Alg}$: 
\begin{align}
    \Obs^{\cl}(M) \cong \int_M \Obs^{\cl}_{\loc} \ . 
\end{align}
A global deformation quantization can now be constructed by first finding a local quantization $\Obs^{q}_{\loc} \colon \Disk_n\to \operatorname{BD}_0\text{-Alg}$ and then defining the global quantum observables as 
\begin{align}
    \Obs^{q}(M) \cong \int_M \Obs^{q}_{\loc} \ . 
\end{align}
For this construction to work one has to ensure that factorization homology commutes with taking classical limits. Symmetric monoidal functors $\Disk_n\to \operatorname{P}_0\text{-Alg}$ are equivalent to framed $\E_n$-algebras in the symmetric monoidal category $\operatorname{P}_0\text{-Alg}$. Ignoring the framed part for the moment, we can conclude from Poisson additivity~\cite{SafronovBracesPoissonAdditivity} that the local classical observables can be encoded into a single $\operatorname{P}_n$-algebra $\Obs^{\cl}_n$. It is natural to expect that the local quantum observables are a deformation quantization of this $\operatorname{P}_d$-algebra to a $\operatorname{BD}_d$-algebra. However, for this to be true a version of BD-additivity is needed, which to the best of our knowledge has not been established in the literature. By formality every $P_i$-algebra admits a deformation quantization to a $\operatorname{BD}_i$-algebra for $i\geq 2$ \cite{Kontsevich2003, CW2015, CPTVV}. 

Extending this picture beyond perturbation theory  leads to additional complications. One of the first problems one encounters is that the derived space of solutions $\mathcal{F}(\mathcal{U})$ is not determined by its algebra of (derived) functions. As a consequence $\mathcal{O}_\mathcal{F}(\mathcal{U})$ is in general not local. A simple example of this is gauge theory with finite gauge group $G$, also known as Dijkgraaf-Witten theory~\cite{DijkgraafWitten}. In this case, the space of fields is the stack $\operatorname{Bun}_G(\mathcal{U}) $ of principal $G$-bundles on $\mathcal{U}$. For $\mathcal{U}=\mathbb{D}^n$ an $n$-dimensional disk, this space is equivalent to $BG$, but the algebra of derived functions is $\mathcal{O}(BG)= C^\bullet(G,\C )\cong \C$. Hence, there are no non-trivial local functions. Moreover, the (co)sheaf condition fails for the algebras of functions. For instance, on the circle one finds $\mathcal{O}_{\operatorname{Bun}_G(\mathbb{S}^1)}= \operatorname{Cl}(G)$, the algebra of class functions on $G$. 
 
Categorified functions such as (nice) sheaves on $\mathcal{F}(\mathcal{U})$ and higher versions thereof usually have better locality properties.\footnote{This idea has also appeared in the context of AQFT~\cite{AQFTCat}.} In the previous example the categorical algebra of higher functions on $\mathbb{D}^n$ becomes $\Rep G $, the category of $G$-representations, which has better gluing behaviour.   
Hence, it seems fruitful to extend the local to global perspective on quantization from algebras of functions to categories of higher functions. 
\emph{The goal of this paper is to develop and apply such a categorical deformation theory via factorization homology in the non-derived setting.} The first question we have to address is what the categorifications of Poisson and BD algebras should be. The guess of algebras over the Poisson operad or BD operad in the bicategory of linear categories does not work (naively interpreted) because both operads are linear whereas the collection of functors between linear categories forms a (linear) category. 
Concretely, the problem is that trying to impose the graded Jacobi identity would in general involve taking the difference of linear functors, which is not defined.
We adapt a pragmatic approach to these definitions by defining \emph{almost Poisson structures} as first order deformations and \emph{BD structures} as formal deformations. With these definitions we are able to work out all the details for the program sketched above (including BD-additivity). 

Our main example is the moduli stack of flat principal $G$-bundles\footnote{To be precise, we always consider the character stack, which is analytically but not algebraically equivalent to the stack of flat bundles. We hope the reader will forgive us the slight inaccuracy.} on a Riemann surface for a reductive algebraic group $G$, which carries the Atiyah-Bott 0-shifted symplectic structure\footnote{The reason for this being 0-shifted instead of (-1)-shifted as in the work of Costello-Gwilliam is that the corresponding classical field theory is Chern-Simons theory on $\Sigma\times \R$. By Poisson additivity we can trade the $\R$-direction for a shifting from (-1)-shifted structures to 0-shifted ones. This explanation of Poisson structures on phase spaces from BV formalism and additivity appeared recently in \cite[Sec.~3.4.2]{GwilliamRejzner2017}.}. As in the case of Dijkgraaf-Witten theory, the classical local observables can be encoded by the category $\Rep G$ of $G$-representations.  
In~\cite{BZBJIntegrating, BZBJ2} Ben-Zvi, Brochier, and Jordan use factorization homology of the representation categories $\Rep^q(G)$ of quantum groups over a surface $\Sigma$ to construct functorial quantizations of this symplectic structure. An alternative deformation of $\Rep G$ is the Drinfeld category $U\mathfrak{g}\lmod^\Phi[[\hbar]]$. We show that factorization homology for the Drinfeld category recovers the quantization constructed by Li-Bland and \v{S}evera~\cite{LBSQ1}. There is an equivalence of ribbon categories between $U(\mathfrak{g})\lmod^\Phi[[\hbar]]$ and $\Rep_\hbar G$. Our formalism allows us to conclude directly from this equivalence that the corresponding quantizations are equivalent.        

To show that we recover known quantizations of the moduli spaces of flat bundles we have to explicitly compute factorization homology in the $(2,1)$-category of categories enriched over complete $\C[[\hbar]]$-modules. We do this by considering a $\C[[\hbar]]$-linear version of the usual skein category~\cite{Walker,Cooke19}. The proof that this computes factorization homology does not depend on the specifics of the category of complete $\C[[\hbar]]$-modules: We develop a version of skein theory for ribbon categories $\Ca$ enriched over an arbitrary closed symmetric monoidal category $\Va$ and show that the enriched skein categories $\SkC{\Sigma}$ associated to an oriented surface $\Sigma$ compute factorization homology.

The tools developed in this paper should also be useful to study Poisson structures and their deformation quantization in settings with defects or on orbifolds, by studying local quantizations and globalization via factorization homology. {Similarly, Batalin-Vilkovisky operators on moduli spaces of flat bundles were constructed in \cite{ANPS}. The construction is parallel to that of the (quasi)-Poisson structures mentioned above, with e.g. fusion \cite[Prop.~4]{ANPS} and skein-theoretic version \cite[Thm.~13]{ANPS}.  It is natural to ask whether there exists an odd extension of the current work, computing factorization homology of the category  $U\mathfrak{g}\lmod$ where $\mathfrak{g}$ has an \emph{odd} invariant inner product.} 

We now explain our main results in more detail. 
\subsection*{Outline and summary of results}  
The paper is split into two parts, where in the first part we extend skein theory to enriched ribbon categories. The second discusses the interaction of factorization homology and deformation quantization of categories.

\paragraph{Notation:}

Multiple types of tensor product appear in this work, sometimes simultaneously. Typically,  $\monprod[\Va]$ is the monoidal product of an enriching symmetric monoidal category $\Va$. $\Va$-enriched monoidal categories will usually be denoted by $\Aa$ or $\Ca$, and their tensor product is $\monprod[\Aa]$ resp. $\monprod[\Ca]$ or simply $\monprod{}$. Free cocompletions of these monoidal products are denoted with a hat $\Comp{\otimes}$.

One categorical level higher, the tensor product of two $\Va$-enriched SMCs will be denoted $\catprod$. A relative tensor product of $\Va$-enriched $\Aa$-module categories is written as $\catprod_\Aa$, or $\catprod_\Aa^{\textnormal{T}, \int}$, where $\textnormal{T}$ and $\int$ mark one of the (equivalent) models used to compute the relative tensor product. When considering presentable categories, their tensor and relative products use $\boxtimes$ instead of $\catprod$. Finally, still one categorical level up, the (fibered) product of 2-categories is denoted by $\Caa \times \Daa$.

\paragraph{Part I: Enriched skein theory.}
The first part is concerned with computing factorization homology in the symmetric monoidal 2-category $\VCat$ of $\Va$-enriched categories, where $\Va$ is a complete and cocomplete closed symmetric monoidal category. Section~\ref{Sec: FH} and Section~\ref{Sec: enriched categories} recall the necessary background on factorization homology and enriched category theory, respectively. Some of the more technical details are moved to Appendix~\ref{app:Background-VCat}. In particular, we prove there that the \((2,1)\)-category \(\VCat\) admits all bicolimits. 
Combining this with the fact that the tensor product of \(\VCat\) is a left adjoint we get that \(\VCat\) indeed is a suitable target for factorization homology (see Proposition \ref{prop:VCat-cocomplete-and-tensor-prod-commutes-with-colimits})

The content of Section~\ref{Sec: FH} and \ref{Sec: enriched categories} is mostly a recollection and combination of known results. One of the few new results in these sections concern concrete models for relative tensor products: For a monoidal $\Va$-category $\Aa$, a right $\Aa$-module category $\Ma$, and left $\Aa$-module category $\Na$, the relative tensor product \(\Ma \relcatprod{\Aa} \Na\) is defined as the bicolimit of the truncated simplicial diagram 
\begin{center}
\begin{tikzcd}[sep = small]
\Ma \catprod \Aa \catprod \Aa \catprod \Na \arrow[r,yshift=1ex] \arrow[r] \arrow[r,yshift=-1ex] & \Ma \catprod \Aa \catprod \Na \arrow[r,yshift=0.5ex] \arrow[r,yshift=-0.5ex] & \Ma \catprod \Na \ \ . 
\end{tikzcd}
\end{center}
We provide two different models which explicitly compute this bicolimit: 

\begin{itemize}
    \item The \emph{enriched Tambara relative tensor product} \(\Ma \Tambaratens{\Aa} \Na\), generalizing~\cite{Tambara, Cooke19}. This is a model defined explicitly in terms of generators and relations, which is convenient because it allows for very concrete manipulations. In Theorem~\ref{thm:eqTambaraBar} we prove that this computes the bicolimit for the relative tensor product. The drawback of this model is that it is hard to directly access the morphism spaces.    
    \item The \emph{coend relative tensor product} \(\Ma \coendtens{\Aa} \Na\) where $\Aa$ is assumed to be rigid\footnote{The same relative tensor product also appeared in \cite{AraujoGuuHudson2025}, which drew our attention to the fact that the rigidity assumption was missing in the first version of our paper.}. Its objects are pairs $(m,n)\in \Obj \Ma \times \Obj \Na $. The $\Va$-object of morphisms is given by the enriched coend (we refer to Section~\ref{Sec:coend_tensor_product} for details) 
    \begin{align}
    \mathcal{M}{\coendtens{\Aa}} \Na \left( m\otimes_{\Aa}n , m'\otimes_{\Aa}n' \right) \coloneqq \int^{a\in \Aa} \Ma(m,m'\triangleleft a ) \monprod[\Va] \Na(a\triangleright n,n') \in \Va \ \  .
    \end{align}
    In Proposition~\ref{prp:CoendTensRelTens} we show that this is an alternative model for the relative tensor product. The advantage of this model is that it comes with a direct description of morphism spaces, which will be very useful for results in the second part of the paper. 
\end{itemize}
In Section~\ref{Sec: Enriched skein} we define, for a $\Va$-ribbon category $\Ca$ and an oriented two dimensional manifold $\Sigma$, the \emph{enriched skein category} $\SkC{\Sigma}\in \VCat$. Roughly speaking, its objects are collections of points of $\Sigma$ labelled with the objects of $\Ca$. Intuitively, morphism spaces are ribbon diagrams in $\Sigma\times [0,1]$ where the ribbons are labelled with objects of $\Ca$ and coupons are labelled by morphisms in $\Ca$. 

\begin{figure}[H]
\centering
    \begin{overpic}[tics=10]{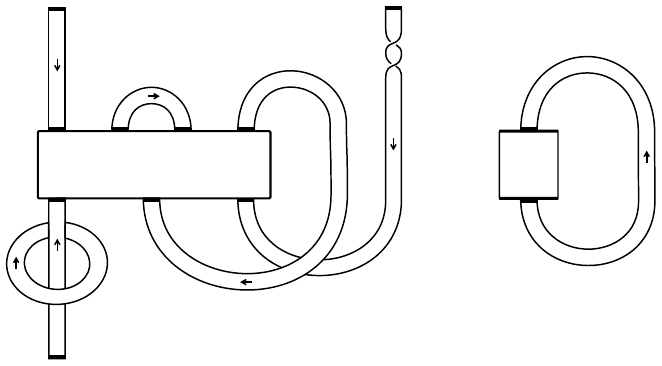}
         \put(6.7,26.5){$(\bullet$}
         \put(22.15,26.5){$\bullet)$}
          \put(36.4,26.5){$\bullet\phantom{)}$}

        \put(6.7, 32.5){$(\bullet$}
        \put(16, 32.5){$(\bullet$}
        \put(26.8, 32.5){$\bullet))$}
        \put(36.4, 32.5){$\bullet\phantom{(}$}

        \put(79.2, 32.5){$\bullet\phantom{(}$}
        \put(79.2,26.5){$\bullet\phantom{)}$}

        \put(10.5,5){$m_1$}
        \put(17,14.7){$m_2$}
        \put(10.5,45.5){$m_3$}
        \put(21.4,42.8){$m_4$}
        \put(43,45.5){$m_5$}
        \put(61.5,45.5){$m_6$}
        \put(87,47.5){$m_7$}
    \end{overpic}
	\caption{An example of a \(\Ca\)-colored ribbon q-graph \(\Gamma\); $m_i$ are objects of the ribbon category $\Ca$.}
	\label{fig:qRibbonGraphINTRO}
\end{figure}

In a $\Va$-enriched category it only makes sense to talk about the $\Va$-object of morphism and not individual morphisms. Hence, the morphism objects in the enriched skein category are defined as appropriate colimits in $\Va$. In contrast to ordinary skein categories we do not assume that $\Ca$ is strict as a monoidal category. This requires us to additionally track the bracketing at every coupon. The reason for this is that one of the example we are interested in is the Drinfeld category $U(\mathfrak{g})\lmod^\Phi[[\hbar]]$ which is equipped with an interesting Drinfeld associator. 

The main result of the first part of our paper, which might be of independent interest, is:
 \begin{reptheorem}{Thm: FH=Sk}
Let $\Ca$ be a $\Va$-enriched ribbon category.
The enriched skein category computes factorization homology 
\begin{align}
    \SkC{\Sigma}\cong \int_\Sigma \Ca \in \Va\mhyphen \Cat \ \ .
\end{align}
 \end{reptheorem} 
Our proof is a generalization of Juliet Cooke's proof~\cite{Cooke19} of the same result in the $R$-linear case, which is based on an explicit verification that skein categories satisfy excision. The extension of her result to the enriched setting is Theorem~\ref{thm:Excision}. The general outline of the proof and many of its topological ingredients are exactly the same as in the $R$-linear case.    

\paragraph{Part II: Categorical deformation quantization.} 
The second part of this paper studies the connection between deformations of categories and factorization homology. The results of the first part are used for explicit computations in relation to the moduli stack of flat $G$-bundles on a Riemann surface.  

Traditional deformation quantization is concerned with deforming commutative algebras into non-commutative ones. We consider a $\C$-linear symmetric monoidal category to be the categorical analogue of a commutative algebra. Instead of only looking at deformations to monoidal categories, deformations to braided monoidal categories ($\E_2$-categories) or pointed categories ($\E_0$-categories) will also be useful. We call first order deformations, i.e. deformations to $\C_\varepsilon$-linear categories, \emph{almost Poisson} and formal deformations to $\C[[\hbar]]$-linear categories \emph{BD}.\footnote{Similar definitions of deformations of linear (compared to symmetric monoidal) $\infty$-categories have been introduced in~\cite{DAGXLurie} and used in~\cite{gepner2020integral}.}      
These can, loosely speaking, be defined as the following ``pullbacks'' of symmetric monoidal bicategories:
\begin{equation} \label{eq:DefnPCatBDnCatintro}
\begin{tikzcd}
	\PCat{i} & {\E_i(\Ce\mhyphen\mathsf{Cat})} && \BDCat{i} & {\E_i(\mathbb C[[\hbar]]\mhyphen \Cat}) \\
	{\E_\infty(\C \mhyphen\mathsf{Cat})} & {\E_i(\mathbb C\mhyphen\mathsf{Cat})} && {\E_\infty(\C \mhyphen \mathsf{Cat})} & {\E_i(\mathbb C \mhyphen\mathsf{Cat})}
	\arrow[hook, from=2-1, to=2-2]
	\arrow["{\otimes_{\Ce} \C}", from=1-2, to=2-2]
	\arrow[from=1-1, to=1-2]
	\arrow[from=1-1, to=2-1]
    \arrow[rd, "{\mbox{\fontsize{13}{15}\selectfont\( \lrcorner \)}}" description, very near start, phantom, start anchor={[xshift=-1.3ex, yshift=0.1ex]}, from=1-1, to=2-2]
	\arrow["{\otimes_{\Ch}\C}", from=1-5, to=2-5]
	\arrow[hook, from=2-4, to=2-5]
	\arrow[from=1-4, to=2-4]
	\arrow[from=1-4, to=1-5]
    \arrow[rd, "{\mbox{\fontsize{13}{15}\selectfont\( \lrcorner \)}}" description, very near start, phantom, start anchor={[xshift=-1.3ex, yshift=0.1ex]}, from=1-4, to=2-5]
\end{tikzcd}.
\end{equation}
A technical problem with this definition is that, to the best of our knowledge, there is no well-developed theory of pullbacks for symmetric monoidal bicategories. We circumvent this problem by explicitly defining what these pullbacks should be. Some of the more involved details related to this definition are spelled out in Appendix~\ref{appendix:pullback}. We do not restrict to `flat' deformations, since the property of being flat is not compatible with factorization homology, as we show explicitly in Example~\ref{Ex: torsion}.\footnote{This was pointed out to us by David Jordan.}  

Equipped with these definitions we prove a version of Dunn's additivity. 
\begin{reptheorem}{prop:additivity}
For $n = 0,1,2, \infty$ the following holds:
\begin{itemize}
    \item $\E_n(\Pois_0\mhyphen\Cat) \cong \Pois_n\mhyphen\Cat$
    \item $\E_n(\BD_0\mhyphen\Cat) \cong \BD_n\mhyphen\Cat$
\end{itemize}
\end{reptheorem} 
Giving the relevant definitions, providing examples, and proving this theorem is the content of Section~\ref{sect:DefQuant-Categories}. 

In Section~\ref{sect:Comp-FH-Quantization} we investigate the relation between these definitions and factorization homology. Firstly, we show that $\PCat{0}$ and $\BDCat{0}$ are tensor sifted cocomplete as (2,1)-categories (Proposition~\ref{lma:targetFH}), allowing us to compute factorization homology in those. Furthermore, we show that the value of factorization homology can be computed in $\C_\varepsilon\text{-}\Cat$ and $\C_\hbar\text{-}\Cat$, respectively. This allows us to use the results of the first part for concrete computations.    

We prove that deforming `local' functors (homology theories) from (oriented) two-dimensional manifolds to $\PCat{0}$ is equivalent to deforming the corresponding framed $\operatorname{aP}_2$-categories. More precisely we show 
\begin{reptheorem}{Thm: quant FH}
  The following diagram commutes and the horizontal functors are equivalences
\begin{equation}
\begin{tikzcd}
 \BD_2^\mathsf{f}\mhyphen\Cat \ar[r, "\sim"] \ar[d, "\lim_{h\rightarrow 0}",swap] & \fE_2(\BD_0\mhyphen\Cat) \ar[r, "\int"] \ar[d, "\lim_{h\rightarrow 0}"] & \Fun^{\otimes l.g.}(\Man_2,\BD_0\mhyphen\Cat) \ar[d, "\lim_{h\rightarrow 0}"] \\ 
  \Pois_2^\mathsf{f}\mhyphen\Cat \ar[r, "\sim"]  & \fE_2(\Pois_0\mhyphen\Cat) \ar[r, "\int"] & \Fun^{\otimes l.g.}(\Man_2,\Pois_0\mhyphen\Cat) 
\end{tikzcd} \ \ . 
\end{equation}
\end{reptheorem}

For a surface $\Sigma$ with boundary and $k$ marked points on its boundary $V\subset \partial \Sigma $, the embedding of disks near those points equips $\int_\Sigma \Ca$ with the structure of a module category over $\Ca^k$. Thus for every object ${o}\in \int_\Sigma \Ca $, one can define the \emph{internal endomorphism algebra} $\IEAlong{\Ca}{o}\in \widehat{\Ca}^{\,\boxtimes k} $ (see Definition \ref{defn:InternalEndAlg} and \cite[Def.~3.5]{safronovQMM}). Here $\widehat{\Ca}$ is the free cocompletion of $\Ca$ and $\boxtimes$ denotes the Kelly tensor product of locally presentable $\Va$-categories (see Section \ref{sect:the-2cat-VPres}). In Section~\ref{ssec:IntEndAlg} we show that categorical deformations also lead to deformations of internal endomorphism algebras.

The embedding of the empty set into $\Sigma$ induces a canonical pointing of $\int_\Sigma \Ca$ and we denote its internal endomorphism algebra by $\IEAsymbol_{\Sigma,V}$.
The last Section of our paper is concerned with describing the Poisson structures on $\IEAsymbol_{\Sigma,V}$ and their deformation quantization explicitly. The main example will be the moduli spaces of flat principal $G$-bundles for a reductive algebraic group $G$.  

We start by showing in Proposition~\ref{prop:MarkedFusionInternalEndAlgs} that gluing $\Sigma$ along two marked points $v_1,v_2$ on its boundary and adding one new marked point corresponds to applying $\widehat{\otimes}\colon \widehat{C}\boxtimes \widehat{C} \to \widehat{C}$ on the two factors in $\widehat{C}^{\boxtimes k}$ corresponding to $v_1$ and $v_2$ to $\IEAsymbol_{\Sigma,V}$. 
This allows us to compute $\IEAsymbol_{\Sigma,V}$ gluing $\Sigma$ together from disks with two marked points. In Section~\ref{sec: Fusion and Poisson} we apply this to ribbon $\mathsf{aP}_2$-categories and derive an explicit formula for the Poisson structure in Theorem~\ref{Thm: gluing Poisson}. Explicitly, we get that excision for factorization homology recovers fusion for Poisson structures~\cite{AKSM, SeveraCenters}. Using skein theory these results give generalizations of the Goldman Poisson structures defined by resolving crossings as we show in Proposition~\ref{prop:goldman}.    
Finally, in Section~\ref{sec:CS} we apply these result to character stacks.

 \vspace*{0.2cm}\textsc{Acknowledgments.} 
	 	We thank Adrien Brochier, Damien Calaque, Kevin Costello, Nick Gurski, David Jordan, 
        Lyne Moser, Pavel Safronov,  Alexander Schenkel, Pavol Ševera and Lukas Woike 
        for helpful discussions related to this project. We would also like to thank the referee for a careful reading and many useful remarks. We would like to thank Claudia Scheimbauer for hosting multiple meetings of the authors, facilitating work on this project.
        EK was supported by the SFB 1085: Higher invariants from the Deutsche Forschungsgemeinschaft (DFG). 
        CK has received funding from the European Research Council (ERC) under the European Union’s Horizon 2020 research and innovation program (grant agreement No. 768679).
	 	LM gratefully acknowledges support of the Simons Collaboration on Global Categorical Symmetries. Research at Perimeter Institute is supported in part by the Government of Canada through the Department of Innovation, Science and Economic Development and by the Province of Ontario through the Ministry of Colleges and Universities. The Perimeter Institute is in the Haldimand Tract, land promised to the Six Nations. 
        JP was supported by the Postdoc.Mobility grant 203065 of the SNSF. JP would like to thank the Perimeter Institute and the Department of Mathematics of HU Berlin for hospitality during the last stages of writing.

\part{Enriched skein categories}
\section{Background on oriented factorization homology}\label{Sec: FH}
The algebraic input data for factorization homology on oriented $n$-manifolds are framed $\E_n$-algebras with values in a symmetric monoidal $(\infty,1)$-category $\Ca$. Factorization homology then computes functorial invariants of oriented $n$-manifolds by `integrating' the local input data over a given manifold. In this work we will only be concerned with the case of surfaces ($n = 2$) and categorical framed $\E_2$-algebras. We will nevertheless first give the basic definitions for manifolds of any dimension and values in arbitrary symmetric monoidal \((\infty, 1)\)-categories before restricting to the 2-dimensional, categorical case.

\subsection{Basic definitions}
The geometric data for defining oriented factorization homology are oriented manifolds and the spaces of embeddings between them. More precisely, we work with the following $(\infty,1)$-category:

\begin{definition}
$\Man_n^{\ori}$ is the $(\infty,1)$-category whose
\begin{itemize}
    \item objects are oriented $n$-dimensional manifolds
    \item space of morphisms $\mathsf{Emb}^{\ori}(X,Y)$ is the space of oriented smooth embeddings of $X$ into $Y$ equipped with the compact-open topology. 
\end{itemize}
The disjoint union endows $\Man_n^{\ori}$ with the structure of a symmetric monoidal $(\infty,1)$-category. We denote by $\Disk_n^{\ori} \subset \Man_n^{\ori}$ the full symmetric monoidal subcategory whose objects are oriented Euclidean spaces and disjoint unions thereof.
\end{definition}

Throughout this section we fix an ``ambient'' symmetric monoidal $(\infty,1)$-category $(\Ca,\catprod)$, and assume that $\Ca$ is \(\otimes\)-sifted cocomplete~\cite{AFfh}. These assumptions ensure that factorization homology with values in $(\Ca,\catprod)$, introduced in Definition \ref{def:FH} below, is well-defined. First, we need to define the algebraic input data for factorization homology: 

\begin{definition}
A \emph{framed $\E_n$-algebra} in $\Ca$ is a symmetric monoidal functor 
$$
\Aa \colon \Disk_n^{\ori, \sqcup} \longrightarrow \Ca^{\catprod}   \ .
$$
\end{definition}

Since $\Disk_n^{\ori}$ is generated, as a symmetric monoidal category, by $\mathbb{D}^n$, we also denote by $\Aa$ the image of the generator $\mathbb{D}^n$ under the above functor.

\begin{example}
In Figure \ref{fig:diskalgebras} below we give a sketch of the generating morphisms in $\Disk_2^{\ori}$, i.e.\ for $n=2$, and the corresponding algebraic structures on the framed $\E_2$-algebra $\Aa \colon \Disk_2^{\ori,\sqcup} \to \Ca^{\catprod}$.
\end{example}
\begin{figure}[h!]
\centering
\begin{overpic}[scale=0.55,tics=10]{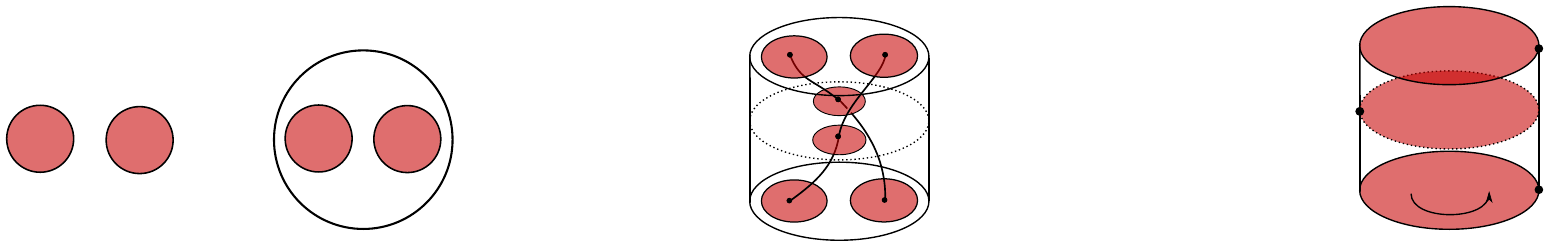}
 	 	\put(13,5.5){$\hookrightarrow$}
  		\put(7,-4){$ m \colon \Aa \catprod \Aa \to \Aa$}
  		\put(46,-4){$ \beta \colon m \Rightarrow m \circ \sigma$}
  		\put(86,-4){$\theta \colon \id_\Aa \Rightarrow \id_\Aa$}
\end{overpic}  
\vspace{20pt}
\captionsetup{format=plain}
\caption{\emph{Left-to-right}: Disk embeddings, or isotopies thereof, in $\Disk_2^{\ori}$ that give rise to the multiplication $m$ and the braiding $\beta$ in $\Aa = \Aa(\mathbb{D}^2)$ ($\sigma$ denotes the symmetry in $\Ca$). Loop in the space of disk embeddings coming from rotating the disk by $2\pi$. This gives rise to a natural isomorphism \(\theta\) satisfying certain properties, and is called a twist on \(\Aa\).}
\label{fig:diskalgebras}
\end{figure}

\begin{definition}\cite{AFfh}\label{def:FH}
\emph{Factorization homology $\int_{(-)} \Aa$ of the framed $\E_n$-algebra $\Aa$} is defined as the left Kan extension of \(\Aa\) along the inclusion \(\Disk_n^{\ori, \sqcup} \hookrightarrow \Man_n^{\ori, \sqcup}\), i.e.
\begin{center}
\begin{tikzcd}
\Disk_n^{\ori,\sqcup} \arrow[d,hook] \arrow[r,"\Aa"] & \Ca^{\catprod} \\
\Man_n^{\ori, \sqcup} \arrow[ur,"\int_{(-)}\Aa",dashed,swap]
\end{tikzcd}.
\end{center}
\end{definition}

The left Kan extension admits a pointwise formula: the value of factorization homology on a manifold $M$ is computed by the colimit
$$
\int_M \Aa = \colim \big( {\Disk_n^{\ori, \sqcup}}_{ / M} \xrightarrow{\fgt} \Disk_n^{\ori, \sqcup} \xrightarrow{\Aa} \Ca^{\catprod} \big)
$$
over all possible disk embeddings into $M$. The assumptions on $\Ca$ guarantee that the above colimit exists and makes factorization homology into a symmetric monoidal functor \cite[Proposition 3.7]{AFfh}. The value of factorization homology on any manifold $M$ is naturally pointed by the inclusion $\emptyset \hookrightarrow M$ of the empty manifold:
\begin{equation}\label{fh_pointing}
\int_\emptyset \Aa \cong 1_\Ca \longrightarrow \int_{M} \Aa  \ \ .
\end{equation}

\subsection{Excision}
Oriented factorization homology, being a homology theory for oriented manifolds, satisfies a certain gluing property called $\otimes$-excision. Excision allows to reconstruct the value of factorization homology from a certain decomposition of $M$, as we will explain here. We also give a list of two properties, of which excision is one, that completely characterise factorization homology. 

We first define the type of decomposition of an oriented manifold needed for excision. 
\begin{definition}
A \emph{collar-gluing} is a decomposition $M = M_- \bigcup_{M_0} M_+$, where $M_+$ and $M_-$ are open subsets of $M$, together with a direct product structure on $M_0 = M_- \cap M_+$, i.e.~a diffeomorphism $\theta \colon M_0 \xrightarrow{\cong} N \times (-1,1)$ of oriented manifolds. 
\end{definition}

\begin{example}
    Let \(M=\Sigma_{2,0}\), i.e.\ the genus 2 surface. Figure \ref{fig:collargluing} illustrates a collar-gluing of \(M\) where we have \(M_- \cong \Sigma_{1,1} \cong M_+\) and \(M_0 \cong S^1 \times (-1,1)\). 
\end{example}

\begin{figure}[h]
 \centering
  \begin{overpic}[scale=0.3,tics=10]{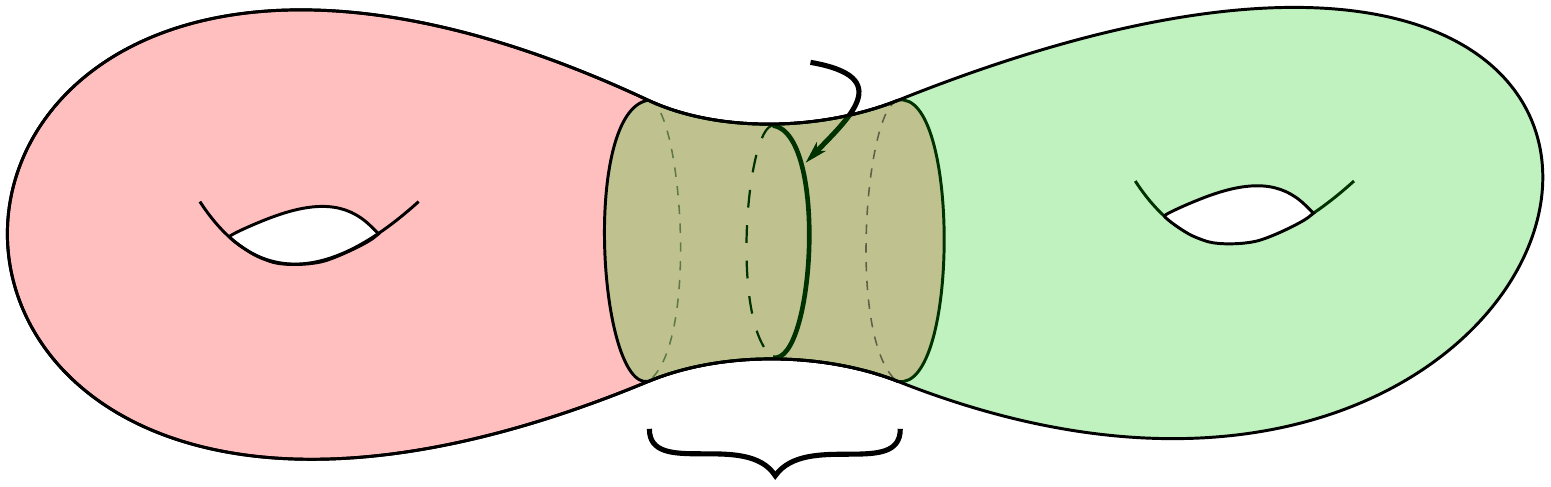}
  	\put(47,27){$N$}
  	\put(8,8){$M_-$}
  	\put(78,8){$M_+$}
  	\put(47,-5){$M_0$}
	\end{overpic}  
	\vspace{0.2cm}
	\caption{Example of a collar-gluing of the genus 2 surface.}
  \label{fig:collargluing}
\end{figure} 
Factorization homology $\int_{N \times (-1,1)} \Aa$ for the product manifold $N \times (-1,1)$ has a natural $\E_1$-algebra structure coming from embeddings of open intervals. This in turn gives rise to an $\E_1$-algebra structure on $\int_{M_0} \Aa$ under the equivalence $\theta \colon M_0 \cong N \times (-1,1)$. Fix oriented embeddings
$$
\mu_- \colon [-1, 1) \sqcup (-1, 1) \hookrightarrow [-1, 1)  \quad \text{and} \quad \mu_+ \colon (-1, 1) \sqcup (-1, 1] \hookrightarrow (-1, 1] 
$$
such that $\mu_- (-1) = -1$ and $\mu_+(1) = 1$. Under the diffeomorphism $\theta$ these maps lift to embeddings
$$
\act_- \colon M_- \sqcup M_0 \longrightarrow M_-  \quad \text{and} \quad \act_+ \colon M_0 \sqcup M_+ \longrightarrow M_+  \ \ .
$$
See Figure \ref{fig:modulestrExcision} for a sketch. The maps $\act_-$ and $\act_+$ induce right, respectively left $\int_{M_0} \Aa$-module structures 
on \(\int_{M_-} \Aa\), respectively \(\int_{M_+}\Aa\). 

\begin{lemma}\label{lma:excisionFH}\cite[Lemma 3.18]{AFfh}
Let $M = M_- \bigcup_{M_0} M_+$ be a collar-gluing of oriented $n$-manifolds and let $\Aa$ be a framed $\E_n$-algebra in $\Ca$. There is an equivalence of categories
$$
\int_M \Aa \simeq \int_{M_-} \Aa \underset{\int_{M_0} \Aa}{\catprod} \int_{M_+} \Aa \ \ ,
$$
where on the right hand side the relative tensor product is computed by the colimit of the 2-sided bar construction
\begin{equation}\label{eq:barconstruction}
\begin{tikzcd}[column sep = small]
\dots  \arrow[r,yshift=1.5ex] \arrow[r,yshift=0.5ex] \arrow[r,yshift=-0.5ex] \arrow[r,yshift=-1.5ex] & \Ma_- \catprod \Na \catprod \Na \catprod \Ma_+  \arrow[r] \arrow[r,yshift=1ex] \arrow[r,yshift=-1ex]& \Ma_- \catprod \Na \catprod \Ma_+ \arrow[r,yshift=0.5ex] \arrow[r,yshift=-0.5ex] & \Ma_- \catprod \Ma_+
\end{tikzcd}
\end{equation}
for $\Ma_- \coloneqq \int_{M_-}\Aa$, $\Ma_+ \coloneqq \int_{M_+} \Aa$ and $\Na \coloneqq \int_{M_0} \Aa$.
\end{lemma}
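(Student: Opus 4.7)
The plan is to prove excision directly from the pointwise colimit formula for factorization homology. I would proceed in three main steps.

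First, I would construct a canonical comparison map from the bar colimit to $\int_M \Aa$. The inclusions $M_\pm \hookrightarrow M$ and $M_0 \hookrightarrow M$ induce morphisms $\Ma_\pm \to \int_M \Aa$ and $\Na \to \int_M \Aa$, respectively. The extensions $\act_\pm$ of the interval-gluing embeddings $\mu_\pm$ endow $\int_M \Aa$ with a two-sided $\Na$-module structure compatible with the $\Ma_\pm$-actions, because functoriality of $\int_{(-)}\Aa$ under embeddings and its symmetric monoidality encode precisely the associativity and unit constraints. This yields a cone over the simplicial diagram \eqref{eq:barconstruction} with tip $\int_M \Aa$, hence a map out of the relative tensor product.

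Second, I would exhibit the inverse via a cofinality argument in ${\Disk_n^{\ori,\sqcup}}_{/M}$. Consider the full subcategory $\Da(M)$ of \emph{collar-separated} disk configurations $\coprod_i D_i \hookrightarrow M$, in which each $D_i$ factors entirely through $M_-$ or entirely through $M_+$. Using the product structure $\theta \colon M_0 \xrightarrow{\cong} N \times (-1,1)$, any disk $D_i$ meeting $M_0$ can be isotoped so that $D_i \cap M_0$ is a union of slabs of the form (disk in $N$) $\times$ (sub-interval), after which $D_i$ can be pushed entirely into $M_-$ or $M_+$. A parameterized version of this straightening, combined with the contractibility of spaces of collar-preserving isotopies, shows that $\Da(M) \hookrightarrow {\Disk_n^{\ori,\sqcup}}_{/M}$ is cofinal in the $\infty$-categorical sense.

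Third, I would identify $\colim_{\Da(M)} \Aa$ with the bar colimit. Stratifying $\Da(M)$ by the combinatorial pattern of how the components $D_i$ are distributed between $M_-$, $M_+$, and the tubular slices $N \times (t_j, t_{j+1})$ of the collar yields a natural simplicial object whose $k$-th term is equivalent to $\Ma_- \otimes \Na^{\otimes k} \otimes \Ma_+$; the face maps arise either from restricting the embedding $\act_\pm$ on one side or from merging two adjacent collar slabs via the $\E_1$-structure on $\Na$. Comparing this stratification with the bar diagram gives the desired equivalence.

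The main obstacle is the cofinality step. One must simultaneously straighten every disk of a configuration with respect to $M_0$ and verify that the resulting space of straightenings is weakly contractible, so that cofinality holds at the level of $(\infty,1)$-categories and not merely for homotopy categories. This is a parameterized isotopy extension argument where the product structure on the collar is indispensable, and where care is required because configurations may contain arbitrarily many disks whose intersections with $M_0$ must be handled coherently.
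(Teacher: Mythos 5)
This lemma is not proved in the paper at all: it is quoted verbatim from Ayala--Francis \cite[Lemma 3.18]{AFfh} and used as a black box (the only excision result the paper actually proves is Theorem \ref{thm:Excision}, for enriched skein categories, which is a different statement). So there is no in-paper proof to compare against; what you have written is an outline of a proof of the Ayala--Francis theorem itself.

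As an outline it follows the right general shape, and your first step (the cone over the bar diagram induced by $\act_\pm$ and functoriality of $\int_{(-)}\Aa$) is fine. But the two steps that carry all the content are asserted rather than argued. First, the cofinality of the collar-separated subcategory $\Da(M) \hookrightarrow {\Disk_n^{\ori,\sqcup}}_{/M}$ requires, for every configuration $U$, that the under-category $\Da(M)_{U/}$ be weakly contractible; ``a parameterized isotopy extension argument'' is a name for the difficulty, not a resolution of it, and this is essentially the entire theorem. Second, your step 3 presupposes a functor from $\Da(M)$ to $\Delta^{\op}$ recording ``the number of collar slabs,'' but a collar-separated configuration carries no intrinsic slab decomposition, so this functor is not well defined as stated; making it so, and then showing that the colimit over $\Da(M)$ is computed fiberwise over $\Delta^{\op}$ with fibers $\Ma_-\otimes\Na^{\otimes k}\otimes\Ma_+$ (which also uses $\otimes$-sifted cocompleteness to pull the tensor product out of the colimits), is a further chain of localization lemmas. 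This is precisely why Ayala--Francis instead push $\Aa$ forward along a map $M\to[-1,1]$ collapsing the collar and reduce to the one-dimensional disk category over the interval, where the simplicial combinatorics is canonical. If you intend to prove this result rather than cite it, you should either carry out those two steps in detail or adopt the pushforward reduction.
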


\begin{figure}[h]
 \centering
  \begin{overpic}[scale=0.3,tics=10]{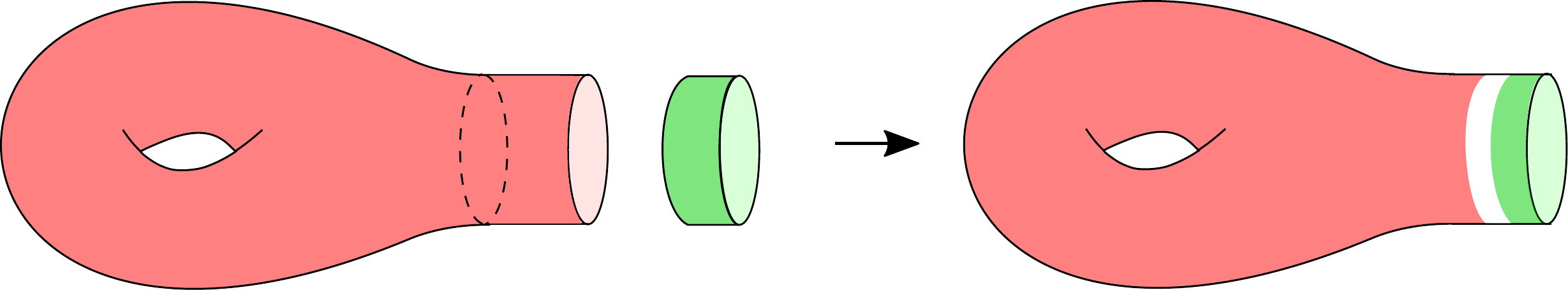}
	\end{overpic}  
	\captionsetup{format=plain}
	\caption{The map which induces the right $\int_{M_0} \Aa$-module structure on $\int_{M_-} \Aa$. Here, the green collar depicts the product manifold $N \times (-1, 1)$.}
  \label{fig:modulestrExcision}
\end{figure}

Conversely, factorization homology is characterised by two properties, of which \(\otimes\)-excision is typically the most involved to prove. More precisely, 
\begin{theorem} \cite{AFfh, AFT-stratified} 
\label{thm:properties-for-FH}
    Let \(\Aa\) be a framed \(\E_n\)-algebra in \(\Ca\). The functor \(\int_{\mhyphen} \Aa\) is characterised by the following properties:
    \begin{enumerate}
        \item Its restriction to the full subcategory $\Disk_n^{\operatorname{or}, \sqcup}$
        is equivalent to $\mathcal{A}$.
        \item \(\int_{\mhyphen}\Aa\) satisfies \(\otimes\)-excision. 
    \end{enumerate}
\end{theorem}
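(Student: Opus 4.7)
The two implications require rather different inputs. For the forward direction, showing that $\int_{\mhyphen}\Aa$ satisfies (1) and (2): property (1) is immediate from the pointwise formula for the left Kan extension along the fully faithful inclusion $\Disk_n^{\ori,\sqcup} \hra \Man_n^{\ori,\sqcup}$, since for a disjoint union of disks $U$ the overcategory $(\Disk_n^{\ori,\sqcup})_{/U}$ admits a final object given by the identity, so the defining colimit collapses to $\Aa(U)$. Property (2), $\otimes$-excision, follows by a cofinality argument: given a collar-gluing $M = M_- \cup_{M_0} M_+$, one organizes the overcategory $(\Disk_n^{\ori,\sqcup})_{/M}$ according to whether each disk maps into $M_-$, $M_+$, or is threaded across the collar, and shows that this stratification exhibits $\int_M \Aa$ as the iterated colimit computing the relative tensor product in Lemma~\ref{lma:excisionFH}.

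For the converse direction, suppose $F\colon \Man_n^{\ori,\sqcup} \to \Ca^\otimes$ is a symmetric monoidal functor satisfying (1) and (2) with $F|_{\Disk_n^{\ori,\sqcup}} \simeq \Aa$. The strategy is to induct along a handle decomposition: every smooth oriented $n$-manifold admits an open handle decomposition, presenting it as an iterated collar-gluing of disks. The base case is property (1); the inductive step uses that both $F$ and $\int_{\mhyphen}\Aa$ satisfy excision and are therefore forced to produce the same relative tensor product at each gluing, provided the identifications at the previous stages assemble coherently. The natural equivalence $F \simeq \int_{\mhyphen}\Aa$ is constructed stage-by-stage using the universal property of the bar construction in \eqref{eq:barconstruction}, with functoriality in embeddings checked by decomposing any embedding into moves that add or shuffle handles.

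The main obstacle will be twofold. First, coherence: the identifications produced at each collar-gluing must assemble into a genuine natural transformation of symmetric monoidal functors, not merely a family of pointwise equivalences. This forces one to work in an appropriate $(\infty,1)$-categorical setting, lifting Lemma~\ref{lma:excisionFH} to an equivalence of objects equipped with their full data as modules over $\int_{M_0}\Aa$. Second, non-compactness: manifolds of infinite type have no finite handle decomposition, so one must invoke a continuity axiom asking that $F$ sends sequential colimits of open submanifolds to the corresponding colimits in $\Ca$. Since $\Ca$ is assumed $\otimes$-sifted cocomplete, this condition holds automatically for $\int_{\mhyphen}\Aa$, and it is typically bundled into the definition of a homology theory. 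Applying the compact case to an exhaustion $M = \bigcup_k M_k$ by finite-type submanifolds and passing to the colimit then completes the proof.
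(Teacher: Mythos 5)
The paper offers no proof of this theorem at all: it is stated as a citation to Ayala--Francis and Ayala--Francis--Tanaka, so there is nothing internal to compare against, and your sketch should be measured against the argument in those references. Your plan is essentially the standard one: restriction of a pointwise left Kan extension along a fully faithful inclusion recovers $\Aa$ because $(\Disk_n^{\ori,\sqcup})_{/U}$ has a final object; excision is Lemma~\ref{lma:excisionFH}; and the characterization goes by induction over collar-gluings of handles together with a continuity condition for infinite-type manifolds. Two remarks. First, the ``main obstacle'' of coherence that you flag largely dissolves if you set the converse up the way Ayala--Francis do: since $F$ restricts to $\Aa$ on disks, the universal property of the left Kan extension already hands you a \emph{natural} symmetric monoidal transformation $\int_{\mhyphen}\Aa \Rightarrow F$, and the handle induction is only needed to check that this pre-existing map is an objectwise equivalence --- no stage-by-stage assembly of identifications is required, and lifting Lemma~\ref{lma:excisionFH} to a fully coherent module-level statement is avoided. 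Your stage-by-stage construction could in principle be made to work but would be substantially harder to make precise. Second, you are right that properties (1) and (2) as literally stated do not pin down $F$ on manifolds of infinite type; the cited references either build continuity under sequential unions into the definition of a homology theory or restrict to finite-type manifolds, and the paper's two-item formulation silently elides this. With those caveats your outline is a faithful reconstruction of the literature proof.
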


\section{Enriched category theory}\label{Sec: enriched categories}
The main goal of this section is to recall and establish some basic results from $\mathcal{V}$-enriched category theory, where throughout this section, $\mathcal{V}$ is always a closed symmetric monoidal category.  In Section \ref{sect:the-2-cat-VCat} we recall the definition of the 2-category $\mathcal{V}\mhyphen \Cat$ of $\Va$-enriched categories, as well as show that it is \(\otimes\)-sifted cocomplete making it a suitable target for factorization homology. Then we present a brief review of enriched coends in Section \ref{Sec:enriched coends}, since they will play an important role for computations later. Another important target will be the 2-category $\Va\mhyphen\Pres$ of enriched locally presentable categories, which will be discussed in Section \ref{sect:the-2cat-VPres}. 

As we have seen in the previous section, excision allows one to compute the value of factorization homology on more complicated manifolds as a relative tensor product of its value on simpler manifolds. We will give three explicit ways of computing these relative tensor products in Section \ref{sect:rel-tensor-products}. Finally, in Section \ref{sect:example-of-enriching-cat-complete-modules} we discuss our main example of enriching category, namely $\mathcal{V}=\widehat{\Ch\lmod}$, the category of complete modules over the formal power series ring $\Ch$. We also recall two important $\Va=\widehat{\Ch\lmod}$-enriched categories: the Drinfeld category and quantum group representations.

\paragraph{Conventions and notation.}

\begin{definition} \label{defn:underlying-ordinary-category}
Let $\Ca$ be a $\Va$-enriched category. We will denote by $\Ca_0$ the \emph{underlying (ordinary) category} with the same objects as $\Ca$, and with Hom-sets $\Ca_0(c, c') = \Va( 1_\Va, \Ca(c, c'))$. 
\end{definition}

If we have a morphism $f \in \Ca_0(c, c')$, \emph{precomposition} with $f$ is the morphism in $\Va$
\[  \Ca(c', c'') \xrightarrow{- \circ f} \Ca(c, c'') \]
given by the composition
\begin{equation}
\Ca(c', c'') \xrightarrow{\cong}  \Ca(c', c'')\monprod[\Va] 1_\Va \xrightarrow{\id\monprod[\Va] f} \Ca(c', c'')\monprod[\Va] \Ca(c, c')\xrightarrow{\circ^{\Ca}} \Ca(c, c''),
\end{equation}
where in the last step we use the composition of \(\Ca\). \emph{Postcomposition} with \(f\), i.e.\ $f\circ -$, is defined similarly. Moreover, if $\Ca$ is monoidal (see Section \ref{Sec:enriched ribbon cats} for the definition of a monoidal $\Va$-category), we analogously define
\[ \Ca(e, e') \xrightarrow{- \monprod[\Ca] f} \Ca(e\monprod[\Ca] c, e'\monprod[\Ca] c') \]
and
\[  \Ca(c'\monprod[\Ca] d', c''\monprod[\Ca] d'')\xrightarrow{- \circ (f\monprod[\Ca] g) } \Ca(c\monprod[\Ca] d, c''\monprod[\Ca] d''),\]
where $g \in \Ca_0(d, d')$.

\subsection{The 2-category \texorpdfstring{$\mathcal{V}\mhyphen \Cat$}{V-Cat}} \label{sect:the-2-cat-VCat}
In this section we introduce the 2-category of \(\Va\)-enriched categories. We also explain how one can change the enriching category, equip \(\Va\mhyphen \Cat\) with a monoidal product and under which assumptions (on \(\Va\)) we have that $\mathcal{V}\mhyphen \Cat$ is cocomplete. 

\begin{definition}
Let $(\Va,\monprod[\Va])$ be a monoidal category. We define $\VCat$ to be the 2-category whose objects are $\Va$-categories and 1-morphisms are $\Va$-functors $F \colon \Ca \to \Da$, i.e. a map $\Obj(\Ca) \to \Obj(\Da)$ together with morphisms 
$$
F_{c,c'} \colon \Ca(c,c') \to \Da(F(c),F(c'))
$$
in $\Va$. The 2-morphisms are $\Va$-natural transformations $\alpha \colon F \Rightarrow G$ between $F,G \colon \Ca \to \Da$, with components $\alpha_c \colon 1_\Va \to \Da(F(c),G(c))$, for all $c \in \Ca$, making the following diagram commute 
    \begin{center}
        \begin{tikzcd}
        \Ca(c,c') \arrow[r,"F_{c,c'}"] \arrow[d,"G_{c,c'}"] & \Da(F(c), F(c')) \arrow[d,"\alpha_{c'} \circ \mhyphen"] \\
        \Da(G(c),G(c')) \arrow[r,"\mhyphen \circ \alpha_c"] & \Da(F(c),G(c'))
        \end{tikzcd}
    \end{center}
The set of $\Va$-natural transformations $\alpha \colon F \Rightarrow G$ will be denoted by $\Va\mhyphen\text{Nat}(F,G)$. For $\Ca,\Da \in \VCat$ we denote the category of $\Va$-functors and $\Va$-natural transformations between them by $\Va\mhyphen\text{Fun}(\Ca,\Da)$. 
\end{definition}

A $\Va$-functor $F \colon \Ca \to \Da$ is an equivalence if and only if it is essentially surjective and fully-faithful. Note that in the enriched context, a functor is called fully-faithful if each $F_{c,c'}$ is an isomorphism in $\Va$ \cite[Section 1]{KellyVCat}.

\paragraph{Change of enrichment.}

Let $(\Va,\monprod[\Va],1_\Va)$ and $(\Ua,\monprod[\Ua],1_\Ua)$ be two monoidal categories and $\Phi \colon \Va \to \Ua$ a monoidal functor with structure maps 
$$
\mu_\Phi \colon \Phi(-) \monprod[\Ua] \Phi(-) \xrightarrow{\cong} \Phi(-\monprod[\Va]-), \quad \varepsilon_\Phi \colon 1_\Ua \xrightarrow{\cong} \Phi(1_\Va) \ .
$$
Then, there is an induced 2-functor  
$$
\Phi_\ast \colon \VCat \longrightarrow \Ua\mhyphen\Cat
$$
defined as follows: For a $\Va$-category $\Aa$, the $\Ua$-category $\Phi_* \Aa$ has the same objects as $\Aa$ and the morphisms are defined by
$$
\Phi_*\Aa(a,a') \coloneqq \Phi(\Aa(a,a')) \in \Ua 
$$
for all $a,a' \in \Aa$. For a $\Va$-functor $F \colon \Aa \to \Ba$ we define a $\Ua$-functor $\Phi_*F \colon \Phi_*\Aa \to \Phi_* \Ba$ such that on objects it agrees with $F$ and on morphisms it is
$$
(\Phi_*F)_{a,a'} \coloneqq \Phi(F_{a,a'}) \colon \Phi(\Aa(a,a')) \to \Phi(\Ba(F(a),F(a'))) \  .
$$
Finally, on a $\Va$-natural transformation $\alpha \colon F \Rightarrow G$, for $F,G \colon \Aa \to \Ba$ two $\Va$-functors, the components of $\Phi_*\alpha \colon \Phi_*F \Rightarrow \Phi_*G$ are 
$$
1_\Ua \xrightarrow{\varepsilon_\Phi} \Phi(1_\Va) \xrightarrow{\Phi(\alpha_a)} \Phi(\Ba(F(a),G(a))) \ .
$$

\paragraph{Tensor product of $\Va$-categories.}\label{sec:tensorprod_VCat}
{In this subsection we further assume that $\Va$ is symmetric monoidal, closed and complete. The symmetry of $\Va$ allows us to define a natural tensor product on $\Va\mhyphen\Cat$, and the additional assumptions ensure the existence of the internal Hom \eqref{eq:VCat is closed}.}

\begin{definition}\label{definition:tensor-product-of-Vcats}
For two $\Va$-categories $\Ca, \Da$ define $\Ca \catprod \Da$   to be the $\Va$-category whose objects are pairs $(c,d) \in \Obj(\Ca) \times \Obj(\Da)$ and its morphisms are defined by 
$$
\Ca \catprod \Da\, ((c,d),(c',d')) \coloneqq \Ca(c,c') \monprod[\Va] \Da(d,d') \in \Va \ .
$$
The composition in $\Ca \catprod \Da$ is defined using the symmetry of $\Va$.
\end{definition}
The 2-category $\VCat$ is closed under the above tensor product \cite[Section 2.3]{KellyVCat}, i.e.~there is an equivalence of categories
\begin{equation}\label{eq:VCat is closed}
\Hom_{\VCat}(\Ca \catprod \Da, \Ea) \cong \Hom_{\VCat}(\Ca, [\Da,\Ea]) \ ,
\end{equation}
which is 2-natural in the $\Va$-categories $\Ca,\Da,\Ea$. Here, $[\Da,\Ea]$ is the $\Va$-category whose objects are $\Va$-functors $F \colon \Da \to \Ea$ and morphisms are defined by the objects 
$$
\Va\mhyphen\textbf{Nat}(F,G) \coloneqq \int_{d \in \Da} \Ea(F(d),G(d)) ~\in \Va \ ,
$$
where the right hand side is a $\Va$-enriched end defined as the equalizer of 
\begin{align}
    \prod_{d\in \Da } \Ea(F(d),G(d)) \rightrightarrows  \prod_{d_1,d_2\in \Da} [\Ea(d_1,d_2),\Ea(F(d_1),G(d_2))]_{\Va}
\end{align}
where $[-,-]_{\Va}$ denotes the internal Hom of $\Va$ (see also the next section). \par 
From Equation \eqref{eq:VCat is closed} we conclude that the internal Hom is right adjoint to the natural tensor product of \(\VCat\), so the tensor product preserves colimits in \(\VCat\). 

\paragraph{Cocompleteness of $\VCat$.}

In the last paragraph we have seen that the tensor product of $\Va$-categories is a left adjoint. In order to compute factorization homology in $\VCat$, it thus remains to show that it is closed under bicolimits. To that end, we refer the interested reader to Appendix \ref{Sec:cocompleteness VCat}, where we discuss the existence of bicolimits of categories in the $\Va$-enriched setting. From Proposition \ref{Prp:bicolimits VCat} and the fact that the tensor product of \(\VCat\) is left adjoint we get: 

\begin{proposition} \label{prop:VCat-cocomplete-and-tensor-prod-commutes-with-colimits}
    If $\Va$ is a cocomplete and closed symmetric monoidal category, $\VCat$ admits all bicolimits and the tensor product preserves all bicolimits. 
\end{proposition}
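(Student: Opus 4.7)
The plan is to decompose the proposition into its two assertions and handle them separately, each via a known general principle together with the structural results already established in the excerpt.

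For the existence of bicolimits, I would simply invoke Proposition \ref{Prp:bicolimits VCat} from Appendix \ref{Sec:cocompleteness VCat}, which asserts precisely that $\VCat$ is bicocomplete whenever $\Va$ is cocomplete (and closed symmetric monoidal). The strategy behind that appendix result, which I would at least gesture at here, is to construct bicolimits in two stages: first produce the underlying set of objects via an ordinary colimit in $\mathsf{Set}$, and then produce the Hom-objects as suitable (weighted) colimits in $\Va$, using cocompleteness of $\Va$ to close under the necessary composition and coherence relations. One should verify the universal property up to isomorphism of $\Va$-functors, which is standard.

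For the statement that the tensor product preserves bicolimits, I would argue as follows. Fix $\Da \in \VCat$ and consider the endofunctor $(-) \catprod \Da \colon \VCat \to \VCat$. By the closed structure recalled in equation \eqref{eq:VCat is closed}, this functor admits the internal Hom $[\Da, -]$ as a right bi-adjoint, where the internal Hom is defined via the enriched end
\[
\Va\mhyphen\textbf{Nat}(F,G) = \int_{d\in \Da} \Ea(F(d),G(d)),
\]
and the end exists because $\Va$ is complete (as a closed symmetric monoidal category with enough limits to form ends; this is where completeness of $\Va$ is implicitly used, and was assumed earlier in Section \ref{sec:tensorprod_VCat}). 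Any left bi-adjoint preserves bicolimits, so $(-)\catprod \Da$ preserves bicolimits. By symmetry of the tensor product, the same holds in the other variable, and therefore $\catprod$ preserves bicolimits in each variable separately, which is the appropriate sense of preservation of bicolimits for a monoidal structure on a bicategory.

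The only potential subtlety, and what I expect to be the main thing requiring care, is the bi-categorical aspect: the adjunction \eqref{eq:VCat is closed} is stated as an equivalence of Hom-categories that is 2-natural in all three variables, and one must check that this suffices to conclude preservation of \emph{bicolimits} (as opposed to strict 2-colimits). This follows because a 2-natural equivalence of Hom-categories is precisely the data of a bi-adjunction, and bi-adjoints preserve bicolimits by the usual formal argument: a cocone under $F((-)\catprod \Da)$ with tip $\Ea$ corresponds, via the adjunction, to a cocone under $F$ with tip $[\Da,\Ea]$, and universality transports across the equivalence. Beyond that point the proof is essentially a one-line invocation of the two preceding results, so the proposition is really a packaging statement confirming that $\VCat$ fits into the framework required by Section \ref{Sec: FH} for factorization homology to be well-defined.
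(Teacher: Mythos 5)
Your proposal is correct and matches the paper's argument: the paper likewise deduces the proposition directly from Proposition \ref{Prp:bicolimits VCat} (bicocompleteness of $\VCat$, proved in the appendix via strictification) together with the adjunction \eqref{eq:VCat is closed} exhibiting $(-)\catprod\Da$ as a left adjoint to the internal Hom. Your extra remark on why a bi-adjunction (rather than a strict 2-adjunction) suffices to preserve bicolimits is a reasonable elaboration of a point the paper leaves implicit.
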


\subsection{Enriched coends}\label{Sec:enriched coends}
In this section we recall the definition and some basic facts about enriched coends, which will play an important role for one of the enriched relative tensor products we will use. We mostly follow~\cite[Section II.7]{Riehl}, but~\cite{KellyVCat,CoendCalc} are also excellent references. 

We start by introducing weighted enriched colimits. We will focus on colimits, but clearly limits can be defined dually. 
\begin{definition}
Let $\mathcal{F}\colon \mathcal{D} \to \mathcal{M} $ and $W\colon \mathcal{D}^{\operatorname{op}}\to \mathcal{V}$ be two $\mathcal{V}$-functors. The \emph{colimit of $\mathcal{F}$ weighted by $W$} is an object $\colim^W\mathcal{F}$ of $\mathcal{M}$, together with $\mathcal{V}$-natural isomorphisms 
\begin{align}
\mathcal{M}(\colim^W\mathcal{F}, m) \cong \mathcal{V}^{\mathcal{D}^{\operatorname{op}}}(W, \mathcal{M}(\mathcal{F}(-),m)) \ . 
\end{align}
If $\mathcal{D}$ is the free $\Va$-category on an ordinary category $\mathcal{D}_0$ we can choose the weight $W$ to be induced by the constant functor at the monoidal unit $1_\Va \colon \mathcal{D}_0\to \Va$. A colimit weighted by a weight of this shape is called a \emph{conical colimit} and denoted by $\colim^* \mathcal{F}$.
\end{definition}
Note that if $\mathcal{M}$ is tensored over $\Va$, ordinary colimits are also conical colimits.

\begin{definition}
Let $\mathcal{F}(-,-)\colon \mathcal{D}^{\operatorname{op}}\catprod \mathcal{D}\to \mathcal{M}$ be a  $\mathcal{V}$-functor. We define the \emph{enriched coend $\int^{\mathcal{D}} \mathcal{F}$ of $\mathcal{F}$} as the weighted colimit $\colim^{\Hom_{\mathcal{D}}} \mathcal{F} $.
\end{definition}
Equivalently, if $\mathcal{M}$ is tensored over $\mathcal{V}$, we can define the enriched coend as the conical colimit 
\begin{align}\label{Eq: coend = coeq}
\operatorname{coeq}\left( \coprod_{d_1,d_2} \mathcal{D}(d_2,d_1)\otimes \mathcal{F}(d_1,d_2) \rightrightarrows  \coprod_{d} \mathcal{F}(d,d)  \right) \ .
\end{align}
Most results which hold for ordinary coends also hold for their enriched version. That is, we for example have the Fubini theorem
\begin{align}
\int^{d_1 \in D_1 }\int^{d_2\in D_2 } \mathcal{F}(d_1,d_2,d_1,d_2) \cong \int^{d_1\times d_2\in D_1 \times D_2 } \mathcal{F}(d_1,d_2,d_1,d_2)
\end{align}
and coYoneda lemma 
\begin{align}
\int^{c'\in C} \mathcal{C}(-,c') \otimes \mathcal{F}(c') \cong \mathcal{F}(-)  
\end{align}
for all $\mathcal{V}$-functors $\mathcal{F}\colon \mathcal{C}\to \mathcal{M}$, where $\mathcal{M}$ is tensored over $\mathcal{V}$.  

From Equation~\eqref{Eq: coend = coeq} we see that the enriched coend can also be characterized by the universal property that giving a map from it to $m\in \mathcal{M}$ is equivalent to giving a family of maps $\alpha_{d}\colon \mathcal{F}(d,d)\to m$ such that
\begin{equation}
    \begin{tikzcd}
      \mathcal{D}(d_2,d_1)\otimes \mathcal{F}(d_1,d_2) \ar[r] \ar[d] & \mathcal{F}(d_1,d_1) \ar[d] \\ 
      \mathcal{F}(d_2,d_2) \ar[r] &m 
    \end{tikzcd}
\end{equation}
commutes for all pairs of objects $d_1,d_2\in \mathcal{D}$.

\subsection{The 2-category \texorpdfstring{$\mathcal{V}\mhyphen\Pres$}{V-Pres}} \label{sect:the-2cat-VPres}
The notion of locally presentable categories was first introduced by Gabriel–Ulmer in \cite{Gabriel-Ulmer}. The extension to the enriched
setting is due to Kelly \cite{KellyLFP} and was further developed in \cite{BQR}. Here we recall some of the main definitions that allow generalizations of fundamental results for locally presentable categories to the enriched context. 

Throughout, let $\Va$ be a complete and cocomplete symmetric monoidal closed category. Moreover, following \cite{BQR}, we fix a regular cardinal $\alpha_0$ and assume that $\Va_0$ is locally $\alpha_0$-presentable and that the subcategory of $\alpha_0$-compact objects is closed under the tensor product and contains the monoidal unit. When in this situation, $\Va$ is called a \emph{locally $\alpha_0$-presentable base}. 

\begin{example}
The category $\Va = \widehat{\Ch\lmod}$ is a locally $\aleph_1$-presentable base (see Proposition \ref{prp:ChMod_presentable_base}).
\end{example}
An object $c$ in a $\Va$-category $\Ca$ is called \emph{$\alpha$-compact} (in the enriched sense) if the functor $\Ca(c,-) \colon \Ca \to \Va$ preserves $\alpha$-filtered $\Va$-colimits\footnote{For the definition of a filtered colimit in the $\Va$-enriched setting, we refer the reader to e.g.\ \cite[Definition 2.3]{BQR}}.

\begin{definition}
Let $\Va$ be a locally $\alpha_0$-presentable base and $\alpha \geq \alpha_0$. A $\Va$-category $\Ca$ is called \emph{locally $\alpha$-presentable} if it is $\Va$-cocomplete and admits a strong $\Va$-generator formed by $\alpha$-compact objects.
\end{definition}

It was shown in \cite{BQR}, based on the work of Kelly \cite{KellyLFP} for locally finitely presentable $\Va$-categories, that there is an equivalence between locally $\alpha$-presentable $\Va$-enriched categories and $\alpha$-continuous $\Va$-valued presheaves, generalizing this important result from ordinary category theory to the enriched setting. \par 
\begin{definition}
Fix a locally $\alpha_0$-presentable base $\Va$ and $\alpha \geq \alpha_0$. We define $\VPres$ to be the 2-category whose objects are locally $\alpha$-presentable $\Va$-enriched categories, 1-morphisms are compact cocontinuous $\Va$-functors and 2-morphisms are enriched natural transformations.
\end{definition}
The 2-category $\VPres$ admits a tensor product, which we now describe. 
 
\paragraph{Tensor product of locally presentable $\Va$-categories.}
To define the tensor product on \(\VPres\) it will be convenient to introduce the 2-category $\VRex$ whose objects are small $\Va$-enriched categories having all $\alpha$-small colimits, 1-morphisms are $\Va$-functors preserving $\alpha$-small colimits, and 2-morphisms are enriched natural transformations. Given a category $\Ca \in \VRex$, one can take its \emph{ind-completion} $\operatorname{ind}(\Ca)$, also known as \emph{free completion under $\alpha$-filtered colimits}, which lands in $\VPres$. Conversely, the subcategory of $\alpha$-compact objects of a locally presentable $\Va$-category is small and has all $\alpha$-small colimits. These operations extend to a 2-categorical equivalence $\VRex \cong \VPres$ \cite[Section 9]{Kelly2limits}.\par  
In \cite{KellyVCat}, Kelly introduced a tensor product $\boxtimes$ of small $\Va$-enriched categories with $\alpha$-small colimits, which is uniquely characterized by the following universal
property: for $\Ca$ and $\Da$ two categories in $\VRex$, their \emph{Kelly tensor product} is an object $\Ca \boxtimes \Da \in \VRex$ such that for any $ \Ea \in \VRex$ there is a natural equivalence
$$
\VRex(\Ca \boxtimes \Da, \Ea) \cong \VRex(\Ca, \Da ; \Ea)
$$
between the category of \(\Va\)-functors $\Ca \boxtimes \Da \to \Ea$ in $\VRex$ and the category of \(\Va\)-functors $\Ca \catprod \Da \to \Ea$ preserving $\alpha$-small colimits in each variable. Moreover, it is shown in \cite{KellyVCat} that we have a further equivalence:
$$
\VRex(\Ca \boxtimes \Da, \Ea) \cong \VRex(\Ca, \VRex(\Da, \Ea)) \ \ .
$$
Hence, $\Ca \boxtimes -$ commutes with colimits. One can transport the Kelly tensor product along the equivalence $\VRex \cong \VPres$. We denote the resulting tensor product for $\VPres$ again by $\boxtimes$. 

\paragraph{Cocompleteness of $\VPres$.}
Together with the previous discussion, the following proposition asserts that $(\VPres, \boxtimes)$ is a valid target for factorization homology. A proof is given in the appendix. 

\begin{proposition}[Proposition \ref{prp:VPres_colimits}]
$\VPres$ admits all bicolimits.
\end{proposition}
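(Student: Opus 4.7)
The approach is to exploit that every 1-morphism of $\VPres$ is a left adjoint, reducing the construction of bicolimits to one of bilimits in a dual 2-category. The enriched special adjoint functor theorem implies that every compact cocontinuous $\Va$-functor $F : \Ca \to \Da$ in $\VPres$ admits an accessible continuous right adjoint $F^R : \Da \to \Ca$, and passing to right adjoints is functorial up to coherent isomorphism via the mate construction. Writing $\VPres'$ for the 2-category with the same objects as $\VPres$ but with accessible continuous $\Va$-functors as 1-morphisms, this yields a 2-equivalence $\VPres \simeq (\VPres')^{\mathrm{op}}$ (the enriched analogue of the classical biequivalence $\mathrm{Pr}^{L} \simeq (\mathrm{Pr}^{R})^{\mathrm{op}}$). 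Under this equivalence, bicolimits in $\VPres$ correspond to bilimits in $\VPres'$.

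Next, I would compute such bilimits inside the ambient $\VCat$, which admits all bilimits by the dual of Proposition~\ref{Prp:bicolimits VCat}: products of $\Va$-categories are formed componentwise on Hom-objects in $\Va$, and pseudo-equalizers and general bilimits are constructed analogously. Given a diagram $D' : I \to \VPres'$, let $\Ca := \mathrm{bilim}\, D'$ in $\VCat$, and verify that $\Ca$ inherits local $\alpha$-presentability. $\Va$-cocompleteness of $\Ca$ follows from computing $\Va$-colimits componentwise, using cocompleteness of each $D'_i$ together with the continuity of all transition functors in $D'$. A strong $\Va$-generator of $\alpha$-compact objects is obtained by applying the left adjoints of the bilimit projections $\pi_i : \Ca \to D'_i$ to strong generators of $\alpha$-compact objects in each $D'_i$ and collecting these across $i \in I$. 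Translating back via the duality of the previous paragraph then produces the desired bicolimit in $\VPres$.

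The main obstacle is the verification in the second paragraph that the bilimit genuinely lifts to $\VPres'$, specifically that the inherited strong generator really consists of $\alpha$-compact objects of $\Ca$. This reduces to an interchange of $\alpha$-filtered $\Va$-colimits with the bilimit structure, which is standard when the indexing 2-category $I$ is small relative to $\alpha$ and follows the same pattern as the classical unenriched arguments of Kelly. Should the indexing outrun the fixed $\alpha$ in the paper's definition, one can replace $\alpha$ by a suitable larger regular cardinal --- the underlying $\Va$-category of the bicolimit is unchanged, only the presentability parameter shifts.
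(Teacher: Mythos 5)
Your overall architecture is the same as the paper's: pass to right adjoints to identify $\VPres$ with the opposite of a 2-category $\VPres'$ of locally presentable categories and continuous accessible functors, compute the corresponding bilimit in $\VCat$, and show it lifts back. The paper, however, does not carry out the lifting by hand; it cites Bird's theorem that $\VPres^{\op}$ has products, inserters and equifiers computed in $\VCat$, then invokes Kelly's result that PIE limits generate all pseudo-limits, and finally the strictification argument of Proposition~\ref{Prp:bicolimits VCat} to pass to bilimits. Your proposal attempts to reprove the content of Bird's theorem directly, and this is where there is a genuine gap.

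Concretely, your justification of $\Va$-cocompleteness of the bilimit $\Ca = \mathrm{bilim}\,D'$ is backwards. You claim colimits in $\Ca$ are computed componentwise ``using cocompleteness of each $D'_i$ together with the continuity of all transition functors.'' Componentwise computation of a colimit in a pseudo-limit of categories requires the transition functors to \emph{preserve colimits}: given a diagram $(x^j_i)_j$ in $\Ca$, the structure isomorphisms of the candidate colimit need maps $F(\colim_j x^j_i) \xrightarrow{\cong} \colim_j F(x^j_i)$, which continuity of $F$ does not provide. Continuity gives you componentwise \emph{limits}. The standard repair (and essentially Bird's route) is to show $\Ca$ is complete (componentwise, using continuity) and accessible (via the limit theorem for accessible categories along accessible functors), and deduce local presentability --- hence cocompleteness --- from that characterization, rather than constructing colimits componentwise. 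Relatedly, your use of left adjoints to the projections $\pi_i$ to produce a strong generator risks circularity, since the existence of those left adjoints is typically obtained \emph{after} knowing $\Ca$ is locally presentable; this too is resolved in Bird's argument but is not addressed in your sketch. As written, the key lemma your proof rests on is true but not established by the reasoning you give.
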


\newcommand{\Lan}[2]{\mathrm{Lan}_{#1}{#2}}
\subsection{Free cocompletions} \label{sect:Completions} 
Many of the locally presentable \(\Va\)-categories in this article are obtained by freely cocompleting small \(\Va\)-enriched categories. Hence, in this subsection we give the definition of the free cocompletion of a (small \(\Va\)-enriched) category together with some elementary results. Here \(\Va\) is a complete and cocomplete symmetric monoidal closed category.

\begin{definition}\label{defn:free-cocompletion}
    Let \(\Ca\) be a small \(\Va\)-enriched category. The \emph{free cocompletion} \(\Comp{\Ca}\) is the enriched functor category (see Equation \eqref{eq:VCat is closed})
    \begin{align}
        \Comp{\Ca} \coloneqq [\Ca^{\op}, \Va].
    \end{align}
\end{definition}

Since colimits in presheaf categories are computed pointwise it follows that \(\Comp{\Ca}\) has all \(\Va\)-colimits which justifies the name. Recall the enriched Yoneda embedding \(\yo\colon\Ca \ra \Comp{\Ca}\). The free cocompletion enjoys a universal property:

\begin{theorem}\cite[Theorem 4.51]{KellyVCat}\label{thm:CompletionFunctors}
    Let \(\Ca\) be a small \(\Va\)-enriched category and \(\Da\) a cocomplete \(\Va\)-category. There is an equivalence of \(\Va\)-categories
  	\begin{align}
		\Va\mhyphen\Fun^{c.c.}(\Comp{\Ca}, \Da) &\xrightarrow{\simeq} \Va\mhyphen\Fun(\Ca, \Da),   
		\\ 	F &\mapsto F\circ \yo
    	\end{align}
	where \(\Va\mhyphen\Fun^{c.c.}(\Comp{\Ca}, \Da) \subset \Va\mhyphen\Fun(\Comp{\Ca}, \Da)\) is the full subcategory of cocontinuous functors. The inverse sends \(G\colon \Ca \ra \Da\) to the left Kan extension \(\Lan{\yo}{G}\) of \(G\) along \(\yo\). 
\end{theorem}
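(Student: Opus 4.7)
The plan is to build an explicit quasi-inverse to the restriction functor $\mathrm{res}\colon F \mapsto F \circ \yo$ using the coend formula for the left Kan extension, and then check invertibility using the enriched coYoneda lemma together with the density of the Yoneda embedding. Because $\Da$ is cocomplete it is in particular tensored over $\Va$, so for any $G\colon \Ca \to \Da$ the coend
\begin{align}
    \Lan{\yo}{G}(P) \;\coloneqq\; \int^{c\in\Ca} P(c) \otimes G(c)
\end{align}
exists in $\Da$ for every $P \in \Comp{\Ca}$. First I would check that this formula indeed defines a $\Va$-functor $\Lan{\yo}{G}\colon \Comp{\Ca} \to \Da$, and that it is cocontinuous: colimits in $\Comp{\Ca}$ are computed pointwise in $\Va$, and the coend commutes with such colimits because tensoring with any fixed $G(c)\in\Da$ is a left adjoint and coends commute with colimits taken inside them.

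Next I would verify the two triangle identities. For $\mathrm{res}\circ\Lan{\yo}{(-)} \cong \id$, evaluating the coend on a representable and applying the enriched coYoneda lemma gives
\begin{align}
    \Lan{\yo}{G}(\yo(c)) \;=\; \int^{c'\in\Ca} \Ca(c',c)\otimes G(c') \;\cong\; G(c),
\end{align}
naturally in $c$ and in $G$. For $\Lan{\yo}{(-)}\circ \mathrm{res} \cong \id$, I would use the density of the Yoneda embedding, namely the canonical isomorphism $P \cong \int^{c\in\Ca} P(c)\otimes \yo(c)$ in $\Comp{\Ca}$, which itself is a direct consequence of the enriched coYoneda lemma applied to the presheaf $P$. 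For $F\colon \Comp{\Ca}\to\Da$ cocontinuous, applying $F$ to this density presentation and pulling it inside the coend yields
\begin{align}
    F(P) \;\cong\; F\!\left(\int^{c} P(c)\otimes \yo(c)\right) \;\cong\; \int^{c} P(c)\otimes F(\yo(c)) \;=\; \Lan{\yo}{(F\circ\yo)}(P),
\end{align}
naturally in $P$ and in $F$. Combining these two families of natural isomorphisms gives the desired equivalence of categories.

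Finally, to upgrade this to an equivalence of $\Va$-categories, I would check that both sides have the same hom-objects in $\Va$: on the restriction side a natural transformation between cocontinuous functors $F,F'\colon \Comp{\Ca}\to\Da$ is determined by its values on representables because every $P$ is a colimit of representables and $F,F'$ are cocontinuous, and conversely any enriched natural transformation $F\yo \Rightarrow F'\yo$ extends uniquely through the coend formula. The main conceptual ingredients are the enriched coYoneda lemma and the pointwise computation of colimits in $\Comp{\Ca}$; the part requiring the most care is the $\Va$-naturality/coherence of the coend manipulations, in particular ensuring that $F$ being cocontinuous really implies it commutes with weighted coends, which reduces to it commuting with conical colimits and with tensoring by objects of $\Va$.
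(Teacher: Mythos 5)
Your proof is correct and is essentially the standard argument: the paper itself gives no proof of this statement but defers entirely to Kelly's Theorem 4.51, whose proof proceeds exactly as you propose — realizing $\Lan{\yo}{G}$ by the coend $\int^{c}P(c)\otimes G(c)$, using the enriched coYoneda lemma for $\mathrm{res}\circ\Lan{\yo}{(-)}\cong\id$, and using the density presentation of $P$ together with cocontinuity of $F$ (preservation of conical colimits and tensors) for the other direction. The points you flag as needing care (that the coend defines a $\Va$-functor, and that $\Va$-cocontinuity lets $F$ pass inside weighted coends) are indeed the only places requiring verification, and both go through since smallness of $\Ca$ and cocompleteness of $\Da$ guarantee the relevant colimits exist.
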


Sometimes we want to employ the above result to a \(\Va\)-functor \(F \colon \Ca \ra \Da\) where the target category is not necessarily cocomplete. In this situation we use the composition \(\Va\mhyphen\Fun(\Ca, \Da) \hookrightarrow \Va\mhyphen\Fun(\Ca, \Comp{\Da}) \simeq \Va\mhyphen\Fun^{c.c.}(\Comp{\Ca}, \Comp{\Da})\), where the first arrows comes from post-composing with the Yoneda embedding. Explicitly, this gives that a functor \(F \colon \Ca \ra \Da\) admits a unique extension \( \Comp{F} \colon \Comp{\Ca}\ra \Comp{\Da}\) defined on objects as 
    \begin{align}\label{eq:CompletionFunctorEquation}
        \Comp{F}(\Comp{X})(d) \coloneqq \int^{c\in \Ca} \Da(d, F(c)) \monprod[\Va] \Comp{X}(c),
    \end{align}
where we have simply spelled out what the left Kan extension \(\Lan{\yo}{\yo\circ F}\) is following e.g.\ \cite[Equation 4.25]{KellyVCat}.

Another consequence of Theorem \ref{thm:CompletionFunctors} is that monoidal structures can be transferred to free cocompletions. 
Let \((\Ca, \otimes)\) be a monoidal \(\Va\)-category. It follows that \(\Comp{\Ca}\) is also monoidal by the so-called \emph{Day convolution product}, denoted by \(\Comp{\otimes}\). For two objects \(F, G \in \Comp{\Ca}\) we have
\begin{align}\label{eq:DayConvProd}
    (F\CompProd G)(x) = \int^{y_1, y_2 \in \Ca} \Ca(x, y_1 \otimes y_2) \monprod[\Va] F(y_1) \monprod[\Va] G(y_2).
\end{align}

For example in Section \ref{ssec:IntEndAlg} we define a particular algebra (the internal endomorphism algebra) in a free cocompletion. 
It will be convenient to have multiple ways of encoding the algebra structure in this situation. To that end we have the following lemma. 

\begin{lemma}\label{lemma:LaxMonVsAlgStructure}
    Let \(\Ca\in \VCat\) be a monoidal \(\Va\)-category. An algebra in \(\Comp{\Ca}\) is equivalent to a lax monoidal functor \(\Fa \colon \Ca^{\op} \ra \Va\). 
\end{lemma}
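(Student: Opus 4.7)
The plan is to unpack the algebra structure on $\Fa \in \Comp{\Ca}$ via the explicit description \eqref{eq:DayConvProd} of the Day convolution, then to use the enriched (co)Yoneda lemma to identify the resulting data with a lax monoidal structure on $\Fa \colon \Ca^{\op} \to \Va$.

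First, by the universal property of the coend in \eqref{eq:DayConvProd}, a $\Va$-natural transformation $\mu \colon \Fa \CompProd \Fa \to \Fa$ is the same as a family of morphisms in $\Va$
\begin{equation}
\mu_{x, y_1, y_2} \colon \Ca(x, y_1 \otimes y_2) \monprod[\Va] \Fa(y_1) \monprod[\Va] \Fa(y_2) \to \Fa(x),
\end{equation}
$\Va$-natural in $x$ and dinatural in $y_1, y_2$. Applying enriched coYoneda in the variable $x$, such a family is in bijection with morphisms $m_{y_1, y_2} \colon \Fa(y_1) \monprod[\Va] \Fa(y_2) \to \Fa(y_1 \otimes y_2)$, $\Va$-natural in $y_1, y_2$: one direction is obtained by setting $x = y_1 \otimes y_2$ and precomposing with $1_\Va \to \Ca(y_1\otimes y_2, y_1\otimes y_2)$, and the inverse reconstructs $\mu_{x, y_1, y_2}$ using the functoriality of $\Fa$ on $\Ca^{\op}$. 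Analogously, since the unit of the Day convolution is the representable $\mathbb{1}_{\Comp{\Ca}} = \yo(1_\Ca)$, the Yoneda lemma identifies unit morphisms $\eta \colon \yo(1_\Ca) \to \Fa$ with morphisms $e \colon 1_\Va \to \Fa(1_\Ca)$ in $\Va$.

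Next, I would verify that the algebra axioms for $(\mu, \eta)$ correspond to the associativity and unit axioms for the lax monoidal structure $(m, e)$. For associativity, one unfolds both iterated Day convolutions $(\Fa \CompProd \Fa) \CompProd \Fa$ and $\Fa \CompProd (\Fa \CompProd \Fa)$ as coends over triples $(y_1, y_2, y_3)$ using Fubini, so that the associator of $\CompProd$ is induced by the associator of $\otimes$ in $\Ca$ at the level of mapping objects. The algebra associativity of $\mu$, evaluated at $x = y_1 \otimes y_2 \otimes y_3$ via the correspondence above, then becomes precisely the hexagon for $m_{y_1, y_2 \otimes y_3} \circ (\id \monprod[\Va] m_{y_2, y_3}) = \Fa(\alpha_{y_1,y_2,y_3}) \circ m_{y_1 \otimes y_2, y_3} \circ (m_{y_1, y_2} \monprod[\Va] \id)$ expected of a lax monoidal functor. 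The unit axioms are handled identically using the unitors of $\Ca$.

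Finally, both constructions are clearly functorial in morphisms of algebras / monoidal natural transformations, so the correspondence upgrades to an equivalence of categories. The main obstacle is the bookkeeping in the last verification: one must track carefully how the associator and unitors of the Day convolution — which are defined by coend manipulations — translate, under the Yoneda identification, into the associator and unitors of $\Ca$ acting on $\Fa$; once this is done, the matching of axioms is automatic. The result is a classical fact about Day convolution going back to \cite{KellyVCat}, and the argument above is the enriched version of that standard proof.
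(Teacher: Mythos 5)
Your proposal is correct and follows essentially the same route as the paper: both directions are obtained by the standard Day-convolution/Yoneda dictionary, passing from the multiplication $\mu$ to $m_{y_1,y_2}$ by evaluating at $x=y_1\otimes y_2$ against the identity of $\Ca(y_1\otimes y_2, y_1\otimes y_2)$, and reconstructing the algebra structure from the lax structure by applying it under the coend and invoking the enriched co-Yoneda lemma. Your write-up is in fact somewhat more complete than the paper's, which only records the two constructions and leaves the axiom matching implicit.
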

\begin{proof}
    Let \(F \in \Comp{\Ca}\) be an algebra. The corresponding lax monoidal structure is defined by
    \begin{align} \label{eq:AlgToLaxFunctor}
         F(y_1) \monprod[\Va] F(y_2) \xrightarrow{1_\Va\otimes \id} \Ca(y_1\otimes y_2, y_1\otimes y_2) \monprod[\Va] F(y_1) \monprod[\Va] F(y_2) \xrightarrow{\mu_{y_1\otimes y_2}^{y_1, y_2}} F(y_1\otimes y_2), 
    \end{align}
    where \(\mu_{c}^{y_1, y_2}\) is the multiplication of \(F\) given component-wise. 
    Conversely, let \(G\in \Comp{\Ca}\) be a lax monoidal functor. The corresponding algebra structure is defined by 
    \begin{align}\label{eq:LaxToAlg}
        G\CompProd G (c) = \int^{y_1, y_2\in \Ca} \Ca(c, y_1\otimes y_2) \monprod[\Va] G(y_1) \monprod[\Va] G(y_2) &
        \\ \xrightarrow{\psi_{y_1, y_2}} &\int^{y_1, y_2\in \Ca} \Ca(c, y_1\otimes y_2) \monprod[\Va] G(y_1 \otimes y_2) \simeq G(c),
    \end{align}
    where \(\psi\) is the lax monoidal structure, and in the last step we use the enriched co-Yoneda lemma. 
\end{proof}

As alluded to in the beginning of this subsection, it turns out that free cocompletions not only contain all (small) \(\Va\)-colimits, but they are even locally presentable.
\begin{proposition} \cite[Proposition A.2.5]{CorinaThesis} 
	The free cocompletion \(\Comp{\Ca}\) is locally presentable as a \(\Va\)-enriched category.
\end{proposition}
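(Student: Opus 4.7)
The plan is to verify directly the two defining properties of a locally \(\alpha_0\)-presentable \(\Va\)-category applied to \(\Comp{\Ca}= [\Ca^{\op}, \Va]\): namely, \(\Va\)-cocompleteness and the existence of a strong \(\Va\)-generator consisting of \(\alpha_0\)-compact objects. Cocompleteness is immediate: \(\Va\) is cocomplete by hypothesis on the base, and weighted colimits in enriched functor categories are computed pointwise, so they exist in \(\Comp{\Ca}\).

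For the generator I would take the essentially small family
\[
    \mathcal{G} \coloneqq \{\, G \monprod[\Va] \yo(c) \,:\, G\in \Va_{\alpha_0},\ c\in \Obj(\Ca)\,\},
\]
where \(\Va_{\alpha_0}\) denotes a small set of representatives of \(\alpha_0\)-compact objects of \(\Va\) and \(\monprod[\Va]\) is the canonical tensoring of \(\Comp{\Ca}\) over \(\Va\). This is a set because \(\Ca\) is small and because the \(\alpha_0\)-compact objects of \(\Va\) form an essentially small subcategory, since \(\Va\) is locally \(\alpha_0\)-presentable. By the enriched Yoneda lemma together with the tensor--hom adjunction one obtains
\[
    \Comp{\Ca}\bigl(G \monprod[\Va] \yo(c),\, F\bigr) \;\cong\; [G,\, F(c)]_{\Va}.
\]
Since colimits in \(\Comp{\Ca}\) are pointwise and \(G\) is \(\alpha_0\)-compact in \(\Va\) by assumption, the right-hand side preserves \(\alpha_0\)-filtered \(\Va\)-colimits in \(F\); hence each object of \(\mathcal{G}\) is \(\alpha_0\)-compact in \(\Comp{\Ca}\). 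Strongness is then reduced, via the same pointwise formula, to the analogous property of \(\Va_{\alpha_0} \subset \Va\), which is built into the definition of a locally \(\alpha_0\)-presentable base: a morphism \(\phi \colon F\to F'\) is invertible in \(\Comp{\Ca}\) iff each component \(\phi_c\) is invertible in \(\Va\), and the latter is jointly detected by the \(\alpha_0\)-compact generators of \(\Va\).

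The main obstacle I anticipate is purely formal bookkeeping: carefully matching the enriched definitions of ``\(\alpha_0\)-compact'' and ``strong \(\Va\)-generator'' from \cite{KellyLFP, BQR} to the concrete functor-category description above, and checking that the tensoring \(G \monprod[\Va] \yo(c)\) interacts as expected with the Yoneda embedding under these identifications. The essential mathematical content is just the enriched Yoneda lemma together with the pointwise computation of colimits in presheaf categories, both of which are standard in enriched category theory, so no deeper input is needed beyond the hypotheses already imposed on the base \(\Va\).
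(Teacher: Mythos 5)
Your proof is correct and is the standard argument: the paper itself does not prove this proposition but defers to \cite[Proposition A.2.5]{CorinaThesis}, and the cited proof proceeds exactly as you do — pointwise cocompleteness plus the strong generator \(\{G \monprod[\Va] \yo(c)\}\) of \(\alpha_0\)-compact objects, using the enriched Yoneda lemma and the identification \(\Comp{\Ca}(G \monprod[\Va] \yo(c), F) \cong [G, F(c)]_{\Va}\). The only remark worth making is that under Kelly's formulation of ``strong generator'' (conservativity of the \(\Va\)-valued restricted Yoneda functor) the representables \(\yo(c)\) alone already suffice, so your larger family is safe under either convention but not strictly necessary.
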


Moreover, taking the free cocompletion is a symmetric monoidal functor 
\begin{equation*}
    \widehat{(-)} \colon \VCat \longrightarrow \VPres
\end{equation*}
hence it sends (braided) monoidal categories to (braided) monoidal categories. 

\subsection{Relative tensor products} \label{sect:rel-tensor-products}
In this section we discuss the relative tensor product of $\Va$-enriched module categories $\Ma$ and $\Na$ over a monoidal $\Va$-category $\Aa$ (the notion of an enriched module category is given in Definition \ref{def:modulecat} below). Throughout this paper, we will use three definitions for the relative tensor product of enriched categories:
\begin{itemize}
\item Tambara relative tensor product: $\Ma \Tambaratens{\Aa} \Na$. \\
This is an enriched version of the relative tensor product defined in \cite{Tambara}, which is a categorical analogue of the definition of the relative tensor product of modules. It was already used in \cite{Cooke19} to prove excision for skein categories in the $\C$-linear setting.
\item Colimit of the truncated bar construction: $\Ma \relcatprod{\Aa} \Na$. \\
This is the definition used to formulate excision for factorization homology \cite{AFfh} (see also Lemma \ref{lma:excisionFH} of the background part on factorization homology).
\item Coend relative tensor product: $\Ma \coendtens{\Aa} \Na$. \\
This is a new model for the relative tensor product. In this paper we will mainly use it to compute the internal endomorphism algebras associated to the pointing of enriched skein categories. 
\end{itemize}
In Theorem \ref{thm:eqTambaraBar} and Proposition \ref{prp:CoendTensRelTens} of this section we show that these definitions are indeed equivalent. 

\begin{definition}\label{def:modulecat}
Let $\Aa$ be a monoidal $\Va$-category. A \emph{right $\Va$-enriched $\Aa$-module category} is a $\Va$-enriched category $\Ma$ equipped with a $\Va$-functor 
$$
\triangleleft \colon \Ma \catprod \Aa \to \Ma, \quad (m,a) \mapsto m \triangleleft a
$$ 
called the \emph{action}, endowed with natural \emph{associativity} and \emph{unit} \(\Va\)-isomorphisms
\begin{center}
\begin{tikzcd}[sep = large]
\Ma \catprod \Aa \catprod \Aa \arrow[r, bend left, "{\triangleleft \circ (\id_\Ma \catprod \monprod[\Aa])}"{name=A}]
   \arrow[r, swap, bend right, "\triangleleft \circ (\triangleleft \catprod \id_\Aa)"{name=B}]
 & \Ma \arrow[shorten <=5pt,shorten >=5pt, Rightarrow, from=A, to=B, "\beta"] & \Ma \arrow[r, bend left, "\triangleleft 1_\Aa "{name=C}] \arrow[r, swap, bend right, "\id_\Ma"{name=D}] & \Ma \arrow[shorten <=5pt,shorten >=5pt, Rightarrow, from=C, to=D, "\eta"]
\end{tikzcd}
\end{center}
satisfying the evident pentagon and triangle identities. The notion of a $\Va$-enriched left $\Aa$-module category is defined analogously. 
\end{definition}

\subsubsection{Enriched Tambara relative tensor product}\label{TambaraTensorSection}
We construct the enriched Tambara relative tensor product in terms of generators and relations. We refer the reader to Appendix \ref{appendix:VCatVGrph} and \ref{appendix:VCatGenRel} for details on how to present a $\Va$-category in terms of generators and relations. In a first step, we define the underlying $\Va$-graph $\Omega$ of the relative tensor product together with the free $\Va$-category $\Free(\Omega)$ generated by $\Omega$. In a second step we impose relations between morphisms in $\Free(\Omega)$ such that the resulting object, called the enriched Tambara relative tensor product $\Ma \Tambaratens{\Aa} \Na \coloneqq \Free(\Omega)/\sim$, has the following universal property. For any $\Va$-category $\Ca$ there is an equivalence (of ordinary categories)
\begin{equation} \label{eq:Univ-prop-tambara-rel-prod}
\Hom_{\VCat}(\Ma \Tambaratens{\Aa} \Na, \Ca) \simeq \BalFunA(\Ma \catprod \Na,\Ca)
\end{equation}
where $\BalFunA(\Ma \catprod \Na, \Ca)$ denotes the category of $\Aa$-balanced $\Va$-functors $\Ma \catprod \Na \to \Ca$ (see Definition \ref{df:BalancedFunctor} of Appendix \ref{app:reltens}).  \par 

Define the vertices of the $\Va$-graph $\Omega$ to be $\Obj(\Omega) \coloneqq \Obj(\Ma \catprod \Na)$, and the $\Va$-object of edges is 
$$
\Omega((m,n),(m',n')) \coloneqq \Ma \catprod \Na((m,n),(m',n')) \coprod \big( \coprod_{\substack{\{a \in \Aa~|~m = m' \triangleleft a, \\ a \triangleright n = n' \}}} 1_\Va \big) \coprod \big( \coprod_{\substack{\{a \in \Aa~|~m \triangleleft a = m', \\ n = a \triangleright n'\}}} 1_\Va \big) \ .
$$
We introduce the following notation 
\begin{align} \label{eq:iota}
\iota_{m' \triangleleft a,n}^{m', a \triangleright n} \colon 1_\Va \to \coprod_{\substack{\{a \in \Aa~|~m = m' \triangleleft a, \\ a \triangleright n = n' \}}} 1_\Va \to \Omega((m,n),(m',n'))
\end{align}
and 
\begin{align} \label{eq:iotaInverse}
(\iota_{m \triangleleft a,n'}^{m, a \triangleright n'})^{-1} \colon 1_\Va \to \coprod_{\substack{\{a \in \Aa~|~m \triangleleft a = m', \\ n = a \triangleright n'\}}} 1_\Va \to \Omega((m,n),(m',n'))
\end{align}
for the canonical maps into the $\Va$-object of edges for each $a \in \Aa$.

\begin{definition}\label{df:TambaraTensProd}
Let $\Aa$ be a $\Va$-enriched monoidal category, let $\Ma$ be an enriched right $\Aa$-module category, and let $\Na$ be an enriched left $\Aa$-module category. The \emph{$\Va$-enriched Tambara relative tensor product} is defined by
\begin{equation}
\Ma \Tambaratens{\Aa} \Na \coloneqq \Free(\Omega) / \sim \ \ ,
\end{equation}
where we impose the following relations between morphisms in $\Free(\Omega)$:
\begin{itemize}
\item Isomorphism: 
$$
(\iota^{m,a \triangleright n}_{m \triangleleft a, n})^{-1} \circ \iota_{m \triangleleft a, n}^{m,a \triangleright n} \sim \id_{(m \triangleleft a,n)}, \qquad \iota_{m \triangleleft a, n}^{m ,a \triangleright n} \circ (\iota_{m \triangleleft a, n}^{m ,a \triangleright n})^{-1} \sim \id_{(m, a \triangleright n)}
$$
\item Naturality of $\iota$
\item Compatibility of $\iota$ with associators and unitors
\end{itemize}
We have spelled out these relations in Equations \eqref{reln:iso}, \eqref{reln:naturality}, \eqref{reln:associatior} and \eqref{reln:unitor} of Appendix \ref{app:reltens}.
\end{definition} 

\subsubsection{Bar construction} \label{sect:bar-construction}
Here we explain what the 2-sided bar construction amounts to in our setting. That is, since \(\VCat\) is a 2-category we can truncate the general bar construction. 

Let $\Da$ be the 2-category whose objects and 1-morphisms are as below
\begin{center}
\begin{tikzcd}
A \arrow[r, yshift=3ex,"G_0"] \arrow[r, "G_1"] \arrow[r,yshift=-3ex,"G_2"] & B
\arrow[r,yshift=1.5ex,"F_0"] \arrow[r,yshift=-1.5ex,"F_1"] & C \ ,
\end{tikzcd}
\end{center}
and with 2-morphisms
$$
\kappa_1 \colon F_1 \circ G_0 \Rightarrow F_0 \circ G_2, \quad \kappa_2 \colon F_0 \circ G_0 \Rightarrow F_0 \circ G_1, \quad \kappa_3 \colon F_1 \circ G_2 \Rightarrow F_1 \circ G_1 \ \ .
$$
A strict 2-functor from $\Da$ to $\VCat$ is called a \emph{diagram of shape $\Da$ in $\VCat$}. The relative tensor product used in \cite{AFfh} for proving excision for factorization homology (with values in \(\VCat\)) is the colimit of the truncated bar construction, which is the following diagram of shape $\Da$. 

\begin{definition}\label{df:BarTensorProduct}
Let $(\Aa,\otimes)$ be a monoidal $\Va$-category and $\Ma, \Na$ right, respectively left, $\Aa$-module categories. The \emph{truncated bar construction} is the diagram
\begin{center}
\begin{tikzcd}[column sep = huge]
\Ma \catprod \Aa \catprod \Aa  \catprod \Na \arrow[r, yshift=3ex,"\triangleleft \catprod \id_{\Aa \catprod \Na}"] \arrow[r, "\id_\Ma \catprod \otimes \catprod \id_\Na"] \arrow[r,yshift=-3ex,"\id_{\Ma\catprod \Aa} \catprod \triangleright"] & \Ma \catprod \Aa \catprod \Na
\arrow[r,yshift=1.5ex,"\triangleleft \catprod \id_\Na"] \arrow[r,yshift=-1.5ex,"\id_\Ma \catprod \triangleright"] & \Ma \catprod \Na
\end{tikzcd}
\end{center}
in $\VCat$ together with the $\Va$-natural isomorphisms
\begin{align*}
& \id \colon (\id_\Ma \catprod \triangleright) \circ (\triangleleft \catprod \id_{\Aa \catprod \Na})\Rightarrow (\triangleleft \catprod \id_\Na)\circ (\id_{\Ma\catprod \Aa} \catprod \triangleright) \\
& \beta_\Ma \colon  (\triangleleft \catprod \id_\Na) \circ (\triangleleft \catprod \id_{\Aa \catprod \Na}) \Rightarrow (\triangleleft \catprod \id_\Na) \circ (\id_\Ma \catprod \otimes \catprod \id_\Na)   \\
& \beta_\Na \colon (\id_\Ma \catprod \triangleright) \circ (\id_{\Ma\catprod \Aa} \catprod \triangleright) \Rightarrow (\id_\Ma \catprod \triangleright) \circ (\id_\Ma \catprod \otimes \catprod \id_\Na)
\end{align*}
where $\beta_\Ma$ and $\beta_\Na$ are the associators for the right and left $\Aa$-action.
\end{definition}

\begin{definition}
We define the \emph{relative tensor product arising from the (truncated) bar construction} to be given by the bicolimit over the above diagram, i.e.
\begin{center}
\begin{tikzcd}[sep = small]
\Ma \relcatprod{\Aa} \Na \coloneqq \colim \Big(\Ma \catprod \Aa \catprod \Aa \catprod \Na \arrow[r,yshift=1ex] \arrow[r] \arrow[r,yshift=-1ex] & \Ma \catprod \Aa \catprod \Na \arrow[r,yshift=0.5ex] \arrow[r,yshift=-0.5ex] & \Ma \catprod \Na \Big)
\end{tikzcd}
\end{center}
\end{definition}

When we later prove that the enriched skein category $\mathbf{Sk}_\Ca(-)$ computes factorization homology we need that the relative tensor product coming from the bar construction agrees with the enriched Tambara relative tensor product defined in Section \ref{TambaraTensorSection}. This has been proven in \cite[Theorem 2.27]{Cooke19} for linear categories and is here extended to the $\Va$-enriched setting:

\begin{theorem}\label{thm:eqTambaraBar}
The $\Va$-enriched Tambara relative tensor product $\Ma \Tambaratens{\Aa} \Na$ is equivalent to the relative tensor product arising from the truncated bar construction. Explicitly, we have an equivalence of \(\Va\)-categories
\begin{align*}
    \Ma \Tambaratens{\Aa} \Na \simeq \Ma \relcatprod{\Aa} \Na \ \ .
\end{align*}
\end{theorem}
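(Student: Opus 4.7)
The plan is to show that both constructions corepresent the same (pseudo)functor $\BalFunA(\Ma \catprod \Na, -) \colon \VCat \to \Cat$, and then invoke the bicategorical Yoneda lemma. For the Tambara side this is precisely the universal property \eqref{eq:Univ-prop-tambara-rel-prod}, which holds by construction. The work is therefore entirely on the bar-construction side.

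To compute $\Hom_{\VCat}(\Ma \otimes_\Aa \Na, \Ca)$, I would use that $\Ma \otimes_\Aa \Na$ is the bicolimit of a diagram of shape $\Da$. A 1-morphism out of such a bicolimit corresponds to a pseudo-cone on $\Ca$, which unpacks to the following data:
\begin{itemize}
    \item a $\Va$-functor $F \colon \Ma \catprod \Na \to \Ca$ (the component at the vertex $\Ma \catprod \Na$);
    \item three $\Va$-functors $\Ma \catprod \Aa \catprod \Na \to \Ca$ and four $\Va$-functors $\Ma \catprod \Aa \catprod \Aa \catprod \Na \to \Ca$, each of which is forced by the 1-cells of $\Da$ to agree with the respective composition of $F$ with action maps;
    \item $\Va$-natural isomorphisms filling the two face maps out of $\Ma \catprod \Aa \catprod \Na$, i.e.\ a $\Va$-natural isomorphism $\xi \colon F(- \triangleleft -, -) \Rightarrow F(-, - \triangleright -)$;
    \item higher coherence data coming from the three 2-cells of $\Da$ together with the associator 2-cells $\beta_\Ma, \beta_\Na$ of Definition~\ref{df:BarTensorProduct}.
\end{itemize}
The component $\xi$ is exactly the balancing datum, and naturality of $\xi$ corresponds to the naturality relation \eqref{reln:naturality} in Definition~\ref{df:TambaraTensProd} (which in the presentation by generators and relations is imposed explicitly).

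The main step, and the main obstacle, is to show that the higher coherence data extracted from the 2-cells of $\Da$ together with $\beta_\Ma$ and $\beta_\Na$ translate precisely into the associator and unitor compatibility relations \eqref{reln:associatior} and \eqref{reln:unitor}. Concretely, the edge $\id_\Ma \catprod \otimes \catprod \id_\Na$ of the bar diagram, paired with the 2-cells $\kappa_2$ and $\kappa_3$ and the associators $\beta_\Ma, \beta_\Na$, forces the square
\[
\begin{tikzcd}
F((m \triangleleft a) \triangleleft b, n) \ar[r, "\xi"] \ar[d, "\beta_\Ma"'] & F(m \triangleleft a, b \triangleright n) \ar[r, "\xi"] & F(m, a \triangleright (b \triangleright n)) \ar[d, "\beta_\Na"] \\
F(m \triangleleft (a \otimes b), n) \ar[rr, "\xi"] & & F(m, (a \otimes b) \triangleright n)
\end{tikzcd}
\]
to commute, matching exactly the map $\iotaman$ compatibility with associators. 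Similarly, the (implicit) degeneracy/unit coherence produces the unitor compatibility. This is the direct enriched analogue of \cite[Theorem~2.27]{Cooke19}; the passage from the $\C$-linear to the $\Va$-enriched setting is formal because the whole argument is phrased in terms of universal properties in the 2-category $\VCat$ and uses no feature of $\C$-linear categories beyond existence of bicolimits, which is guaranteed by Proposition~\ref{prop:VCat-cocomplete-and-tensor-prod-commutes-with-colimits}.

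Once the identification of data is established, both sides corepresent $\BalFunA(\Ma \catprod \Na, -)$ pseudonaturally in $\Ca$, so the bicategorical Yoneda lemma yields the desired equivalence $\Ma \relcatprod{\Aa} \Na \simeq \Ma \otimes_\Aa \Na$ in $\VCat$. The bookkeeping for the coherence match is the only nontrivial ingredient; the rest is a mechanical unwinding of the respective universal properties.
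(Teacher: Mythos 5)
Your proposal is correct and follows essentially the same route as the paper's proof in Appendix~\ref{app:eqTambaraBar}: both sides are shown to corepresent $\BalFunA(\Ma\catprod\Na,-)$, with the bar-construction side handled by unpacking pseudo-cones (via the explicit description from \cite[Proposition~2.25]{Cooke19}) and matching the coherence 2-cells $\kappa_i$, $\beta_\Ma$, $\beta_\Na$ against the Tambara relations, exactly as in the enriched analogue of \cite[Theorem~2.27]{Cooke19}. The paper simply carries out the "bookkeeping" you defer, by writing down the explicit equivalence $\Ia_\Ea$ between cocones and balanced functors and checking well-definedness, essential surjectivity, and full faithfulness.
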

For a proof of Theorem \ref{thm:eqTambaraBar} we refer the reader to Appendix \ref{app:eqTambaraBar}.

\subsubsection{The coend relative tensor product}\label{Sec:coend_tensor_product}
We now introduce a new model for the relative tensor product which is defined using enriched coends. 

\begin{definition}
Let \(\Aa\) be a monoidal \(\Va\)-category and \(\Ma\), \(\Na\) right, respectively left \(\Aa\)-module \(\Va\)-categories.
We define the \emph{coend relative tensor product}, denoted \(\Ma\coendtens{\Aa} \Na\), to be the \(\Va\)-category whose
objects are given by pairs $(m , n)$, for $m\in \Ma$ and $n\in \Na$, and denoted by $m\otimes_{\Aa}n$.
The $\Va$-object of morphisms from $m\otimes_{\Aa}n$ to $m'\otimes_{\Aa}n'$ is defined by the following enriched coend: 
\begin{equation} \label{eq:CoendRelProd}
\mathcal{M}\coendtens{\Aa} \Na \left( m\otimes_{\Aa}n , m'\otimes_{\Aa}n' \right) \coloneqq \int^{a\in \Aa} \Ma(m,m'\triangleleft a ) \monprod[\Va] \Na(a\triangleright n,n') \in \Va \ \ . 
\end{equation}

Composition is defined by 
\begin{align}\label{eq:CompositionCoendRelProd}
\begin{tikzcd}
	{\left( \displaystyle\int^{a'\in \Aa} \Ma(m',m''\triangleleft a' ) \monprod[\Va] \Na(a'\triangleright n',n'') \right) \monprod[\Va] \left( \displaystyle\int^{a\in \Aa}  \Ma(m,m'\triangleleft a ) \monprod[\Va] \Na(a\triangleright n,n') \right) } \\
	{\displaystyle\int^{(a, a') \in \Aa\catprod \Aa }  \Ma(m',m''\triangleleft a' ) \monprod[\Va] \Ma(m,m'\triangleleft a ) \monprod[\Va] \Na(a'\triangleright n',n'') \monprod[\Va] \Na(a\triangleright n,n')} \\
	{\displaystyle\int^{(a, a') \in \Aa\catprod \Aa }  \Ma(m' \triangleleft a,m''\triangleleft a' \triangleleft a ) \monprod[\Va] \Ma(m,m'\triangleleft a ) \monprod[\Va]   \Na(a'\triangleright n',n'') \monprod[\Va] \Na(a'\triangleright a\triangleright n, a'\triangleright n')} \\
	{\displaystyle\int^{(a, a')\in \Aa \catprod \Aa}  \Ma(m,m''\triangleleft a'\triangleleft a ) \monprod[\Va] \Na(a'\triangleright a\triangleright n,n'') } \\
	{\displaystyle\int^{(a, a')\in \Aa \catprod \Aa }  \Ma(m,m''\triangleleft (a'\otimes a) )\monprod[\Va] \Na((a'\otimes a)\triangleright n,n'') } \\
	{\displaystyle\int^{ c \in \Aa  }  \Ma(m,m''\triangleleft c ) \monprod[\Va] \Na(c\triangleright n,n''),}
	\arrow["\cong"', from=1-1, to=2-1]
	\arrow["{(-) \triangleleft\, a\, \monprod[\Va] \id \monprod[\Va] \id  \monprod[\Va] a' \triangleright (-) }"', from=2-1, to=3-1]
	\arrow["{\circ_{\Ma} \monprod[\Va] \circ_{\Na}} "', from=3-1, to=4-1]
	\arrow["\cong"', from=4-1, to=5-1]
	\arrow[from=5-1, to=6-1]
\end{tikzcd}
\end{align}
where the last map sends $a'\otimes a$ to $c$. The unit for this composition is induced by the unit for the composition in $\Ma$ and $\Na$, respectively. 
\end{definition}

Observe that there is a canonical \(\Va\)-functor $F\colon \mathcal{M} \catprod \mathcal{N} \to \mathcal{M}\coendtens{\Aa} \Na$ which sends an object $(m, n)$ to $m\otimes_{\Aa} n$. On Hom-objects the functor \(F\) is given by the component of the $\Va$-dinatural transformation 
\begin{align}
\Ma(m,m')\monprod[\Va]\Na(n,n')\cong \Ma(m,m'\triangleleft 1_{\Aa})\monprod[\Va] \Na( 1_{\Aa} \triangleright  n,n') \longrightarrow \int^{a\in \Aa} \Ma(m,m'\triangleleft a ) \monprod[\Va] \Na(a\triangleright n,n')
\end{align} 
for $a=1_{\Aa}$. 
Furthermore, $F$ is canonically balanced (as in Definition \ref{df:BalancedFunctor}). To see this we need to construct a $\Va$-natural transformation $\beta_{m,a,n}\colon F(m\triangleleft a, n)\to F(m,a\triangleright n)$, which is given by 
\begin{align}
1_{\Va} \xrightarrow{\id\otimes \id} \Ma(m\triangleleft a,m\triangleleft a ) \monprod[\Va] \Na(a\triangleright n,a\triangleright n)\longrightarrow  \int^{b\in \Aa} \Ma(m\triangleleft a,m\triangleleft b ) \monprod[\Va] \Na(b\triangleright n,a\triangleright n)
\end{align}
where the unlabeled arrow is the map into the coend for $b=a$. In general $\beta$ is not a natural isomorphism. However, similarly  to~\cite[Lemma~2.12]{AraujoGuuHudson2025}, it is easy to verify that $\beta$ is a natural isomorphism if $\mathcal{A}$ is an enriched ribbon category, as defined in Section~\ref{Sec:enriched ribbon cats}. If $\beta$ is an isomorphism, the condition making it a balancing follows from a straightforward computation.   

\begin{proposition}\label{prp:CoendTensRelTens}
If the natural transformation $\beta$ is a natural isomorphism, then the balanced $\cat{V}$-functor $F\colon \Ma\catprod \Na \to \mathcal{M}\coendtens{\Aa} \Na$ is universal. In particular, $\mathcal{M}\coendtens{\Aa} \Na$ is a model for the relative tensor product. 
\end{proposition}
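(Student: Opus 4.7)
The plan is to verify directly the universal property of the relative tensor product: for every $\Va$-category $\Ca$, precomposition with $F$ should give an equivalence of (ordinary) categories
\begin{equation*}
F^*\colon \Va\mhyphen\Fun(\Ma \coendtens{\Aa} \Na, \Ca) \xrightarrow{\simeq} \BalFun{\Aa}(\Ma \catprod \Na, \Ca).
\end{equation*}
Combined with Equation \eqref{eq:Univ-prop-tambara-rel-prod} and Theorem \ref{thm:eqTambaraBar}, this yields a canonical equivalence $\Ma \coendtens{\Aa} \Na \simeq \Ma \relcatprod{\Aa} \Na \simeq \Ma \otimes_{\Aa} \Na$, which establishes both assertions of the proposition.

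To construct an inverse to $F^*$, let $G\colon \Ma \catprod \Na \to \Ca$ be a balanced $\Va$-functor with balancing $\beta_{m,a,n}\colon G(m \triangleleft a, n) \to G(m, a \triangleright n)$. I would define $\widetilde G\colon \Ma \coendtens{\Aa} \Na \to \Ca$ on objects by $\widetilde G(m \otimes_{\Aa} n) \coloneqq G(m,n)$. On morphism objects, by the universal property of the coend in \eqref{eq:CoendRelProd}, it suffices to specify a dinatural family
\begin{equation*}
\Ma(m, m' \triangleleft a) \monprod[\Va] \Na(a \triangleright n, n') \longrightarrow \Ca\bigl(G(m,n), G(m', n')\bigr),
\end{equation*}
which I take to be the composition of $G \monprod[\Va] G$ with postcomposition by $\beta_{m',a,n}$ inside $\Ca$. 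Dinaturality in $a \in \Aa$ reduces to the $\Va$-naturality of $\beta$ together with the functoriality of $G$ in both variables.

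Next I would check that $\widetilde G$ respects composition and units as specified in \eqref{eq:CompositionCoendRelProd}. The unit case is immediate from the functoriality of $G$ and the unit coherence of $\beta$. For composition, the key step is to apply the Fubini theorem for enriched coends so that $\widetilde G \monprod[\Va] \widetilde G$ can be evaluated on the combined coend $\int^{(a, a') \in \Aa \catprod \Aa}(-)$, and then to match each arrow of the pentagon \eqref{eq:CompositionCoendRelProd}. The compatibility of $\beta$ with the associators of the right and left $\Aa$-actions (which is part of the definition of a balancing) is precisely what identifies the reindexing step $(a, a') \mapsto a' \otimes a$ with composition in $\Ca$. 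On $\Va$-natural transformations the extension $\alpha \mapsto \widetilde \alpha$ is forced by the universal property of the coend, yielding an inverse to $F^*$ on 2-morphisms as well.

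Finally, I would verify the two triangle identities. The isomorphism $F^* \widetilde G \cong G$ as balanced functors comes from unwinding the defining dinatural family at $a = 1_{\Aa}$ via the enriched co-Yoneda lemma, which absorbs the balancing into an identity. Conversely, for $H\colon \Ma \coendtens{\Aa} \Na \to \Ca$, the functor $\widetilde{F^* H}$ agrees with $H$ on objects and on the universal dinatural family, hence on the coend morphism objects by uniqueness. The main obstacle I expect is the bookkeeping in the composition check against \eqref{eq:CompositionCoendRelProd}, but the required coherences are entirely dictated by dinaturality of coends, functoriality and naturality of balancings, and associativity of the module actions.
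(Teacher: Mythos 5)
Your proposal is correct and follows essentially the same route as the paper: both verify the universal property against balanced functors by constructing an explicit inverse to $F^*$ that sends $(G,\beta)$ to the functor induced on the coend by the dinatural family built from $G\monprod[\Va] G$ and postcomposition with the balancing. The remaining verifications (functoriality, compatibility with composition via the associativity of the balancing, and the identities exhibiting the equivalence) are exactly the bookkeeping the paper also defers to a direct check.
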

\begin{proof}
Recall the universal property of the Tambara relative tensor product from Equation \eqref{eq:Univ-prop-tambara-rel-prod}. If we can show the same universal property for the coend tensor product we are done: i.e.\ we need to show that for any $\Va$-category $\Ca$ the functor $F^*\colon \Va\text{-}\Cat (\mathcal{M}\coendtens{\Aa} \Na,\Ca) \longrightarrow \BalFunA(\Ma \catprod \Na, \Ca) $ is an equivalence (of ordinary categories). We do this by constructing an explicit inverse $G$. Let $(H,\beta)$ be an \(\Aa\)-balanced functor 
$\Ma\catprod \Na \longrightarrow \Ca$. We define $G(H,\beta)\colon \Ma\coendtens{\Aa} \Na \to \Ca $ on objects by $G(H,\beta)[m\otimes_{\Aa} n] \coloneqq H(m,n)$. On Hom-objects it is induced by the maps 
\begin{equation}
\begin{tikzcd}
 \Ma(m, m'\triangleleft a ) \monprod[\Va] \Na(a\triangleright n,n') \ar[d,"{ \Ma(m, m'\triangleleft a ) \monprod[\Va] \id_n \monprod[\Va] \id_{m'} \monprod[\Va] \Na(a\triangleright n,n') }"] \\  \big( \Ma(m, m'\triangleleft a ) \monprod[\Va] \Na(n, n) \big)  \monprod[\Va] \big( \Ma(m',m') \monprod[\Va] \Na(a\triangleright n,n') \big) \ar[d,"{H\otimes^{\Va} H}"] \\ 
 \Ca(H(m, n),H(m'\triangleleft a, n )) \monprod[\Va] \Ca(H(m', a\triangleright n),H(m', n' )) \ar[d,"{{\beta_{m',a,n}}_*}"] \\ 
 \Ca(H(m, n),H(m', a\triangleright n )) \monprod[\Va] \Ca(H(m', a\triangleright n),H(m', n' )) \xrightarrow{\circ_\Ca} \Ca(H(m, n),H(m', n' ))  
\end{tikzcd} \ \ .
\end{equation}
It is tedious but straightforward to check that this defines a $\Va$-functor; we did indeed check this! 
The value of $G$ on \(\Aa\)-balanced natural transformations $\varphi\colon H\to H'$ is the natural transformation with the same components as $\varphi$. The fact that $\varphi$ is balanced implies that this is also a natural transformation $G(H)\to G(H')$. 
This finishes the definition of the functor $G$. A direct computation shows that $F^* \circ G= \id$, hence $F^*$ is essentially surjective. Since, by construction, $F^*$ is also injective on morphisms, it is fully faithful.
\end{proof}

\subsection{Example of enriching category: \texorpdfstring{$\Va = \widehat{\Ch\lmod}$}{V = complete C[[h]] modules}} \label{sect:example-of-enriching-cat-complete-modules}
We now give our main example of enriching category, namely the category of complete \(\Ch\)-modules. 
Let $M$ be a left module over $\Ch$. Consider the submodules $(\hbar^n M)_{n \in \mathbb{N}}$ and denote $M_n \coloneqq M / \hbar^n M$. There are canonical projections 
$$
p_n \colon M_n \to M_{n-1} \ ,
$$
and $(M_n, p_n)_{n \in \mathbb{N}}$ is an inverse system of $\Ch$-modules. Hence, we can consider the inverse limit
$$
\widehat{M} \coloneqq \varprojlim_n M_n = \Big\{ (x_n) \in \prod_n M_n~|~p_n(x_n) = x_{n-1} \Big\} \ .
$$
The left $\Ch$-module $\widehat{M}$ is called the \emph{$\hbar$-adic completion of $M$}. 

\begin{definition}
The $\Ch$-module $M$ is called \emph{complete} if the canonical map $M \to \widehat{M}$ is an isomorphism.
We denote the category of complete \(\Ch\)-modules by $\widehat{\Ch\lmod}$.
\end{definition}

\begin{proposition}\label{prp:cmplModreflectivesubcat}
$\widehat{\Ch\lmod} \hookrightarrow \Ch\lmod$ is a reflective subcategory, where the left adjoint to the inclusion functor is the completion functor $\widehat{(-)}$. In particular, $\widehat{\Ch\lmod}$ is complete and cocomplete. Limits are calculated in $\Ch\lmod$, and colimits are calculated by completing the colimits in $\Ch\lmod$.
\end{proposition}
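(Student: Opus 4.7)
The plan is to exhibit the completion functor as left adjoint to the inclusion, and then derive the remaining statements from general facts about reflective subcategories.

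For the adjunction, I would first verify the key functoriality point: any $\Ch$-linear map $f \colon M \to N$ satisfies $f(\hbar^n M) \subseteq \hbar^n N$, hence descends to maps $f_n \colon M_n \to N_n$ compatible with the tower. Taking inverse limits yields $\widehat{f} \colon \widehat{M} \to \widehat{N}$, and one checks that this is functorial and that the canonical maps $\eta_M \colon M \to \widehat{M}$ assemble into a natural transformation. To prove $\widehat{(-)} \dashv \iota$, I would take $M \in \Ch\lmod$ and $N \in \widehat{\Ch\lmod}$ and show that precomposition with $\eta_M$ gives a bijection
\[
\Hom_{\widehat{\Ch\lmod}}(\widehat{M}, N) \xrightarrow{\cong} \Hom_{\Ch\lmod}(M, \iota N).
\]
Given $f \colon M \to N$, the map $\widehat{f} \colon \widehat{M} \to \widehat{N} \cong N$ (where the last isomorphism uses completeness of $N$) is the required extension. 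Uniqueness follows because any two extensions agree modulo $\hbar^n$ for all $n$, and in a complete module an element is determined by its images in the quotients $N_n$.

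Once the adjunction is established, reflectivity is automatic: the inclusion is fully faithful by construction, and the unit $\eta_M$ is an isomorphism precisely when $M$ is complete. I would then invoke the standard categorical result that a reflective subcategory of a (co)complete category is itself (co)complete. Explicitly: $\Ch\lmod$ is complete and cocomplete since it is a category of modules over a ring. Colimits in $\widehat{\Ch\lmod}$ are constructed by taking the colimit in $\Ch\lmod$ and applying the reflector $\widehat{(-)}$, which follows because left adjoints preserve colimits and the unit identifies this with the colimit in the subcategory.

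For limits, the inclusion $\iota$ is a right adjoint, hence preserves limits, so it suffices to show that a limit in $\Ch\lmod$ of a diagram $D \colon J \to \widehat{\Ch\lmod}$ already lies in $\widehat{\Ch\lmod}$. This is a general property of reflective subcategories: denoting the limit in the ambient category by $L$, one obtains a canonical map $\widehat{L} \to L$ by combining the reflector with the limit's universal property (each projection $L \to D(j)$ factors through $\widehat{L}$ since $D(j)$ is complete), and this map is inverse to the unit $\eta_L$. Hence $L$ is complete and serves as the limit in $\widehat{\Ch\lmod}$. The main technical point to get right is the extension/uniqueness step in the first paragraph; everything afterward is formal categorical bookkeeping.
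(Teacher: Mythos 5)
Your proposal is correct and follows essentially the same route as the paper: both establish the adjunction by reducing a map into a complete module to its compatible family of reductions modulo $\hbar^n$, and then derive (co)completeness formally from reflectivity. The one thing you should make explicit is the single non-formal input that both your extension/uniqueness step and the well-definedness of the reflector depend on, namely the identification $\hbar^n \widehat{M} = \operatorname{Ker}(\widehat{M} \to M/\hbar^n M)$, equivalently $\widehat{M}/\hbar^n\widehat{M} \cong M/\hbar^n M$ (the paper cites this from the Stacks project, Tag 00M9; it holds because $(\hbar)$ is finitely generated). Without it, you do not know that $\widehat{M}$ is itself complete (so that $\widehat{f}$ really lands in the subcategory — completion is not idempotent for general ideals), and your phrase ``any two extensions agree modulo $\hbar^n$'' needs exactly this fact to pass from agreement on the image of $\eta_M$ to agreement on all of $\widehat{M}$ modulo $\hbar^n\widehat{M}$ before invoking separatedness of $N$. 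With that lemma stated, the rest of your argument is the same formal bookkeeping the paper relies on.
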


\begin{proof}
A full proof can be found for example in \cite[Theorem 5.8]{Po20}, here we give a brief outline of the proof. Let $M \in \Ch\lmod$ and $C \in \widehat{\Ch\lmod}$. We have the following sequence of bijections
\begin{align*}
    \Hom_{\Ch\lmod}(M, C) & \cong \{\Hom_{\Ch\lmod}(M/\hbar^nM, C/\hbar^nC) \}_{n \in \mathbb{N}}\\ 
    & \stackrel{(\ast)}{\cong} \{\Hom_{\Ch\lmod}(\widehat{M}/\hbar^n\widehat{M}, C/\hbar^n C ) \}_{n \in \mathbb{N}}\\ 
    & \cong \Hom_{\widehat{\Ch\lmod}}(\widehat{M}, C) \  ,
\end{align*}
since giving a map $M \rightarrow C$ is equivalent to specifying a family of maps $\{M \rightarrow C/\hbar^nC\}_{n \in \mathbb{N}}$ because $C \cong \widehat{C}$. For $(\ast)$ one uses \cite[Lemma 10.96.3, \href{https://stacks.math.columbia.edu/tag/00M9}{Tag 00M9}]{stacks-project}, stating that $\hbar^n \widehat{M} = \textrm{Ker}(\widehat{M}\to M/\hbar^nM)$.
\end{proof}

Write $\otimes = \otimes_{\Ch}$. Let $M$ and $N$ be two $\Ch$-modules. We define their tensor product as the $\hbar$-adic completion of $M \otimes N$:
\begin{align*}
M\widehat{\otimes} N & \coloneqq \widehat{M\otimes N } \\
& = \varprojlim_{n >0} (M\otimes N)/ \hbar^n (M\otimes N)  \ .
\end{align*}
The standard associativity and commutativity constraints (of \(\Ch\lmod\)) induce \(\Ch\)-linear isomorphisms
\begin{equation}
    (M\widehat{\otimes} N) \widehat{\otimes} P \cong M\widehat{\otimes} (N \widehat{\otimes} P) \ \ \text{and}  \ \ M \widehat{\otimes} N \cong N \widehat{\otimes} M,
\end{equation}
which gives \(\widehat{\Ch\lmod}\) the structure of a symmetric monoidal category. The monoidal unit is given by \(\Ch\) as can be seen from the \(\Ch\)-linear isomorphisms \(\Ch \widehat{\otimes}M \cong \widehat{M} \cong M\widehat{\otimes} \Ch \). 

\begin{proposition}
    The symmetric monoidal category $(\widehat{\Ch\lmod}, \widehat{\otimes})$ is closed. 
\end{proposition}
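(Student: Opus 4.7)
The plan is to define the internal Hom between two complete $\Ch$-modules $M$ and $N$ to be simply the ordinary module of $\Ch$-linear maps
\[
\underline{\Hom}(M,N)\coloneqq \Hom_{\Ch}(M,N),
\]
equipped with its pointwise $\Ch$-action, and then verify two things: (i) that this module is already complete when $N$ is complete, and (ii) that it implements the desired $(\widehat{\otimes},\underline{\Hom})$-adjunction. No new macros are introduced; the symbols $\Hom_{\Ch}$ and $\widehat{\otimes}$ are already in scope.

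\textbf{Step 1: Completeness of $\Hom_{\Ch}(M,N)$.} Assume $N$ is complete, so $N\cong \varprojlim_n N/\hbar^n N$. Since $\Hom_{\Ch}(M,-)$ preserves limits in its second argument (as a right adjoint in $\Ch\lmod$), we obtain
\[
\Hom_{\Ch}(M,N)\cong \varprojlim_n \Hom_{\Ch}(M,N/\hbar^n N).
\]
Each term $\Hom_{\Ch}(M,N/\hbar^n N)$ is annihilated by $\hbar^n$; any $\hbar$-torsion $\Ch$-module $X$ is automatically complete, because the inverse system $(X/\hbar^m X)_{m\geq 0}$ is eventually constant equal to $X$. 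Thus $\underline{\Hom}(M,N)$ is a cofiltered limit of complete modules. By Proposition \ref{prp:cmplModreflectivesubcat} the inclusion $\widehat{\Ch\lmod}\hookrightarrow \Ch\lmod$ creates limits, so $\underline{\Hom}(M,N)$ lies in $\widehat{\Ch\lmod}$.

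\textbf{Step 2: The adjunction.} Let $M,N,P\in \widehat{\Ch\lmod}$. Combining the universal property of completion from Proposition \ref{prp:cmplModreflectivesubcat} with the ordinary tensor–Hom adjunction on $\Ch\lmod$ gives a chain of natural bijections
\begin{align}
\Hom_{\widehat{\Ch\lmod}}(M\widehat{\otimes} N,\,P)
&\cong \Hom_{\Ch\lmod}(M\otimes N,\,P) \\
&\cong \Hom_{\Ch\lmod}(M,\,\Hom_{\Ch}(N,P)) \\
&\cong \Hom_{\widehat{\Ch\lmod}}(M,\,\underline{\Hom}(N,P)),
\end{align}
where the first isomorphism uses that $P$ is complete, and the last uses that $M$ is complete and that $\underline{\Hom}(N,P)$ lies in $\widehat{\Ch\lmod}$ by Step 1 (the inclusion is fully faithful). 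Naturality is inherited from the naturality of the classical tensor–Hom adjunction, and one checks that the resulting bijection upgrades to a $\Ch$-linear isomorphism of $\underline{\Hom}$-objects in the standard way, giving the enriched adjunction $-\widehat{\otimes} N\dashv \underline{\Hom}(N,-)$.

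\textbf{Main obstacle.} The only non-formal point is Step 1: one needs $\Hom_{\Ch}(M,N)$ to be $\hbar$-adically complete, which is not entirely obvious from the definition of completeness but follows cleanly from the identification above and the stability of $\widehat{\Ch\lmod}$ under limits. Everything else is a direct assembly of the completion–inclusion adjunction with the already-established closed structure on $\Ch\lmod$.
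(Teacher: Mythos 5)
Your proof is correct, and its overall architecture matches the paper's: establish that $\Hom_{\Ch\lmod}(N,P)$ is already complete when $P$ is, then chain the reflection adjunction of Proposition \ref{prp:cmplModreflectivesubcat} with the ordinary tensor--Hom adjunction on $\Ch\lmod$. Your Step 2 is essentially verbatim the paper's argument. Where you genuinely diverge is in the proof of the completeness lemma: the paper first treats finitely presented $M$ via a presentation $\Ch^m \to \Ch^n \to M \to 0$ and left-exactness of $\Hom(-,C)$, then writes a general module as a filtered colimit of finitely presented ones and uses $\Hom(\colim_i N_i, C) \cong \lim_i \Hom(N_i, C)$; you instead expand the \emph{target} as $N \cong \varprojlim_n N/\hbar^n N$, commute $\Hom_{\Ch}(M,-)$ past the limit, and observe each term is killed by $\hbar^n$ and hence complete. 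Your route is arguably cleaner — it avoids the two-stage reduction and uses only that $\widehat{\Ch\lmod}$ is closed under limits — and both arguments are valid. One small caveat: your parenthetical claim that ``any $\hbar$-torsion $\Ch$-module is automatically complete'' is false as literally stated (a module in which every element is killed by \emph{some} power of $\hbar$ need not be complete, e.g.\ $\bigoplus_n \Ch/\hbar^n$); the justification you give, that the inverse system is eventually constant, applies only to modules annihilated by a \emph{fixed} power $\hbar^n$, which is exactly the case you need for $\Hom_{\Ch}(M, N/\hbar^n N)$, so the argument goes through — just tighten the wording to ``bounded $\hbar$-torsion'' or ``annihilated by $\hbar^n$''.
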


\begin{proof}
We first show that if $C$ is a $\hbar$-adically complete module then $\Hom_{\Ch\lmod}(M,C)$ is $\hbar$-adically complete. To that end, first assume that $M$ is a finitely presented $\Ch$-module. Choose a representation $\Ch^m \to \Ch^n \to M \to 0$, applying $\Hom_{\Ch\lmod}(-,C)$ we get an exact sequence
$$
0 \to \Hom_{\Ch\lmod}(M,C) \to C^n \to C^m \  .
$$
Since $\widehat{\Ch\lmod}$ is closed under products and kernels we get that $\Hom_{\Ch\lmod}(M,C)$ is complete. Moreover, every module is a colimit over its finitely presented submodules, and so we have that 
$$
\Hom_{\Ch\lmod}(N,C) = \Hom_{\Ch\lmod}(\colim_i N_i,C) = \lim_i \Hom_{\Ch\lmod}(N_i,C)
$$
is complete. \par 
By Proposition \ref{prp:cmplModreflectivesubcat} we then have for $M,N,C \in \widehat{\Ch\lmod}$
\begin{align*}
    \Hom_{\widehat{\Ch\lmod}}(M \widehat{\otimes} N, C) & \cong \Hom_{\Ch\lmod}(M \otimes N, C) \\
    & \cong \Hom_{\Ch\lmod}(M, \Hom_{\Ch\lmod}(N,C))\\
    & \cong \Hom_{\widehat{\Ch\lmod}}(M,\Hom_{\Ch\lmod}(N,C)) \  . \qedhere
\end{align*}
\end{proof}

\begin{proposition}\label{prp:ChMod_presentable_base}
$\widehat{\Ch\lmod}$ is locally $\alpha$-presentable (as an ordinary category) for $\alpha > \aleph_0$.
\end{proposition}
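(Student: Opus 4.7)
The plan is to apply the standard reflection theorem: a reflective subcategory of a locally presentable category that is closed under $\kappa$-filtered colimits is itself locally $\kappa$-presentable (Adámek--Rosický, \emph{Locally Presentable and Accessible Categories}, Corollary to Theorem~1.39/Satz~1.46). Since $\Ch\lmod$ is locally finitely presentable (as a module category over a commutative ring) and $\widehat{\Ch\lmod} \hookrightarrow \Ch\lmod$ is reflective by Proposition~\ref{prp:cmplModreflectivesubcat}, it suffices to exhibit a regular cardinal $\alpha > \aleph_0$ such that the inclusion preserves $\alpha$-filtered colimits. I would take $\alpha = \aleph_1$.

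The core step is therefore the following: given an $\aleph_1$-filtered diagram $(M_i)_{i \in I}$ of complete $\Ch$-modules, its colimit $M = \colim_{i \in I} M_i$ computed in $\Ch\lmod$ is already complete (so that the reflection $M \to \widehat M$ is the identity and the two colimits agree). To prove this, first note that the functor $(-)/\hbar^n(-) \cong (-) \otimes_{\Ch} \Ch/\hbar^n$ is a left adjoint and thus commutes with colimits, giving natural isomorphisms $M/\hbar^n M \cong \colim_i M_i/\hbar^n M_i$. Then use the classical fact that in $\mathrm{Set}$ (and hence in $\Ch\lmod$, where filtered colimits and countable limits are computed underlyingly) countable limits commute with $\aleph_1$-filtered colimits. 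This yields
\begin{equation}
\lim_n M/\hbar^n M \;\cong\; \lim_n \colim_i M_i/\hbar^n M_i \;\cong\; \colim_i \lim_n M_i/\hbar^n M_i \;\cong\; \colim_i M_i \;=\; M,
\end{equation}
where the third isomorphism uses completeness of each $M_i$. Hence $M$ is complete, proving that $\aleph_1$-filtered colimits of complete modules agree in $\Ch\lmod$ and $\widehat{\Ch\lmod}$.

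With that in hand, the reflection theorem gives local $\aleph_1$-presentability; the strong generator can be taken to be the image under completion of a finitely generating set of compact objects of $\Ch\lmod$ (e.g.\ $\Ch$ itself), and these become $\aleph_1$-compact in $\widehat{\Ch\lmod}$ precisely because filtered colimits along $\aleph_1$-filtered diagrams in the subcategory compute correctly. The main obstacle is the commutation-of-limits argument in the middle display; everything else is bookkeeping on top of standard accessible-category theory. Note that one genuinely cannot take $\alpha = \aleph_0$: sequential (hence countable-filtered) colimits of complete modules need not be complete, which is why the cardinal restriction $\alpha > \aleph_0$ is stated in the proposition.
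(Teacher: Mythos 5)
Your proof is correct and rests on the same key computation as the paper's: showing that an $\aleph_1$-filtered colimit of complete modules, computed in $\Ch\lmod$, is already complete because countable ($\alpha$-small) limits commute with $\alpha$-filtered colimits, so that $\lim_n \colim_i M_i/\hbar^n M_i \cong \colim_i \lim_n M_i/\hbar^n M_i \cong \colim_i M_i$. The only difference is in the packaging of the conclusion — you invoke the Adámek--Rosický reflection theorem, whereas the paper directly exhibits $\Ch$ as a conservative (hence strong) generator and verifies its $\alpha$-compactness from the same completeness-of-colimits fact — so the mathematical content is essentially identical.
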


\begin{proof}
For lighter notation, we write $K \coloneqq \Ch$. The functor $\Hom_{K\lmod}(K,-)$ is conservative since an isomorphism in $K\lmod$ is an isomorphism of the underlying vector space which is compatible with the $K$-action. By Proposition \ref{prp:cmplModreflectivesubcat} we have that $\Hom_{\widehat{K\lmod}}(K, -) \cong \Hom_{K\lmod}(K, -) \circ \iota$. Since $\iota$ is fully faithful the composite is conservative and $K$ is a strong generator in $\widehat{K\lmod}$.\par
Next, we show that $K$ is $(\alpha > \aleph_0)$-compact. For any $\alpha$-filtered diagram $F \colon \Da \to \widehat{K\lmod}$ of complete $K$-modules the following holds in $K\lmod$:
\begin{align*}
\widehat{\colim F} & \cong \lim_{n \in \mathbb{N}} \colim F / (\hbar^n \colim F )\\ 
& \cong \colim \lim_{n \in \mathbb{N}} F / \hbar^n F \\
& \cong \colim F \ .
\end{align*}
We used that the completion functor is idempotent and that $\alpha$-filtered colimits commute with $\alpha$-small limits. Hence for $\alpha > \aleph_0$ we have
\begin{align*}
\Hom_{\widehat{K\lmod}}(K,\widehat{\colim F}) & \cong \Hom_{K\lmod}(K,\colim F) \\
& \cong \colim F \\
& \cong \colim \Hom_{\widehat{K\lmod}}(K,F)
\end{align*}
showing that $K$ is $\alpha$-compact.
\end{proof}

\subsubsection{The Drinfeld category and quantum group representations}\label{Drinfeld category}
Our main examples of categories enriched in $\Va = \widehat{\Ch\lmod}$ comes from the representation theory of topological quasi-Hopf algebras. More precisely, we consider on one hand the category of representations of Drinfeld-Jimbo type quantum groups $U_\hbar(\mathfrak{g})$. On the other hand we have Drinfeld's category of representations of the quantized universal enveloping algebra $U(\mathfrak{g})[[\hbar]]$ with non-trivial associator. In the following we describe these two categories in more detail and recall a result by Drinfeld giving a balanced braided equivalence between the two.\par

Let $\mathfrak{g}$ be a complex semi-simple Lie algebra of rank $n$. Its Casimir element $C \in U\mathfrak{g}$ is defined as $C= \sum_i x_i x^i$, where $\{x_i\}$ and $\{x^i\}$ are two bases of $\mathfrak{g}$ that are dual with respect to the Killing form, i.e. $ \operatorname{tr}_{\mathfrak{g}} (\ad_{x_i} \ad_{x^j}) = \delta_i^j$. Let  $t \in (\mathsf{Sym}^2\mathfrak{g})^\mathfrak{g}$ be the $\ad$-invariant symmetric tensor defined by $t=(\Delta C - C \otimes 1 - 1 \otimes C)/2 = \sum_i (x_i\otimes x^i + x^i \otimes x_i)/2$. The \emph{Drinfeld-Jimbo quantum group} $U_\hbar(\mathfrak{g})$ is the $\Ch$-algebra topologically generated by the $3n$ symbols $(H_i, X_i, Y_i)_{1 \leq i \leq n}$ and a set of relations, which reduce for $\hbar = 0$ to the Chevalley-Serre relations in $U(\mathfrak{g})$. For a detailed definition of $U_\hbar(\mathfrak{g})$ see for example \cite[Definition XVII.2.3.]{Kassel}. The topological braided Hopf algebra 
\begin{equation} \label{eq:Uhbar-of-g}
U_\hbar(\mathfrak{g}) \coloneqq (U_\hbar(\mathfrak{g}), \Delta_\hbar, \varepsilon_\hbar, \Phi_\hbar = 1 \widehat{\otimes} 1 \widehat{\otimes} 1, \Ra_\hbar, S_\hbar)
\end{equation}
is a quantum enveloping algebra (QUEA) \cite{DrinQG}. In particular, $U_\hbar(\mathfrak{g})$ is topologically free as a $\Ch$-module. The universal R-matrix is of the form $\Ra_\hbar = 1 \otimes 1 + \hbar r + \Oa(\hbar^2)$, where $r$ is a classical r-matrix with symmetric part $t$. 
\begin{notation}
    We write $U_\hbar(\mathfrak{g})\lmod$ for the \(\Va\)-category of topologically free (i.e. torsion free and complete), finite rank modules over the quantum enveloping algebra $U_\hbar(\mathfrak{g})$ from \eqref{eq:Uhbar-of-g}. 
\end{notation}
Moreover, $U_\hbar(\mathfrak{g})\lmod$ is a ribbon \(\Va\)-category (a notion explained in Definition \ref{defn:ribbon-V-cat}). 
The braiding comes from the action of the universal R-matrix $\Ra_\hbar = \Ra_\hbar^1 \otimes \Ra_\hbar^2$ and the ribbon twist is given by the action of $\theta_\hbar = e^{-\hbar \rho}u$, where $\rho$ is the half-sum of positive roots and $u = S(\Ra_\hbar^2)\Ra_\hbar^1$.  \par 

In \cite{DrinKZ}, Drinfeld showed that there exist a set of quantum enveloping algebras $A_{\mathfrak{g},t}$, with coproduct and counit $\Ch$-linearly extended from $U(\mathfrak{g})$, by proving existence of non-trivial associators $\Phi$, called \emph{Drinfeld associators}. Namely,
$$
A_{\mathfrak{g},t} \coloneqq (U(\mathfrak{g})[[\hbar]],\Delta,\varepsilon, \Phi, \Ra = e^{\frac{\hbar}{2}t}) \ \ ,
$$
is a quasi-triangular quasi-Hopf algebra, with a Drinfeld associator $\Phi = 1 + \frac{\hbar^2}{24} [t_{12},t_{23}] + \Oa(\hbar^3) \in U(\mathfrak{g})^{\otimes 3}[[\hbar]]$. See \cite[Theorem XIX.4.2]{Kassel} and references therein for more details on the QUEA $A_{\mathfrak{g},t}$. 
\begin{definition}
    The \emph{Drinfeld category}, denoted $U(\mathfrak{g})\lmod^\Phi[[\hbar]]$, is defined as the braided tensor \(\Va\)-category of topologically free $U(\mathfrak{g})[[\hbar]]$-modules of finite rank with associator induced by $\Phi$ and non-trivial braiding coming from the action of $e^{\frac{\hbar}{2}t}$. 
\end{definition}
The Drinfeld category is even a ribbon \(\Va\)-category. The evaluation and coevaluation are twisted using invariant elements $\alpha, \beta \in U(\mathfrak{g})[[\hbar]]$ as in \cite[Eq.~1.18]{DrinfeldQuasiHopf}, \cite[Eq.~(16 bis)]{Cartier1993}; one can choose e.g. $\alpha = \beta = \nu^{1/2}$ \cite{LeMurakamiParallel}. The ribbon twist comes from the action by the element $\theta = e^{\frac{\hbar C}{2}}$, where $C \in U(\mathfrak{g})$ is the quadratic Casimir element; See \cite{DrinfeldGal1990, Cartier1993} and \cite{KasselTuraev1998}. \par  
Recall that a gauge transformation of a braided quasi-bialgebra $(H, \Delta, \varepsilon,\Phi,\Ra)$ is an invertible element $F \in H \otimes H$, satisfying a normalization condition, that allows to twist $H$ to a new braided quasi-bialgebra $H_F = (H, \Delta_F = F \Delta F^{-1},\varepsilon,\Phi_F, \Ra_F = F_{21}^{-1}\Ra F)$, a formula for the twisted associator $\Phi_F$ can for example be found in \cite[Section XV.3]{Kassel}. Two braided quasi-bialgebras $H$ and $H'$ are then called gauge equivalent if there exist a gauge transformation $F$ and an isomorphism $\alpha: H \xrightarrow{\cong} H'_F$. Gauge equivalent braided quasi-bialgebras have equivalent braided monoidal categories of modules, explicitly the braided tensor equivalence is given by \cite[Theorem XV.3.9]{Kassel}:
$$
H'\lmod \xrightarrow{\alpha^*} H\lmod, \quad \alpha^* V \otimes \alpha^* W \xrightarrow{F^{-1} \triangleright} \alpha^*(V \otimes W)
$$
for all $V, W \in H'\lmod$, where by abuse of notation we wrote $\otimes$ for the tensor structure in both $H\lmod$ and $H'\lmod$. \par 
Twist equivalence of the QUEAs $U_\hbar(\mathfrak{g})$ and $A_{\mathfrak{g},t}$ is due to Drinfeld (\cite[{Prop.~3.16}]{DrinfeldQuasiHopf}, \cite{DrinfeldGal1990}), who showed that there exists a gauge transformation $F \in A_{\mathfrak{g},t} \widehat{\otimes} A_{\mathfrak{g},t}$ and an isomorphism $\alpha \colon U_\hbar (\mathfrak{g}) \xrightarrow{\cong} (A_{\mathfrak{g},t})_F$ of topological braided quasi-bialgebras. On the ribbon-categorical level we then have the following \cite[Section XIX.5]{Kassel},  \cite[Theorem~A.1]{KasselTuraev1998}.

\begin{theorem}\label{Thm: equivalence}
There is an equivalence of ribbon \(\Va\)-categories
$$
U_\hbar(\mathfrak{g})\lmod \xrightarrow{\alpha^*} U(\mathfrak{g})\lmod^\Phi[[\hbar]] \ \ ,
$$
between the representation category of the quantum group $U_\hbar(\mathfrak{g})$ and the Drinfeld category.
\end{theorem}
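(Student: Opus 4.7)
The strategy is to bootstrap from Drinfeld's classical result on the gauge equivalence of the two quasi-Hopf algebras to an equivalence of their module categories, and then to observe that this equivalence automatically respects the enrichment over $\widehat{\Ch\lmod}$. The key input is Drinfeld's theorem \cite[Prop.~3.16]{DrinfeldQuasiHopf} (see also \cite{DrinfeldGal1990}), which guarantees the existence of a gauge transformation $F \in A_{\mathfrak{g},t} \widehat{\otimes} A_{\mathfrak{g},t}$ with $F \equiv 1\widehat{\otimes} 1 \pmod{\hbar}$ and an isomorphism $\alpha \colon U_\hbar(\mathfrak{g}) \xrightarrow{\cong} (A_{\mathfrak{g},t})_F$ of topological braided quasi-bialgebras. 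I would take this as a black box rather than attempt to reproduce Drinfeld's construction, which relies on deep properties of the KZ connection.

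Given Drinfeld's input, I would construct the functor $\alpha^*$ in two stages. First, observe that restriction of scalars along $\alpha$ provides a functor from $(A_{\mathfrak{g},t})_F\lmod$ to $U_\hbar(\mathfrak{g})\lmod$ which is an isomorphism of $\Va$-enriched categories, since $\alpha$ is an isomorphism of algebras and hence identifies the underlying $\Ch$-modules along with their module structures. Second, I would show that $(A_{\mathfrak{g},t})_F\lmod$ and $A_{\mathfrak{g},t}\lmod = U(\mathfrak{g})\lmod^\Phi[[\hbar]]$ are equivalent as braided tensor $\Va$-categories: on objects both categories are identical, and the tensor structure is transported by equipping the identity functor with the tensorator
\[
V \otimes^{(F)} W \xrightarrow{F^{-1}\triangleright (\mhyphen)} V \otimes W,
\]
where $\otimes^{(F)}$ denotes the tensor product in $(A_{\mathfrak{g},t})_F\lmod$. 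Composing these two steps yields the desired functor $\alpha^*$.

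The verifications needed are then standard and entirely algebraic: the pentagon axiom for the tensorator uses precisely the identity $\Phi_F = (1\otimes F)(\id\otimes\Delta)(F)\cdot \Phi \cdot (\Delta\otimes \id)(F^{-1})(F^{-1}\otimes 1)$ defining the twisted associator, while compatibility with the braiding reduces to the identity $\Ra_F = F_{21}^{-1}\Ra F$ relating the twisted $R$-matrices. These are exactly the content of \cite[Thm.~XV.3.9]{Kassel}, which is formulated over a field but transports verbatim to the $\hbar$-adic setting since everything in sight is built out of topological tensor products $\widehat{\otimes}$ of topologically free modules, and $F$, $F^{-1}$ act by continuous $\Ch$-linear maps. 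In particular, the tensorator is an isomorphism in $\widehat{\Ch\lmod}$ because $F$ is invertible.

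The main subtlety — and essentially the only place where the enriched setting requires care — is checking that $\alpha^*$ is a functor of $\Va$-enriched categories rather than merely of their underlying ordinary categories. This amounts to observing that the Hom-objects on both sides are the same complete $\Ch$-modules of $\Ch$-linear maps compatible with the respective actions, and that $\alpha^*$ acts as the identity on these Hom-objects once the underlying $\Ch$-modules are identified via $\alpha$. Since Drinfeld's $\alpha$ and $F$ are continuous in the $\hbar$-adic topology, this identification is automatic. The main conceptual obstacle, if any, is simply bookkeeping the ribbon structure: but the ribbon element $e^{-\hbar\rho}u$ of $U_\hbar(\mathfrak{g})$ is identified with $e^{\hbar C/2}$ (up to a twist by $\alpha,\beta \in U(\mathfrak{g})[[\hbar]]$ used in the pivotal structure of the Drinfeld category) under $\alpha$, so $\alpha^*$ upgrades to an equivalence of ribbon $\Va$-categories, although only the braided tensor statement is needed here.
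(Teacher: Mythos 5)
Your proposal follows the same route as the paper: the theorem is obtained by citing Drinfeld's gauge equivalence between $U_\hbar(\mathfrak{g})$ and $(A_{\mathfrak{g},t})_F$ as a black box and then invoking the general fact that gauge equivalent (topological) braided quasi-bialgebras have braided tensor equivalent module categories, with the tensorator given by the action of $F^{-1}$ as in \cite[Theorem XV.3.9]{Kassel}. Your additional remarks on the enrichment over $\widehat{\Ch\lmod}$ and the ribbon compatibility (which the paper handles by citing \cite[Theorem~A.1]{KasselTuraev1998}) are correct and consistent with the paper's treatment.
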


\section{Enriched skein theory}\label{Sec: Enriched skein}
In this section we will first introduce an enriched version of skein categories. 
For this we define enriched ribbon categories in Section \ref{Sec:enriched ribbon cats}.
Afterwards, we define in Section \ref{sec:enrichedskeincat} enriched skein categories. 
The main differences, compared to the usual definition, e.g.\ as given in \cite{Cooke19}, is that we work with a ribbon category \(\Ca\) 
which is \(\Va\)-enriched and not necessarily strict. 
Then in Section \ref{sec:excisionenrichedsetting} we prove that enriched skein categories satisfy excision. 
As a formal consequence we get the main theorem of this section (Theorem \ref{Thm: FH=Sk}) which verifies that enriched skein categories compute factorization homology.

\subsection{Enriched ribbon categories}\label{Sec:enriched ribbon cats}
In this section we set up some basic definitions for rigid and ribbon categories in the enriched setting. 

Recall that a \emph{monoidal $\Va$-category} $\Ca$ is a pseudomonoid in the symmetric monoidal 2-category $\Va\mhyphen\Cat$, see \cite[Section~4]{DayStreet1997}. In other words, it consists of a $\Va$-category $\Ca$ together with a \emph{unit} object $1_\Ca \in\Ca$, a $\Va$-functor $\monprod[\Ca] \colon \Ca \,\catprod\, \Ca \to \Ca$, \emph{unitors} $\rho, \lambda$ and an \emph{associator} $\alpha$ which are all $\Va$-enriched natural isomorphisms satisfying the usual axioms. Similarly, a \emph{braiding} or \emph{symmetry} is a suitable $\Va$-natural transformation $\beta \colon \monprod[\Ca] \to (\monprod[\Ca])^\mathrm{op}$. If $\Ca$ is (braided/symmetric) monoidal, so is $\Ca_0$ \cite[Theorem 5.7.1]{CruttwellThesis}.

\begin{definition}\label{def:enrribboncat}
    A monoidal $\Va$-category $\Ca$ is called \emph{left-rigid} if for each object $c \in \Ca$, there exists an object $c^* \in \Ca$, called its \emph{left dual}, and morphisms 
    \begin{align*}
        \operatorname{ev}_c \in \Ca_0( c^* \monprod[\Ca] c, 1_\Ca ) \qquad \mathrm{and}  \qquad \operatorname{coev}_c \in \Ca_0( 1_\Ca, c \monprod[\Ca] c^* )
    \end{align*}
   satisfying the snake identities: i.e.\ the following two diagrams (in $\Va$) commute
\begin{equation} \label{eq:snake-relations-enriched-rigid-cat}
	\begin{tikzcd}[column sep=7, row sep=20.0]
		{\Ca(c^*, c^*)} && {\Ca(c^* \otimes^\Ca 1_\Ca, 1_\Ca\otimes^\Ca c^*)} && {\Ca(c,c)} && {\Ca( 1_\Ca \otimes^\Ca c, c \otimes^\Ca 1_\Ca)} \\
		&& {\Ca(c^* \otimes^\Ca 1_\Ca, (c^* \otimes^\Ca c)\otimes^\Ca c^*)} &&&& {\Ca( 1_\Ca \otimes^\Ca c, c\otimes^\Ca (c^* \otimes^\Ca c))} \\
		&& {\Ca(c^* \otimes^\Ca 1_\Ca, c^* \otimes^\Ca (c\otimes^\Ca c^*))} &&&& {\Ca( 1_\Ca \otimes^\Ca c, (c\otimes^\Ca c^*) \otimes^\Ca c)} \\
		{1_\Va} && {\Ca(c^* \otimes^\Ca 1_\Ca, c^* \otimes^\Ca 1_\Ca)} && {1_\Va} && {\Ca( 1_\Ca \otimes^\Ca c, 1_\Ca \otimes^\Ca c)}
		\arrow["{\lambda_{c^*}\circ - \circ \rho_{c^*}^{-1}}"' {yshift=3pt, xshift=3pt} , from=1-3, to=1-1]
		\arrow["{\rho_c \circ -  \circ \lambda_c^{-1}}"' {yshift=3pt, xshift=3pt}, from=1-7, to=1-5]
		\arrow["{(\operatorname{ev}_c \otimes^\Ca \id) \circ -}"', from=2-3, to=1-3]
		\arrow["{(\id \otimes^\Ca \operatorname{ev}_c) \circ -}"', from=2-7, to=1-7]
		\arrow["{\alpha^{-1}_{c^*, c, c^*} \circ -} "', from=3-3, to=2-3]
		\arrow["{\alpha_{c, c^*, c} \circ -} "', from=3-7, to=2-7]
		\arrow["\id", from=4-1, to=1-1]
		\arrow["\id"', from=4-1, to=4-3]
		\arrow["{(\id\otimes^\Ca \operatorname{coev}_c) \circ -}"', from=4-3, to=3-3]
		\arrow["\id", from=4-5, to=1-5]
		\arrow["\id" ', from=4-5, to=4-7]
		\arrow["{(\operatorname{coev}_c \monprod[\Ca] \id ) \circ -}"', from=4-7, to=3-7]
        \end{tikzcd}
\end{equation}
A \emph{right-rigid} category is defined analogously; a category is \emph{rigid} if it is left-rigid and right-rigid.
\end{definition}

The enriched version of the snake identities above can be seen as the usual equality of the identity and snake diagrams 
by using the enriched Reshetikhin-Turaev functor -- see Proposition \ref{prop:enrRT}.

\begin{remark}
The category $\Ca$ is (left-)rigid iff $\Ca_0$ is, as the snake identities can be formulated purely in terms of the composition in $\Ca_0$. Thus, if $\Ca$ is braided and left-rigid, it is rigid \cite[Proposition 7.2]{JoyalStreet1993}.
\end{remark}

\begin{definition}
    Let $c^*$, $d^*$ be left duals of $c, d \in \Ca$. A \emph{transpose} is the morphism $\Ca(c, d) \to \Ca(d^*, c^*)$ defined as
    \[ \Ca(c, d) \xrightarrow{\id \monprod[\Ca] (- \monprod[\Ca] \id)} \Ca(d^* \monprod[\Ca] (c \monprod[\Ca] c^*), d^* \monprod[\Ca] (d \monprod[\Ca] c^*)) \xrightarrow{ \lambda_{c^*} \circ (\operatorname{ev}_d\monprod[\Ca] \id) \circ \alpha_{d^*, d, c^* }^{-1} \circ - \circ (\id \monprod[\Ca] \operatorname{coev}_c) \circ \rho_{d^*}^{-1}} \Ca(d^*, c^*).\]
\end{definition}

\begin{definition}\label{defn:ribbon-V-cat}
    A \(\Va\)-category $\Ca$ is called \emph{ribbon} if it is rigid, braided monoidal and is equipped with a \emph{twist}; i.e.\ a $\Va$-natural automorphism $\theta$ of the identity functor $\id_\Ca$ which satisfies
    \[ \theta_{c\monprod[\Ca] c'} = \beta_{c', c} \circ \beta_{c, c'} \circ (\theta_c \monprod[\Ca] \theta_{c'}) \]
    in $\Ca_0$, and for which the following diagram (in \(\Va\)) commutes
    \[ 
    \begin{tikzcd}
    1_\Va \arrow[r, "\theta_a"] \arrow[rd, "\theta_{a^*}"'] & \Ca(a, a) \arrow[d, "\mathrm{transpose}"] \\
    & \Ca(a^*, a^*) \ .
    \end{tikzcd}
    \]
\end{definition}

\subsection{Enriched skein categories}\label{sec:enrichedskeincat}
An (ordinary) skein category is a categorical analogue of a skein algebra. 
This was first defined by Walker and Johnson-Freyd in \cite{Walker, Theo}, respectively. 
The latter definition is also used in \cite{Cooke19}. 
In this section we define enriched skein categories, which is a generalization of the usual 
notion allowing for more general enriching categories than \(R\lmod\). Moreover, we also explain how enriched skein categories in certain situations have a monoidal or module-structure.

\subsubsection{Definition of enriched skein categories}
Recall from \cite[Definition~1.1]{Cooke19} that a \emph{ribbon graph} is a collection of oriented ribbons\footnote{Each ribbon has two possible orientations. For notational convenience, \cite{Cooke19} remembers the orientation of the interval $[0,1]$ \textit{and} the additional choice $\pm$, our orientation should be thought of as the product of these two, i.e. it corresponds to the arrow in \cite[Figure~1]{Cooke19}.} (strands) and coupons, with some ribbon ends attached to coupon bases or joined to form annuli. See Figure \ref{fig:qRibbonGraph} for an example.  A ribbon graph on a surface $\Sigma$ is an embedding of a ribbon graph to $\Sigma\times [0, 1]$ such that the free ends of the ribbons are sent to $\Sigma\times \{0, 1\}$ and the rest of the ribbon graph lies in $\Sigma \times (0, 1)$.

We will use ribbon graphs equipped with parenthesizations of coupons, which we will call ribbon q-graphs after \cite{LeMurakamiCategory}.
\begin{definition}
A \emph{ribbon q-graph} is a ribbon graph together with a choice, for each coupon, of parenthesization of the ribbon endpoints on the top and on the bottom of the coupon.
\end{definition}

Let $\Ca$ be a $\Va$-enriched ribbon category, where $\Va$ is tensor cocomplete. A \emph{$\Ca$-coloring} of a ribbon q-graph is a choice of an object $m_r\in \Ca$ for each ribbon $r$, c.f.\ \cite[Definition 1.2]{Cooke19}. A \emph{\(\Ca\)-colored ribbon q-graph on $\Sigma$} is defined analogously to \cite[Definition~1.3.]{Cooke19}. That is, it is a \(\Ca\)-colored ribbon q-graph together with an embedding into \(\Sigma\times [0,1]\) such that unattached bases of ribbons are sent to \(\Sigma\times \{0,1\}\), otherwise the image is in \(\Sigma \times (0,1)\) and coupons are oriented upwards.  

In the usual definition, a \(\Ca\) coloring of a graph also attaches a suitable morphism of \(\Ca\) to each coupon. 
In the $\Va$-enriched setting, each \(\Ca\)-colored q-graph instead carries an object of $\Va$ of \emph{all} possible colorings of coupons. 
We now properly define this ``object of morphisms''. 

\begin{definition}
Let  $\Gamma$ be a $\Ca$-colored ribbon q-graph. If $e$ is the bottom or the top edge of a coupon, let 
\[c_e = \otimes_{r \in e} m_r^{\text{or}(r)} \in \Ca \]
be the tensor product of the objects corresponding to the decorations of the incident ribbons, parenthesized using the parenthesization of the coupon edge. As in \cite[Definition~1.2]{Cooke19}, we use $m_r^{\text{or}(r)}$ to mean $m_r$ or $m_{r}^*$ for ribbons oriented up or down.

Define $\Ca((\Gamma)) \in \Va$ to be the unordered\footnote{That is, for each ordering of the $n$ coupons, a tensor product of the objects in the corresponding order; modulo equivalence obtained by permuting the tensor factors and the ordering. This quotient is the coequalizer of the diagram with $n!$ objects, each being a tensor product in one order, and morphisms given by permutations. See \cite[Def~II.1.58]{MSSOperads} for more details.} tensor product
\begin{equation}
    \Ca((\Gamma)) := \bigotimes_{c \textit{ Coupon of } \Gamma} \Ca(c_\text{in}, c_\text{out}) \ , 
\end{equation}
where $c_\text{in}$ and $c_\text{out}$ are the objects of $\Ca$ given by the tensor product of the objects of $\Ca$ according to the decoration and parenthesization of the bottom and top of the coupon $c$. 
\end{definition}

\begin{example}
    Let \(\Gamma\) be the \(\Ca\)-colored ribbon q-graph from Figure \ref{fig:qRibbonGraph}. 
    The corresponding ``$\Va$-object of morphisms'' is given by
    \begin{equation} 
        \Ca((\Gamma)) = \Ca((m_1\otimes  m_5) \otimes m_6, (m_3^* \otimes(m_4\otimes m_4^*))\otimes m_5)) \monprod[\Va] \Ca(m_7^*, m_7^*) \ ,
    \end{equation}
 where the order of the two coupons is chosen as on Figure \ref{fig:qRibbonGraph} when writing down the unordered tensor product.
\end{example}

\begin{figure}[H]
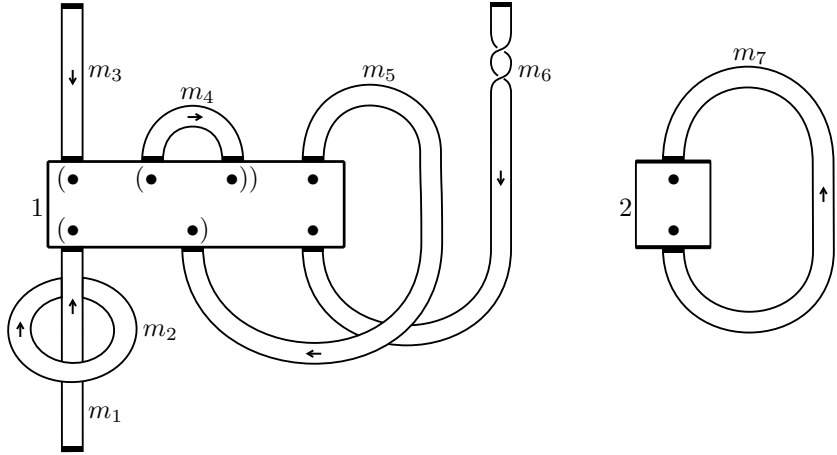

\centering
    \begin{overpic}[tics=10]{pics/qribbontangle.pdf}
         \put(6.7,26.5){$(\bullet$}
         \put(22.15,26.5){$\bullet)$}
          \put(36.4,26.5){$\bullet\phantom{)}$}

        \put(6.7, 32.5){$(\bullet$}
        \put(16, 32.5){$(\bullet$}
        \put(26.8, 32.5){$\bullet))$}
        \put(36.4, 32.5){$\bullet\phantom{(}$}

        \put(79.2, 32.5){$\bullet\phantom{(}$}
        \put(79.2,26.5){$\bullet\phantom{)}$}

        \put(10.5,5){$m_1$}
        \put(17,14.7){$m_2$}
        \put(10.5,45.5){$m_3$}
        \put(21.4,42.8){$m_4$}
        \put(43,45.5){$m_5$}
        \put(61.5,45.5){$m_6$}
        \put(87,47.5){$m_7$}
        
        \put(3.7,29){$1$}
        \put(73.5,29){$2$}
    \end{overpic}
	\caption{An example of a \(\Ca\)-colored ribbon q-graph \(\Gamma\).}
	\label{fig:qRibbonGraph}
\end{figure}

\begin{definition}
The category $\mathbf{Ribbon}_\Ca(\Sigma)$ is a $\Va$-category with objects \(m\) given by finite collections of points \(x_i\) on $\Sigma$ with framing, orientation and a decoration by objects \(m_i\) of $\Ca$.
That is, an object is \(m=\{{m_i}_{x_i}^{\varepsilon_i}\}_{i\in I_m}\), where the \(x_i\)'s are pairwise disjoint framed points of \(\Sigma\), \(\varepsilon_i = \pm\) is the corresponding orientation and \(m_i\) denotes the object of \(\Ca\) decorating the point. 
We often abbreviate this by simply saying a finite collection of \emph{decorated points} on \(\Sigma\), and only write \(m=\{m_i\}_{i\in I_m}\) with the rest of the data left implicit. 

Let $m, n$ be two such objects. We define the corresponding Hom-object as
\begin{equation}\label{eq:EnrichedSkeinHom}
    \mathbf{Ribbon}_\Ca(\Sigma)(m, n) := \coprod_{[\Gamma]\colon m \to n} \Ca((\Gamma)) 
\end{equation}
where the coproduct is over all isotopy classes of $\Ca$-colored ribbon q-graphs on \(\Sigma\) compatible with the coloring, framing and orientation of $m$ and $n$ on $\Sigma\times \{0\}$ and $\Sigma\times \{1\}$, respectively.

The composition is defined by stacking of graphs using that $\Ca((\Gamma\circ\Gamma'))\cong\Ca((\Gamma))\otimes\Ca((\Gamma'))$ and that the monoidal product of $\Va$ distributes over coproducts. 
\end{definition}

\begin{example}
For $\Va = (\operatorname{Set}, \times)$, an element of the set $\mathbf{Ribbon}_\Ca(\Sigma)(m, n)$ is an isotopy class of $\Ca$-colored ribbon graphs with coupons decorated by morphisms in $\Ca$ compatible with the decoration of ribbons. For $\Va = (R\lmod, \otimes_R)$, an element of the $R$-module $\mathbf{Ribbon}_\Ca(\Sigma)(m, n)$ is a formal $R$-linear combination of isotopy classes of $\Ca$-colored ribbon graphs, again with coupons decorated by suitable morphisms in $\Ca$. For example, a ribbon graph with a coupon decorated by $f+g$ is equal to the sum of ribbon graphs with coupons decorated with $f$ and $g$. We note that this property is not present in \cite[Definition 1.5]{Cooke19} -- the category defined there is rather $F_*(\mathbf{Ribbon}_{U_*\Ca}(\Sigma))$, where $U\colon \Vect \rightleftarrows \operatorname{Set}\colon F$ is the usual free--forgetful adjunction. 
After passing to the skein category this distinction vanishes.
\end{example}

\begin{definition}
For $\Sigma = [0, 1]^{\times 2}$, define $\mathbf{qRibbon}_\Ca([0, 1]^{\times 2})$ to be the $\Va$-category with objects are given by \emph{parenthesized words} $m_W$ with letters $m_i^{\pm}$ for $m_i \in \Ca$ and \(\pm\) denoting the corresponding orientation. 
Morphisms are given by
\begin{equation}\label{eq:EnrichedSkeinHomqRibbon}
    \mathbf{qRibbon}_\Ca([0, 1]^{\times 2})(m_W, n_{V}) := \mathbf{Ribbon}_\Ca([0, 1]^{\times 2})(m_W \times\{0\},  n_{V} \times\{0\})
\end{equation}
where $m_W \times\{0\}$ is the square with the objects $m_i^\pm$ uniformly distributed on the interval $[0,1]\times\{0\} \subset [0,1]^{\times 2}$, with framing in the direction of 
$[0,1]\times\{0\}$.
\end{definition}
In other words, $\mathbf{qRibbon}_\Ca([0, 1]^{\times 2})$ consists of (\(\Ca\)-colored) ribbon q-graphs in the cube ending on the bottom front and top front edges of the cube, together with parenthesizations. This category comes with a ribbon structure and a straightforward generalization of the ordinary Reshetikhin-Turaev functor \cite{RT, Turaev} to the enriched setting, as we will now see.

\begin{proposition}\label{prop:enrRT}
The category $\mathbf{qRibbon}_\Ca([0, 1]^2)$ is a $\Va$-enriched ribbon category. There is a ribbon\footnote{An obvious generalization of a ribbon functor, i.e. a braided, strong monoidal \(\Va\)-functor preserving twists.} $\Va$-functor
\begin{equation*}
T \colon \mathbf{qRibbon}_\Ca([0, 1]^2) \to \Ca 
\end{equation*}
such that on generators we have $T(m, +)= m$, $T(m, -)=m^*$, $T$ is strict monoidal and such that on morphisms it satisfies the (enriched analogue of the) conditions of \cite[Theorem 2.5]{Turaev}.
The functor \(T\) is called the \emph{enriched Reshetikin-Turaev functor}. 
\end{proposition}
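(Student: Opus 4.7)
The plan is to first equip $\mathbf{qRibbon}_\Ca([0,1]^2)$ with its natural ribbon structure coming from the geometry of the cube, and then construct the functor $T$ by the standard height-function decomposition of ribbon q-graphs. On objects, the monoidal product is concatenation of parenthesized words with the outer parenthesization grouping the two factors, and on morphisms it is side-by-side juxtaposition of ribbon q-graphs inside $[0,1]\times[0,1]\times[0,1]$. The associator is represented by an identity-shaped ribbon graph with a reparenthesization of its top versus its bottom; the braiding and the twist are a positive crossing of two strands and a $2\pi$-rotation of a single ribbon, respectively; and duals are realised by orientation reversal, with evaluation and coevaluation given by the standard cup and cap. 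Since each of these structure 1- and 2-cells involves no coupons, the corresponding $\Va$-object $\Ca((\Gamma))$ is an empty tensor product and hence canonically $1_\Va$, so the structure maps are given by canonical units $1_\Va \to \mathbf{qRibbon}_\Ca([0,1]^2)(-,-)$. Verification of the ribbon axioms then reduces to standard topological arguments using ambient isotopy in the cube.

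For the functor $T$, I would set $T(m_W)$ to be the object of $\Ca$ obtained by interpreting the parenthesized word $m_W$ via $\monprod[\Ca]$, sending $m^+$ to $m$ and $m^-$ to $m^*$ and iterating according to the parenthesization. For a morphism, I would pick a representative q-graph $\Gamma$ in generic position with respect to the vertical projection of $[0,1]^2\times [0,1]$, so that $\Gamma$ decomposes into a finite vertical stack of elementary horizontal slices, each a tensor product of identity strands, positive or negative crossings, cups, caps, $\pm$-twists, and coupons. To each non-coupon piece I would assign, via the braiding, twist, evaluation, coevaluation and unit of $\Ca$, a canonical morphism $1_\Va \to \Ca(-,-)$; to each coupon $c$ I would use the identity on $\Ca(c_{\text{in}}, c_{\text{out}})$. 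Tensoring across each slice in $\Va$ and composing across slices using the composition of $\Ca$ then produces a morphism
\begin{equation}
\Ca((\Gamma)) \;=\; \bigotimes_{c} \Ca(c_{\text{in}}, c_{\text{out}}) \longrightarrow \Ca(T(m_W), T(n_V)),
\end{equation}
and passing to the coproduct over isotopy classes $[\Gamma]$ gives the required $\Va$-morphism on Hom-objects.

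The main obstacle is well-definedness on isotopy classes, i.e.\ independence of both the generic projection chosen and of the particular height-function decomposition. The classical Reshetikhin-Turaev theorem, as formulated in \cite{Turaev}, reduces this to a finite list of local moves (Reidemeister-type moves for ribbon q-graphs together with reparenthesization moves), each of which corresponds to a coherence axiom of a ribbon category: $\Va$-naturality of the braiding and twist, pentagon and hexagon, triangle, the snake identities of Equation~\eqref{eq:snake-relations-enriched-rigid-cat}, and the ribbon equation of Definition~\ref{defn:ribbon-V-cat}. In the enriched setting each such move translates into an equality of $\Va$-morphisms rather than an elementwise equality of morphisms in $\Ca_0$, but since the ribbon axioms of Section~\ref{Sec:enriched ribbon cats} are themselves formulated $\Va$-enriched, every classical verification goes through verbatim. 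The tracking of parenthesizations in ribbon q-graphs is precisely what allows the construction to absorb the non-strict associator needed for examples such as the Drinfeld category $U(\mathfrak{g})\lmod^\Phi[[\hbar]]$. Finally, the strong monoidality and ribbon-compatibility of $T$ are immediate from the construction: horizontal juxtaposition of graphs is sent to $\monprod[\Ca]$, a positive crossing to the braiding $\beta$ of $\Ca$, and a $2\pi$-twist to the twist $\theta$ of $\Ca$.
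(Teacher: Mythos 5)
Your proposal is correct and follows essentially the same route as the paper's proof: the monoidal structure by juxtaposition of parenthesized words and graphs (with structure cells given by coupon-free graphs, hence canonical maps out of $1_\Va$), the functor $T$ defined componentwise on the coproduct over isotopy classes via Turaev's generator decomposition with coupons sent to the identity on $\Ca((\Gamma_c))$, and well-definedness reduced to the enriched ribbon axioms of $\Ca$ with associators inserted to account for non-strictness. No gaps.
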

\begin{proof}
In this proof we abbreviate objects \(m_W\) of \(\mathbf{qRibbon}_\Ca([0, 1]^2)\) by \(m\) for lighter notation. 
The tensor product of $\mathbf{qRibbon}_\Ca([0, 1]^2)$ is given by juxtaposition of parenthesizations.
On morphisms, the tensor product is induced by the morphism 
\begin{align*}
\coprod_{[\Gamma]\colon m \to n} \Ca((\Gamma)) \otimes  \coprod_{[\Gamma']\colon {m'} \to {n'}} \Ca((\Gamma'))
\cong  \coprod_{[\Gamma]\colon m \to n, [\Gamma']\colon {m'}\to {n'} } \Ca((\Gamma))\otimes \Ca((\Gamma')) \to 
\coprod_{[\Gamma'']\colon {mm'} \to {nn'}}\Ca((\Gamma''))
\end{align*}
given by the canonical inclusions $\Ca((\Gamma))\otimes \Ca((\Gamma'))\cong \Ca((\Gamma\Gamma')) \xrightarrow{i_{\Gamma\Gamma'}} \coprod_{[\Gamma'']\colon mm' \to nn'}\Ca((\Gamma''))$. Here, $\Gamma\Gamma'$ is just the juxtaposition of the two ribbon graphs.

The Reshetikhin-Turaev functor, being strict monoidal, sends a parenthesized sequence $m$ into the parenthesized tensor product of the objects (or their duals). On morphisms, it is defined as follows: As a map from the coproduct $\coprod_{[\Gamma]\colon m \to n} \Ca((\Gamma)) \to \Ca(T(m), T(n))$, it is given by the sequence of maps $T_{[\Gamma]}\colon \Ca((\Gamma)) \to \Ca(T(m), T(n))$. These maps are defined by decomposing the ribbon graph $\Gamma$ into compositions of generators of the category $\mathbf{qRibbon}_\Ca([0, 1]^2)$ as in \cite[Theorem 2.5, Lemma~3.4]{Turaev}, with suitable associators inserted in between. For example for the ribbon graph $\beta\colon (m,m') \to (m', m)$ given by the overcrossing of two strands going up, we define $T_\beta$ as the component $\beta_{m, m'} \colon 1_\Va \ra \Ca(m\otimes m', m'\otimes m)$; and similarly for other basic ribbon graphs such as the twist, associator, cap or cup. For a ribbon graph $\Gamma_c$ given by a single coupon \(c\), the $\Va$-morphism $T_c$ is the identity on $\Ca((\Gamma_c))$. 

For $T_{[\Gamma]}$ to be well-defined, it needs to satisfy the relations listed in \cite[Lemma~3.4]{Turaev}, which follows from the fact that $\Ca$ is a ribbon \(\Va\)-category. Note that, in contrast to loc. cit., the images of the relations in $\Ca$ will have associators inserted between the images of generating morphisms, since $\Ca$ is not assumed to be strict. For example,  the snake identities (in the ribbon category \(\Ca\)) from Equation \eqref{eq:snake-relations-enriched-rigid-cat} have an additional associator compared to \cite[((3.2.b,c)]{Turaev}. 
\end{proof}

Using the Reshetikhin-Turaev functor, we can define the enriched skein category. 
Here, a suitable coequalizer will replace the usual quotient by skein relations. 
\begin{definition}[Enriched skein category] \label{defn:EnrichedSkeinCatnew}
The \emph{enriched skein category of \(\Sigma\)}, $\mathbf{Sk}_\Ca(\Sigma)$, is the \(\Va\)-category with the same objects as $\mathbf{Ribbon}_\Ca(\Sigma)$. To define morphisms $m\to n$, consider the set of all triples $(\Gamma, cb, p)$ where $\Gamma\colon m\to n$ is a \(\Ca\)-colored ribbon q-graph, 
$cb \colon [0, 1]^3\hookrightarrow \Sigma\times[0, 1]$ an orientation-preserving embedding of the cube such that $\Gamma$ intersects the cube transversely and only at the bottom and the top front edges, 
and $p$ is parenthesizations of these intersection points on the bottom and top. From this we form a diagram in \(\Va\) with two sets of objects: $\{s_\Gamma = \Ca((\Gamma))\}$ for all ribbon graphs $\Gamma\colon m\to n$ and $\{r_{(\Gamma, cb, p)} = \Ca((\Gamma))\}$ for all triples as above. The arrows in this diagram are given as follows: for each triple $(\Gamma, cb, p)$, there is an arrow $r_\Gamma\xrightarrow{\text{id}} s_\Gamma$, and an arrow $r_\Gamma\xrightarrow{T_{\Gamma\cap cb}} s_{\Gamma_{cb, p}}$, where $\Gamma_{cb, p}$ is the ribbon graph obtained by replacing the cube in $\Gamma$ by the single parenthesized coupon given by the cube. The arrow is given by the component of the Reshetikhin-Turaev functor $T_{\Gamma\cap cb}$ on the interior of the cube tensored with the identity on the exterior of the cube:
\begin{equation} \label{eq:SkCoequalizer}
\begin{tikzcd}
	{\{r_{(\Gamma, cb, p)} = \Ca((\Gamma))\}_{(\Gamma, cb, p)}} && {\{s_{\Gamma'} = \Ca((\Gamma'))\}_{\Gamma'}}
	\arrow["{\{\id_{\Ca((\Gamma))}\}_{(\Gamma, cb, p)}}", shift left=2, from=1-1, to=1-3]
	\arrow["{\{T_{\Gamma\cap cb}\}_{(\Gamma, cb, p)}}"', shift right=2, from=1-1, to=1-3]
\end{tikzcd}\end{equation}
Then, $\mathbf{Sk}_\Ca(\Sigma)(m, n)\in \Va$ is defined as the colimit over the diagram \eqref{eq:SkCoequalizer}. 
\end{definition}
\begin{remark}\label{rem:isotopy-classes-automatic-from-RT-functor}
    We do not impose invariance under isotopies of $\Gamma$ as an additional relation above because isotopy invariance already follows from the fact that the Reshetikhin-Turaev functor is isotopy invariant and that each isotopy can be seen as a composition of isotopies inside boxes \cite[Corollary 1.3]{EK}.  For $\Va = \operatorname{Set}, R\lmod$, we get back the definition of the skein category (with parenthesizations). That is, the colimit \eqref{eq:SkCoequalizer} quotients by the usual skein relations of \cite[Definition 1.9.]{Cooke19}. Note that, using the skein relations, we can change the parenthesizations arbitrarily: 
    
\begin{equation}
  \begin{overpic}[tics=10]{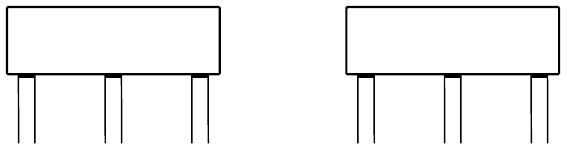}
        \put(48.5,18){$=$}
        \put(19.3, 19){$f$}
        \put(69.0, 19){$f\circ \alpha^{-1}_{m_1, m_2, m_3}$}
        \put(2.7,14.8){$(\bullet$}
        \put(19.1,14.8){$\bullet)$}
        \put(34.4,14.8){$\bullet\phantom{)}$}
        \put(63.8,14.8){$\bullet$}
        \put(77.5,14.8){$(\bullet$}
        \put(94.4,14.8){$\bullet)$}
        \put(7, 6){$m_1$}
        \put(22.1, 6){$m_2$}
        \put(37.7, 6){$m_3$}
        \put(67, 6){$m_1$}
        \put(82.1, 6){$m_2$}
        \put(97.7, 6){$m_3$}
  \end{overpic}
\end{equation}
    Thus, if $\Ca$ is strict, the functor from the parenthesized skein category to the non-parenthesized skein category which forgets the parenthesizations of coupons is an isomorphism of categories.
\end{remark}

We now investigate an example of an enriched skein category, i.e.\ for \(\Sigma=S^2\). In \cite[Proposition 4.4]{GJS} the free cocompletion of the (ordinary/unenriched) skein category of the sphere is computed to be the Müger center of the free cocompletion of the corresponding ribbon category. It would be interesting to understand in what sense this carries over to the enriched setting. 

\begin{example}
For a $\Va$-enriched ribbon category $\Ca$ we describe the skein category associated to the 2-sphere ${S}^2$. 
We begin by looking at the functor $\Ca \cong \SkC{\mathbb{D}^2}\to \SkC{S^2}$ corresponding to an embedding of $\mathbb{D}^2$ into $S^2$ which we assume to avoid the north pole of $S^2$. This functor is clearly essentially surjective and full, hence $\SkC{S^2}$ is a quotient of $\SkC{\mathbb{D}^2}$. The relations not already holding in $\Ca$ are induced by moving a ribbon through the north pole. The new relation is given by
\begin{equation}\label{eq:sphererelation}
    \begin{overpic}[tics=10]{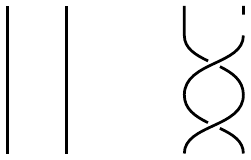}
        \put(48, 28){$\sim$}
        \put(95.5, 51){$\theta^2$}
    \end{overpic},
\end{equation}
where $\theta$ is the twist. This relation is imposed by quotienting by the categorical ideal generated by it; \emph{not} by the monoidal ideal, i.e. it doesn't hold fully locally. The skein category of the sphere could be therefore called the \emph{twisted Müger cocenter} of $\Ca$. 
\end{example}

\begin{example}[Skein categories introduce torsion]\label{Ex: torsion}
Let $\Va=\widehat{\Ch\lmod}$ and $\Ca=  U(\mathfrak{g})\lmod^\Phi[[\hbar]]$ 
and consider the corresponding enriched skein category of the sphere, i.e.\ $\mathbf{Sk}_{U(\mathfrak{g})\lmod^\Phi[[\hbar]]}{(S^2)}$. The relation \eqref{eq:sphererelation}, with trivial left strand, implies 
\begin{align}
0= \theta^2_{X}- \id_X = \hbar t_{X, X} + O(\hbar^2) \ \ . 
\end{align}
Hence the element $t_{X,X}+O(\hbar)$ is torsion in $\mathbf{Sk}_{U(\mathfrak{g})\lmod^\Phi[[\hbar]]}{(S^2)}$. It is also non-zero because its image under the canonical map to $\mathbf{Sk}_{U(\mathfrak{g})\lmod}{(S^2)}$ is $t_{X,X}$ which is non-zero\footnote{The skein category of a symmetric monoidal category $\Ca_\textnormal{SMC}$ over a sphere is $\Ca_\textnormal{SMC}$.}. This example shows that even for a topologically free category $\Ca$ the skein category is not topologically free in general. 
\end{example}

Note that in Proposition \ref{prop:freeskeinmodules} we prove that given a surface with non-trivial boundary it does follow that the enriched skein category is topologically free.

\subsubsection{Monoidal structure on \(\SkC{C\times [0,1]}\)}
We here explain how, for a 1-manifold \(C\), the \(\Va\)-category \(\SkC{C\times [0,1]}\) is monoidal. This follows from the functoriality of the construction with respect to embeddings, which we briefly explain first. 
\begin{observation}
    An embedding of surfaces \(p\colon \Sigma \ra \Pi\) induces a \(\Va\)-functor 
    \begin{equation}
    P\colon \SkC{\Sigma} \ra \SkC{\Pi}
    \end{equation}
    of enriched skein categories. On objects, \(P\) sends points \(m_i\) on \(\Sigma\) with framing, orientation and decoration to the point \(p(m_i)\) in \(\Pi\) decorated by the same object of \(\Ca\) and framing and orientation induced by the embedding \(p\). 
    To define \(P\) on morphisms, let \(m,n\) denote two objects of \(\SkC{\Sigma}\). The $\Va$-morphism \(\SkC{\Sigma}(m, n) \to \SkC{\Pi}(P(m), P(n))\) is specified using the universal property of the colimit \eqref{eq:SkCoequalizer}: it corresponds to the collections of inclusions of  \(\Ca((\Gamma)) = \Ca((p(\Gamma)))\) into \(\SkC{\Pi}(P(m), P(n))\).
\end{observation}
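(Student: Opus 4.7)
The plan is to verify two things: first, that the proposed assignment on morphisms descends from a natural cocone on $\{s_\Gamma\}_\Gamma$ to the coequalizer defining $\SkC{\Sigma}(m,n)$ in \eqref{eq:SkCoequalizer}; and second, that the resulting family of maps assembles into a $\Va$-functor.

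First I would extend the given embedding to $p \times \id_{[0,1]} \colon \Sigma \times [0,1] \to \Pi \times [0,1]$ and observe that this pushforward sends any $\Ca$-colored ribbon q-graph $\Gamma \colon m \to n$ to a $\Ca$-colored ribbon q-graph $p(\Gamma) \colon P(m) \to P(n)$ with identical combinatorial data: the same coupons, colorings, parenthesizations, framings, and orientations, differing only in their embedding into the ambient cylinder. In particular, there is a canonical identification $\Ca((\Gamma)) = \Ca((p(\Gamma)))$, and composing with the coproduct inclusion of the $p(\Gamma)$-summand into $\SkC{\Pi}(P(m), P(n))$ defines a cocone from the objects $\{s_\Gamma\}_\Gamma$ appearing on the right-hand side of \eqref{eq:SkCoequalizer}.

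The step I expect to be the sole substantive point is showing that this cocone coequalizes the two parallel arrows of \eqref{eq:SkCoequalizer}. Given a triple $(\Gamma, cb, \mathfrak{p})$ with $cb \colon [0,1]^3 \hookrightarrow \Sigma\times[0,1]$ a cube embedding and $\mathfrak{p}$ the parenthesization of the intersection points, the composite $(p\times\id_{[0,1]}) \circ cb$ is again an orientation-preserving cube embedding in $\Pi \times [0,1]$, meeting $p(\Gamma)$ transversely along precisely the pushforward of $\Gamma \cap cb$, with the same parenthesization data $\mathfrak{p}$. Since the enriched Reshetikhin--Turaev functor $T$ of Proposition \ref{prop:enrRT} depends only on the intrinsic $\Ca$-colored ribbon q-graph inside the cube, the two morphisms $T_{\Gamma\cap cb}$ and $T_{p(\Gamma)\cap ((p\times\id)\circ cb)}$ coincide under the identifications above. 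Hence the cube-replacement operation in $\SkC{\Sigma}$ is intertwined with the one in $\SkC{\Pi}$, and the universal property of the colimit produces the desired $\Va$-morphism $\SkC{\Sigma}(m,n) \to \SkC{\Pi}(P(m), P(n))$.

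Finally, functoriality follows from the analogous property at the level of ribbon q-graphs: the empty q-graph is sent to the empty q-graph (so identities go to identities), and the stacking that defines composition is preserved by $p \times \id_{[0,1]}$ because the $[0,1]$-coordinate is untouched and the tensor factors $\Ca((\Gamma))$ are identified canonically under pushforward. The remaining checks are pure bookkeeping: the framings and orientations transport correctly because $p$ is an orientation-preserving embedding of surfaces, and the parenthesizations at coupons are unchanged by construction.
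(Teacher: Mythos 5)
Your argument is correct and takes essentially the same route the paper intends: the Observation is stated by appeal to the universal property of the colimit \eqref{eq:SkCoequalizer}, and the substantive point you isolate --- that a cube embedding $cb$ in $\Sigma\times[0,1]$ pushes forward to the cube embedding $(p\times\id)\circ cb$ in $\Pi\times[0,1]$ with the same intrinsic intersection data, so that $T_{\Gamma\cap cb}$ and $T_{p(\Gamma)\cap((p\times\id)\circ cb)}$ agree under the canonical identification $\Ca((\Gamma))=\Ca((p(\Gamma)))$ --- is exactly the verification needed for the cocone to descend. The only quibble is that the identity on an object $m$ is represented by the coupon-free graph of vertical strands over the points of $m$ (whose $\Ca((\Gamma))$ is the empty tensor product $1_\Va$), not by the empty q-graph; this does not affect the argument.
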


\begin{definition}
\label{ex:MonStruCTimesInt} 
For $\Sigma = C \times [0, 1]$ as above, the \(\Va\)-enriched skein category \(\SkC{C\times [0,1]}\) has a monoidal structure defined as follows. Consider the two embeddings \(L, R \colon C\times [0,1] \ra C\times [0,1]\) given by \((c, s) \stackrel{L}{\mapsto} (c, \frac{s}{2})\) and \((c, s) \stackrel{R}{\mapsto} (c, \frac{1}{2} + \frac{s}{2})\). That is, \(L\) embeds \(C\times [0,1]\) into the left half of \(C\times [0,1]\) and \(R\) embeds it into the right half. For later reference let \(l\) respectively \(r\) denote the isotopies between the identity embeddings of \(C\times [0,1]\) into itself and \(L\) respectively \(R\). The monoidal product on \(\SkC{C\times [0,1]}\) is induced by the embedding
    \begin{align*}
        I \colon C\times [0,1] \sqcup C\times [0,1] \xhookrightarrow{L \sqcup R} C\times [0,1].
    \end{align*}
    We denote the monoidal product of two objects \(a\) and \(b\) by \(a\ast b\). The monoidal unit is the empty set. 
\end{definition}

\subsection{Excision}\label{sec:excisionenrichedsetting}
In this section we prove excision for enriched skein categories. The general ideas behind the proof follow those of \cite{Cooke19}, where excision is proven for \(\Vect\)-enriched skein categories. 

Since the proof of excision mainly exploits the topological features of skein categories, many arguments directly carry over from the usual setting. Because of this we will refrain from giving equally explicit descriptions of some of the topological arguments (i.e.~spelling out all isotopies) and rather explain the arguments pictorially. We mostly follow the notation in \cite{Cooke19}, so the interested reader can easily fill in the missing details. 

A formal consequence of the excision property, and the main result of this section, is:
\begin{theorem}\label{Thm: FH=Sk} 
Let $\Ca$ be a $\Va$-enriched ribbon category.
The skein category computes factorization homology 
\begin{align}
    \SkC{\Sigma}\cong \int_\Sigma \Ca \in \Va\mhyphen \Cat
\end{align}
\end{theorem}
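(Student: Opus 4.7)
The plan is to invoke the axiomatic characterization of factorization homology recorded in Theorem \ref{thm:properties-for-FH}: it suffices to exhibit $\SkC{-}$ as a symmetric monoidal functor $\Man_2^{\operatorname{or},\sqcup}\to \VCat$ whose restriction to $\Disk_2^{\operatorname{or},\sqcup}$ is equivalent to $\Ca$ as a framed $\E_2$-algebra, and which satisfies $\otimes$-excision. Functoriality along oriented embeddings is the observation preceding Definition \ref{ex:MonStruCTimesInt}, and the symmetric monoidal structure with respect to disjoint union follows because a ribbon q-graph in $\Sigma_1\sqcup\Sigma_2$ decomposes uniquely as a pair of ribbon q-graphs in each component, so that Hom-objects split as $\otimes_{\Va}$-tensor products.

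The first step is to identify the restriction of $\SkC{-}$ to oriented disks with the ribbon category $\Ca$. Concretely, for $\Sigma=\mathbb{D}^2$ one has a canonical $\Va$-functor $\SkC{\mathbb{D}^2}\to \Ca$ defined on objects by $\{m_i\}_{i\in I}\mapsto \bigotimes_{i\in I} m_i^{\varepsilon_i}$ (with some chosen bracketing and ordering along the front edge of a cube containing the disk) and on morphisms by an application of the enriched Reshetikhin--Turaev functor from Proposition \ref{prop:enrRT}. That this functor is an equivalence is essentially the universal property of $\Ca$ as a ribbon category: it is clearly essentially surjective, and full faithfulness follows because the coequalizer presentation of morphism objects in Definition \ref{defn:EnrichedSkeinCatnew} exactly imposes the ribbon relations, so $T$ descends to an isomorphism on Hom-objects. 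The embeddings $L,R$ and the generators of $\Disk_2^{\operatorname{or}}$ depicted in Figure \ref{fig:diskalgebras} then match, under this identification, with the monoidal product, braiding, and ribbon twist of $\Ca$: the multiplication in $\SkC{\mathbb{D}^2}$ is (up to canonical isotopy) the juxtaposition of ribbon q-graphs from Definition \ref{ex:MonStruCTimesInt}, the loop given by rotating a disk through $2\pi$ is sent to the ribbon twist $\theta$ of $\Ca$, and the half-braid embedding is sent to $\beta$.

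The second step is $\otimes$-excision for $\SkC{-}$, which is exactly the statement of Theorem \ref{thm:Excision} established earlier in this section. This is the main technical input: one must show that for a collar gluing $\Sigma = \Sigma_-\cup_{N\times(-1,1)}\Sigma_+$, the canonical functor
\[
\SkC{\Sigma_-} \otimes_{\SkC{N\times(-1,1)}} \SkC{\Sigma_+} \longrightarrow \SkC{\Sigma}
\]
is an equivalence in $\VCat$. For the purpose of the present theorem this can be treated as a black box, but we note that its proof combines the description of the relative tensor product via the enriched Tambara presentation (Theorem \ref{thm:eqTambaraBar}) with an isotopy argument showing that every ribbon q-graph in $\Sigma$ can be pushed off the gluing collar up to the relations generated by the $\Ca$-action of $\SkC{N\times(-1,1)}$, so that the induced functor is essentially surjective and fully faithful.

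Putting these two ingredients together, Theorem \ref{thm:properties-for-FH} supplies a unique (up to equivalence) symmetric monoidal functor $\Man_2^{\operatorname{or},\sqcup}\to \VCat$ restricting to the framed $\E_2$-algebra $\Ca$ on disks and satisfying excision, and that functor is $\int_{-}\Ca$. Since $\SkC{-}$ enjoys both properties, the equivalence $\SkC{\Sigma}\simeq \int_\Sigma \Ca$ follows for every oriented surface $\Sigma$. The only genuine obstacle in this program is the excision step; once that is in hand, the remainder is the disk identification above together with an invocation of uniqueness.
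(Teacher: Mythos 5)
Your proposal is correct and follows exactly the paper's own argument: the paper deduces the theorem from the characterization of factorization homology in Theorem \ref{thm:properties-for-FH}, the excision result Theorem \ref{thm:Excision}, and the identification $\SkC{\mathbb{D}^2}\cong\Ca$. The additional detail you supply on the disk identification and the monoidal structure is consistent with (and fills in what the paper leaves implicit in) its one-line proof.
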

\begin{proof}
    By Theorem \ref{thm:properties-for-FH} this is a direct consequence of Theorem \ref{thm:Excision} and \(\SkC{\mathbb{D}^2}\cong \Ca\). 
\end{proof}

\subsubsection{Module structure on (enriched) skein categories}
Let \(C\) be a 1-manifold, and \(M\) be a surface with boundary. The first building block for the excision-proof is to understand how \(\SkCM\) can be equipped with a \(\SkC{C\times [0,1]}\)-module structure through the data of an appropriate embedding of \(C\) into \(\partial M\).

\begin{definition} \cite[Definition 1.17]{Cooke19} \label{defn:ThickEmbedding}
    A \emph{thickened right embedding} of \(C\) into the boundary of \(M\) consists of 
    \begin{enumerate}
        \item An embedding \(\Xi\colon C\times (-\varepsilon, 1] \hookrightarrow M\), for some \(\varepsilon >0\), such that its restriction to \(C\times \{1\}\) gives an embedding \(\xi\colon C\hra \partial M \). We define the restriction \(\Phi:=\Xi_{|_{[0,1]}}\) and the restriction \(\mu:=\Xi_{|_{C\times \{0\}}}\).
        \item An embedding \(E\colon M \ra M\) such that \(\textrm{Im}(E)\) is disjoint from \(\trm{Im}(\Phi)\).
        \item An isotopy \(\lambda\colon M\times [0,1] \ra M\) from \(\trm{Id}_{M}\) to \(E\) which is trivial outside of \(\trm{Im}(\Xi)\).
    \end{enumerate}
\end{definition}
Similarly one defines a \emph{thickened left embedding} with the only alteration being that \(\Xi\) is an embedding \(\Xi \colon C\times [0, 1+\varepsilon)\hra M\), for some \(\varepsilon>0\), such that restricted to \(C\times \{0\}\) it is an embedding \(\xi \colon C\hra \partial M\).

\begin{example} \label{ex:Delta}
\normalfont
Figure \ref{fig:Excision_Fig1} illustrates the data of a thickened right embedding of \(C=S^1\) into the boundary of the once-punctured torus \(M=\Sigma_{1,1}\). Condition (2) and (3) from Definition \ref{defn:ThickEmbedding} together imply that there exists a \(\delta>0\) such that \(-\varepsilon < -\delta < 0 \) as illustrated in Figure \ref{fig:Excision_Fig1}. 
\end{example}

\begin{figure}[h!]
\centering
\begin{overpic}[scale=.2,tics=10]{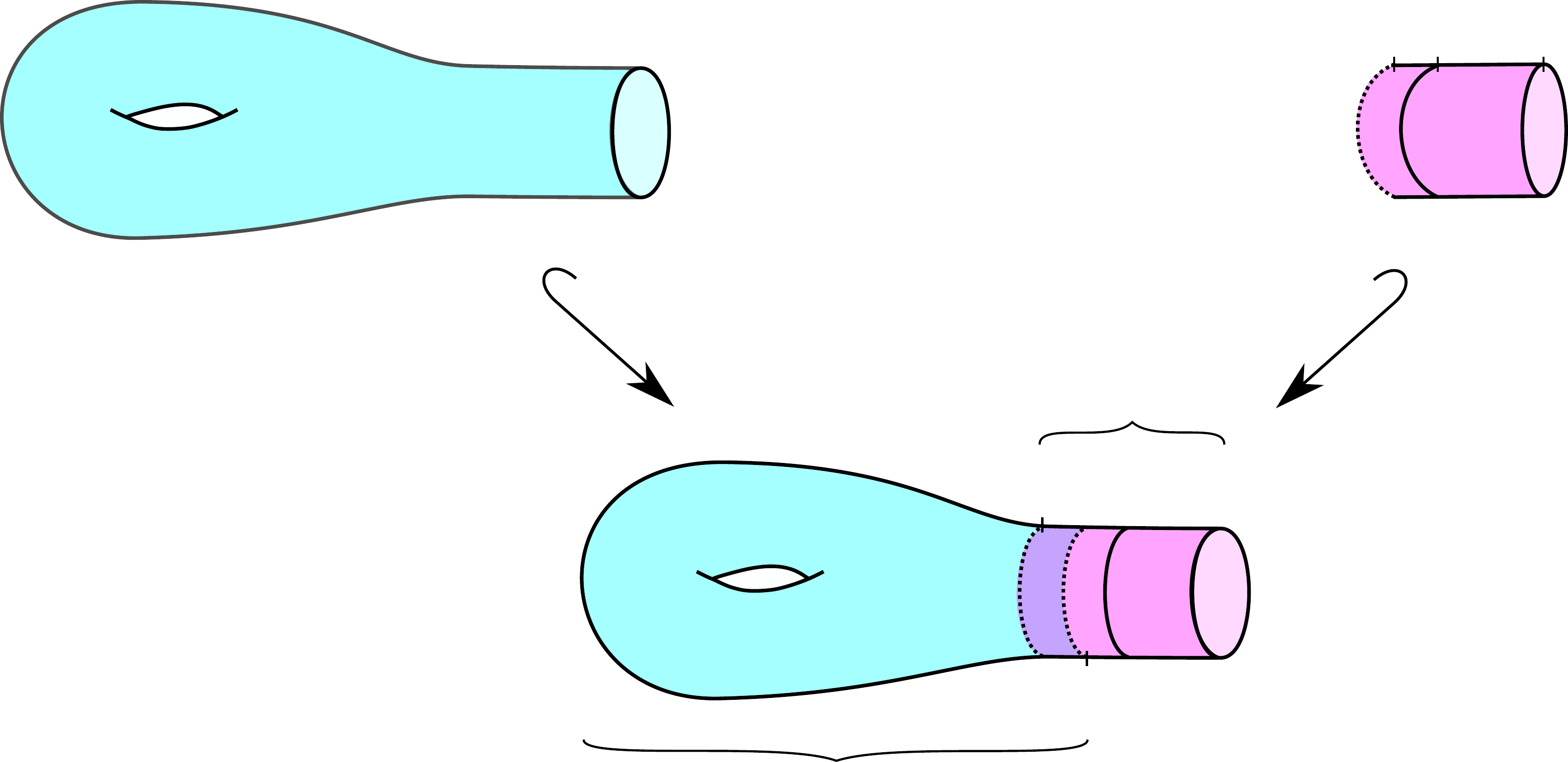}
\put(5,36.5){\small{$M$}}
\put(86,47){\small{$-\varepsilon$}}
\put(91,47){\small{$0$}}
\put(97.5,47){\small{$1$}}
\put(64,17){\small{$-\varepsilon$}}
\put(66.5,3){\small{$-\delta$}}
\put(68,24){\small{Im($\Xi$)}}
\put(50,-5){\small{Im($E$)}}
\put(33,24){\small{$E$}}
\put(90,24){\small{$\Xi$}}
\end{overpic}
\vspace{0.5cm}
	\caption{Illustration of a thickened right embedding of \(S^1\) into the once-punctured torus \(\Sigma_{1,1}\).}
	\label{fig:Excision_Fig1}
\end{figure}

\begin{definition}
\label{rem:IsotopyGivesRibbon} 
    Let \(f,g: M \ra M\) be two embeddings and let \(\sigma\colon M\times[0,1] \ra M\) be an isotopy from \(f\) to \(g\). For any object \(m\in \SkCM\) the isotopy \(\sigma\) induces a map
    \begin{align*}
        r_{\sigma, m} \colon 1_\Va \ra \SkCM(f(m), g(m)).
    \end{align*}
    That, is, let \(\Gamma_\sigma\) denote the actual ribbon traced out by the isotopy \(\sigma\). The map \(r_{\sigma, m}\) is induced by sending \(1_\Va \to \Ca((\Gamma_\sigma)) = 1_\Va\). We talk about both \(\Gamma_\sigma\) and the induced map \(r_{\sigma, m}\) as \emph{the ribbon corresponding to the isotopy \(\sigma\)}.  
\end{definition}

There are two instances of isotopies giving rise to ribbons that will play an important role later. 
    
\begin{example} \label{ex:RibbonTangleFromLambda} \normalfont
    Given a thickened right embedding \((\Xi, E, \lambda)\) and an arbitrary object \(m\in \SkCM\) the isotopy \(\lambda\) will give a ribbon tangle \(r_{\lambda,m}\colon 1_\Va \ra \SkCM(m, E(m))\). This is illustrated in Figure \ref{fig:Excision_Fig2} below for \(C=S^1\), \(M=\Sigma_{1,1}\) and the object \(m=\{m_1, m_2, m_3, m_4\}\). 
    The isotopy \(\lambda\) traces out identity ribbons for the (blue) objects \(m_1\) and \(m_2\), while it gives ribbons that continuously move the (red) objects \(m_3\) and \(m_4\) into \(\im(E)\cap \im(\Xi)\).  
\end{example}

\begin{example} \label{ex:UnitorRibbon}
From the monoidal structure on \(\SkC{C\times[0,1]}\), introduced in Definition \ref{ex:MonStruCTimesInt}, we have two isotopies \(l\) and \(r\) retracting \(C\times [0,1]\) to the left respectively right half of \(C\times [0,1]\). The ribbon corresponding to the isotopy \(l\) for the object \(a=\{a_1, a_2\}\) is depicted in Figure \ref{fig:Excision_Fig3}. The ribbon coming from the isotopy \(r\) is analogous, only retracting to the right instead of left. 
\end{example}

\begin{figure}[H]
    \centering
    \vspace{0.25cm}
    \begin{subfigure}[b]{0.5\textwidth}
         \centering
         \begin{overpic}[scale=.2,tics=10]{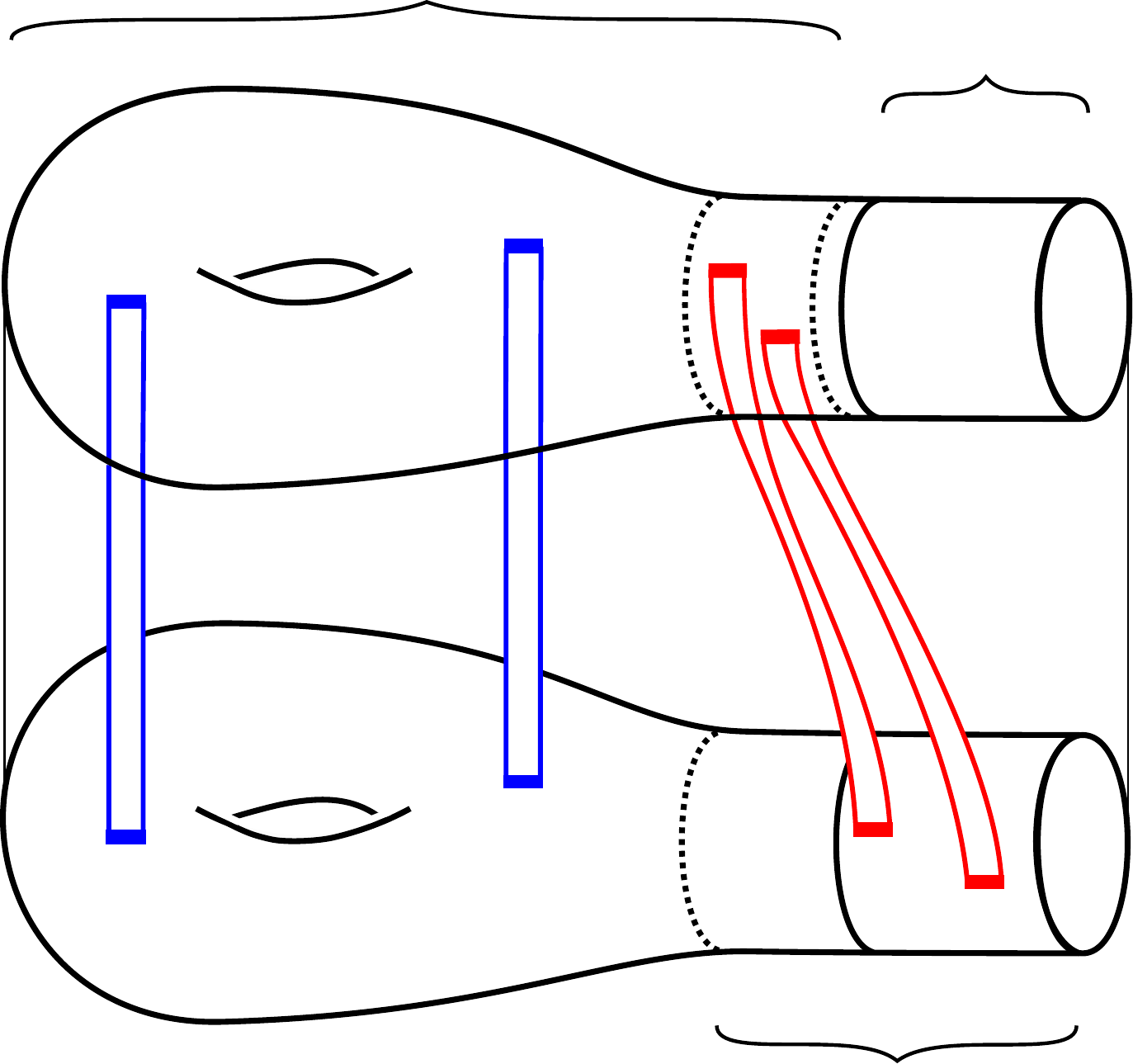}
         \put(74,-7.5){\small{Im($\Xi$)}}
         \put(80,91){\small{Im($\Phi$)}}
         \put(31,99){\small{Im($E$)}}
         \put(8.25,15.5){\tiny{$m_1$}}
         \put(42.75,20.5){\tiny{$m_2$}}
         \put(74.5,16.5){\tiny{$m_3$}}
          \put(83.5,12.5){\tiny{$m_4$}}
         \end{overpic}
         \vspace{0.5cm}
         \caption{The ribbon \(r_{\lambda, m}\) for $m = \{m_1,  m_2, m_3, m_4\}$ coming from the isotopy \(\lambda\).}
         \label{fig:Excision_Fig2}
     \end{subfigure}
     \hfill
     \begin{subfigure}[b]{0.4\textwidth}
         \centering
         \begin{overpic}[scale=.25,tics=10]{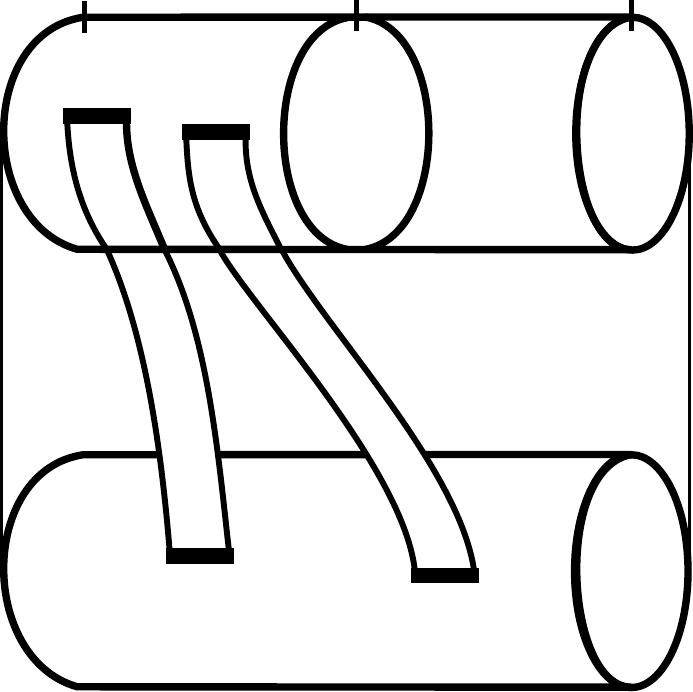}
         \put(10,106){\small{$0$}}
         \put(48.75,106){\small{$\frac{1}{2}$}}
         \put(89.25,106){\small{$1$}}
         \put(25,11.25){\tiny{$a_1$}}
         \put(60,8.5){\tiny{$a_2$}}
         \end{overpic}
         \vspace{1cm}
         \caption{The ribbon \(r_{l,a}\) coming from the left retraction, i.e.~multiplying an object \(a = \{a_1, a_2\}\) with the empty set \(\emptyset\).}
         \label{fig:Excision_Fig3}
     \end{subfigure}
        \caption{Two examples of ribbons arising from an isotopy.}
        \label{fig:Excision_Fig2And3}
\end{figure}

\begin{definition} \cite[Definition 1.19]{Cooke19} \label{defn:embedding_module}
    Given a thickened right embedding \((\Xi, E, \lambda)\) of \(C\) into the boundary of \(M\), \(\SkCM\) is a right \(\SkC{C\times [0,1]}\)-module with action 
    \begin{align*}
        \lhd \colon \SkCM \catprod \SkC{C\times [0,1]} \longrightarrow \SkCM
    \end{align*}
    induced from the embedding of surfaces 
    \begin{align*}
        M \sqcup \left( C\times [0,1] \right) &\xrightarrow{E\sqcup \Phi} M
    \end{align*}
    The associator is a \(\Va\)-natural transformation, i.e.~we need to specify maps \(\beta_{a, m, b} \colon 1_\Va \rightarrow \SkC{M}((m\lhd a) \lhd b, m \lhd (a\ast b))\) for all \(a, b \in \SkC{C\times [0,1]}\) and \(m\in\SkC{M}\). These maps are given by sending \(1_\Va \to \Ca((\Gamma_\beta)) = 1_\Va\), where \(\Gamma_\beta\) is the ribbon in Figure \ref{fig:Excision_Fig4}. Similarly, the unitor \(\eta_m \colon 1_\Va \rightarrow \SkC{M}(m\lhd \emptyset, m)\) is defined through the map sending \(1_\Va \to \Ca((\Gamma_\eta)) = 1_\Va\) where \(\Gamma_\eta\) is the inverse of the ribbon from Example \ref{ex:RibbonTangleFromLambda}. 
\end{definition}

\begin{figure}[H]
 \centering
 \begin{overpic}[scale=.15,tics=10]{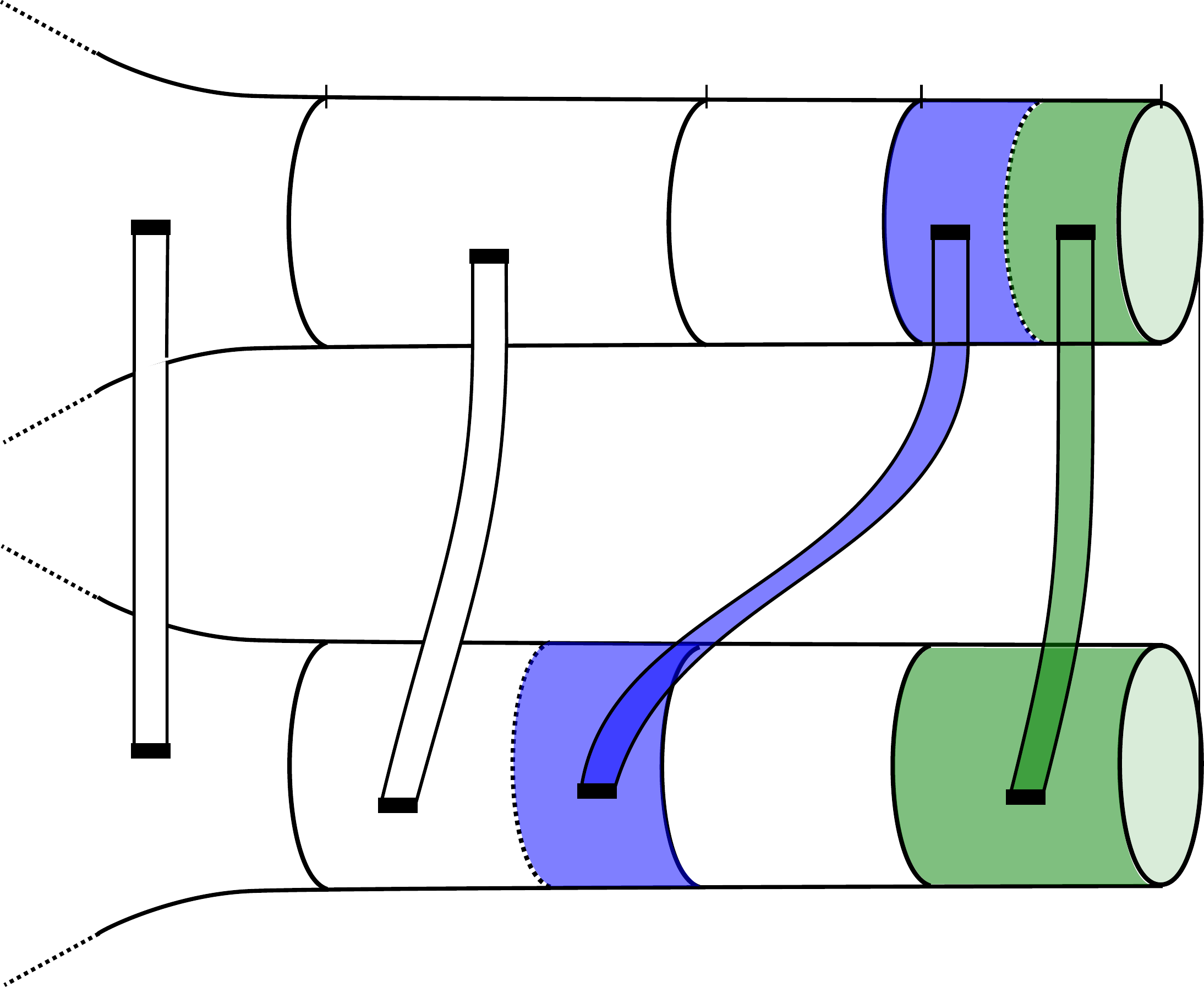}
         \put(22,78){\small{$-\varepsilon$}}
         \put(54,78){\small{$-\delta$}}
         \put(75,78){\small{$0$}}
         \put(95,78){\small{$1$}}
        \put(0,65){\tiny{$E(x) = x$}}
        \put(-2,15){\tiny{$E^2(x) = x$}}
        \put(29,10){\tiny{$E^2(y)$}}
        \put(37,63.5){\tiny{$E(y)$}}
        \put(15,42){\tiny{$r^{-1}_{\lambda, m\lhd \emptyset \lhd \emptyset}$}}
        \put(47,44){\textcolor{blue}{\tiny{$r^{-1}_{\lambda,\emptyset \lhd(a \ast \emptyset)}$}}}
        \put(73,32){\textcolor{darkspringgreen}{\tiny{$r_{r,\emptyset \lhd b}$}}}
 \end{overpic}
	\caption{The ribbon \(\Gamma_\beta\) corresponding to the associator \(\beta_{m,a,b}\) for \(m= x\otimes y\).}
	\label{fig:Excision_Fig4}
\end{figure}

Equivalently, the data of a thickened left embedding \((\Xi, E, \lambda)\) of \(C\) into the boundary of a surface \(N\) defines a left \(\SkC{C\times [0,1]}\)-module structure on \(\SkC{N}\).

\subsubsection{Excision of enriched skein categories}
Excision of ordinary skein categories was conjectured by Johnson-Freyd \cite{Theo}, based on ideas of Walker \cite{Walker, MW11}. 
A similar excision result for universal braid categories in \(\mathrm{Set}\) is proven by Yetter in \cite{Yetter92}. 
The topological parts of this proof carries over to the proof of excision for ordinary skein categories in \cite[Theorem 1.22]{Cooke19}. 
We also use the same topological arguments here when proving excision of enriched skein categories. 

\begin{theorem}[Excision of enriched skein categories] \label{thm:Excision}
    Let \(C\) be a 1-manifold with a thickened right embedding \((\Xi_M, E_M, \lambda_M)\) into the boundary of the surface \(M\) and a thickened left embedding \((\Xi_N, E_N, \lambda_N)\) into the boundary of the surface \(N\). Denoting \(A \coloneqq C\times [0,1]\), let \(\SkC{M} \relcatprod{\SkC{A}} \SkC{N}\) be the \(\Va\)-enriched relative Tambara tensor product (defined in Section \ref{TambaraTensorSection}), and let 
    \begin{align*}
        M\sqcup_A N \coloneqq M\sqcup N \bigg/\bigg\{ \Phi_M(c,i) \sim \Phi_N(c,i) \quad\forall\, c\in C,\,\,i\in [0,1]\bigg\} \ \ .
    \end{align*} 
    The thickened embeddings define a \(\Va\)-functor 
    \begin{align}
        F \colon \SkC{M} \relcatprod{\SkC{A}} \SkC{N} \stackrel{\simeq}{\longrightarrow} \SkC{M\sqcup_A N}
    \end{align}
    which is an equivalence of \(\Va\)-categories.
\end{theorem}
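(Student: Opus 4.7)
The plan is to adapt Juliet Cooke's proof of excision for $\C$-linear skein categories (\cite[Theorem 1.22]{Cooke19}) to the enriched setting, replacing pointwise set-theoretic manipulations with arguments at the level of the colimits in $\Va$ defining the Hom-objects. First, I would construct the functor $F$ by appealing to the universal property of the Tambara relative tensor product (Equation \eqref{eq:Univ-prop-tambara-rel-prod}): it suffices to exhibit an $\SkC{A}$-balanced $\Va$-functor $\tilde F \colon \SkC{M} \catprod \SkC{N} \to \SkC{M \sqcup_A N}$. On objects, $\tilde F(m,n) = E_M(m) \sqcup E_N(n)$; on morphisms, $\tilde F$ is induced by the embeddings $E_M$ and $E_N$ of the thickened embedding data. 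The balancing $\Va$-natural isomorphism $\tilde F(m \triangleleft a, n) \cong \tilde F(m, a \triangleright n)$ is built from the ribbons $r_{\lambda_M}$ and $r_{\lambda_N}$ of Example \ref{ex:RibbonTangleFromLambda} that slide $a$ across the gluing region, and its coherence with associators and unitors follows from tracking the ribbons $\Gamma_\beta$ and $\Gamma_\eta$ of Definition \ref{defn:embedding_module}.

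Next, essential surjectivity of $F$ is handled topologically: given any object $p \in \SkC{M \sqcup_A N}$, isotope the points of $p$ away from the gluing locus $\Phi_M(C) = \Phi_N(C)$ into $E_M(M) \sqcup E_N(N)$. The resulting isotopy produces an invertible ribbon in $\SkCM[M \sqcup_A N]$ witnessing an isomorphism to an object of the form $\tilde F(m,n)$. This argument is identical to the one in \cite{Cooke19} and does not depend on the choice of enriching category.

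The main work is fully faithfulness, i.e.\ showing that for all $(m_1,n_1), (m_2,n_2)$ the $\Va$-morphism
\begin{equation}
(\SkC{M} \relcatprod{\SkC{A}} \SkC{N})\bigl((m_1,n_1),(m_2,n_2)\bigr) \longrightarrow \SkC{M \sqcup_A N}\bigl(F(m_1,n_1), F(m_2,n_2)\bigr)
\end{equation}
is an isomorphism. The strategy is to construct an explicit inverse. Given a $\Ca$-colored ribbon q-graph $\Gamma$ representing a morphism on the right, perturb $\Gamma$ to be transverse to the thickened gluing cylinder $A \times [0,1]$ and then cut along $A$ to obtain a pair $(\Gamma_M, \Gamma_N)$ together with an identification of the endpoints along $A$, which assembles into a generator morphism in the Tambara tensor product via the maps $\iota$ of \eqref{eq:iota} and \eqref{eq:iotaInverse}. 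I would check that: (i) different transverse perturbations of $\Gamma$ give the same class, using isotopy through $A$ combined with the Tambara relations of Definition \ref{df:TambaraTensProd} (isomorphism, naturality, associator/unitor compatibility); (ii) the skein coequalizer relations \eqref{eq:SkCoequalizer} in $\SkCM[M \sqcup_A N]$ are sent to identifications already present in $\SkC{M} \relcatprod{\SkC{A}} \SkC{N}$, by cutting any Reshetikhin-Turaev-evaluating cube so that it lies entirely on one side of $A$; and (iii) the two compositions with $F$ are identities.

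The main obstacle is step (ii): in the enriched setting we cannot argue elementwise, and one must verify that the cutting procedure descends from the coproduct over ribbon q-graphs in \eqref{eq:EnrichedSkeinHom} to the colimit over the diagram \eqref{eq:SkCoequalizer}, compatibly with the further coequalizer defining the Tambara product in Definition \ref{df:TambaraTensProd}. Concretely, this reduces to checking that whenever a Reshetikhin-Turaev cube meets the perturbed cylinder $A \times [0,1]$ non-trivially, one can further perturb so that the cube lies strictly on one side of $A$, at the cost of applying an additional balancing of $\Va$-morphisms encoded by $\iota$. The topological content of all these isotopies is exactly as in \cite[\S 2]{Cooke19}; the new ingredient is translating each relation into a commuting diagram of $\Va$-morphisms between the colimits defining the respective Hom-objects, which is a formal but careful bookkeeping exercise using the enriched Reshetikhin-Turaev functor of Proposition \ref{prop:enrRT}.
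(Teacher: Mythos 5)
Your proposal is correct and follows essentially the same route as the paper: define $F$ on objects via $E_M \sqcup E_N$ and on morphisms via the embedded ribbons together with the crossing ribbons for the $\iota$-generators (which is equivalent to your packaging via the universal property and a balancing), prove essential surjectivity by isotoping points off the gluing region, and prove fully faithfulness by isotoping an arbitrary ribbon graph into a normal form transverse to the gluing cylinder so that it decomposes into pieces on each side composed with the $\iota$-ribbons, then checking well-definedness against the skein and Tambara relations. The paper carries out your "perturb and cut" step as an explicit six-stage isotopy and, like you, reduces well-definedness of the inverse to local checks in boxes of an open cover, deferring the remaining topology to Cooke's argument.
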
 

\begin{notation}
	For easier referencing of the different regions of \((M\sqcup_A N)\times [0, 1]\), we will call \( (\im \,\Xi_M \cup \im\,\Xi_N) \times [0,1]\) the \emph{middle region}. 
\end{notation}

\begin{remark}
    We will for notational convenience work with \(\delta\) as in Example \ref{ex:Delta} being the same for both the thickened left and right embedding.
    We can do this without loss of generality by e.g.\ setting \(\delta\) to be the smaller of the two a priori different numbers.
\end{remark} 

\begin{proof}
We start by defining the functor \(F\). Recall from the definition of the Tambara tensor product that objects of \(\SkC{M}\relcatprod{\SkC{A}} \SkC{N}\) are given by pairs of objects \((m, n)\), where \(m\in \SkC{M}\) and \(n\in \SkC{N}\). Explicitly, \(m\) will be a finite collection of disjoint, framed, oriented and decorated (by objects of \(\Ca\)) points in \(M\), and similarly for \(n\). \par 
On objects, we define the functor by
\begin{align*}
	F(m, n) \coloneqq E_M(m) \sqcup E_N(n) \ ,
\end{align*}
which is a finite set of disjoint, framed, oriented and decorated points in \(M\sqcup_A N\), and hence an object of \(\SkC{M\sqcup_A N}\). 

Next, we define $F$ on $\Va$-objects of morphisms: let \((m, n)\) and \((m', n')\) be two objects of \(\SkC{M} \relcatprod{\textbf{Sk}_\Ca(A)} \SkC{N}\). Recall that the \(\Va\)-object of edges of the graph \(\Omega\) underlying the Tambara tensor product \(\Va\)-category is
\begin{align}
\Omega((m&,n),(m',n')) = \\
&\SkC{M} \catprod \SkC{N}\big((m,n),(m',n')\big) \coprod \big( \coprod_{\substack{\{a~|~m = m' \triangleleft a, \\ a \triangleright n = n' \}}} 1_\Va \big) \coprod 							\big( \coprod_{\substack{\{a~|~m \triangleleft a = m', \\ n = a \triangleright n'\}}} 1_\Va \big) \ \ ,
\label{eq:OmegaTambaraSkein}
\end{align}
where \(a\in \SkC{A}\).
From this we need to produce a \(\Va\)-object of edges from \(E_M(m) \sqcup E_N(n)\) to \(E_M(m') \sqcup E_N(n')\). Consider \(\Gamma_M\) a ribbon graph from \(m\) to \(m'\) in \(M\) and \(\Gamma_N\) a ribbon graph from \(n\) to \(n'\) in \(N\). The map on Hom-objects is induced by sending \(\Gamma_M \mapsto E_M(\Gamma_M)\) resp.~\(\Gamma_N \mapsto E_N(\Gamma_N)\). In addition the copies of \(1_\Va\) for \(a\in \SkC{A}\) are sent to \(\Ca((\Gamma)) = 1_\Va\), where \(\Gamma\) is the ribbon in Figure \ref{fig:Excision_Fig5} for the first coproducts of \(1_\Va\) corresponding to \(\iotaman\), respectively the inverse ribbon for the second coproducts of \(1_\Va\) corresponding to \(\iotamanInv\). 

\begin{figure}[H]
\centering
    \begin{overpic}[scale=.12,tics=10]{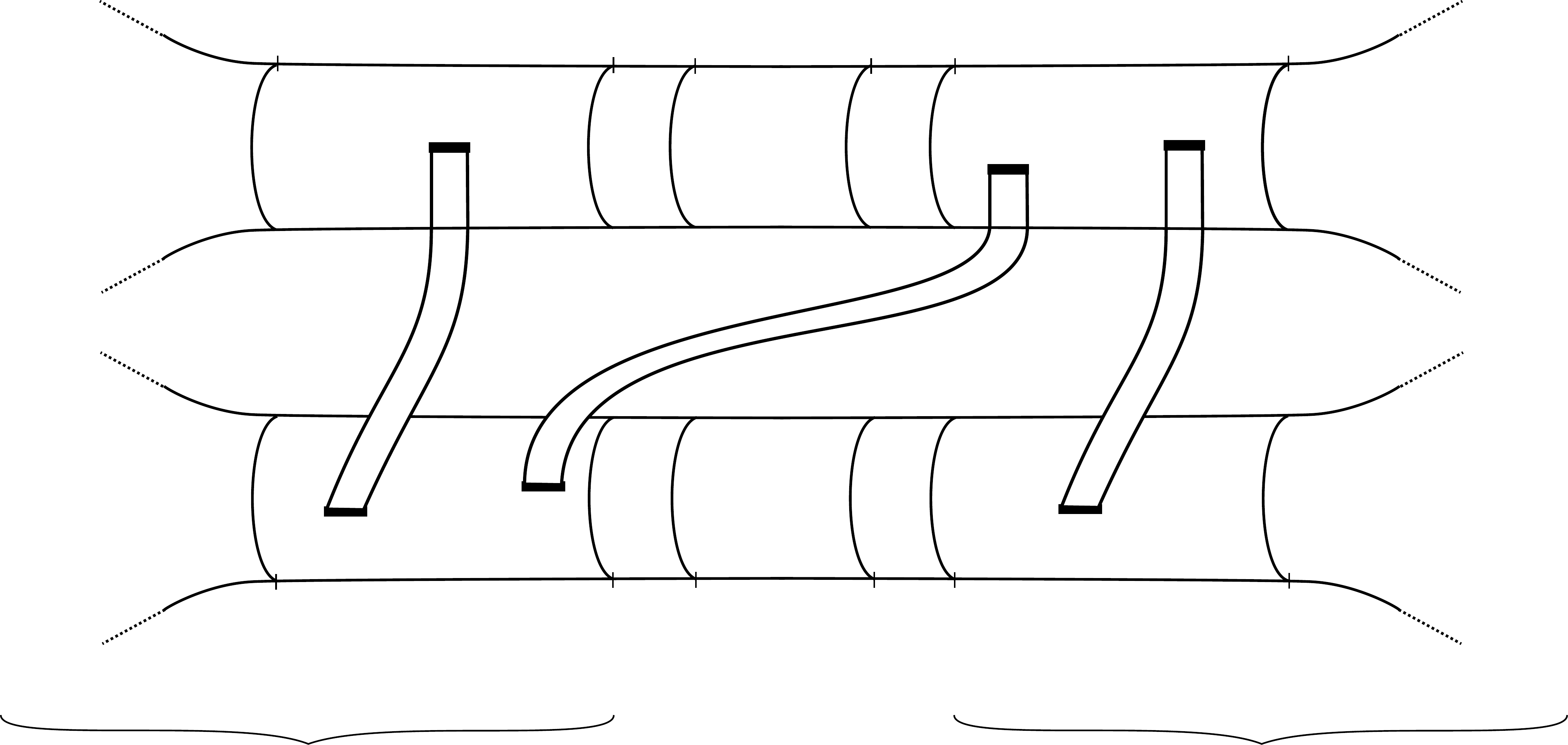}
    \put(5,10){\small{$M$}}
    \put(90,10){\small{$N$}}
    \put(10,-5){\small{Im($E_M$)}}
    \put(72,-5){\small{Im($E_N$)}}
    \put(15,5.5){\footnotesize{$-\varepsilon$}}
    \put(35,5.5){\footnotesize{$-\delta$}}
    \put(43.5,5.5){\footnotesize{$0$}}
    \put(54.5,5.5){\footnotesize{$1$}}
    \put(59,5.5){\footnotesize{$1+\delta$}}
    \put(80,5.5){\footnotesize{$1+\varepsilon$}}
    \put(17,12){\tiny{$E^2_M(m')$}}
    \put(27.5,14){\tiny{$E_M(a)$}}
    \put(65,12.5){\tiny{$E_N(n)$}}
    \put(23,40){\tiny{$E_M(m')$}}
    \put(60,38.5){\tiny{$E_N(a)$}}
    \put(70,40){\tiny{$E_N^2(n)$}}
\end{overpic}
\vspace{0.5cm}
	\caption{The ribbon corresponding to \(F\left(\iota_{m' \triangleleft a,n}^{m', a \triangleright n} \right) \colon 1_\Va \rightarrow \SkC{M\sqcup_A N}(F(m'\triangleleft a, n), F(m', a\triangleright n) )\). Here we have \(m=m'\lhd a\) and \(n'=a \rhd n\).}
 	\label{fig:Excision_Fig5}
\end{figure}

\paragraph{\(F\) is well-defined.} Recall that the Hom-objects of \(\SkC{M} \relcatprod{\SkC{A}} \SkC{N}\) are constructed by  \(\Free(\Omega((m, n), (m', n')))\), where \(\Omega((m, n), (m', n'))\) is given in Equation \eqref{eq:OmegaTambaraSkein}, and then imposing the list of relations in Section \ref{TambaraTensorSection} by taking coequalizers. We need \(F\) to respect the conditions imposed on the Tambara tensor product. For example, the first condition, which is spelled out in detail in Equation \eqref{reln:iso}, ensures that \(\iotaman\) is an isomorphism, with inverse given by \(\iotamanInv\). Pictorially, this is exactly the fact that composing the ribbon in Figure \ref{fig:Excision_Fig5} corresponding to \(F(\iotaman)\) with the ribbon corresponding to \(F(\iotamanInv\) is isotopic to the identity ribbon on \(F(m \triangleleft a, n)\).  By similar topological arguments the other relations can be checked and the functor \(F\) is indeed well-defined.

\paragraph{\(F\) is essentially surjective.} Observe that any point in \(E_M(M) \sqcup E_N(N) \subset M\sqcup_A N\) is in the image of \(F\) by construction. Secondly, if we have a decorated point \(x\) that is not in this region there exists a ribbon which moves \(x\) to the left and into the image of \(E_M\)\footnote{or similarly, to the right into the image of \(E_N\).}. This ribbon is an isomorphism. Hence we have that every decorated point in \(M\sqcup_A N\) is isomorphic to any point in the image of the functor \(F\), which gives that \(F\) is essentially surjective. 

\paragraph{\(F\) is fully faithful.} For \(F\) to be fully faithful we need to prove that \(F_{(m, n), (m', n')}\) provides isomorphisms in \(\Va\) between the Hom-objects 
\begin{align}\label{eq:F-on-hom-objects}
        F_{(m, n), (m',n')} \colon &\SkC{M} \relcatprod{\SkC{A}} \SkC{N}((m, n), (m', n')) \stackrel{\cong}{\longrightarrow}  \\ &\SkC{M\sqcup_A N}(F(m,n), F(m',n'))
\end{align}
where \((m,n)\) and \((m',n')\) are any two objects of \(\SkC{M} \relcatprod{\SkC{A}} \SkC{N}\). We divide this into two parts: first we construct the inverse, and then we prove that the inverse is indeed well-defined. 

\paragraph{Construction of inverse.} Let \(\bar{\Gamma}\) denote a ribbon diagram from \(E_M(m) \sqcup E_N(n)\) to \(E_M(m')\sqcup E_N(n')\) in \(M\sqcup_A N\). That is, we have a map
\begin{align*}
\Ca((\bar{\Gamma})) \longrightarrow \SkC{M\sqcup_A N}\left( F\left( m, n\right), F\left( m', n' \right)\right) \ \ .
\end{align*}
We are going to construct a ribbon \(\Psi\) arising in the colimit of the Hom-object of the Tambara tensor product, i.e.~with a map
\begin{align*}
    \Ca((\Psi)) \longrightarrow \SkC{M} \relcatprod{\textbf{Sk}_\Ca(A)} \SkC{N} \left( \left( m, n\right), \left( m', n' \right) \right)
\end{align*}
such that \(F\) sends \(\Psi\) to \(\Gamma\) for some \(\Gamma\) that is isotopic to \(\bar{\Gamma}\). With this, the assignment \(\bar{\Gamma} \mapsto \Psi\) induces an inverse \(F^{-1}_{(m, n), (m', n')}\).

The overall idea of the subsequent argument is the following. The ribbon \(\bar{\Gamma}\) might be non-trivial in the middle region. Meanwhile, the only strands of ribbons in the image of \(F\) that crosses the middle region are of the form \(F(\iotaman)\) and its inverse. That is, a ribbon in the image of \(F\) has no (non-trivial) coupons or twists in the middle region. Hence, we need to provide some ribbon \(\Gamma\) that, up to isotopy, captures the parts of the ribbon \(\bar{\Gamma}\) that are non-trivial across the middle region.  This is done by reverse-engineering \(\Gamma\) from \(\bar{\Gamma}\) by a sequence of isotopies explained in the steps below\footnote{To simplify notation we will call the ribbon obtained from isotoping \(\bar{\Gamma}\) for \(\Gamma\) in all the steps below, even though we are really considering several isotopic versions of the ribbon \(\Gamma\) throughout the construction. }. 

\begin{enumerate}
\setlength\itemsep{0.5em}
    \item \emph{Transverse intersections.} \par 
    By an isotopy that is fixed outside of the middle region we assume that the strands of \(\Gamma\) only intersect \(C\times \{0\} \times [0,1]\) by a finite number of transverse strands. Let \(t_i \in [0,1], i\in I\) be these different levels. In addition we ask for the isotopy to ensure that we only have decorated points and no coupons of the ribbon \(\Gamma\) intersecting the hyperplane at \(t_i\). We denote these decorated points by \(\Gamma_{t_i}\). This is illustrated for a small part of the ribbon \(\Gamma\) in Figure \ref{fig:excisionprf1}. The small blue coupons depict the decorated points \(\Gamma_{t_i}\) at each level \(t_i\).
    \item \emph{Move points of \(\Gamma_{t_i}\) to \( \im E_M \sqcup \im E_N \).} \par 
     Choose an isotopy in the \(t\)-direction that is fixed outside of the middle region such that each \(\Gamma_{t_i}\) only has points lying in \(\im E_M \sqcup \im E_N\), as illustrated in Figure \ref{fig:excisionproof2}. 
    \item \emph{Choose small \(\varepsilon_i\) for each \(t_i\).} \par  
    For every \(t_i\), choose a small \(\varepsilon_i >0\) such that, in the region \(C\times (-\delta, 1+\delta) \times [0,1]\), $\Gamma$ lies only between the hyperplanes at \(t_i\) and \(t_i + \varepsilon_i\). In addition we ask for the levels to satisfy \(t_i+\varepsilon_i < t_{i+1}\) for all \(i\), and also that \(\Gamma_{t_i}\) and \(\Gamma_{t_i+\varepsilon_i}\) only consists of decorated points and no coupons. This is illustrated in Figure \ref{fig:excisionproof3}. 
    \item \emph{Horizontally moving points of \(\Gamma_{t_i}\) and \(\Gamma_{t_i + \varepsilon_i}\).}   \par
    By an isotopy fixed in \(C\times (0,1) \times [0,1]\) we further assume that each \(\Gamma_{t_i}\) and \(\Gamma_{t_i +\varepsilon_i}\) only has points in  to \(\im E_{M} \sqcup \im \Phi_{M} \sqcup \im \Phi_N \sqcup \im E_N\). This means that we can assume \(\Gamma_{t_i} = (m\triangleleft (a\sqcup \bar{b}), (\bar{c} \sqcup d)\triangleright n)\), where only the points \(\bar{b}\) and \(\bar{c}\) intersect \(C\times \{-\delta\}\times[0,1]\) respectively \(C\times \{1+\delta\}\times[0,1]\) in the time-interval \([t_i, t_i + \varepsilon_i]\). Analogously we can write \(\Gamma_{t_i+\varepsilon_i} = (m'\triangleleft (a'\sqcup \bar{b}'), (\bar{c}' \sqcup d')\triangleright n')\). 
    \item \emph{Isotopy in \(t\)-direction for non-crossing ribbons.} \par 
    Isotope in the \(t\)-direction such that any ribbon not crossing the middle region is only the identity in the intervals \([t_i, t_i + \varepsilon_i]\). Denote by \(\Gamma_{[t_i, t_i+\varepsilon_i]}\) the part of the ribbon \(\Gamma\) in the time-interval \([t_i, t_i+\varepsilon_i]\). This step ensures that \(\Gamma_{[t_i, t_i+\varepsilon_i]} =\id_{m \triangleleft a, d \triangleright n} \sqcup \Theta_{[t_{i}, t_i + \varepsilon_i]}\), where \(\Theta_{[t_{i}, t_i + \varepsilon_i]} \colon E_M(\bar{b}) \sqcup E_N(\bar{c}) \ra E_M(\bar{b}') \sqcup E_N(\bar{c}')\) is the part of the ribbon straddling the middle region.
    \item \emph{Move straddling coupons to the side.} \par  
    The middle region is homeomorphic to \(C\times [0,1]\) (at any fixed time \(t\in [0,1]\)), and since \([0,1]\) is topologically trivial there exists a ribbon tangle \(\bar{\Theta}_{[t_{i}, t_i + \varepsilon_i]} \colon \bar{b}\triangleright \bar{c} \ra \bar{b}'\triangleright \bar{c}'\) in \(\SkC{N}\) such that \(\Theta\) is isotopic to \(\bar{\Theta}\) pre- and postcomposed by ribbons obtained from dragging \(\Theta\)  to the right into \(\im E_N\). This is illustrated in \cite[Figure 7]{Cooke19}. 
\end{enumerate}

\begin{figure}[t]
     \centering
     \begin{subfigure}[b]{0.3\textwidth}
         \centering
         \begin{overpic}[scale=.06,tics=10]{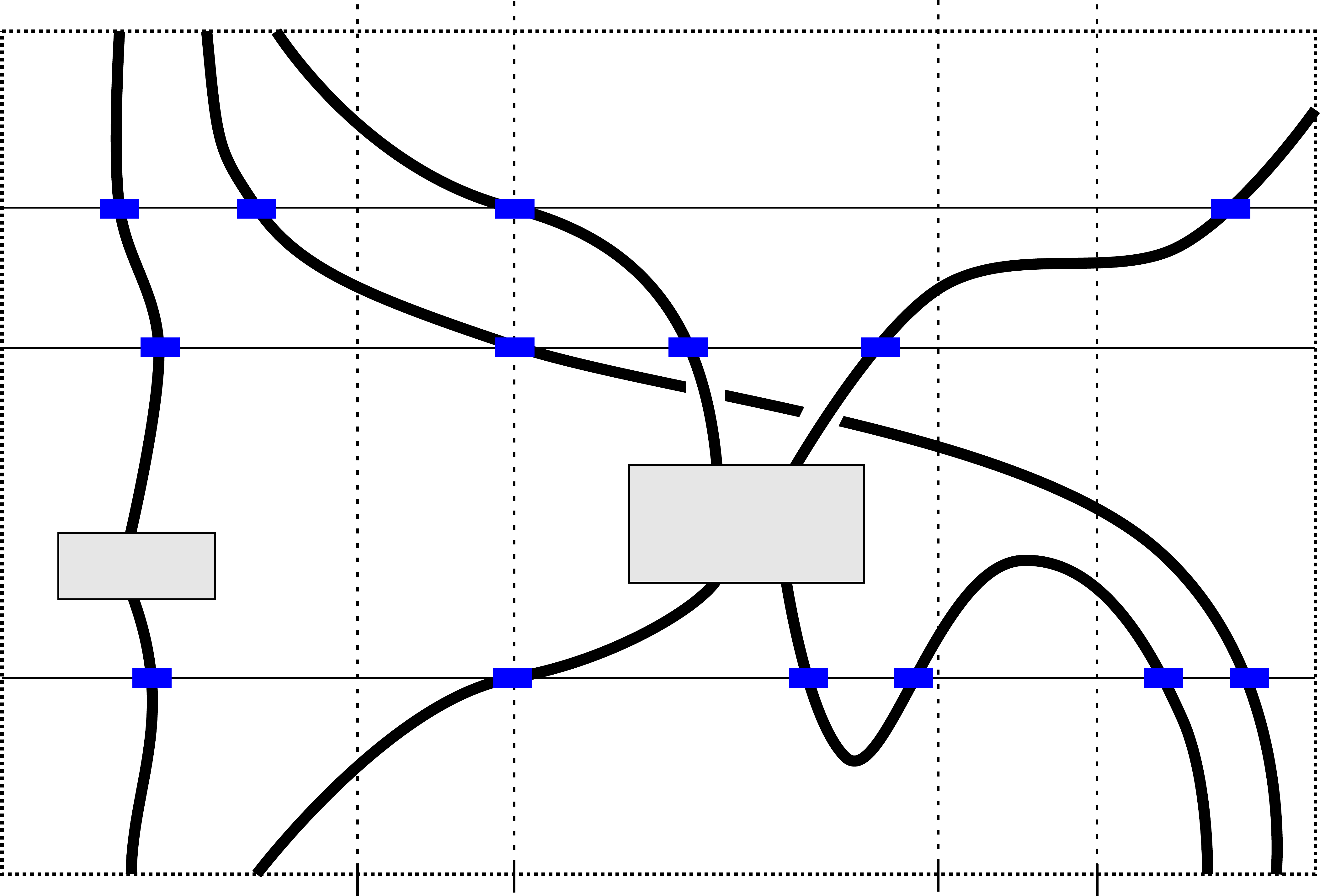}
         \put(25,-5){\scriptsize{-$\delta$}}
         \put(37,-5){\scriptsize{$0$}}
         \put(70,-5){\scriptsize{$1$}}
         \put(78,-5){\scriptsize{$1$+$\delta$}}
         \put(-7,15){\scriptsize{$t_i$}}
         \put(-12,41){\scriptsize{$t_{i+1}$}}
         \put(-12,51){\scriptsize{$t_{i+2}$}}
         \end{overpic}
         \vspace{0.2cm}
         \caption{Step 1}
         \label{fig:excisionprf1}
     \end{subfigure}
     \hfill
     \begin{subfigure}[b]{0.3\textwidth}
         \centering
          \begin{overpic}[scale=.06,tics=10]{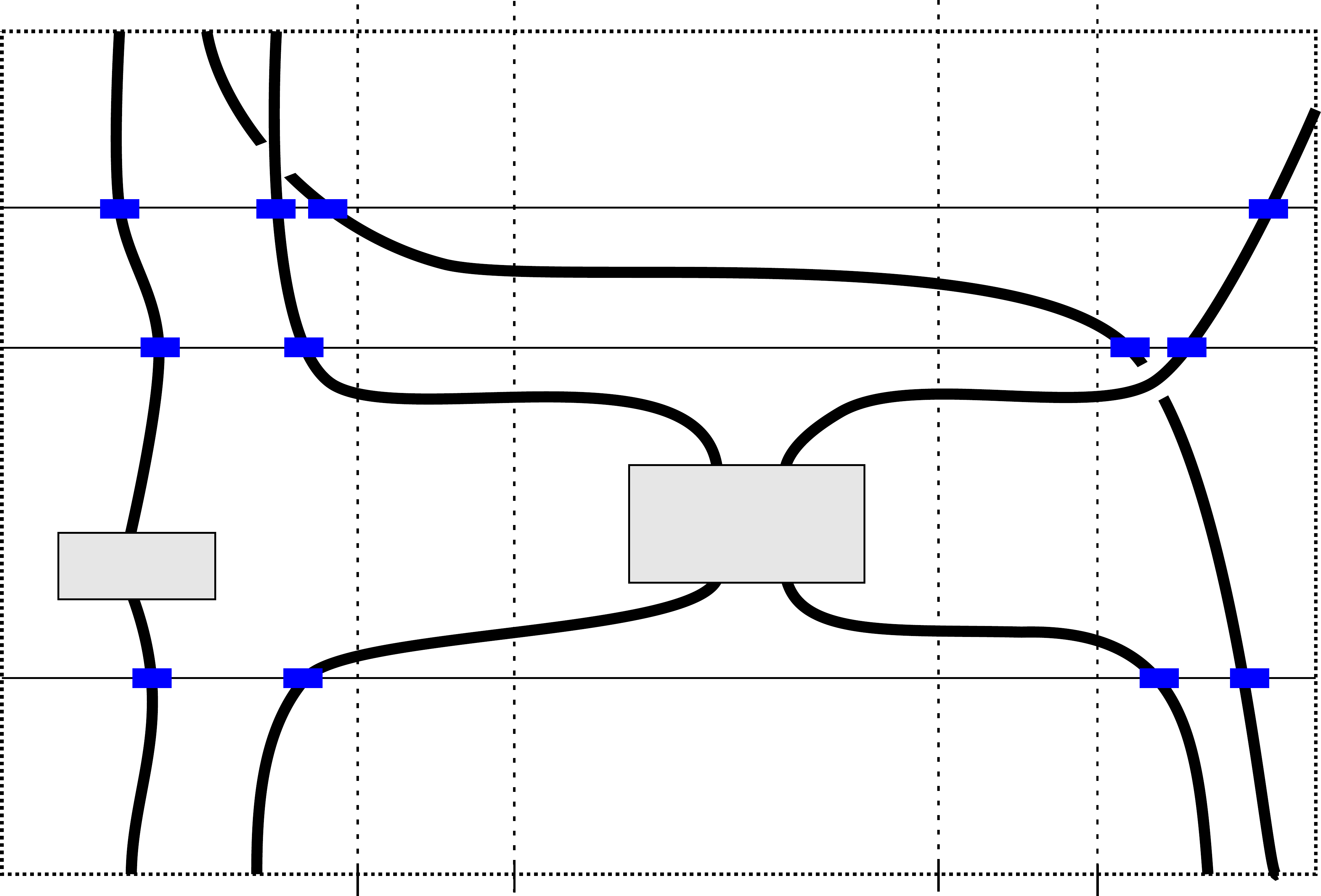}
          \put(25,-5){\scriptsize{-$\delta$}}
         \put(37,-5){\scriptsize{$0$}}
        \put(70,-5){\scriptsize{$1$}}
         \put(78,-5){\scriptsize{$1$+$\delta$}}
          \end{overpic}
          \vspace{0.2cm}
         \caption{Step 2}
         \label{fig:excisionproof2}
     \end{subfigure}
     \hfill
     \begin{subfigure}[b]{0.3\textwidth}
         \centering
          \begin{overpic}[scale=.06,tics=10]{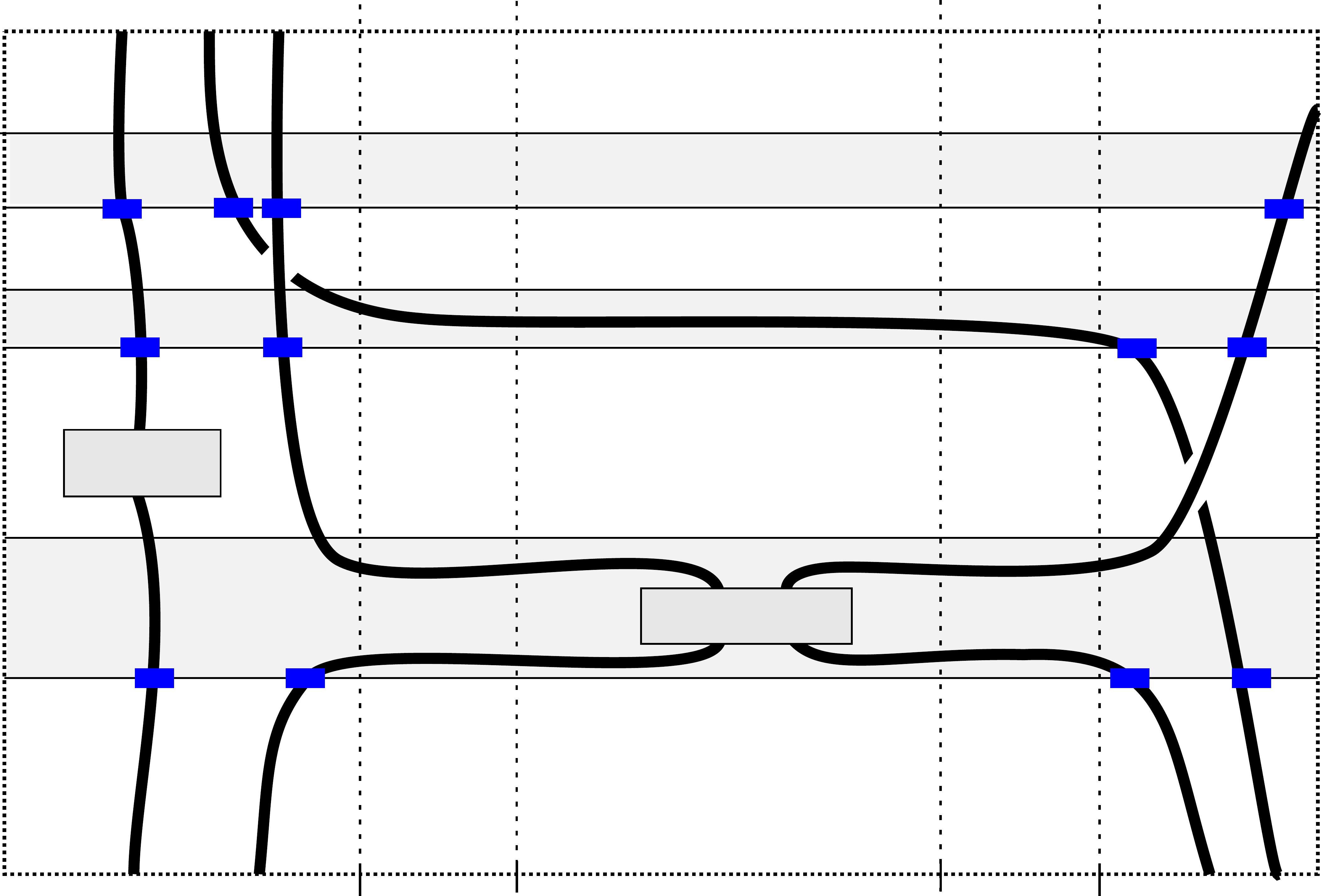}
          \put(25,-5){\scriptsize{-$\delta$}}
         \put(37,-5){\scriptsize{$0$}}
        \put(70,-5){\scriptsize{$1$}}
         \put(78,-5){\scriptsize{$1$+$\delta$}}
          \put(102,15){\scriptsize{$t_i$}}
         \put(102,26){\scriptsize{$t_i$+$\varepsilon_i$}}
          \end{overpic}
          \vspace{0.2cm}
         \caption{Step 3}
         \label{fig:excisionproof3}
     \end{subfigure}
        \caption{}
        \label{fig:excisionproof}
\end{figure}

In total we have constructed a ribbon diagram \(\Gamma\) isotopic to \(\bar{\Gamma}\) that consists of ribbons in \(E_M(M)\) and \(E_N(N)\) composed with ribbons corresponding to \(F(\iotaman)\) and \(F\left(\iotamanInv\right)\).  In other words there exists some ribbon diagram \(\Psi\) such that \(F\) sends \(\Psi\) to \(\Gamma\). The assignment \(\bar{\Gamma} \mapsto \Psi\) induces the inverse \(F_{(m,n), (m',n')}^{-1}\). 

\paragraph{The inverse is well-defined.} Here we will argue that the assignment \(\bar{\Gamma} \mapsto \Psi\) gives a well-defined inverse to \eqref{eq:F-on-hom-objects}. Since \(F(\Gamma_M,\Gamma_N)\mapsto (\Gamma_M,\Gamma_N)\) and \(F(\iotaman) \mapsto \iotaman\), this map will automatically be the inverse. What we have to check is that it is indeed well-defined, i.e.~sends equivalent ribbons to equivalent ribbons. 
Pick some open cover of \((M\sqcup_A N)\times [0,1]\) consisting of (opens homeomorphic to) boxes. 
Any equivalence of ribbon diagrams in \(\SkC{M\sqcup_A N}\) can be decomposed into equivalences which are fixed outside one of the opens of the cover, c.f.\ Remark \ref{rem:isotopy-classes-automatic-from-RT-functor}. Hence, we only need to consider the different local cases which makes ribbon diagrams equivalent separately. Morally, this directly follows from the above six steps being compatible with identifying ribbons locally. The interested reader is once again referred to \cite[Section 1.4]{Cooke19} if they want to fill in the details. 
\end{proof}

\begin{remark}
    Observe that we never actually need to consider the parenthesization of the coupons above because we are only manipulating ribbons through isotopies. That is, we are only moving the coupons around, but their parenthesization remains the same. 
\end{remark}

\part{Categorical deformation quantization}

\section{Deformation quantization of categories}
\label{sect:DefQuant-Categories}
We start this section by motivating and giving naive definitions of almost Poisson structures
on linear categories as well as their quantization. Then we give some (mostly well-known) examples of such structures in Section \ref{sect:examples-of-Pois-and-BD-cats}. Lastly, in Section \ref{sect:dunns-additivity-for-pois-and-bd-cats}, we prove that these categories afford a version of Dunn's additivity which is important for later computations.

\subsection{Motivation and definitions} \label{sec:DefQuantPullback}
We start by recalling deformation quantization of algebras in a way which motivates
our definitions of deformation quantization of categories~\cite{BFFLS,Deligne95}\cite[Section 10]{GuttRawnsley}. Let $(A_0,\cdot,  \{-,-\})$ be a Poisson algebra over $\C$. 
A \emph{deformation quantization} of $A_0$ is a topologically-free $\C[[\hbar]]$-algebra $(A_\hbar, \star)$ 
together with an algebra isomorphism $\psi \colon A_\hbar/ \hbar A_\hbar \cong  A_\hbar\otimes_{\C[[\hbar]]} \C \longrightarrow A_0 $ such that 
\begin{align}\label{Eq: DQ}
\psi\left( \tfrac{a\star b - b\star a}{\hbar}  \right) = \{ \psi(a),\psi(b) \} ,
\end{align}
where the division by $\hbar$ is well defined since $[a\star b - b\star a]$ is zero in $A_\hbar/ \hbar A_\hbar$.
Keeping only the structure modulo $\hbar^2$, i.e.\ the first order deformation, we get an \emph{almost Poisson algebra}, 
i.e.\ a flat $\mathbb C_\varepsilon$-algebra\footnote{We denote $\Ce = \C[\varepsilon]/(\varepsilon^2)$.} $A_\varepsilon$ together with an algebra isomorphism $\psi_\varepsilon \colon A_\varepsilon\otimes_{\mathbb C_\varepsilon} \mathbb C \xrightarrow{\cong} A_0$. In summary, this motivates that almost Poisson algebras and their deformation quantizations can be defined as the following pullbacks of categories
\[\begin{tikzcd}
	\mathsf{aPoisAlg} & {\mathsf{Alg}(\Ce\lmod^{\mathsf{flat}})} && \mathsf{DefQuantAlg} & {\mathsf{Alg}(\Ch\lmod^\textsf{tf})} \\
	{\mathsf{CommAlg}{(\C\lmod)}} & {\mathsf{Alg}(\C\lmod)} && {\mathsf{CommAlg}(\C\lmod)} & {\mathsf{Alg}(\C\lmod)}
	\arrow[hook, from=2-1, to=2-2]
	\arrow["{\otimes_{\Ce} \C}", from=1-2, to=2-2]
	\arrow[from=1-1, to=1-2]
	\arrow[from=1-1, to=2-1]
    \arrow[rd, "{\mbox{\fontsize{13}{15}\selectfont\( \lrcorner \)}}" description, very near start, phantom, start anchor={[xshift=-1.3ex, yshift=0.1ex]}, from=1-1, to=2-2]
	\arrow["{\otimes_{\Ch}\C}", from=1-5, to=2-5]
	\arrow[hook, from=2-4, to=2-5]
	\arrow[from=1-4, to=2-4]
	\arrow[from=1-4, to=1-5]
    \arrow[rd, "{\mbox{\fontsize{13}{15}\selectfont\( \lrcorner \)}}" description, very near start, phantom, start anchor={[xshift=-1.3ex, yshift=0.1ex]}, from=1-4, to=2-5]
\end{tikzcd}\]
The right vertical arrows in both pullbacks come from the functors $\otimes_{\Ch}\C\colon \Ch\lmod \to \C\lmod$ and $\otimes_{\Ce}\C\colon \Ce\lmod\to\C\lmod$. These are symmetric monoidal, due to being induction functors\footnote{If $\varphi \colon R \to S$ is a morphism of commutative rings, then the induction functor $\otimes_R S \colon R\lmod \to S\lmod$ is symmetric monoidal via the map $(M\otimes_R S) \otimes_S (N\otimes_R S) \xrightarrow{\cong} ( M\otimes_R N )\otimes_R S $ given by $(m, s, n, s') \mapsto m \otimes n \otimes ss'$.} assigned to the  algebra morphism $\Ch\twoheadrightarrow \C$ and $\Ce\twoheadrightarrow\C$, respectively.

\begin{remark}[Poisson algebras as almost Poisson algebras]
    Any antisymmetric biderivation $\{-,-\}$ on a commutative algebra $A_0$ defines an almost Poisson algebra $A_\varepsilon = A_0 \otimes \mathbb C_\varepsilon$ via
    \begin{equation} 
        a \star b = a \cdot b + \frac \varepsilon2 \{a, b\} 
    \end{equation}
    and if this deformation can be extended to $\varepsilon^2$, then the Jacobi identity for $\{-,-\}$ holds. 
    
    In general, not all almost Poisson algebras come from antisymmetric biderivations. Choosing a splitting $A_\varepsilon \cong A_\varepsilon/(\varepsilon A_\varepsilon)\otimes_\C \Ce$, we get 
    \begin{equation} 
        a*b = a\cdot b + \varepsilon\, p(a, b), 
    \end{equation}
    where $p$ is a Hochschild 2-cocycle for $A_0 := A_\varepsilon/(\varepsilon A_\varepsilon)$, and different splittings change $p$ by a coboundary. Thus, almost Poisson algebras $A_\varepsilon$ naturally induce elements of the second Hochschild cohomology of $A_0$. The antisymmetrization of $p$ is a biderivation not depending on the choice of splitting, while the symmetrization of $p$ is a symmetric Hochschild 2-cocycle. Thus, the almost Poisson algebra $A_\varepsilon$ comes from an antisymmetric biderivation if e.g.\ the second symmetric Hochschild cohomology vanishes (such algebras were investigated by Knudson in \cite{Knudson}).  
    
    A notable case when this holds is in the setting of the HKR theorem \cite{HKR}, where, over a field, the assumption of smoothness implies $H^2_\text{sym}(A_0, A_0)=0$ \cite[Section 9.3 \& 9.4]{Weibel}. An important example is $A_0=C^\infty(M)$ with $p$ a bidifferential operator (see e.g.\ \cite[Remark~2.17]{GuttRawnsley}). In this case our notion of an almost Poisson algebra agrees with the usual one \cite[Section~3.3]{daSilvaWeinstein}.
\end{remark}

Generalizing to linear categories, we can define their deformations in a similar fashion. Motivated by 
factorization homology, we will consider deformations of $\E_\infty$-algebras into not only $\E_1$-algebras,
but to general $\E_i$-algebras. Heuristically, these can be defined as the following ``pullbacks'':

\begin{equation} \label{eq:DefnPCatBDnCat}
\begin{tikzcd}
	\PCat{i} & {\E_i(\Ce\mhyphen\mathsf{Cat})} && \BDCat{i} & {\E_i(\mathbb C[[\hbar]]\mhyphen \Cat}) \\
	{\E_\infty(\C \mhyphen\mathsf{Cat})} & {\E_i(\mathbb C\mhyphen\mathsf{Cat})} && {\E_\infty(\C \mhyphen \mathsf{Cat})} & {\E_i(\mathbb C \mhyphen\mathsf{Cat})}
	\arrow[hook, from=2-1, to=2-2]
	\arrow["{\otimes_{\Ce} \C}", from=1-2, to=2-2]
	\arrow[from=1-1, to=1-2]
	\arrow[from=1-1, to=2-1]
    \arrow[rd, "{\mbox{\fontsize{13}{15}\selectfont\( \lrcorner \)}}" description, very near start, phantom, start anchor={[xshift=-1.3ex, yshift=0.1ex]}, from=1-1, to=2-2]
	\arrow["{\otimes_{\Ch}\C}", from=1-5, to=2-5]
	\arrow[hook, from=2-4, to=2-5]
	\arrow[from=1-4, to=2-4]
	\arrow[from=1-4, to=1-5]
    \arrow[rd, "{\mbox{\fontsize{13}{15}\selectfont\( \lrcorner \)}}" description, very near start, phantom, start anchor={[xshift=-1.3ex, yshift=0.1ex]}, from=1-4, to=2-5]
\end{tikzcd}
\end{equation}
\begin{remark} \label{rem:PullbackWhere?}
We would like to think of the above pullbacks as being taken in a 3-category of symmetric monoidal bicategories. However, to the best of our knowledge, such pullbacks were not yet studied in the literature, neither for symmetric monoidal bicategories nor for bicategories. We do not develop such a theory here, and instead we define the above pullback bicategories explicitly, relaxing the usual pullback of categories -- see Appendix \ref{appendix:pullback} and in particular Definition \ref{def:pullback} for details. A similar construction appears for example in \cite[Section 2.3]{CHR2014HomotopyPullback}.
\end{remark}

\begin{definition}\label{def:pullback_Poisi}
	An \emph{$\Pois_i$-category} $\Ca$ is a $\Ce$-linear $\E_i$-category $\Ca_\varepsilon$, a $\C$-linear symmetric monoidal category $\Ca_\text{SMC}$ and an $\E_i$-equivalence
	$\Phi\colon (\Ca_\varepsilon)_0  \to \Ca_\text{SMC}$, where we denote $(-)_0:= \otimes_\Ce \C$ for brevity. In this case we also say that \(\Ca_\varepsilon\) is an \emph{almost Poisson-$i$ category}. 
 
	An \emph{$\Pois_i$-functor} $F\colon \Ca \to \Da$ is a $\Ce$-linear functor of $\E_i$-categories $F_\varepsilon \colon \Ca_\varepsilon \to \Da_\varepsilon$, 
	a $\C$-linear symmetric monoidal functor $F_\text{SMC}\colon \Ca_\text{SMC} \to \Da_\text{SMC}$ and a natural isomorphism  $\phi$ of $\E_i$-functors
\begin{equation}
\begin{tikzcd}
	{(\Ca_\varepsilon)_0} & {\Ca_\text{SMC}} \\
	{(\Da_\varepsilon)_0} & {\Da_\text{SMC}}
	\arrow["{(F_\varepsilon)_0}"', from=1-1, to=2-1]
	\arrow["{\Phi^\Ca}", from=1-1, to=1-2]
	\arrow["{F_\text{SMC}}", from=1-2, to=2-2]
	\arrow["{\Phi^\Da}"', from=2-1, to=2-2]
	\arrow["\phi"{description}, shorten <=4pt, shorten >=4pt, Rightarrow, from=2-1, to=1-2]
\end{tikzcd} \ . 
\end{equation}

    An \emph{$\Pois_i$-natural transformation} $\alpha \colon F \Rightarrow G$ is an $\E_i$-natural transformation $\alpha_\varepsilon \colon F_\varepsilon \Rightarrow G_\varepsilon$ and  a monoidal natural transformation $\alpha_\text{SMC} \colon F_\text{SMC} \Rightarrow G_\text{SMC}$ such that the two composite natural transformations below are equal
\begin{equation}\label{eq:pullback2cell}\begin{tikzcd}[sep=large]
	{(\Ca_\varepsilon)_0} & {\Ca_\text{SMC}} \\
	{(\Da_\varepsilon)_0} & {\Da_\text{SMC}}
	\arrow["{\Phi^\Ca}", from=1-1, to=1-2]
	\arrow["{\Phi^\Da}"', from=2-1, to=2-2]
	\arrow[""{name=0, anchor=center, inner sep=0}, "{(G_\varepsilon)_0}"{pos=0.4}, curve={height=-18pt}, from=1-1, to=2-1]
	\arrow["{G_\text{SMC}}", curve={height=-18pt}, from=1-2, to=2-2]
	\arrow[""{name=1, anchor=center, inner sep=0}, "{(F_\varepsilon)_0}"'{pos=0.4}, curve={height=18pt}, from=1-1, to=2-1]
	\arrow["{\phi^G}"{description}, shift right=3, shorten <=7pt, Rightarrow, from=2-1, to=1-2]
	\arrow["{(\alpha_\varepsilon)_0}", shorten <=7pt, shorten >=7pt, Rightarrow, from=1, to=0]
\end{tikzcd} \quad = 
\quad
\begin{tikzcd}[sep=large]
	{(\Ca_\varepsilon)_0} & {\Ca_\text{SMC}} \\
	{(\Da_\varepsilon)_0} & {\Da_\text{SMC}}
	\arrow["{\Phi^\Ca}", from=1-1, to=1-2]
	\arrow["{\Phi^\Da}"', from=2-1, to=2-2]
	\arrow[""{name=0, anchor=center, inner sep=0}, "{G_\text{SMC}}"{pos=0.6}, curve={height=-18pt}, from=1-2, to=2-2]
	\arrow["{(F_\varepsilon)_0}"', curve={height=18pt}, from=1-1, to=2-1]
	\arrow[""{name=1, anchor=center, inner sep=0}, "{F_\text{SMC}}"'{pos=0.6}, curve={height=18pt}, from=1-2, to=2-2]
	\arrow["{\phi^F}"{description, pos=0.4}, shift left=3, shorten >=7pt, Rightarrow, from=2-1, to=1-2]
	\arrow["{\alpha_\text{SMC}}", shorten <=7pt, shorten >=7pt, Rightarrow, from=1, to=0]
\end{tikzcd} \ .
\end{equation}
Furthermore, \(\Pois_i\)-categories,  \(\Pois_i\)-functors and \(\Pois_i\)-natural transformations 
assemble into a 2-category denoted \(\PCat{i}\). 
\end{definition}

\begin{definition}
    We define $\BD_i$-categories, $\BD_i$-functors and $\BD_i$-natural transformations by replacing \(\Ce\) with \(\Ch\) throughout Definition \ref{def:pullback_Poisi}. These assemble into a 2-category denoted $\BDCat{i}$. 
\end{definition}

 There is an obvious functor $\otimes_{\C[[\hbar]]} \C_\varepsilon \colon \BDCat{i} \to \PCat{i}$ coming from the pair of ring morphism $\Ch \twoheadrightarrow \Ce \twoheadrightarrow \C$, i.e.\ from the diagram: 
\begin{equation}
    \begin{tikzcd}
	& {\E_i(\Ch\mhyphen\mathsf{Cat})} \\
	{\E_\infty(\C\mhyphen\mathsf{Cat})} & {\E_i(\C\mhyphen\mathsf{Cat})} & {\E_i(\Ce\mhyphen\mathsf{Cat})} \\
	& {\E_\infty(\C\mhyphen\mathsf{Cat})} & {\E_i(\C\mhyphen\mathsf{Cat})}
	\arrow[hook, from=2-1, to=2-2]
	\arrow["{\otimes_\Ch \C}"', from=1-2, to=2-2]
	\arrow["{=}"{description}, from=2-2, to=3-3]
	\arrow["{\otimes_\Ch \Ce}", from=1-2, to=2-3]
	\arrow["{\otimes_\Ce \C}", from=2-3, to=3-3]
	\arrow["{=}"{description}, from=2-1, to=3-2]
	\arrow[hook, from=3-2, to=3-3]
\end{tikzcd} \ . 
\end{equation} 
Explicitly, the functor sends $(\Ca_\hbar, \Ca_\text{SMC}, \Phi\colon \Ca_\hbar\otimes_\Ch \C \to \Ca_\text{SMC})\in\BDCat{i}$ to \begin{equation}\label{eq:pullbackofPoissFunc}
    (\Ca_\hbar\otimes_\Ch \Ce, \Ca_\text{SMC}, \Phi' \colon \Ca_\hbar\otimes_\Ch \Ce\otimes_\Ce \C \cong \Ca_\hbar\otimes_\Ch \C 
        \xrightarrow{\Phi} \Ca_\text{SMC})\in\PCat{i} \ , 
\end{equation}
and similarly for 1-cells and 2-cells.

\begin{remark}
    One should be able to give more sophisticated but harder to work with definitions using the $\operatorname{P}_n$ and $\BD_n$ operad. However, we do not pursue such a definition here since the above (more naive) approach is sufficient for our purposes. 
\end{remark}

\begin{definition}\label{Def: qunatisation Cat}
A \emph{quantization} of an almost Poisson category $\Ca_\varepsilon$ is an element
of the (homotopy) fiber of $-\otimes_{\C[[\hbar]]} \C_{\varepsilon}$ at $\Ca_\varepsilon$. 
\end{definition}

\begin{remark}
For us it is important that factorization homology commutes with taking the classical limit. This requires us to not only work with flat or topologically-free modules but allow for more general modules. Being a flat deformation hence becomes a property which has to be checked independently for examples of interest. Even if the input is flat, the output of factorization homology might not be in general -- see Example \ref{Ex: torsion}.
\end{remark}

\begin{lemma}\label{lma:tensorprod_aPn_BDn}
The naive tensor product of $\C[[\hbar]]$-, and $\mathbb{C}_\varepsilon$-enriched categories equips the bicategories $\BD_n\mhyphen\Cat$ and $\Pois_n\mhyphen\Cat$ with a symmetric monoidal structure, and the functor $\otimes_{\C[[\hbar]]} \C_\varepsilon \colon \BDCat{i} \to \PCat{i}$ is a symmetric monoidal homomorphism. 
\end{lemma}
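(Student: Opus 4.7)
The plan is to define the symmetric monoidal structure on $\BDCat{i}$ (and analogously on $\PCat{i}$) component-wise on the two legs of the pullback from Equation~\eqref{eq:DefnPCatBDnCat}, exploiting the fact that both $\E_i(\Ch\mhyphen\Cat)$ and $\E_\infty(\C\mhyphen\Cat)$ inherit a symmetric monoidal structure from the respective closed symmetric monoidal $2$-categories $\Ch\mhyphen\Cat$ and $\C\mhyphen\Cat$ (see Section~\ref{sect:the-2-cat-VCat}), and that the induction functor $-\otimes_\Ch \C$ is symmetric monoidal.

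Concretely, for objects $(\Ca_\hbar, \Ca_\text{SMC}, \Phi^\Ca)$ and $(\Da_\hbar, \Da_\text{SMC}, \Phi^\Da)$ in $\BDCat{i}$, I would set
\begin{equation}
(\Ca_\hbar, \Ca_\text{SMC}, \Phi^\Ca) \otimes (\Da_\hbar, \Da_\text{SMC}, \Phi^\Da) \coloneqq \bigl(\Ca_\hbar \otimes_\Ch \Da_\hbar,\, \Ca_\text{SMC} \otimes_\C \Da_\text{SMC},\, \Phi^{\Ca \otimes \Da}\bigr),
\end{equation}
where the $\E_i$-equivalence $\Phi^{\Ca \otimes \Da}$ is the composite
\begin{equation}
(\Ca_\hbar \otimes_\Ch \Da_\hbar) \otimes_\Ch \C \xrightarrow{\ \cong\ } (\Ca_\hbar \otimes_\Ch \C) \otimes_\C (\Da_\hbar \otimes_\Ch \C) \xrightarrow{\Phi^\Ca \otimes \Phi^\Da} \Ca_\text{SMC} \otimes_\C \Da_\text{SMC},
\end{equation}
with the first isomorphism being the lax-monoidal coherence of $-\otimes_\Ch \C$. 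The definition extends to $1$-cells and $2$-cells in the evident way by tensoring the $\hbar$- and SMC-components separately, with the coherence data $\phi$ assembled from naturality of this isomorphism. The tensor unit is the triple $(\Ch, \C, \id)$.

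The associator, unit constraints and braiding are constructed levelwise from those of $\E_i(\Ch\mhyphen\Cat)$ and $\E_\infty(\C\mhyphen\Cat)$, with the necessary $2$-cell component (cf.~Equation~\eqref{eq:pullback2cell}) induced by functoriality of the induction $-\otimes_\Ch \C$ and its symmetric monoidal coherences. All the hexagon and pentagon axioms reduce to the corresponding axioms in each leg, together with coherence for the symmetric monoidal functor $-\otimes_\Ch \C$. Verifying that these constraints pass through the (weakened) pullback defined in Appendix~\ref{appendix:pullback} is the main bookkeeping step, but no new content is needed beyond coherence of symmetric monoidal pseudofunctors. The same argument carries over verbatim to $\PCat{i}$, replacing $\Ch$ with $\Ce$.

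For the final claim, the functor $-\otimes_\Ch \Ce \colon \BDCat{i}\to \PCat{i}$ sends the $\hbar$-leg through $-\otimes_\Ch \Ce$ and keeps the SMC-leg unchanged, as in Equation~\eqref{eq:pullbackofPoissFunc}. Its symmetric monoidal structure map is
\begin{equation}
(\Ca_\hbar \otimes_\Ch \Da_\hbar) \otimes_\Ch \Ce \xrightarrow{\ \cong\ } (\Ca_\hbar \otimes_\Ch \Ce) \otimes_\Ce (\Da_\hbar \otimes_\Ch \Ce),
\end{equation}
i.e.~the symmetric monoidal coherence of $-\otimes_\Ch \Ce$; the SMC-leg contributes the identity. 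Compatibility with $\Phi$ amounts to the observation that the composite
\begin{equation}
(\Ca_\hbar \otimes_\Ch \Ce) \otimes_\Ce \C \cong \Ca_\hbar \otimes_\Ch \C \xrightarrow{\Phi^\Ca} \Ca_\text{SMC}
\end{equation}
is symmetric monoidal-naturally compatible with the analogous composite for $\Da$, which again follows from the factorisation $\C \twoheadleftarrow \Ce \twoheadleftarrow \Ch$ at the level of rings. The hexagon for the braiding and the pentagon for associativity of the monoidal coherence reduce to the corresponding hexagon/pentagon axioms for the symmetric monoidal induction functor $-\otimes_\Ch \Ce$. The main obstacle, and essentially the only nontrivial content, is keeping track of the three layers of coherence (on the $\hbar$-leg, on the SMC-leg, and on the equivalence $\Phi$) so that they are simultaneously compatible with the weakened pullback structure of Appendix~\ref{appendix:pullback}; once that bookkeeping is done, all axioms reduce to standard coherence for symmetric monoidal pseudofunctors applied to induction along the ring maps $\Ch \to \Ce \to \C$.
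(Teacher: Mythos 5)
Your construction is correct and is essentially the paper's own argument: the paper proves Lemma \ref{lma:tensorprod_aPn_BDn} by citing the general Propositions \ref{prop:SMBCpullback} and \ref{prop:SMBCpullbackmap} of Appendix \ref{appendix:pullback}, which equip an arbitrary pullback of 2-strict symmetric monoidal bicategories with the componentwise tensor product (comparison equivalence assembled from the monoidal coherences of the legs, exactly your $\Phi^{\Ca\otimes\Da}$) and produce the induced symmetric monoidal homomorphism from a square of symmetric monoidal transformations (your treatment of $-\otimes_\Ch\Ce$). The only presentational difference is that the paper isolates the ``2-strictness'' of $R\mhyphen\Cat$ and of induction along ring maps to make all coherence 2-cells identities, which is the precise form of the bookkeeping you defer to ``standard coherence for symmetric monoidal pseudofunctors.''
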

The above lemma follows from more general statements in Appendix \ref{appendix:pullback} about pullbacks, see Proposition \ref{prop:SMBCpullback} and Proposition \ref{prop:SMBCpullbackmap}.

\begin{remark}
For a $\BD_0$-category $\mathcal{C}$ we define the algebra $\mathcal{O}^{\hbar}_{\Ca}\coloneqq \End_{\Ca}(\star)$ where $\star$ is the pointing of $\Ca$. If $\mathcal{O}^{\hbar}_{\Ca}$ is topologically-free this defines a deformation quantization of algebras as defined at the beginning of this section. 
\end{remark}

\subsection{Examples of \texorpdfstring{$\Pois_i$}{Pois-i}- and \texorpdfstring{$\BD_i$}{BD-i}-categories}
\label{sect:examples-of-Pois-and-BD-cats}
We now make the definitions from the previous section more concrete by providing a number of examples. 
First out are some well-known examples of $\Pois_0$-categories. 

\begin{example}
Consider a linear category $*// A$ with one object $*$ and an algebra $A$ as endomorphisms. From an Eckmann-Hilton argument it follows that this category 
admits a (unique) symmetric monoidal structure if and only if $A$ is commutative. The monoidal structure is the multiplication in $A$. Extending $*// A$ to an $\Pois_0$-category corresponds to specifying a $\Ce$-algebra $A_\varepsilon$ and an isomorphism $A_\varepsilon\otimes_{\C_\varepsilon} \C \cong A$. We conclude that $\Pois_0$-structures on $*// A$ are exactly almost Poisson structure on $A$. Furthermore, a quantization of the $\Pois_0$-structure as in Definition~\ref{Def: qunatisation Cat} corresponds to a quantization of the almost Poisson algebra.  
\end{example}
\begin{example}
Let $\cat{C}$ be a symmetric monoidal $\C$-linear category. We consider compatible $\Pois_0$-structures on\footnote{Here, $\Ca[\varepsilon]$ has the same objects as $\Ca$ and the morphism spaces are obtained via extension of scalars:
\begin{equation}
    \Map_{\Ca[\varepsilon]}(X,Y) = \Map_\Ca(X,Y)\otimes_\C\Ce  \ \ .
\end{equation}}
$\cat{C}[\varepsilon]$ pointed by the monoidal unit in $\cat{C}$. They are described by deformations of the composition 
\begin{align}
\circ_\varepsilon \colon \Hom_{\cat{C}}(B,C)[\varepsilon] \otimes_{\C[\varepsilon]} \Hom_{\cat{C}}(A,B)[\varepsilon] &\longrightarrow \Hom_{\cat{C}}(A,C)[\varepsilon] \\ 
g\otimes f & \longmapsto g\circ_{\cat{C}}f + \varepsilon\cdot \circ_1(f,g)
\end{align}
where $\circ_1$ is a linear map $\Hom_{\cat{C}}(B,C)\otimes \Hom_{\cat{C}}(A,B)\to  \Hom_{\cat{C}}(A,C)$ 
satisfying some conditions ensuring that $\circ_\varepsilon$ defines a composition. To describe these condition
more explicitly recall that the Hochschild cochain complex~\cite{LVdB2} is formed by the vector spaces
\begin{align}
HC^n(\cat{C}) \coloneqq \prod_{a_0,\dots , a_n\in \cat{C}} \Hom_\C (\Hom_{\cat{C}}(a_{n-1},a_n)\otimes \dots \otimes \Hom_{\cat{C}}(a_{0},a_1), \Hom_{\cat{C}}(a_{0},a_n))
\end{align}
with the differential $HC^n(\cat{C}) \to HC^{n+1}(\cat{C}) $ induced by the composition in $\cat{C}$. 
The collection of all the maps $\circ_1$ defines an element of $HC^2(\cat{C})$. It defines
a deformation of the categorical structure if and only if it is a cycle. It is easy to see that two $\Pois_0$-structures are equivalent if the corresponding elements of $HC^2(\cat{C})$ lie in the same cohomology class. 
In total we have seen that $\Pois_0$-structures on $\cat{C}[\varepsilon]$ are classified by the second Hochschild cohomology group $HH^2(\cat{C})$. 

If $\cat{C}$ is the category of \emph{injective} modules over a commutative algebra $A$ the Hochschild 
cohomology of $A$ and $\cat{C}$ agree~\cite{LVdB2}. If furthermore $A$ is smooth then the HKR theorem~\cite{HKR} implies
that $H^2(A)$ agrees with the vector space of bivector fields on the algebraic variety $\operatorname{Spec}A$.
These also classify almost Poisson structures on $A$. 
\end{example}

\begin{example}[Poisson-Hopf algebras]
Recall that a Poisson-Hopf algebra is a commutative Hopf algebra $(H, m, u, \Delta, \eta)$ together with a Poisson bracket $p \colon H \otimes H \to H$ such that the coproduct is a morphism of Poisson algebras. This condition is equivalent to $(H_\varepsilon, m_\varepsilon, u, \Delta, \eta)$ being a $\Ce$-linear Hopf algebra, where $m_\varepsilon := m + \varepsilon p$. The category of comodules of $H_\varepsilon$ is then an $\Pois_1$-category, with the reduction at $\varepsilon=0$ being the symmetric monoidal category of comodules of $H$. The case of coPoisson-Hopf algebras and their modules is obtained by considering the previous example in $\Vect^\text{op}$.
\end{example}

\begin{example}
    The deformations of monoidal categories and functors classified by the Davidov-Yetter cohomology \cite{Davidov, CY, FGS} give a specific class of examples in $\Pois_1$. Namely,  2-cocycles in the DY complex of a symmetric monoidal functor of $F\colon \Ca \to \Da$ give $\Pois_1$ functor structures on the trivial $\Ce$-linear extension of $F$. Similarly, $3$-cocycles for the identity functor on $\Ca$ are given by deformations of the associator on a symmetric monoidal category $\Ca$, i.e. they give deformations of $\Ca$ into a $\Pois_1$-category.
\end{example}

We now turn to some important examples of $\Pois_2$-categories. 

\begin{example}[Infinitesimally braided categories]\label{ex:iBMC}
Following {\cite[Section~4]{Cartier1993}}, given a $\mathbb K$-linear monoidal category $(\Ca, \otimes)$ with symmetry $\sigma$, an infinitesimal braiding on $\Ca$ is a natural transformation
\begin{equation}
    t_{X,Y} \colon X \otimes Y \to X \otimes Y, \quad \quad X,Y \in \Ca
\end{equation}
such that $\sigma_{X,Y}t_{X,Y}=t_{Y,X}\sigma_{X,Y}$ and such that $\beta = \sigma \circ (1 + \varepsilon t)$ is a braiding in the $\Ce$-linear category $\Ca[\varepsilon]$. 
An important example for us will be  $\Ca = U(\mathfrak{g})\lmod$ for some Lie algebra $\mathfrak{g}$. Let $t \in (\mathsf{Sym}^2\mathfrak{g})^\mathfrak{g}$ be a symmetric $\mathfrak{g}$-invariant tensor. Then, $U(\mathfrak{g})\lmod$ is infinitesimally braided via
\begin{equation}
    t_{X,Y}(x \otimes y) = \sum_{i,j \in I}t^{ij} e_i \triangleright x \otimes e_j \triangleright y, \quad \quad x \in X, y \in Y 
\end{equation}
in a basis $(e_i)_{i \in I}$ of $\mathfrak{g}$ where $t = \sum_{ij} t^{ij} e_i \otimes e_j$.

An infinitesimal braiding should be understood as the categorical analogue to a Poisson structure on a commutative algebra. A quantization of the infinitesimal braided monoidal category $(U(\mathfrak{g})\lmod,t)$ is given by the Drinfeld category $U(\mathfrak{g})\lmod^\Phi[[\hbar]]$.

Infinitesimally braided categories were also called ``infinitesimal symmetric categories'' by Kassel \cite[Sec.~XX.4]{Kassel} and ``Cartier categories'' by Heckenberger and Vendramin \cite{HV}. Deformations by $t$ which is not necessarily symmetric were studied by \cite{ABSV} as ``(symmetric) pre-Cartier categories'', but note that antisymmetric $t_{X, Y}$ results in the deformed braiding $\sigma \circ (1 + \varepsilon t)$ being symmetric.
\end{example}

\begin{example}[Infinitesimal Drinfeld center]
A special case of the above construction can be obtained using an infinitesimal version of the Drinfeld center/Drinfeld double as described in \cite[Section~7.2]{PSVQv2}. Given a Poisson-Hopf algebra $H$, a dimodule for $H$ is a left $H$-comodule $X$ and a morphism $\rho\colon X\otimes H \to X$ such that
\begin{equation} 
    \rho(\rho(x, h), h') - \rho(\rho(x, h'), h) = \rho(x, p(h, h')), \quad \rho(x, hh') = \rho(x, h)\eta(h') + \rho(x, h') \eta(h) 
\end{equation}
and such that
\begin{equation} 
    x_{(0)}h_{(1)}\otimes \rho(x_{(1)}, h_{(2)}) - h_{(2)}\rho(x, h_{(1)})_{(0)} \otimes \rho(x, h_{(1)})_{(0)} = p(h, x_{(0)})x_{(1)} \ ,  
\end{equation}
for all $x\in X$, $h, h'\in H$. Here we use the Sweedler notation for the coaction $x\mapsto x_{(0)}\otimes x_{(1)} \in H\otimes X$ and $\Delta h = h_{(1)} \otimes h_{(2)}$. The category of $H$-dimodules and comodule morphisms is then infinitesimally braided: $\rho(x\otimes y, h) = \rho(x, h)\otimes y + x\otimes \rho(y, h)$ defines a dimodule structure on $X\otimes Y$, and the infinitesimal braiding is given by
\begin{equation} 
    t_{X, Y} = r_{X, Y} + \sigma_{Y, X} \circ r_{Y, X} \circ \sigma_{X, Y}  
\end{equation}
where $\sigma$ is the symmetry and $r_{X, Y}\colon X\otimes Y \to X\otimes Y$ is defined as
\begin{equation} 
    r_{X,Y}(x\otimes y) = - \rho(x, y_{(0)}) \otimes y_{(1)} \ . 
\end{equation}
The Hopf algebra $H$ is an $H$-dimodule with coaction given by $\Delta$ and $\rho(h, h') = p(h, h'_{(1)}) h'_{(2)}$.
\end{example}

\begin{example}\label{Ex: P_2 from r}
Let $U_\hbar(\mathfrak g)\lmod$ be the category of representations of the quantum group $U_\hbar(\mathfrak g)$ which are topologically free as $\Ch$-modules -- see Section \ref{Drinfeld category} for details. This forms a $\BD_2$-category, with reduction modulo $\hbar$ given by $U(\mathfrak g)\lmod$. The equivalence $U_\hbar(\mathfrak g)\lmod\otimes_\Ch \C \cong U(\mathfrak g)\lmod$ follows from the fact that any representation of $U_\hbar(\mathfrak g)$ on $V[[\hbar]]$ is equivalent to a trivial extension of an action of $U(\mathfrak g)$ on $V$, see \cite[Theorem~2.6.8]{JacobsThesis}. This theorem also implies\footnote{In fact, this holds in greater generality: $\hbar$-linear $A$-module morphisms $V \to W$ form a topologically free $\Ch$-module if the algebra $A$ and the modules $V, W$ are topologically free as $\Ch$-modules.} that the Hom-spaces in $U_\hbar(\mathfrak g)$ are topologically free $\Ch$-modules, i.e.\ the triple $(U_\hbar(\mathfrak g)\lmod, U(\mathfrak g)\lmod, \cong)$ is a $\BD_2$-category with topologically free Hom-spaces.

This $\BD_2$-category is a deformation quantization of the $\Pois_2$-category given as follows. Let $U_\varepsilon(\mathfrak{g})$ be the semi-classical limit of the quantum group $U_\hbar(\mathfrak{g})$, meaning that $U_\varepsilon(\mathfrak{g})$ is a Hopf algebra which is isomorphic to $U(\mathfrak{g})[\varepsilon]/(\varepsilon^2)$ as a $\mathbb{C}_\varepsilon = \mathbb{C}[\varepsilon]/(\varepsilon^2)$-module with coproduct $\Delta_\varepsilon$ satisfying 
\begin{equation}\label{eq:almostCocom}
    \Delta_\varepsilon^{\op}(-) = (1 - \varepsilon r) \Delta_\varepsilon(-) (1 + \varepsilon r) \ \ ,
\end{equation}
where $r$ is the standard classical r-matrix for the simple Lie algebra $\mathfrak{g}$. Let $U_\varepsilon(\mathfrak{g})\lmod$ be the category of $U_\varepsilon(\mathfrak{g})$-modules which are free as $\mathbb{C}_\varepsilon$-modules. Then, $U_\varepsilon(\mathfrak{g})\lmod$ is an $\Pois_2$-category with braiding given by 
\begin{equation}
    \beta_{X,Y} = \tau \circ (1 + \varepsilon r~\triangleright) \colon X \otimes Y \to Y \otimes X \ \ ,
\end{equation}
where $\tau(x \otimes y) = y \otimes x$ is the flip map. The above is a map of $U_\varepsilon(\mathfrak{g})$-modules due to Equation \eqref{eq:almostCocom}. 
\end{example}

\subsection{Additivity} \label{sect:dunns-additivity-for-pois-and-bd-cats}
In this section we state a version of Dunn's additivity for $\Pois_n$- and $\BD_n$-categories:

\begin{theorem}\label{prop:additivity}
For $n = 0,1,2, \infty$ the following holds:
\begin{itemize}
    \item $\E_n(\Pois_0\mhyphen\Cat) \cong \Pois_n\mhyphen\Cat$, 
    \item $\E_n(\BD_0\mhyphen\Cat) \cong \BD_n\mhyphen\Cat$. 
\end{itemize}
\end{theorem}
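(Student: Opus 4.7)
The plan is to reduce the statement to the standard Dunn additivity $\E_n(\E_m(\Ca)) \simeq \E_{n+m}(\Ca)$ in a symmetric monoidal bicategory $\Ca$, applied to the two factors appearing in the pullback defining $\PCat{0}$ (respectively $\BDCat{0}$). By Lemma~\ref{lma:tensorprod_aPn_BDn} both $\PCat{0}$ and $\BDCat{0}$ carry symmetric monoidal bicategory structures, so $\E_n(\PCat{0})$ and $\E_n(\BDCat{0})$ are defined. I will treat only the Poisson case; the $\BD$ case is identical after replacing $\Ce$ by $\Ch$.

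First I would unpack the data. An object of $\E_n(\PCat{0})$ is an $\E_n$-algebra structure on a triple $(\Ca_\varepsilon,\Ca_{\text{SMC}},\Phi)$, where $\Ca_\varepsilon$ is a pointed $\Ce$-linear category, $\Ca_{\text{SMC}}$ is a symmetric monoidal $\C$-linear category, and $\Phi\colon (\Ca_\varepsilon)_0\to\Ca_{\text{SMC}}$ is an $\E_0$-equivalence. The $\E_n$-structure decomposes componentwise, yielding an $\E_n$-algebra structure on each of $\Ca_\varepsilon$ and $\Ca_{\text{SMC}}$, and upgrading $\Phi$ to a morphism of $\E_n$-algebras. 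The same is true for $1$- and $2$-cells. The key point is that taking $\E_n$-algebras commutes with the pullback construction of Appendix~\ref{appendix:pullback}: since $\E_n$-objects in a symmetric monoidal bicategory are defined as a weighted limit (namely a diagram out of the operad), and the ``pullback'' in Definition~\ref{def:pullback} is itself built from hand-crafted limits and invertible 2-cells, the two limit operations commute. Hence
\begin{equation}
\E_n(\PCat{0}) \;\simeq\; \E_n(\E_\infty(\C\mhyphen\Cat)) \;\times_{\E_n(\E_0(\C\mhyphen\Cat))}\; \E_n(\E_0(\Ce\mhyphen\Cat)).
\end{equation}

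Next I would apply ordinary Dunn additivity to each vertex. On the right we have $\E_n(\E_0(\mhyphen))\simeq\E_n(\mhyphen)$, essentially tautological as $\E_0$ is just pointing and is absorbed by the unit of any $\E_n$-algebra. On the left we have $\E_n(\E_\infty(\mhyphen))\simeq\E_\infty(\mhyphen)$, since an $\E_n$-algebra in symmetric monoidal categories, for our bicategorical range $n\in\{0,1,2,\infty\}$, is again a symmetric monoidal category by an Eckmann--Hilton style argument (together with the standard bicategorical checks for $n=1,2$). The reduction functor $\otimes_\Ce\C$ is symmetric monoidal and respects these identifications, so the pullback on the right becomes
\begin{equation}
\E_\infty(\C\mhyphen\Cat) \;\times_{\E_n(\C\mhyphen\Cat)}\; \E_n(\Ce\mhyphen\Cat),
\end{equation}
which is by Definition~\ref{def:pullback_Poisi} precisely $\PCat{n}$. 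The same argument at the level of $1$- and $2$-cells produces the equivalence of bicategories claimed.

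The main obstacle, as flagged in Remark~\ref{rem:PullbackWhere?}, is that pullbacks of symmetric monoidal bicategories are not developed in the literature, so the commutation of $\E_n$ with the pullback must be checked by hand using the explicit model from Appendix~\ref{appendix:pullback}. In the Poisson/BD setting this amounts to verifying that the finite coherence data of an $\E_n$-algebra structure ($n\in\{0,1,2\}$) can be transported vertex-by-vertex across the pullback together with the comparison $2$-cells of the form \eqref{eq:pullback2cell}, and that this assignment is essentially surjective, fully faithful, and strictly monoidal. For $n=\infty$ the statement is formal since $\E_\infty$ absorbs $\E_n$. The restriction to $n\in\{0,1,2,\infty\}$ precisely matches the range in which bicategorical coherence suffices to encode an $\E_n$-structure, avoiding the need for an $(\infty,2)$-categorical framework.
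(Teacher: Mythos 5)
Your proposal is correct and follows essentially the same route as the paper: the paper's proof is exactly an application of the commutation of $\E_n$ with the explicit pullback (its Proposition~\ref{prop:Eiandpullback}, verified cell-by-cell in Appendix~\ref{appendix:pullback}) followed by Dunn additivity at each vertex, $\E_n(\E_0(-))\simeq\E_n(-)$ and $\E_n(\E_\infty(-))\simeq\E_\infty(-)$, which is precisely the reduction you describe. Your closing paragraph correctly identifies that the commutation step is the only non-formal ingredient and must be checked by hand on the explicit model, which is what the paper does.
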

We first explain the heuristics behind the above additivity-result. Recall from Equation \eqref{eq:DefnPCatBDnCat} and Remark \ref{rem:PullbackWhere?} that \(\Pois_i\mhyphen \Cat\) and \(\BD_i\mhyphen\Cat\) should be thought of as pullbacks of certain (symmetric monoidal) bicategories. If this claim was established, then Proposition \ref{prop:additivity} would follow from the universal property of pullbacks. 
In some more detail, an \(\E_n\)-algebra \(\Ea\) with values in a bicategory \(\Aaa\) is a symmetric monoidal functor \(\Ea \colon \mathrm{Disk}_n \rightarrow \Aaa\). If the bicategory \(\Aaa\) is given by a pullback, i.e.\
\begin{center}
\begin{tikzcd} 
    \mathrm{Disk}_n \arrow[rd, "\Ea"] \arrow[bend left=30, dashed]{rrd} \arrow[bend right=30, dashed]{rdd}
    \\ &\Aaa \arrow[r] \arrow[d]  \ar[rd, "{\mbox{\fontsize{13}{15}\selectfont\( \lrcorner \)}}" description, very near start, phantom, start anchor={[xshift=-1.3ex, yshift=0.1ex]} ]  &\Caa_1  \arrow[d]
    \\ &\Caa_2 \arrow[r] & \Daa
\end{tikzcd} \ , 
\end{center}
it follows that providing a map into \(\Aaa\) is equivalent to providing the two dashed maps into \(\Caa_1 \) and \(\Caa_2\) such that the diagram commutes (up to natural isomorphism). That is, providing an \(\E_n\)-algebra in \(\Aaa\) is equivalent to providing an \(\E_n\)-algebra in \(\Caa_1 \) and \(\Caa_2\) such that they agree as \(\E_n\)-algebras in \(\Daa\). Thus, \(\E_n(\Aaa)\) is given exactly by the pullback-diagram of \(\E_n\)-algebras in \(\Caa_1 \), \(\Caa_2\) and \(\Daa\), i.e. taking pullbacks commutes with applying $\E_n$.

Specializing the above to e.g.~\(\Aaa = \Pois_0\mhyphen\Cat\) we get
    \begin{center}
    \begin{tikzcd}
        \E_n\left(\Pois_0\mhyphen \Cat\right)  \ar[rd, "{\mbox{\fontsize{15}{20}\selectfont\( \lrcorner \)}}" description, very near start, phantom, start anchor={[xshift=-1.3ex, yshift=0.1ex]} ]
& \E_n\left(\E_0\left(\mathbb C_\varepsilon\mhyphen\Cat \right) \right)  \\
	\E_n\left(\E_\infty\left(\mathbb{C} \mhyphen \Cat\right)\right) & \E_n\left(\E_0(\mathbb{C} \mhyphen\Cat) \right)
	\arrow[hook, from=2-1, to=2-2]
	\arrow["{\otimes_{\mathbb C_\varepsilon} \mathbb C}", from=1-2, to=2-2]
	\arrow[from=1-1, to=1-2]
	\arrow[from=1-1, to=2-1]
    \end{tikzcd}.
    \end{center}
Dunn's additivity tells us that \(\E_{n+m}(\Va^{\catprod}) \simeq \E_n\left( \E_m \left(\Va^{\catprod} \right) \right) \), for \(\Va^{\catprod}\) any symmetric monoidal \infy-category. In particular, we have equivalences 
    \begin{align*}
        \E_n(\E_0(\mathbb{C}_\varepsilon\mhyphen\Cat)) \simeq \E_{n}(\mathbb{C}_\varepsilon \mhyphen \Cat),\qquad  \E_n(\E_\infty(\mathbb{C}\mhyphen\Cat)) \simeq \E_{\infty}(\mathbb{C} \mhyphen \Cat)\quad \mathrm{and} \quad \E_n(\E_0(\mathbb{C}\mhyphen \Cat)) \simeq \E_n(\mathbb{C}\mhyphen \Cat)).
        \end{align*}
Hence the commuting diagram above becomes exactly that of the (heuristic) definition of \(\Pois_n\mhyphen\Cat\), providing the wanted equivalence. The same argument works ad verbum for \(\BD_n\mhyphen\Cat\). 

Since we do not have a description of $\Pois_n\mhyphen\Cat$ as a pullback, we prove the commutativity of pullbacks with taking $\E_n$ directly in Appendix \ref{appendix:pullback}. The rest of the proof goes through as described above, and we get:

\begin{proof}[Proof of Theorem \ref{prop:additivity}]
Applying Proposition \ref{prop:Eiandpullback} to $\Caa_1  = \E_0(\Ce\mhyphen \Cat)/ \E_0(\Ch \mhyphen \Cat)$,  $\Caa_2 = \E_\infty(\C \mhyphen\mathsf{Cat})$ and $\Daa = \E_0(\mathbb C \mhyphen\mathsf{Cat})$ gives the two claims of Theorem \ref{prop:additivity}.
\end{proof}

\section{Compatibility between factorization homology and quantization} \label{sect:Comp-FH-Quantization}
We start by proving that the 2-categories \(\PCat{0}\) and \(\BDCat{0}\) are suitable targets for factorization homology in Section \ref{sect:suitable-targets-for-FH-P0-and-BD0}. This allows us to use factorization homology in combination with Theorem \ref{prop:additivity} to relate local and global quantum observables in Section \ref{sec:GlobalQObsFH}. 
Lastly, we define the internal endomorphism algebra in our enriched setting and give some concrete examples of this notion in 
Section \ref{ssec:IntEndAlg}.

\subsection{Target categories for factorization homology} \label{sect:suitable-targets-for-FH-P0-and-BD0}
To ensure that \(\PCat{0}\) and \(\BDCat{0}\) are suitable target categories for factorization homology we need to examine (sifted) colimits therein. 
For this we first consider the following general result regarding colimits computed in what morally is a pullback 2-category.

\begin{lemma} \label{lemma:ColimitsInPullbackBicat}
    Let \(\Caa_1, \Caa_2, \Daa,\) and \(\Eaa\) be 2-categories, and let the diagram 
    \begin{equation} \label{eq:PullbackBicatsColimits}
    \begin{tikzcd}
        \Caa_1\times_\Daa \Caa_2 \arrow[r, "\mathrm{pr}_2"] \arrow[d, swap, "\mathrm{pr}_1"] &\Caa_2 \arrow[d, "H_2"]
        \\ \Caa_1 \arrow[r, swap, "H_1"] &\Daa
    \end{tikzcd}
    \end{equation}
    be a pullback diagram of 2-categories in the sense spelled out in the proof of Proposition \ref{prop:Eiandpullback}. If \(\Caa_1\) and \(\Caa_2\) have colimits of shape \(\Eaa\) which are preserved by \(H_1\) and \(H_2\), then \(\Caa_1\times_\Daa \Caa_2\) also has colimits of shape \(\Eaa\). 
\end{lemma}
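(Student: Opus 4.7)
The plan is to build the colimit of a diagram $F \colon \Eaa \to \Caa_1 \times_\Daa \Caa_2$ componentwise, using the fact that the pullback 2-category is, informally, made up of triples $(c_1, c_2, \phi)$ with $c_i \in \Caa_i$ and $\phi \colon H_1(c_1) \xrightarrow{\simeq} H_2(c_2)$ in $\Daa$. Concretely, I would first form the two projected diagrams
\begin{equation*}
F_1 \coloneqq \mathrm{pr}_1 \circ F \colon \Eaa \to \Caa_1, \qquad F_2 \coloneqq \mathrm{pr}_2 \circ F \colon \Eaa \to \Caa_2,
\end{equation*}
together with the invertible natural transformation $\Phi \colon H_1 \circ F_1 \Rightarrow H_2 \circ F_2$ that is part of the data of $F$ landing in the pullback. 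By hypothesis there exist colimit cocones $\lambda_i \colon F_i \Rightarrow \Delta_{c_i}$ in $\Caa_i$, and since $H_i$ preserves bicolimits of shape $\Eaa$, both $H_1(\lambda_1)$ and $H_2(\lambda_2) \circ \Phi$ exhibit $H_i(c_i)$ as a colimit of $H_1 \circ F_1$ in $\Daa$.

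From the universal property of colimits in $\Daa$, these two colimit cocones are related by a (unique up to unique $2$-iso) invertible $1$-morphism $\phi \colon H_1(c_1) \xrightarrow{\simeq} H_2(c_2)$ in $\Daa$. This produces an object $(c_1, c_2, \phi)$ in $\Caa_1 \times_\Daa \Caa_2$, and the cocones $\lambda_1, \lambda_2$ together with the coherence datum coming from $\Phi$ and $\phi$ assemble into a cocone $\lambda \colon F \Rightarrow \Delta_{(c_1, c_2, \phi)}$ in the pullback.

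To check that $\lambda$ is a bicolimit cocone, take any competing cocone $\mu \colon F \Rightarrow \Delta_{(d_1, d_2, \psi)}$. Projecting to $\Caa_1$ and $\Caa_2$ yields cocones $\mu_i \colon F_i \Rightarrow \Delta_{d_i}$, hence unique (up to unique invertible $2$-cell) factorizations $f_i \colon c_i \to d_i$ in $\Caa_i$. Applying $H_1, H_2$ and using that both are colimits in $\Daa$, one obtains a canonical invertible $2$-cell $\psi \circ H_1(f_1) \cong H_2(f_2) \circ \phi$, which is exactly the coherence datum required to promote $(f_1, f_2)$ to a $1$-morphism $(c_1, c_2, \phi) \to (d_1, d_2, \psi)$ in $\Caa_1 \times_\Daa \Caa_2$. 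Uniqueness up to invertible $2$-cell is inherited factorwise from the colimit universal properties in $\Caa_1$ and $\Caa_2$, and $2$-morphisms between such factorizations are handled by the analogous $2$-dimensional universal property. The main obstacle I anticipate is the bookkeeping of the coherence $2$-cells: one has to verify that the canonical $2$-isomorphism produced in $\Daa$ is genuinely the one required by the explicit definition of the pullback $2$-category given in Appendix~\ref{appendix:pullback}, and that the assignment on $2$-morphisms is compatible with the vertical and horizontal composition in that pullback. This is a diagram-chase using only that $H_1, H_2$ preserve colimit cocones and $2$-cells between them, and no further hypotheses on $\Caa_1, \Caa_2, \Daa$ are needed.
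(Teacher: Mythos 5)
Your proposal is correct and follows essentially the same route as the paper: project the diagram to $\Caa_1$ and $\Caa_2$, form the componentwise colimits, use preservation by $H_1,H_2$ together with the compatibility datum $\Phi$ to produce the comparison equivalence $\phi\colon H_1(c_1)\to H_2(c_2)$, and verify universality factorwise. The only cosmetic difference is that the paper constructs $\phi$ as the explicit composite $H_1(\colim J_1)\cong\colim H_1J_1\cong\colim H_2J_2\cong H_2(\colim J_2)$ rather than appealing to uniqueness of colimit vertices, which amounts to the same thing.
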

\begin{proof}
Let \(J\colon \Eaa \ra \Caa_1\times_\Daa \Caa_2\) be a diagram, and denote by \(J_i=\mathrm{pr}_i \circ J \colon \Eaa \ra \Caa_i\) for \(i\in \{1,2\}\). Since \(\Caa_1\) and \(\Caa_2\) both have colimits of shape \(\Eaa\), and \(H_1, H_2\) preserve those colimits we have an isomorphism \( \Psi \colon H_1(\colim_\Eaa J_1) \cong \colim_\Eaa H_1\circ J_1 \cong \colim_\Eaa H_2\circ J_2 \cong H_2(\colim_\Eaa J_2) \) in \(\Daa\). We claim that \(\colim_\Eaa J := (\colim_\Eaa J_1, \colim_\Eaa J_2, \Psi)\) computes the colimit of \(J\) in \(\Caa_1 \times_\Daa \Caa_2\). 

We first explain how the object \(\colim_\Eaa J\) comes with structure maps making it a cocone of the diagram \(J\). 
For every object \(e\in \Eaa\) we have 1-morphisms
\begin{equation}
    \chi_e = (\chi_{e}^1, \chi_{e}^2, \varepsilon_{e}) \colon (J_1(e), J_2(e), \psi_{e})  \ra (\colim_\Eaa J_1, \colim_\Eaa J_2, \Psi) \ , 
\end{equation}
where \(\chi_{e}^i\) are the 1-morphisms into \(\colim_\Eaa J_i\) in \(\Caa_i\), for \(i\in \{1,2\}\), and the 2-isomorphism \(\varepsilon_e\) is the composition of the 2-isomorphisms filling the diagram below
\begin{equation}
\begin{tikzcd}[row sep=28pt]
    H_1 J_1 e \arrow[rrr, "\psi_e"] \arrow[d, swap, "H_1\chi_e^1"] \arrow[dr, "\chi^{11}_{H_1J_1(e)}", ""{name=U,inner sep=1pt,below}]& & &H_2 J_2 e \arrow[d, "H_2 \chi_e^2"] \arrow[dl, swap, "\chi^{22}_{H_2 J_2(e)}", ""{name=W, inner sep=2pt, below}]
    \\ H_1 \colim_\Eaa J_1 \arrow[r, "\sim"] & \colim_\Eaa H_1 J_1 \arrow[r, "\sim"]  & \colim_\Eaa H_2 J_2 \arrow[r, "\sim"] & H_2\colim_\Eaa J_2
    \arrow["{\alpha_e}", shorten >=3pt,  Rightarrow, from=2-1, to=U]
    \arrow["{\beta^{-1}_{e}}", shorten >=2pt,  Rightarrow, from=W, to=2-4]
    	\arrow[start anchor={[yshift=+1.5ex]}, end anchor={[xshift=1.5ex, yshift=1.5ex]}, shorten <=5pt, shorten >=80pt, Rightarrow, from=2-2, to=1-4, "\gamma_e" near start]
\end{tikzcd} \ .  
\end{equation}
The above left and right triangles have 2-isomorphisms filling them coming from \(H_1\), respectively \(H_2\) preserving colimits of shape \(\Eaa\). In addition, since \(H_1 J_1 \cong H_2 J_2\) in \(\Daa\) we get compatibility data between \(\colim_\Eaa H_1 J_1\) and \(\colim_\Eaa H_2 J_2\), which in particular means that there is a 2-iso \(\gamma_e\) filling the middle square above. 
For any 1-morphism \(g\colon e\ra e'\) in \(\Eaa\) we also have a 2-morphism \(\phi_g = (\phi_g^1, \phi_g^2) \colon \chi_{e'} \circ J(g) \Rightarrow \chi_e \). The compatibility condition for this to be a 2-morphism in \(\Caa_1 \times_\Daa \Caa_2\) again follows from \(H_1\) and \(H_2\) preserving colimits of shape \(\Eaa\). In total this defines the cocone \(\colim_\Eaa J\). 

We need to check that the cocone \(\colim_\Eaa J\) is universal. That is, any other cocone needs to factor through \(\colim_\Eaa J\), and any morphism between two cocones over some fixed category \(\Ca\) also needs to factor through \(\colim_\Eaa J\). Both properties follow, in a similar way to above, from using the 1- and 2-morphisms of \(\colim_\Eaa J_1\) and \(\colim_\Eaa J_2\) in \(\Caa_1\), respectively \(\Caa_2\), together with the compatibility data in \(\Daa\) arising from \(H_1\) and \(H_2\) preserving colimits and the diagrams \(H_1 J_1 \cong H_2 J_2\) being isomorphic. 
\end{proof}

We are now ready to establish the main statement of this subsection. 

\begin{lemma}\label{lma:targetFH}
    The categories $\Pois_0\mhyphen\Cat$ and $\BD_0\mhyphen\Cat$ are tensor sifted cocomplete (as (2,1)-categories).
\end{lemma}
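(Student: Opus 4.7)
The plan is to deduce both statements from Lemma~\ref{lemma:ColimitsInPullbackBicat} applied to the two pullback squares appearing in \eqref{eq:DefnPCatBDnCat}. The lemma reduces tensor sifted cocompleteness of the pullback to three things: (i) sifted bicolimits exist in each of the three $(2,1)$-categories appearing as vertices of the cospan, (ii) the two maps of the cospan, namely the inclusion $\E_\infty(\C\mhyphen\Cat) \hookrightarrow \E_0(\C\mhyphen\Cat)$ and the base change $\otimes_{\Ce}\C$ (respectively $\otimes_{\Ch}\C$), preserve sifted bicolimits, and (iii) the naive tensor product constructed in Lemma~\ref{lma:tensorprod_aPn_BDn} preserves sifted bicolimits separately in each variable.

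For (i), the key input is Proposition~\ref{prop:VCat-cocomplete-and-tensor-prod-commutes-with-colimits}: for $\Va \in \{ \C\mhyphen\Mod, \Ce\mhyphen\Mod, \widehat{\Ch\mhyphen\Mod} \}$ the $(2,1)$-category $\Va\mhyphen\Cat$ admits all bicolimits, and the tensor product preserves bicolimits in each variable. We then observe that the forgetful functors $\E_\infty(\Va\mhyphen\Cat) \to \Va\mhyphen\Cat$ and $\E_0(\Va\mhyphen\Cat) \to \Va\mhyphen\Cat$ create sifted bicolimits; for $\E_0$ this is because sifted diagrams have a canonical pointing inherited from that of the terms (indeed sifted 2-categories admit binary products, so the unit of the tensor product is mapped compatibly), and for $\E_\infty$ this is the categorified version of the standard fact that sifted colimits of (symmetric monoidal) algebras in a tensor sifted cocomplete category are computed on the underlying level. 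Hence sifted bicolimits exist in all three vertices of each cospan.

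For (ii), base change $\otimes_{\Ce}\C \colon \Ce\mhyphen\Mod \to \C\mhyphen\Mod$ is left adjoint to restriction of scalars, and therefore preserves all colimits; the induced $2$-functor $\otimes_{\Ce}\C \colon \E_0(\Ce\mhyphen\Cat) \to \E_0(\C\mhyphen\Cat)$ (and similarly $\otimes_{\Ch}\C$) then preserves sifted bicolimits because these are computed on underlying categories, where base change extends hom-objects termwise and commutes with the colimits computing the enriched relative tensor products of Section~\ref{sect:rel-tensor-products}. The inclusion $\E_\infty(\C\mhyphen\Cat) \hookrightarrow \E_0(\C\mhyphen\Cat)$ preserves sifted bicolimits by the same ``underlying-level computation'' argument as above. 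For (iii), the tensor product on the pullback $\Pois_0\mhyphen\Cat$ (resp.\ $\BD_0\mhyphen\Cat$) is constructed componentwise on the two factors; since the tensor products on $\E_\infty(\C\mhyphen\Cat)$ and on $\E_0(\Va\mhyphen\Cat)$ are induced by the tensor product on $\Va\mhyphen\Cat$, which preserves all bicolimits by Proposition~\ref{prop:VCat-cocomplete-and-tensor-prod-commutes-with-colimits}, the tensor preservation transfers to the pullback.

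The main obstacle is purely bookkeeping: making the statement ``sifted bicolimits of pointed (resp.\ symmetric monoidal) $\Va$-enriched categories are computed on underlying categories'' precise in the $(2,1)$-categorical setting, and verifying that the compatibility $2$-isomorphism $\Psi$ in the pullback, supplied by Lemma~\ref{lemma:ColimitsInPullbackBicat}, interacts correctly with the tensor product of $\Pois_0\mhyphen\Cat$, i.e.\ that the induced cocone on a tensor of sifted diagrams is indeed the colimit cocone. Both are routine given the preservation results above.
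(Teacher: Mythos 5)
Your plan follows essentially the same route as the paper's proof: reduce to Lemma~\ref{lemma:ColimitsInPullbackBicat} applied to the defining pullback, establish sifted cocompleteness of the three vertices via cocompleteness of $\VCat$ together with the facts that $\E_0$-colimits are computed in the undercategory of the unit and that the forgetful functor from $\E_\infty$-algebras detects sifted colimits, use left-adjointness of $\otimes_{\Ce}\C$ (resp.\ $\otimes_{\Ch}\C$) for preservation along the legs, and transfer tensor-compatibility componentwise. The only slip is your justification in the $\E_0$ case --- siftedness is not about the diagram category admitting binary products but about finality of the diagonal (in particular connectedness); the paper instead uses the undercategory description of $\E_0$-algebras, under which colimits of connected, hence sifted, diagrams are created on underlying categories, and it invokes Dunn additivity plus the detection statement of \cite[Corollary 2.7.2]{Dag3Lurie} to handle the $\E_\infty$ vertex precisely.
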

\begin{proof}
We consider \(\Pois_0\mhyphen\Cat\) in detail. The proof for \(\BD_0\mhyphen\Cat\) is analogous. Let \(J \colon \Eaa \ra \Pois_0\mhyphen\Cat \) be a sifted diagram, and denote by \(J_1 = \mathrm{pr}_1 \circ J \colon \Eaa \ra \E_\infty(\mathbb{C}\mhyphen\Cat) \) and \(J_2 = \mathrm{pr}_2 \circ J \colon \Eaa \ra \E_0(\mathbb{C}_\varepsilon\mhyphen\Cat)\). We want to employ Lemma \ref{lemma:ColimitsInPullbackBicat} to get sifted cocompleteness, so we first need to establish its assumptions. 

Recall that \(\Va\mhyphen\Cat\) is cocomplete, so in particular both \(\mathbb{C}\mhyphen\Cat\) and \(\mathbb{C}_\varepsilon\mhyphen\Cat\) are cocomplete. Moreover, the category of \(\E_0\)-algebras in an arbitrary category \(\Ca\) is equivalent to the undercategory \(\Ca_{\emptyset_/}\) of objects under the distinguished object \(\emptyset\). The dual (and truncated) statement of \cite[Proposition 4.6]{LimitNlab} says that the (2,1)-colimit of a functor \(F\) in \(\Ca_{\emptyset_/}\) coincides with the (2,1)-colimit of the functor under the distinguished object \(F_{\emptyset_/}\). Explicitly, this means that the (2,1)-colimit \(\colim_\Eaa J_2\) in \(\E_0(\mathbb{C}_\varepsilon\mhyphen\Cat)\) coincides with the (2,1)-colimit \(\colim_\Eaa {J_2}_{\emptyset_/}\) in \(\mathbb{C}_\varepsilon\mhyphen\Cat\). Since the latter is cocomplete it follows that \(\E_0(\mathbb{C}_\varepsilon\mhyphen\Cat)\) is cocomplete. By the same argument we also get that \(\E_0(\mathbb{C}\mhyphen\Cat)\) is cocomplete. In addition we have that the functor \(\otimes_{\mathbb{C}_\varepsilon} \mathbb{C}\) preserves colimits because it is a left adjoint.

Now consider the category \(\E_\infty(\mathbb{C}\mhyphen\Cat)\). Since the tensor product of \(\VCat\) is left adjoint we know it commutes with all colimits, so in particular sifted ones. This allows us to employ \cite[Corollary 2.7.2 (1)]{Dag3Lurie} which ensures that \(\E_\infty(\mathbb{C}\mhyphen\Cat)\) admits all sifted colimits.

From Dunn's additivity we have \(\E_\infty (\mathbb{C}\mhyphen \Cat) \cong \E_\infty (\E_0(\mathbb{C}\mhyphen \Cat))\). Let \(K\colon \Eaa' \ra \E_0(\mathbb{C}\mhyphen \Cat) \) be some sifted diagram and let \(\Ca\in \E_0(\mathbb{C}\mhyphen \Cat)\). As above, the colimit \(\colim_{\Eaa'} (K\otimes \Ca) \in \E_0(\mathbb{C}\mhyphen \Cat) \) coincides with 
\begin{equation}
    \colim_{\Eaa'} (K\otimes \Ca)_{\emptyset_/} \cong \colim_{\Eaa'} (K_{\emptyset_/} \otimes {\Ca}_{\emptyset_/}) \cong (\colim_{\Eaa'} K_{\emptyset_/} ) \otimes {\Ca}_{\emptyset_/}  \in \mathbb{C}\mhyphen \Cat\ ,
 \end{equation}
where in the last step we use that the tensor product of \(\VCat\) is a left adjoint. The last expression coincides with \((\colim_{\Eaa'} K) \otimes \Ca \in \E_0(\mathbb{C}\mhyphen\Cat)\) thus making \(\E_0(\mathbb{C}\mhyphen\Cat)\) tensor cocomplete. Furthermore we can employ \cite[Corollary 2.7.2 (2)]{Dag3Lurie} which tells us that the forgetful functor \( \E_\infty (\E_0(\mathbb{C}\mhyphen \Cat)) \ra \E_0(\mathbb{C}\mhyphen \Cat)\) detects sifted colimits, so in particular it preserves them. Hence, we are in the situation of Lemma \ref{lemma:ColimitsInPullbackBicat}, and conclude that \(\Pois_0\mhyphen\Cat\) admits all sifted colimits. 

Finally, we need to ensure that the tensor product of \(\Pois_0\mhyphen\Cat\) commutes with sifted colimits. The above argument for 
\(\E_0(\mathbb{C}\mhyphen\Cat)\) being tensor cocomplete goes through in the exact same way for \(\E_0(\mathbb{C}_{\varepsilon}\mhyphen\Cat)\). Moreover, the functor \(\otimes_{\mathbb{C}_\varepsilon} \mathbb{C}\) is monoidal and cocontinuous (since it is a left adjoint), so the isomorphism witnessing the commutativity of tensoring and taking the colimit is sent to the analogous isomorphism in \(\E_0(\mathbb{C}\mhyphen\Cat)\). Since the functor \(\fgt \colon \E_\infty (\mathbb{C}\mhyphen\Cat) \ra \E_0(\mathbb{C} \mhyphen \Cat)\) detects sifted colimits, is monoidal and conservative we can lift the isomorphism witnessing sifted tensor cocompleteness in \(\E_0(\mathbb{C}\mhyphen \Cat)\) to an isomorphism in \(\E_\infty (\mathbb{C}\mhyphen\Cat)\). 
Using that all of the functors in the ``pullback" preserves both the monoidal structure and sifted colimits it is a straightforward check that this data assembles to an isomorphism in \(\Pois_0\mhyphen\Cat\) between \((\colim_{\Eaa} J) \otimes \Ca\) and \(\colim_{\Eaa} (J\otimes \Ca)\), which completes the proof. 
\end{proof}

\begin{remark}
    Even though the categories  $\Pois_0\mhyphen\Cat$ and $\BD_0\mhyphen\Cat$ are bicategories, we are only interested in the colimits needed to compute factorization homology. That is, specific colimits over the category of disks, which is an \((\infty, 1)\)-category. Hence, all colimits needed for factorization homology will factor through the (2,1)-subcategories of $\Pois_0\mhyphen\Cat$ and $\BD_0\mhyphen\Cat$.
\end{remark}

\begin{proposition}
The symmetric monoidal functor 
\begin{equation}
    -\otimes_{\C[[\hbar]]} \C_\varepsilon \colon \BD_0\mhyphen\Cat \to \Pois_0\mhyphen\Cat
\end{equation}
preserves sifted colimits. 
\end{proposition}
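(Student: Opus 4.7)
The plan is to leverage the pullback-style description of $\BD_0\mhyphen\Cat$ and $\Pois_0\mhyphen\Cat$ and reduce the question to a componentwise statement. By Lemma \ref{lemma:ColimitsInPullbackBicat} applied in the proof of Lemma \ref{lma:targetFH}, sifted colimits in either pullback 2-category are computed componentwise: given a sifted diagram $J \colon \Eaa \to \BD_0\mhyphen\Cat$ with projections $J_1 \colon \Eaa \to \E_\infty(\C\mhyphen\Cat)$ (the symmetric monoidal component) and $J_2 \colon \Eaa \to \E_0(\Ch\mhyphen\Cat)$ (the $\hbar$ component), the colimit has underlying data $(\colim_\Eaa J_1,\, \colim_\Eaa J_2,\, \Psi)$, with $\Psi$ assembled from the cocontinuity of the inclusion $\E_\infty(\C\mhyphen\Cat) \hookrightarrow \E_0(\C\mhyphen\Cat)$ and of $-\otimes_\Ch \C$. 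Thus, to prove the proposition, it suffices to check that $-\otimes_\Ch \Ce$ preserves sifted colimits on each factor, and that the assembled compatibility data is transported correctly.

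The SMC factor is untouched by $-\otimes_\Ch \Ce$ (it is the identity on $\E_\infty(\C\mhyphen\Cat)$), so there is nothing to check there. For the $\hbar$ factor, the key claim is that the extension-of-scalars functor
\begin{equation}
-\otimes_\Ch \Ce \colon \E_0(\Ch\mhyphen\Cat) \longrightarrow \E_0(\Ce\mhyphen\Cat)
\end{equation}
preserves sifted colimits. At the level of enriched categories, $-\otimes_\Ch \Ce \colon \Ch\mhyphen\Cat \to \Ce\mhyphen\Cat$ is left adjoint to restriction along the surjection $\Ch \twoheadrightarrow \Ce$, and is therefore cocontinuous. Invoking the equivalence $\E_0(\Va\mhyphen\Cat) \simeq (\Va\mhyphen\Cat)_{\emptyset/}$ and the same undercategory argument used in the proof of Lemma \ref{lma:targetFH}, sifted (2,1)-colimits in $\E_0(\Va\mhyphen\Cat)$ agree with those of the underlying diagrams, so cocontinuity lifts to the $\E_0$-level.

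Finally, I would verify that the 2-categorical compatibility data assembled in Lemma \ref{lemma:ColimitsInPullbackBicat} is sent by $-\otimes_\Ch \Ce$ to the compatibility data defining the colimit of the image diagram in $\Pois_0\mhyphen\Cat$. This uses the canonical natural isomorphism $(\Ca_\hbar \otimes_\Ch \Ce) \otimes_\Ce \C \cong \Ca_\hbar \otimes_\Ch \C$ together with the identification, for each $e \in \Eaa$, of the coherence 2-cells $\alpha_e,\beta_e,\gamma_e$ under the functor $-\otimes_\Ch \Ce$ with the corresponding 2-cells constructed for the image diagram. Symmetric monoidality of the induced colimit isomorphism then follows from Lemma \ref{lma:tensorprod_aPn_BDn}. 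The only real obstacle is bookkeeping coherence of this 2-categorical data; there is no deeper categorical content, as everything reduces to the left-adjoint property of extension of scalars combined with the componentwise description of colimits in pullback 2-categories.
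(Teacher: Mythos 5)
Your argument is correct, and it assembles the same basic ingredients as the paper --- the componentwise description of sifted colimits in the pullback 2-categories (Lemma \ref{lemma:ColimitsInPullbackBicat} via Lemma \ref{lma:targetFH}) and the fact that $-\otimes_\Ch \Ce \colon \Ch\mhyphen\Cat \to \Ce\mhyphen\Cat$ is a left adjoint --- but the final assembly is organized differently. The paper never compares the two componentwise colimits directly: it introduces forgetful functors $\fgt_{\BD_0} \colon \BD_0\mhyphen\Cat \to \Ch\mhyphen\Cat$ and $\fgt_{\Pois_0} \colon \Pois_0\mhyphen\Cat \to \Ce\mhyphen\Cat$, shows that these \emph{preserve and reflect} sifted colimits (reflection is the nontrivial part, proved using conservativity of $\E_\infty(\C\mhyphen\Cat) \to \E_0(\C\mhyphen\Cat)$ on the other leg), and then concludes by chasing the commuting square $\fgt_{\Pois_0} \circ (-\otimes_\Ch\Ce) = (-\otimes_\Ch\Ce)\circ \fgt_{\BD_0}$. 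The payoff of that route is exactly the step you defer as ``bookkeeping'': because $\fgt_{\Pois_0}$ reflects sifted colimits, one never has to match the coherence 2-cells $\Psi$ of the two candidate colimit cocones in $\Pois_0\mhyphen\Cat$ by hand --- it suffices to see that one projection of the image cocone is a colimit. Your direct approach is also viable and arguably more transparent about where each hypothesis enters, but you should be aware that the deferred verification (that $-\otimes_\Ch\Ce$ carries the compatibility isomorphism $\Psi$ of $(\colim J_1, \colim J_2, \Psi)$ to the one constructed for the image diagram, under $(\Ca_\hbar\otimes_\Ch\Ce)\otimes_\Ce\C \cong \Ca_\hbar\otimes_\Ch\C$) is genuinely part of the proof in your setup and should be written out; it is routine but not vacuous. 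The appeal to Lemma \ref{lma:tensorprod_aPn_BDn} at the end is unnecessary, since the statement only concerns preservation of sifted colimits, not compatibility of the resulting isomorphism with the monoidal structure.
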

\begin{proof} 
Consider the following commuting diagram
\begin{equation}\label{diag:BD0aPois0}
   \begin{tikzcd}[column sep=10ex]
        \BD_0\mhyphen\Cat & \mathbb{C}[[\hbar]]\mhyphen\Cat
        \\ \Pois_0\mhyphen\Cat & \mathbb{C}_\varepsilon\mhyphen\Cat
        \arrow[from=1-1, to=1-2, "\fgt_{\BD_0}"]
        \arrow[from=1-1, to=2-1, swap, "-\otimes_{\C[[\hbar]]} \C_\varepsilon"]
        \arrow[from=1-2, to=2-2, "-\otimes_{\C[[\hbar]]} \C_\varepsilon"]
        \arrow[from=2-1, to=2-2, "\fgt_{\Pois_0}",swap]
    \end{tikzcd} \ . 
\end{equation}
First, we note that the vertical functor 
\begin{equation}
    -\otimes_{\C[[\hbar]]} \C_\varepsilon \colon \mathbb{C}[[\hbar]]\mhyphen\Cat \longrightarrow \mathbb{C}_\varepsilon\mhyphen\Cat
\end{equation}
is a left adjoint, so it preserves all colimits. Next, we claim that the two horizontal functors 
\begin{equation}
    \fgt_{\BD_0} \colon \BD_0\mhyphen\Cat \to \E_0(\C[[\hbar]]\mhyphen\Cat) \to \C[[\hbar]]\mhyphen\Cat \quad \text{and} \quad \fgt_{\Pois_0} \colon \Pois_0\mhyphen\Cat \to \E_0(\C_\varepsilon\mhyphen\Cat) \to \C_\varepsilon\mhyphen\Cat
\end{equation}
both preserve and reflect sifted colimits. We will show that this is true for $\fgt_{\BD_0}$, the argument for $\fgt_{\Pois_0}$ is analogous. 
Let $J \colon \Eaa \to \BD_0\mhyphen\Cat$ be a sifted diagram. From the proof of Lemma \ref{lemma:ColimitsInPullbackBicat} we recall that a colimit of shape $\Eaa$ in $\BD_0\mhyphen\Cat$ is a triple $\left( \colim_\Eaa J_1, \colim_\Eaa J_2, \Psi \right)$, where $J_1 \colon \Eaa \to \E_0(\C[[\hbar]]\mhyphen\Cat)$, $J_2 \colon \Eaa \to \E_\infty(\C\mhyphen\Cat)$ and $\Psi$ is an isomorphism between the images of their respective colimits under the functors $H_1 \colon \E_0(\C[[\hbar]]\mhyphen\Cat) \to \E_0(\C\mhyphen\Cat)$ and $H_2 \colon \E_\infty(\C\mhyphen\Cat) \to \E_0(\C\mhyphen\Cat)$. It is clear that the functor 
\begin{equation}
    \BD_0\mhyphen\Cat \longrightarrow \E_0(\C[[\hbar]]\mhyphen\Cat), \quad (X_1,X_2,\Psi_{X_1,X_2}) \longmapsto X_1
\end{equation}
preserves colimits. In order to show that it also reflects colimits, let $\pi \colon J \Rightarrow \Delta(X)$ be a cocone under $J$, such that that its image under $\BD_0\mhyphen\Cat \to \E_0(\C[[\hbar]]\mhyphen\Cat)$ is a colimit cocone. In other words, $X$ is a triple $( X_1, X_2, \Psi_{X_1,X_2} )$ together with maps $u_{X_1} \colon \colim_\Eaa J_1 \to X_1$ and $u_{X_2} \colon \colim_\Eaa J_2 \to X_2$ making the following diagram commute 
\begin{equation}
\begin{tikzcd}
    H_1(\colim_\Eaa J_1) \arrow[d,"H_1(u_{X_1})",swap] \arrow[r,"\Psi"] & H_2(\colim_\Eaa J_2) \arrow[d,"H_2(u_{X_2})"]\\
    H_1(X_1) \arrow[r,"\Psi_{X_1,X_2}"] & H_2(X_2)  \ \ .
\end{tikzcd}
\end{equation}
By assumption we have that $H_1(u_{X_1})$ is an isomorphism and by commutativity of the above diagram so is $H_2(u_{X_2})$. Since $H_2$ is conservative, the cocone $\pi$ is a colimit. Finally, it follows from \cite[Proposition 4.4.2.9]{HTT}, combined with \cite[Corollary 2.1.2.2]{HTT}, that the forgetful functor
\begin{equation}
    \E_0(\mathbb{C}[[\hbar]]\mhyphen\Cat) \longrightarrow \mathbb{C}[[\hbar]]\mhyphen\Cat 
\end{equation}
preserves and reflects sifted colimits, and we conclude that $\fgt_{\BD_0}$ indeed preserves and reflects sifted colimits. 

The observation that $\left( -\otimes_{\C[[\hbar]]} \C_\varepsilon \right) \circ \fgt_{\Pois_0}$ preserves sifted colimits and $\fgt_{\BD_0}$ reflects them, together with commutativity of Equation \eqref{diag:BD0aPois0}, then implies the assertion that $-\otimes_{\C[[\hbar]]} \C_\varepsilon \colon \BD_0\mhyphen\Cat \to \Pois_0\mhyphen\Cat$ preserves sifted colimits. \qedhere
\end{proof}

\newcommand{\exc}{\mathrm{exc}} 

\subsection{Global quantum observables via factorization homology}\label{sec:GlobalQObsFH}
We have seen in the previous section that the categories $\Pois_0\mhyphen\Cat$ and $\BD_0\mhyphen\Cat$ are tensor sifted cocomplete. Hence, we can compute factorization homology on 2-manifolds with coefficients in $\Pois_0\mhyphen\Cat$ and $\BD_0\mhyphen\Cat$. 
In this section we use this fact to outline an approach to quantization of categories via factorization homology.

We start with a classical system described by the categorified observables 
\begin{equation}
    \Obs^{\cl}\colon \Man^{\ori}_2 \to \E_\infty\left(\mathbb{C} \mhyphen \Cat\right) \ \ .
\end{equation}
We think of the value at a 2-dimensional \emph{oriented} surface $\Sigma$ as the symmetric monoidal category of perfect sheaves on the moduli stack $\mathcal{F}$ of solutions to the classical equations of motion (on $\Sigma\times \R$ for 3-dimensional Chern-Simons theory). The algebra of \emph{classical observables} or functions on $\mathcal{F}$ can be defined as $\mathcal{O}_\Sigma \coloneqq \operatorname{End}_{\Obs^{\cl}(\Sigma)}(1)$. The main example for us are sheaves on the moduli stack of flat principal $G$-bundles on $\Sigma$, for a reductive algebraic group $G$, which we will study in detail in Section~\ref{sec:CS}. We only consider classical observables which are local in the sense that they satisfy excision. Hence, there is an $\E_\infty$-algebra $\Obs^{\cl}_{\loc}$ of local observables, namely the value of $\Obs^{\cl}$ on disks, and the global observables can be reconstructed via factorization homology
\begin{align}
    \Obs^{\cl}(\Sigma) = \int_{\Sigma} \Obs^{\cl}_{\loc} \ \ . 
\end{align}
There is a slight subtlety here. Every $\E_\infty$-algebra has a canonical structure of a framed $\E_2$-algebra. However, the balanced structure used to compute the global observables might differ from this one. We denote the category of symmetric monoidal balanced $\C$-linear categories by $\E_\infty^ \mathsf{f}(\C\mhyphen\Cat)$.    

We are interested in quantizing almost Poisson-0 structures on $\Obs^{\cl}$ which are also local. Since we are interested in theories on oriented manifold the coefficients for factorization homology are framed $\E_2$-algebras. This leads us to introduce framed versions of almost Possion and almost BD-categories.
\begin{definition}
A \emph{framed $\BD_2$-category} is a $\BD_2$-category together with a balancing on the $\C[[\hbar]]$-linear braided monoidal category $\Ca_\hbar$ which is part of the $\BD_2$-category. A \emph{framed $\Pois_2$-category} is defined analogously. We denote by $\Pois_2^\mathsf{f}\mhyphen\Cat$ and $\BD_2^\mathsf{f}\mhyphen\Cat$ the bicategory of framed $\Pois_2$-categories and framed $\BD_2$-category, respectively.     
\end{definition}

\begin{notation}\label{not:functors-satisfying-excision}
    Let \(\Fun^\exc(\Man_2^{\ori},-)\) denote functors which satisfy excision. 
\end{notation}

With these definitions at our disposal we can describe Poisson structures on $\Obs^{\cl}$ in terms of the local observables $\Obs^{\cl}_{\loc}$: 

\begin{proposition}\label{Prop: aP-structure}
The following diagram commutes and the horizontal functors are equivalences
\begin{equation}
\begin{tikzcd}
    \Pois_2^\mathsf{f}\mhyphen\Cat \ar[r, "\sim"] \ar[d, "\varepsilon=0",swap] & \fE_2(\Pois_0\mhyphen\Cat) \ar[r, "\int"] & \Fun^{\exc}(\Man^{\ori}_2,\Pois_0\mhyphen\Cat) \ar[d, "\varepsilon=0"] \\ 
    \E_\infty^ \mathsf{f}(\C\mhyphen\Cat) \ar[rr, "\int",swap] & & \Fun^{\exc}(\Man^{{\ori}}_2,\C\mhyphen\Cat)
\end{tikzcd} \ . 
\end{equation} 
\end{proposition}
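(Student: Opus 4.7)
The plan is to decompose the statement into three independent pieces: the left equivalence, the two horizontal factorization homology equivalences, and the commutativity of the outer square, and then combine them.

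First, for the left equivalence $\Pois_2^\mathsf{f}\mhyphen\Cat \simeq \fE_2(\Pois_0\mhyphen\Cat)$ I would proceed as follows. By Theorem \ref{prop:additivity} applied to $n=2$, there is already an equivalence $\E_2(\Pois_0\mhyphen\Cat) \simeq \Pois_2\mhyphen\Cat$. The ``framed'' refinement amounts to adding to an $\E_2$-algebra the datum of an action of $BSO(2)$, equivalently a twist/balancing $\theta$ compatible with the braiding. On the pullback side of the definition of $\Pois_2\mhyphen\Cat$, extra balancing data on the underlying $\Ce$-linear braided monoidal category $\Ca_\varepsilon$ and on the symmetric monoidal category $\Ca_{\mathrm{SMC}}$ (with $\Phi$ compatible) is precisely what makes the triple a framed $\Pois_2$-category. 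A short diagram chase, using that the pullback commutes with taking $\fE_2$ (the framed version of Proposition \ref{prop:Eiandpullback}, which is proved by the same argument replacing the little 2-disks operad with its framed variant), gives the left equivalence.

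Second, the middle horizontal arrow $\fE_2(\Pois_0\mhyphen\Cat) \xrightarrow{\int} \Fun^{\exc}(\Man_2^{\ori},\Pois_0\mhyphen\Cat)$ is an equivalence by the characterization of oriented factorization homology in Theorem \ref{thm:properties-for-FH}, whose hypotheses are satisfied because $\Pois_0\mhyphen\Cat$ is tensor sifted cocomplete by Lemma \ref{lma:targetFH}. The bottom arrow is the same statement applied to the standard target $\C\mhyphen\Cat$, which is classical (framed $\E_2$-algebras in $\C\mhyphen\Cat$ are exactly balanced symmetric monoidal $\C$-linear categories, recovered by factorization homology on oriented surfaces).

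Third, for commutativity of the outer square, note that the right vertical arrow is postcomposition with $\otimes_{\Ce} \C \colon \Pois_0\mhyphen\Cat \to \C\mhyphen\Cat$, which is a symmetric monoidal functor preserving sifted colimits (this is exactly the content of the proposition immediately preceding this one, restricted along $\BD_0 \to \Pois_0$ being replaced by the direct map). Since factorization homology is computed as the colimit
\[ \int_M \Aa \;=\; \colim\bigl( (\Disk_2^{\ori,\sqcup})_{/M} \to \Disk_2^{\ori,\sqcup} \xrightarrow{\Aa} \Pois_0\mhyphen\Cat\bigr), \]
any symmetric monoidal functor preserving such sifted colimits commutes with $\int$ up to canonical equivalence. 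Combining this commutation with the equivalence established in the left square (which is strictly compatible with reduction mod $\varepsilon$ by construction of both $\Pois_2^\mathsf{f}$ and $\fE_2$) yields commutativity of the outer rectangle.

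The main obstacle is the first step: making precise the ``framed'' variant of Dunn additivity $\fE_2(\Pois_0\mhyphen\Cat) \simeq \Pois_2^\mathsf{f}\mhyphen\Cat$, since Theorem \ref{prop:additivity} is stated for the unframed operads. The cleanest route is to upgrade Proposition \ref{prop:Eiandpullback} by replacing the little disks operad with its framed version (for which the same pullback-commutation argument applies verbatim), and then observe that a framed $\E_0$-structure is still just a pointing, so $\fE_2$ of a $0$-shifted object recovers the framed $2$-shifted definition. Everything else in the argument is formal consequences of excision, sifted-colimit-preservation, and the universal property of $\int$.
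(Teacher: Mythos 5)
Your proposal is correct and follows essentially the same route as the paper: the paper's (one-sentence) proof likewise treats the $\int$-equivalences and the commutativity of the square as formal consequences of the preceding discussion (tensor sifted cocompleteness of $\Pois_0\mhyphen\Cat$, the Ayala--Francis characterization, and $\otimes_{\Ce}\C$ preserving sifted colimits), and isolates exactly the point you flag — compatibility of the additivity equivalence with the balancing — as the only thing needing a check, resolved by inspecting the equivalence of Theorem~\ref{prop:additivity}. Your expansion of that check via the framed variant of the pullback-commutation argument is a faithful elaboration of what the paper leaves implicit.
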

\begin{proof}
The only part which does not follow directly from the discussion so far is the compatibility with balancing which follows by inspecting the equivalence from Theorem~\ref{prop:additivity}.  
\end{proof}

The idea is now to locally quantize the classical observables $\Obs^{\cl}_{\loc} \in \fE_2(\Pois_0\mhyphen\Cat) \cong \Pois_2^\mathsf{f}\mhyphen\Cat$ to a framed $\BD_2$-category of quantum observables $\Obs^{\operatorname{q}}_{\loc} \in \BD_2^\mathsf{f}\mhyphen\Cat$. Define the quantum observables on an oriented surface $\Sigma$ via factorization homology:
\begin{equation}
    \Obs^{\operatorname{q}}(\Sigma) \coloneqq \int_{\Sigma} \Obs^{\operatorname{q}}_{\loc} \in \BD_0\mhyphen\Cat \ \ .
\end{equation}
That this provides a quantization of the almost Poisson structure on $\Obs^{\operatorname{cl}}$ is a consequence of the following theorem. 

\begin{theorem}\label{Thm: quant FH}
  The following diagram commutes and the horizontal functors are equivalences
\begin{equation}
\begin{tikzcd}
 \BD_2^\mathsf{f}\mhyphen\Cat \ar[r, "\sim"] \ar[d, "\lim_{h\rightarrow 0}",swap] & \fE_2(\BD_0\mhyphen\Cat) \ar[r, "\int"] \ar[d, "\lim_{h\rightarrow 0}"] & \Fun^{\exc}(\Man^{\ori}_2,\BD_0\mhyphen\Cat) \ar[d, "\lim_{h\rightarrow 0}"] \\ 
  \Pois_2^\mathsf{f}\mhyphen\Cat \ar[r, "\sim"]  & \fE_2(\Pois_0\mhyphen\Cat) \ar[r, "\int"] & \Fun^{\exc}(\Man^{\ori}_2,\Pois_0\mhyphen\Cat) 
\end{tikzcd} \ \ . 
\end{equation}
\end{theorem}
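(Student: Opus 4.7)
The plan is to assemble the theorem from three ingredients: (i) a framed refinement of the additivity result of Theorem \ref{prop:additivity} giving the left horizontal equivalences, (ii) the characterization of factorization homology in Theorem \ref{thm:properties-for-FH} applied to the targets $\BD_0\mhyphen\Cat$ and $\Pois_0\mhyphen\Cat$, whose validity as tensor-sifted-cocomplete targets was established in Lemma \ref{lma:targetFH}, and (iii) the compatibility of the classical limit $\lim_{\hbar \to 0} = -\otimes_{\Ch} \Ce$ with both the symmetric monoidal structure and with sifted colimits.

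First I would deduce the left horizontal equivalences. In the non-framed case, $\fE_2(\BD_0\mhyphen\Cat) \simeq \BD_2\mhyphen\Cat$ is Theorem \ref{prop:additivity}. Tracing through its proof, which reduces to the pullback description of $\BD_2\mhyphen\Cat$ via Proposition \ref{prop:Eiandpullback} and Dunn's additivity in each factor, the framing datum on the left corresponds, on the right, to a balancing on the underlying $\Ch$-linear braided monoidal category $\Ca_\hbar$; the equivalence respects this extra structure, yielding $\fE_2(\BD_0\mhyphen\Cat) \simeq \BD_2^{\mathsf f}\mhyphen\Cat$. The same argument produces the Poisson version, and the bottom equivalence is essentially Proposition \ref{Prop: aP-structure}.

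Next, factorization homology $\int \colon \fE_2(\BD_0\mhyphen\Cat) \to \Fun^{\exc}(\Man_2^{\ori}, \BD_0\mhyphen\Cat)$ is an equivalence by Theorem \ref{thm:properties-for-FH}, since Lemma \ref{lma:targetFH} ensures $\BD_0\mhyphen\Cat$ is a valid target; the inverse is restriction to disks. The same holds over $\Pois_0\mhyphen\Cat$. For the right-hand square to commute, I would use that $-\otimes_{\Ch}\Ce \colon \BD_0\mhyphen\Cat \to \Pois_0\mhyphen\Cat$ is symmetric monoidal (Lemma \ref{lma:tensorprod_aPn_BDn}) and preserves sifted colimits (the proposition immediately following Lemma \ref{lma:targetFH}). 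By the pointwise colimit formula for factorization homology (Definition \ref{def:FH}), this immediately gives a natural isomorphism
\begin{equation}
\Big(\int_M \Obs^q_{\loc}\Big) \otimes_{\Ch} \Ce \;\cong\; \int_M \big( \Obs^q_{\loc} \otimes_{\Ch} \Ce \big),
\end{equation}
i.e.\ the right square commutes. Commutativity of the left square follows from naturality of the framed additivity equivalence in the coefficient category, applied to the symmetric monoidal functor $-\otimes_{\Ch}\Ce$.

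The main obstacle is the first step, namely carefully tracking the framing/balancing datum through the pullback-shaped description of $\fE_2(\BD_0\mhyphen\Cat)$: one must verify that the extra $S^1$-rotation generator of $\Disk_2^{\ori}$ corresponds under the equivalence of Theorem \ref{prop:additivity} to precisely a ribbon twist on the $\Ch$-linear braided monoidal category $\Ca_\hbar$ part of the $\BD_2$-structure, and that this twist is preserved under semiclassical reduction. Once this bookkeeping is done, the remaining parts of the proof reduce to invoking the already-established structural properties of $\BD_0\mhyphen\Cat$ and $\Pois_0\mhyphen\Cat$ and functoriality of the classical limit.
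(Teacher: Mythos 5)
Your proposal is correct and follows essentially the same route as the paper: the paper's proof simply refers back to Proposition~\ref{Prop: aP-structure} (replacing $\Ce$ by $\Ch$), whose own proof invokes exactly the ingredients you list — the additivity equivalence of Theorem~\ref{prop:additivity}, the target-category results of Section~\ref{sect:Comp-FH-Quantization} (Lemma~\ref{lma:targetFH} and the sifted-colimit preservation of $-\otimes_{\Ch}\Ce$), and an inspection of the additivity equivalence to handle the balancing. The only difference is one of exposition: you spell out the bookkeeping of the framing datum and the pointwise-colimit argument for the right-hand square, which the paper leaves implicit.
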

\begin{proof}
Follows from the same arguments as used in the proof of Proposition~\ref{Prop: aP-structure}, replacing $\C_\varepsilon$ with $\C_\hbar$.
\end{proof}
In summary we get that constructing a local quantization of the classical observables is equivalent to quantizing the framed $\Pois_2$-category of local observables into a framed $\BD_2$-category of local quantum observables. The global quantum observables can be computed via factorization homology. By Theorem~\ref{Thm: FH=Sk} we can compute these in terms of enriched skein categories.

\begin{remark}
    We define the \emph{algebra of quantum functions} as $\mathcal{O}_\Sigma^q\coloneqq \operatorname{End}_{\Obs^{q}(\Sigma)}(\star)$ where $\star$ is the pointing of the $\BD_0$-algebra $\Obs^{q}(\Sigma)$. This is a quantization of the algebra of functions $\mathcal{O}_\Sigma^{\cl }$ which can be explicitly described by the enriched skein algebra. 
\end{remark}

\newcommand{\Ann}{\mathbb{A}\mathrm{nn}} 

\subsection{Internal endomorphism algebra}\label{ssec:IntEndAlg}
In this section we extend the notion of the `internal skein algebra' from \cite{safronovQMM} to the $\cat{V}$-enriched setting. 
We also show how it can be used to recover equivariant Poisson structures on algebras from the category of global quantum observables on manifolds with boundary. 

We start with the general $\cat{V}$-enriched situation. Let $(\cat{A},\otimes,\dots )$ be a monoidal $\cat{V}$-category and $( \cat{M}, \triangleright,\dots )$ a left $\cat{A}$-module. For a fixed object $\cat{O}\in \cat{M}$ we can construct a lax monoidal $\cat{V}$-functor
\begin{align} \label{eq:IEA}
    \cat{A}^{\op} & \longrightarrow \cat{V} \\ 
    a &\longmapsto \cat{M}(a\triangleright \cat{O}, \cat{O})  \ . 
\end{align} 
The lax monoidal structure uses the identity on $\cat{O}$ as unit and 
\begin{equation}\label{eq:lax_mon_str_intendoalg}
\begin{split}
  \cat{M}(b\triangleright \cat{O}, \cat{O}) \monprod[\Va] \cat{M}(a\triangleright \cat{O}, \cat{O}) & \xrightarrow{\mathmakebox[5em]{\id {\monprod[\Va]} (b\triangleright -)}} \cat{M}(b\triangleright \cat{O}, \cat{O}) \monprod[\Va] \cat{M}(b\triangleright a\triangleright \cat{O}, b\triangleright \cat{O})
  \\ &
  \xrightarrow{\mathmakebox[5em]{\circ_{\cat{M}}}} \cat{M}(b \triangleright a \triangleright \cat{O}, \cat{O}) 
\end{split} 
\end{equation}
together with the isomorphism $\cat{M}(b \triangleright a \triangleright \cat{O}, \cat{O}) \cong \cat{M}((b \otimes a) \triangleright \cat{O}, \cat{O}) $ which is part of the left module structure. 
Recall from Definition \ref{defn:free-cocompletion} that \(\widehat{\Aa}\) denotes the functor \(\Va\)-category $[\cat{A}^{\op},\cat{V}]$. 
Moreover, from Lemma \ref{lemma:LaxMonVsAlgStructure} we know that providing an algebra in the free cocompletion \(\widehat{\Aa}\) is equivalent to providing a lax monoidal functor \(\Aa^{\op} \ra \Va\). 

\begin{definition} \label{defn:InternalEndAlg}
    We call the algebra object $\IEAlong{\cat{A}}{\cat{O}}$ in $\widehat{\cat{A}}$ corresponding to the lax monoidal functor constructed in Equation \eqref{eq:IEA} the \emph{internal endomorphism algebra} associated to $\cat{O}$, see also~\cite[Def.~3.5]{safronovQMM}. 
\end{definition}

We are mostly interested in the following situation arising for global quantum observables on manifolds with boundaries: Let $\cat{M}$ be a $\C$-linear symmetric monoidal category (e.g.\ the classical observables on a manifold with boundary) equipped with the structure of a module over a $\C$-linear symmetric monoidal category $\cat{A}$ (e.g.\ the local classical observables) together with a first order deformation $(\cat{A}_\varepsilon,\cat{M}_\varepsilon)$ and a compatible formal deformation $(\cat{A}_\hbar,\cat{M}_\hbar)$ to a module over a monoidal category (e.g.\ the quantum observables). 
We want to show that the algebras $\IEAlong{\cat{A}_\hbar\!}{\cat{O}}$, $\IEAlong{\cat{A}_\varepsilon\!}{\cat{O}}$ and $\IEAlong{\cat{A}}{\cat{O}}$ are deformations of each other. For this we first construct functors 
\begin{align}
    \widehat{\cat{A}_\hbar} \xrightarrow{\lim_{\hbar\to 0} } \widehat{\cat{A}_\varepsilon}  \xrightarrow{\varepsilon=0} \widehat{\cat{A}} \ \ .
\end{align}
The map $\lim_{\hbar\to 0}$ is constructed as a functor in $\widehat{\Aa}$ in two steps. First we form the functor 
$\cat{A}_\hbar^{\op} \to \C[[\hbar]]\lmod \xrightarrow{-\otimes_{\C[[h]]} \C[\varepsilon]} \C[\varepsilon]\lmod$. Next we observe that this induces a well-defined functor $\cat{A}_\varepsilon^{\op}\to   \C[\varepsilon]\lmod $ which is the image under $\lim_{\hbar\to 0}$. The functor setting $\varepsilon=0$ is constructed in the same way. This construction preserves lax monoidal structures on functors. 

\begin{proposition}\label{Prop: Defor skein alg}
With the notation introduced above we have 
\begin{align}
\lim_{\hbar\to 0} \IEAlong{\cat{A}_\hbar\!}{\cat{O}} & \cong \IEAlong{\cat{A}_\varepsilon\!}{\cat{O}} \\
 \IEAlong{\cat{A}_\varepsilon\!}{\cat{O}}_{(\varepsilon=0)} & \cong \IEAlong{\cat{A}}{\cat{O}}
\end{align} 
as lax monoidal functors.
\end{proposition}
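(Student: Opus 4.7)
My strategy is to translate everything to the equivalent language of lax monoidal functors via Lemma~\ref{lemma:LaxMonVsAlgStructure}, and then match the two sides pointwise in a way that is automatically compatible with the lax monoidal structure.

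First, recall from the paragraph preceding the proposition how \(\lim_{\hbar \to 0}\) acts on lax monoidal functors: it sends \(F \colon \Aa_\hbar^{\op} \to \C[[\hbar]]\lmod\) to the factorization through \(\Aa_\varepsilon^{\op}\) of the composite \(\Aa_\hbar^{\op} \xrightarrow{F} \C[[\hbar]]\lmod \xrightarrow{- \otimes_{\C[[\hbar]]} \C_\varepsilon} \C_\varepsilon\lmod\), and the construction preserves lax monoidal structures. Thus, after applying Lemma~\ref{lemma:LaxMonVsAlgStructure}, the claim becomes a statement about an isomorphism of lax monoidal functors
\begin{equation}
    \Ma_\hbar(- \triangleright \mathcal{O}, \mathcal{O}) \otimes_{\C[[\hbar]]} \C_\varepsilon \cong \Ma_\varepsilon(- \triangleright \mathcal{O}, \mathcal{O})
\end{equation}
of functors \(\Aa_\varepsilon^{\op} \to \C_\varepsilon\lmod\), and similarly for the \(\varepsilon = 0\) reduction.

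Second, I would unpack the compatibility between \((\Aa_\hbar, \Ma_\hbar)\) and \((\Aa_\varepsilon, \Ma_\varepsilon)\). By construction of the functor \(- \otimes_{\C[[\hbar]]} \C_\varepsilon \colon \BD_0\mhyphen\Cat \to \Pois_0\mhyphen\Cat\) (cf.\ Equation~\eqref{eq:pullbackofPoissFunc} and Lemma~\ref{lma:tensorprod_aPn_BDn}), the change-of-enrichment \(\C[[\hbar]]\lmod \to \C_\varepsilon\lmod\) is precisely given by \(-\otimes_{\C[[\hbar]]} \C_\varepsilon\) on Hom-objects. Hence for any objects \(m, n\) of \(\Ma_\hbar\) we have a canonical isomorphism \(\Ma_\varepsilon(m, n) \cong \Ma_\hbar(m, n) \otimes_{\C[[\hbar]]} \C_\varepsilon\), and the composition and module action in \(\Ma_\varepsilon, \Aa_\varepsilon\) are obtained by base change from those in \(\Ma_\hbar, \Aa_\hbar\). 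Evaluating at \(m = a \triangleright \mathcal{O}\), \(n = \mathcal{O}\) for \(a \in \Aa_\varepsilon\) (viewed as an object of \(\Aa_\hbar\)) gives the desired pointwise isomorphism, natural in \(a\).

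Third, I would verify compatibility with the lax monoidal structure. The lax monoidal structure of \(\End_{\Aa_\hbar}(\mathcal{O})\) from Equation~\eqref{eq:lax_mon_str_intendoalg} is built only from three ingredients: the composition in \(\Ma_\hbar\), the left action of \(\Aa_\hbar\) on \(\Ma_\hbar\), and the unit \(1_{\C[[\hbar]]\lmod} \to \Ma_\hbar(\mathcal{O}, \mathcal{O})\) picking out \(\id_\mathcal{O}\). Each of these structure maps tensored with \(\C_\varepsilon\) gives, by the previous point, the corresponding structure map for \(\Ma_\varepsilon\), and the identity on \(\mathcal{O}\) is visibly preserved. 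Thus the pointwise isomorphism assembles into an isomorphism of lax monoidal functors, yielding the first claimed isomorphism. The second isomorphism follows by running the identical argument with the base change \(\C_\varepsilon \to \C\) in place of \(\C[[\hbar]] \to \C_\varepsilon\).

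I do not anticipate a substantive obstacle: the statement is a formal compatibility between base change of enriched module categories and the three structure maps entering the internal endomorphism algebra. The only care needed is bookkeeping to confirm that the lax monoidal diagram of \eqref{eq:lax_mon_str_intendoalg} commutes after base change, which is immediate because each arrow in that diagram is obtained by the base change functor from the corresponding arrow for \(\Ma_\hbar\).
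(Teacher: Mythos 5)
Your proposal is correct and follows essentially the same route as the paper: both reduce the claim to the isomorphism of action functors $\triangleright_\hbar \otimes_{\Ch} \Ce \cong \triangleright_\varepsilon$ (respectively $\triangleright_\varepsilon \otimes_{\Ce} \C \cong \triangleright$), which is part of the data of a quantization (respectively of an $\Pois_0$-functor), and then observe that the internal endomorphism algebra, including its lax monoidal structure from Equation~\eqref{eq:lax_mon_str_intendoalg}, is built only from this action data and is therefore transported along the isomorphism. The paper states this more tersely, while you spell out the pointwise Hom-object identification and the compatibility check; the content is the same.
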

\begin{proof}
The image of the $\Ch$-linear action functor
\begin{equation} 
    \triangleright_\hbar  \colon  \cat{A}_\hbar \catprod \cat{M}_\hbar \to \cat{M}_\hbar 
\end{equation}
under the 2-functor $-\otimes_{\C[[h]]}  \C[\varepsilon]$ is isomorphic to the $\Ce$-linear action functor 
\begin{equation}
\triangleright_\varepsilon  \colon  \cat{A}_\varepsilon \catprod \cat{M}_\varepsilon \to \cat{M}_\varepsilon 
\end{equation} 
since  $\cat{A}_\hbar$ is a quantization of $\cat{A}_\varepsilon$. 
Moreover, $\lim_{\hbar\to 0} \IEAlong{\cat{A}_\hbar\!}{\cat{O}}$ is the internal endomorphism algebra associated to the action functor $\triangleright_\hbar \otimes_{\C[[h]]} \C[\varepsilon] $. Finally, the isomorphism of action functors $\triangleright_\hbar \otimes_{\C[[h]]} \C[\varepsilon] \cong \triangleright_\varepsilon$ induces the isomorphism between the corresponding internal endomorphism algebras.

Similarly, the second isomorphism comes from the fact that the action functors $\triangleright_\varepsilon \otimes _\Ce \C$ and $\triangleright$ are isomorphic, since they are two components of an $\Pois_0$-functor.
\end{proof}

\begin{example} \label{ex:defnPoisson}
Let $M$ be a smooth affine algebraic variety equipped with an action of a reductive algebraic group $G$. The category of perfect sheaves on the quotient stack $M/ G$ is the category of finitely generated projective modules\footnote{The reader might rightfully wonder projective as elements of which (enriched) category. Here we mean projective as elements of the $\C$-linear category of $\cat{O}(M)$-modules inside of $\operatorname{Rep} (G)$.} over $\cat{O}(M)$ seen as an object of $\operatorname{Rep} (G)=U(\mathfrak{g})\lmod$. This is in a canonical way a module category over ${U(\mathfrak{g})\lmod^{\operatorname{f.d.}}}$. 
The internal endomorphism algebra $\IEAlong{U(\mathfrak{g})\lmod^{\operatorname{f.d.}}} 
{\cat{O}(M)} $ can be identified with the algebra $\cat{O}(M)$ as an object of 
$\widehat{U(\mathfrak{g})\lmod^{\operatorname{f.d.}}}= U(\mathfrak{g})\lmod$.

Now assume that we have a first order deformation $(\operatorname{Perf}(M/ G)_\varepsilon, U_\varepsilon(\mathfrak{g}))$, where we fix the deformation of $U(\mathfrak{g})\lmod^{\operatorname{f.d.}}$ to be the one coming from the classical $r$-matrix as explained in Example~\ref{Ex: P_2 from r}. The algebra $\IEAlong{U_\varepsilon(\mathfrak{g})\lmod^{\operatorname{f.d.}}} {\cat{O}(M)}$ is, according to Proposition~\ref{Prop: Defor skein alg}, a first order deformation of $\cat{O}(M)$ where the multiplication is simultaneously a morphism in $U_\varepsilon(\mathfrak{g})\lmod$. If this comes from a bracket $\{-,-\}\colon \mathcal{O}(M)\otimes \mathcal{O}(M) \to  \mathcal{O}(M)$ the condition on the multiplication
to be a morphism in $U_\varepsilon(\mathfrak{g})\lmod$ is equivalent to the action map $G\times M \to M$ to be Poisson. 
For the formal deformation constructed 
from $U_\hbar(\mathfrak{g})$, the internal endomorphism algebra gives rise to an equivariant deformation quantization of it. 

In the case where we fix the deformation of $U(\mathfrak{g})\lmod$ to be the Drinfeld category introduced in Section~\ref{Drinfeld category} we will find deformation quantization of quasi-Poisson structures.   
In the next section we will compute the Poisson structure on moduli spaces of flat principal bundles induced by deformations of their category of perfect sheaves. 
\end{example}

\begin{example}\label{ex:InternalSkeinAlgAnn}
    Let $\cat{A}$ be a ribbon $\cat{V}$-enriched category and let $\Sigma=\mathsf{Ann}$ be an annulus with a marked interval on its boundary. 
    We consider the enriched skein category $\mathbf{Sk}_\cat{A}(\Ann)$ as a module over $\mathbf{Sk}_\cat{A}(\mathbb{D}^2) \cong \cat{A}$ with the module structure induced by the embedding $\mathcal{P} \colon \mathbb{D}^2 \hookrightarrow \Ann$ of a disk along the marked interval. The distinguished object $\mathcal{O}$ is the image of the empty manifold under the embedding $\mathcal{P}$. From the excision-property of the enriched skein category we have
    \begin{equation}
        \mathbf{Sk}_\cat{A}(\Ann) \cong \cat{A} \coendtens{\cat{A} \catprod \cat{A}} \cat{A} \ \ , 
    \end{equation}
    where on the right we are using the coend tensor product from Section \ref{Sec:coend_tensor_product}. 
    Using this the internal endomorphism algebra takes the following form: 
    \begin{align*}
        \cat{A}^{\op} & \xrightarrow{\IEAlong{\Aa}{\Oa}} \Va \\ 
        a & \mapsto \mathbf{Sk}_\cat{A} (\Ann) (a \triangleright \Oa, \Oa) \\
        & \cong \int^{(b,b') \in \cat{A} \catprod \cat{A}} \cat{A}(a, 1 \triangleleft (b,b')) \monprod[\Va] \cat{A}((b,b') \triangleright 1,1) \\
        & \cong \int^{(b,b') \in \cat{A} \catprod \cat{A}} \cat{A}(a,b\otimes b') \monprod[\Va] \cat{A}(b',b^\vee) \\ 
        & \cong \int^{b \in \cat{A}} \cat{A}(a, b \otimes b^\vee) \ . 
    \end{align*}
For the second equivalence we used that $\cat{A}$ has duals and in the last line we applied the enriched co-Yoneda lemma \cite[Section 3]{KellyVCat}. The algebra $\IEAlong{\Aa}{\Oa} \cong \int^{b \in \cat{A}} \yo(b \otimes b^\vee)$ is called the (enriched) \emph{internal skein algebra}. 
See also e.g.\ \cite[Definition 2.18]{GJS} for the case \(\Va=\Vect\). 
The product \eqref{eq:lax_mon_str_intendoalg} on the internal skein algebra comes from ``vertical stacking''. 
This is illustrated in Figure \ref{fig:entire-figure-stacking} below.

\begin{figure}[H]
\centering
\begin{overpic}[scale=1,tics=10]{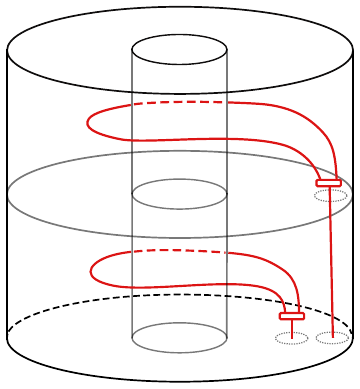}    
\put(84.5, 8.5){$b$}
\put(74, 8.5){$a$}
\end{overpic}
\caption{Composition for the internal skein algebra of the annulus.}
\label{fig:entire-figure-stacking}
\end{figure}
\end{example}

\section{Poisson algebras and quantization} 
Finally, we will apply the formalism of deformations of categories to recover Poisson structures on the internal endomorphism algebra. 
More concretely, in Section~\ref{sect:internal-endo-algebras-and-fusion} we establish how the internal endomorphism algebra of a surface (with boundary) behaves under fusion. In Section~\ref{sec:Cemod} we collect some results on \(\Ce\)-modules and \(\Ce\)-natural transformations needed in Section~\ref{sec: Fusion and Poisson} 
where we recover Poisson structures on classical internal endomorphism algebras. 
In Section~\ref{sect:poisson-brackets-and-skeins} we compute the deformation of the internal endomorphism algebra of a general marked surface using skein-theoretic tools.
Lastly, in Section~\ref{sec:CS} we give the main example of the above, coming from a deformation of the category of representations of a quantum group, namely the representation and character variety associated to a surface and a (reductive) algebraic group.

\newcommand{\intEndAlg}[2]{\IEAsymbol_{#1, #2}} 
\newcommand{\Afused}{\IEAsymbol_{\Sigma^\text{f}}}
\newcommand{\ASigma}{\IEAsymbol_{\Sigma}}
\newcommand{\OSigma}{\Oa_\Sigma}
\newcommand{\Ofused}{\Oa_{\Sigma^\text{f}}}
\newcommand{\OzPop}{\Oa_{\mathbb D_\text{poc}}} 
\newcommand{\diskpoc}{\mathbb D_{\mathrm{poc}}}
\newcommand{\SigmaFused}{\Sigma^\text{f}}
\newcommand{\zPop}{\mathrm{poc}} 
\newcommand{\ASigmaHat}{\widehat{\otimes} \ASigma}
\newcommand{\Daytimes}{\widehat{\otimes}}
\newcommand\drawBraiding[4]{
    \draw[thick] (#1,#2).. controls (#1, #2+0.4*#4) and (#1+#3, #2+0.6*#4)..(#1+#3, #2+#4);
    \draw[thick] (#1+#3, #2).. controls (#1+#3, #2+0.1*#4) and (#1+0.7*#3, #2+0.3*#4).. (#1+0.6*#3, #2+0.4*#4);
    \draw[thick] (#1, #2+#4).. controls (#1, #2+0.9*#4) and (#1+0.4*#3, #2+0.6*#4).. (#1+0.4*#3, #2+0.6*#4); }

\subsection{Internal endomorphism algebras for surfaces with boundary}\label{sect:internal-endo-algebras-and-fusion}
Let $\Aa$ be a $\Va$-enriched ribbon category, let $\Sigma$ be a surface and let $V\subset \partial \Sigma$ be a finite collection of \emph{marked points}. This choice of marked points defines an action\footnote{These marked points also specify intervals on the boundary along which one can perform excision, as we discuss in this section. The use of points as opposed to intervals is conventional (they carry the same combinatorial data) and is common in the Poisson geometry literature.} of the category $\Aa^{\catprod V}$ on $\SkCat{\Sigma}{\Aa}$, with $a_v \in \Aa$ acting by marking a new point near $v\in V$. From the discussion in Section~\ref{ssec:IntEndAlg} this data then gives rise to the internal endomorphism algebra $ \intEndAlg{\Sigma}{V}$ in $\widehat{\Aa}^{\boxtimes V}$, i.e.\ a lax monoidal functor $({\Aa^{\catprod V}})^{\op} \to \Va$. In this section we describe how the algebra \(\intEndAlg{\Sigma}{V}\) depends on $(\Sigma, V)$. In particular, we will describe how it interacts with fusion of surfaces.

Given $(\Sigma, V)$ as above, fusion at two distinct points $v_1, v_2 \in V$ is given by gluing in a disk along the boundary at $v_1, v_2$ and marking a new point \(v_3\) on the glued-in disk. The disk we are gluing in can be thought of as a flat pair of pants, hence we call it the \emph{pair of chaps} and denote it by \(\mathbb D_\zPop\). Figure \ref{fig:FusionLocalPic} below gives a local illustration of the fusion procedure.  

\begin{figure}[H]
     \centering
     \begin{subfigure}[b]{0.45\textwidth}
         \centering
           \begin{overpic}[width=\textwidth, tics=10]{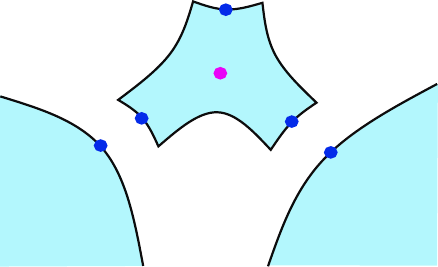}
           \put(25,27){\(v_1\)}
           \put(69,27){\(v_2\)}
           \put(53,45){\(b\)}
           \end{overpic}
         \caption{The surface \((\Sigma, V)\) and a pair of chaps \(\mathbb D_\zPop\).    }
         \label{fig:PreFusion}
     \end{subfigure}
     \hfill
     \begin{subfigure}[b]{0.45\textwidth}
         \centering
               \begin{overpic}[width=\textwidth, tics=10]{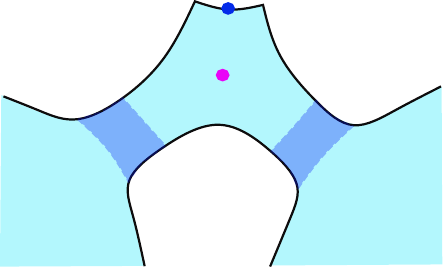}
                    \put(26,27){\(\mathbb D_1\)}
                    \put(67,27){\(\mathbb D_2\)}
                    \put(53,45){\(b\)}
                    \put(51,61){\(v_3\)}
           \end{overpic}
         \caption{The resulting fused surface \((\Sigma^\text{f}, V^\text{f})\).}
         \label{fig:FusedSurface}
     \end{subfigure}
     \caption{A local illustration of fusion of \((\Sigma, V)\) at the marked points \(v_1, v_2 \in V\). Each marked point determines a (isotopy class of) thickened embedding of an interval as in Theorem \ref{thm:Excision}.}
          \label{fig:FusionLocalPic}
\end{figure}

Before explaining how internal endomorphism algebras behave with respect to fusion we record some observations and facts that will be useful. 

\begin{observation}
     The fused surface can be written as \(\SigmaFused = \mathbb D_{\zPop} \sqcup_{\mathbb D_1\sqcup \mathbb D_2} \Sigma\), where \(\mathbb D_1\), \(\mathbb D_2\) are disks as illustrated in Figure \ref{fig:FusedSurface}.
     It follows that the distinguished object of the fused surface can be decomposed as \(\Ofused = \OzPop \otimes \OSigma\).
\end{observation}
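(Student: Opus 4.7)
The plan is to treat this observation as a direct unpacking of the definition of fusion together with the naturality of the pointing of factorization homology, since both claims should follow essentially tautologically once set up correctly.

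For the first claim, I would recall that fusion was defined as gluing a pair of chaps $\diskpoc$ onto $\Sigma$ along two boundary neighborhoods of the points $v_1, v_2 \in V$, each equipped with a collar. I would fix collar embeddings $\mathbb D_i \hookrightarrow \Sigma$ around $v_i$ for $i=1,2$ and corresponding collar embeddings $\mathbb D_i \hookrightarrow \diskpoc$ into two of the three ``legs'' of the chaps. The defining local picture in Figure \ref{fig:FusionLocalPic} is then literally the pushout of oriented surfaces
\begin{equation}
\SigmaFused \;\cong\; \diskpoc \sqcup_{\mathbb D_1 \sqcup \mathbb D_2} \Sigma,
\end{equation}
with the new marked point $v_3$ lying on the boundary of the remaining leg of $\diskpoc$. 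There is essentially nothing to prove here beyond checking that the construction given in words matches the stated pushout; the main subtlety is purely notational, namely to make consistent collar choices so that the glued oriented surface is well-defined up to canonical isotopy.

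For the second claim, I would use that for any surface $\Pi$ the distinguished object $\Oa_\Pi \in \SkCat{\Ca}{\Pi}$ is by construction the image of the empty configuration under the embedding $\emptyset \hookrightarrow \Pi$, and that this assignment is functorial in oriented embeddings. Applied to the two inclusions $\diskpoc \hookrightarrow \SigmaFused$ and $\Sigma \hookrightarrow \SigmaFused$ coming from the pushout above, one obtains morphisms $\OzPop \to \Ofused$ and $\OSigma \to \Ofused$ in $\SkCat{\Ca}{\SigmaFused}$. Since the two empty configurations are disjoint in $\SigmaFused$, their combined image agrees with the image of $\emptyset \sqcup \emptyset \hookrightarrow \SigmaFused$, which under the monoidal (disjoint-union) structure of $\SkCat{\Ca}{-}$ on inclusions is exactly $\OzPop \otimes \OSigma$. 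Thus $\Ofused \cong \OzPop \otimes \OSigma$ as claimed.

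The only step that requires more than formal manipulation is identifying the monoidal-product structure on disjoint distinguished objects; this follows from the naturality of the pointing \eqref{fh_pointing} together with the symmetric monoidality of factorization homology, and hence equivalently of the skein category via Theorem \ref{Thm: FH=Sk}. I do not foresee any genuine obstacle: once the pushout description is written down, everything else is a direct consequence of the universal property of factorization homology and the definition of $\Oa_{(-)}$.
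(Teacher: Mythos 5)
Your unpacking is correct and matches the paper's (implicit) reasoning exactly: the paper states this as an observation with no proof, since the pushout description is the definition of fusion and the identification of distinguished objects is immediate from the excision equivalence of Theorem \ref{thm:Excision}, whose underlying functor sends a pair $(m,n)$ to $E_{\mathbb D_{\mathrm{poc}}}(m)\sqcup E_\Sigma(n)$ and hence the pair of empty configurations to the empty configuration on $\Sigma^{\mathrm f}$. The only point worth tightening is that the $\otimes$ in $\Oa_{\Sigma^{\mathrm f}} = \Oa_{\mathbb D_{\mathrm{poc}}}\otimes \Oa_\Sigma$ is the pair-object notation $m\otimes_{\Aa} n$ of the (coend) relative tensor product rather than a disjoint-union monoidal structure on $\mathbf{Sk}_{\Ca}(\Sigma^{\mathrm f})$ (which is not defined for a general surface), though this does not affect the substance of your argument.
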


Recall that \(\Aa \cong \SkCat{\mathbb{D}^2}{\Aa}\). To make it easier to distinguish the corresponding actions we use the notation \(\Aa_i \coloneqq \SkCat{\mathbb D_i}{\Aa}\), for \(i \in\{1,2\}\). The product \(\Aa_1 \catprod \Aa_2\) inherits a monoidal structure from \(\Aa_1\) and \(\Aa_2\), i.e.
\begin{align}\label{eq:MonProdC}
	\otimes_{\Aa_1 \catprod \Aa_2} \colon \Aa_1 \catprod \Aa_2 \catprod \Aa_1 \catprod \Aa_2 \xrightarrow{\id \catprod \beta \catprod \id} \Aa_1 \catprod \Aa_1 \catprod \Aa_2 \catprod \Aa_2 \xrightarrow{\otimes_{\Aa_1} \catprod \otimes_{\Aa_2}} \Aa_1 \catprod \Aa_2,
\end{align}
where \(\otimes_{\Aa_i}, i\in \{1,2\}\) denotes the monoidal products of \(\Aa_1\) and \(\Aa_2\). The product \(\Aa_1 \catprod \Aa_2\) acts on \(\Aa\) from the right by \(a \lhd (a_1, a_2) \coloneqq a \otimes a_1 \otimes a_2\) for \(a\in \Aa, a_1 \in \Aa_1, a_2 \in \Aa_2\). From the multiplication of \(\Aa_1 \catprod \Aa_2\) above we then have 
\begin{align}\label{eq:ActTwiceToMult}
    a \lhd (a_1, a_2) \lhd (a'_1, a'_2)  \xrightarrow{\id \catprod \big(\id \catprod \beta_{a_2, a'_1} \catprod \id\big)} a \lhd (a_1 \otimes a'_1, a_2 \otimes a'_2).
\end{align}
Finally, recall that the monoidal product \(\otimes \colon \Aa \catprod \Aa \ra \Aa\) gives rise to the functor \(\Daytimes \colon \widehat{\Aa}\boxtimes \widehat{\Aa} \ra \widehat{\Aa}\) as explained in Section \ref{sect:Completions}. 
We are now ready to state how internal endomorphism algebras behave with respect to fusion. 

\begin{proposition}\label{prop:MarkedFusionInternalEndAlgs}
    The algebra \(\intEndAlg{\SigmaFused}{V^{\text{f}}}\)
    is obtained from \(\intEndAlg{\Sigma}{V}\) by applying the tensor product functor $\CompProd \colon \widehat{\Aa}^{\boxtimes 2} \to \widehat{\Aa}$ on the two factors in $\widehat{\Aa}^{\boxtimes V}$ corresponding to $v_1, v_2 \in V$.  
\end{proposition}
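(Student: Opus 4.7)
The proof strategy is to apply excision for enriched skein categories together with the coend model of the relative tensor product from Section~\ref{Sec:coend_tensor_product}. First, the collar-gluing $\SigmaFused \cong \Sigma \cup_{\mathbb{D}_1 \sqcup \mathbb{D}_2} \mathbb D_\zPop$, Theorem~\ref{thm:Excision}, and Proposition~\ref{prp:CoendTensRelTens} give
\begin{equation*}
	\SkCat{\SigmaFused}{\Aa} \simeq \SkCat{\Sigma}{\Aa} \coendtens{\Aa \catprod \Aa} \Aa,
\end{equation*}
where we use $\SkCat{\mathbb D_\zPop}{\Aa} \simeq \Aa$ (as $\mathbb D_\zPop$ is a disk). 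The actions of $\Aa \catprod \Aa$ are through the thickenings of $v_1, v_2$ on the $\Sigma$-side and through the two legs on the $\mathbb D_\zPop$-side. Under this decomposition, $\Ofused \cong \OSigma \otimes_{\Aa\catprod\Aa} 1_\Aa$, the action of $\vec{b}$ on $\Ofused$ factors through the first tensor factor, and the action of $a_3$ at the new marked point $v_3$ factors through the second.

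Plugging these data into the coend formula~\eqref{eq:CoendRelProd}, I would obtain
\begin{equation*}
\intEndAlg{\SigmaFused}{V^{\text{f}}}(\vec{b}, a_3) \simeq \int^{(a_1, a_2)\in \Aa\catprod\Aa} \SkCat{\Sigma}{\Aa}(\vec{b} \triangleright \OSigma,\, \OSigma \triangleleft (a_1,a_2)) \monprod[\Va] \Aa((a_1, a_2) \triangleright (a_3 \triangleright 1_\Aa),\, 1_\Aa).
\end{equation*}
The first factor is identified with $\intEndAlg{\Sigma}{V}(\vec{b}, a_1, a_2)$ by exchanging the left and right module structures at a marked point using standard skein slide moves. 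The second factor is computed directly in $\Aa$ using the cyclic order of $v_1, v_3, v_2$ on $\partial \mathbb D_\zPop$, and rigidity of $\Aa$ yields a natural isomorphism with $\Aa(a_3, a_1 \otimes a_2)$ (any intermediate dualization is absorbed into the equivalence $\SkCat{\mathbb D_\zPop}{\Aa} \simeq \Aa$ and the orientation conventions for the actions). Substituting, the resulting formula
\begin{equation*}
\intEndAlg{\SigmaFused}{V^{\text{f}}}(\vec{b}, a_3) \simeq \int^{(a_1,a_2)} \Aa(a_3, a_1 \otimes a_2) \monprod[\Va] \intEndAlg{\Sigma}{V}(\vec{b}, a_1, a_2)
\end{equation*}
is the Day convolution product~\eqref{eq:DayConvProd} applied to the $v_1, v_2$ factors of $\intEndAlg{\Sigma}{V}$.

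The main obstacle is to verify that this underlying isomorphism respects the algebra structures. The multiplication on $\intEndAlg{\SigmaFused}{V^{\text{f}}}$ is induced by composition~\eqref{eq:CompositionCoendRelProd} in the coend relative tensor product, while the Day convolution of an algebra object inherits its multiplication via Lemma~\ref{lemma:LaxMonVsAlgStructure} from the lax monoidal structure~\eqref{eq:lax_mon_str_intendoalg} of $\intEndAlg{\Sigma}{V}$. Graphically, both correspond to the vertical stacking of skein diagrams in a neighbourhood of the marked points, as in Figure~\ref{fig:entire-figure-stacking}; translating this pictorial identification into an equality at the level of the universal cocones defining the relevant coends, coherently in all variables, is the technical heart of the proof.
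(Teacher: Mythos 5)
Your setup coincides with the paper's: excision (Theorem~\ref{thm:Excision}) plus the coend model of the relative tensor product (Proposition~\ref{prp:CoendTensRelTens}) to write $\SkCat{\SigmaFused}{\Aa}$ as $\Aa \coendtens{\Aa\catprod\Aa}\SkCat{\Sigma}{\Aa}$, followed by the coend formula~\eqref{eq:CoendRelProd} and~\eqref{eq:CompletionFunctorEquation} to identify $\intEndAlg{\SigmaFused}{V^{\text{f}}}$ with $\CompProd\,\intEndAlg{\Sigma}{V}$ \emph{as objects} of $\widehat{\Aa}$. Two cosmetic remarks: the paper needs no rigidity here, since the pair-of-chaps factor contributes $\Aa(b,\OzPop\lhd(a_1,a_2))\cong\Aa(b,a_1\otimes a_2)$ directly (your detour through duals and "slide moves" to swap left and right actions stems from setting up the bimodule structures the other way around); and your claim that any dualization "is absorbed into the equivalence" should be checked rather than asserted, though it is not where the difficulty lies.

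The genuine gap is the step you yourself flag as "the technical heart" and then do not carry out: verifying that the isomorphism intertwines the two multiplications. This is not a routine coherence check that can be discharged by the picture of vertical stacking, because the multiplication on $\CompProd\,\intEndAlg{\Sigma}{V}$ is not defined by skeins at all --- it is the image of the algebra $\intEndAlg{\Sigma}{V}$ under the monoidal functor $(\CompProd, J)$, i.e.\ it is $\CompProd(\mu_{\intEndAlg{\Sigma}{V}})$ precomposed with the monoidal constraint $J$ coming from the braiding in~\eqref{eq:MonProdC}. On the other side, the multiplication on $\intEndAlg{\SigmaFused}{V^{\text{f}}}$, unravelled through the composition law~\eqref{eq:CompositionCoendRelProd} of the coend tensor product, produces the braiding $\beta_{a_2,a'_1}$ via Equation~\eqref{eq:ActTwiceToMult} when two successive actions of $\Aa_1\catprod\Aa_2$ are converted into a single action by the tensor product. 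The paper's proof consists almost entirely of writing out both composites as explicit maps of coends (its displays~\eqref{eq:LaxMonStructureAfused} and~\eqref{eq:lax-mon-structure-otimes-internal-end-alg}) and checking that the two occurrences of the braiding match and that the remaining operations (which only touch the $\SkCat{\Sigma}{\Aa}$-factor) commute with it. Without this computation the proof is incomplete: you have shown the two algebras have isomorphic underlying objects and given a heuristic for why the products should agree, but the actual comparison --- where all the content of the proposition sits --- is missing.
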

\begin{proof}

For notational simplicity, consider \(V = \{v_1, v_2\}\) as in Figure \ref{fig:FusionLocalPic}, and \(V^\text{f} = \{v_3\}\). 
This justifies omitting the marked points from the notation, and we have \(\ASigma \coloneqq \IEAlong{\Aa_1\catprod \Aa_2}{\Oa_\Sigma} \in \widehat{\Aa}_1 \boxtimes \widehat{\Aa}_2 \) and \(\Afused \coloneqq \IEAlong{A}{\Oa_{\Sigma^\text{f}}} \in \widehat{\Aa} \). 
We first compare \(\Afused\) and \(\ASigmaHat\) as objects of \(\Comp{\Aa}\) before we spell out both algebra structures and compare them. 

\nparagraph{\emph{As objects of \(\Comp{\Aa}\).}} Let \(b\in \Aa\cong \SkCat{\diskpoc}{\Aa}\) be an arbitrary object as illustrated in Figure \ref{fig:FusionLocalPic} above. Then we have
\begin{align}
	\Afused(b) &= \SkCat{\SigmaFused}{\Aa}\left(b \rhd \Ofused, \Ofused \right) \stackrel{\ref{thm:Excision}}{\cong} \left(\Aa \coendtens{\Aa_1\catprod \Aa_2} \SkCat{\Sigma}{\Aa}\right) \left(b\otimes \OSigma, \OzPop\otimes \OSigma \right)
	\\ &\stackrel{\eqref{eq:CoendRelProd}}{=}\int^{(a_1, a_2) \in \Aa_1\catprod \Aa_2}  \Aa\left(b, \OzPop \lhd \left(a_1, a_2\right)\right) \monprod[\Va] \SkCat{\Sigma}{\Aa}\left(\left( a_1, a_2\right) \rhd \OSigma, \OSigma \right) \stackrel{\eqref{eq:CompletionFunctorEquation}}{=} \CompProd \ASigma(b) \ . 
\end{align}
The first step is exactly the definition of \(\Afused\), where \(\Ofused = \OzPop \otimes \OSigma\). The second equivalence follows from the excision-property of skein categories established in Theorem \ref{thm:Excision}. Then we use Equation \eqref{eq:CoendRelProd} to write out what the coend relative tensor product explicitly is. Finally, by using Equation \eqref{eq:CompletionFunctorEquation} we see that this is exactly \(\ASigmaHat\). Note that functoriality follows from that of the coend in both cases, so \(\Afused\) and \(\ASigmaHat\) indeed agree as functors. 

\nparagraph{\textit{Algebra structure of \(\Afused\).}} We start with unravelling the lax monoidal structure of \(\Afused\) as given in Equation \eqref{eq:lax_mon_str_intendoalg}. Since we want to compare it to that of \(\ASigmaHat\) we need to understand the lax monoidal structure at the level of the coend coming from the coend relative tensor product. For any two objects \(b, c \in \Aa\), the lax monoidal structure on $\Afused$ is given by
\begin{center}
\begin{align}\label{eq:LaxMonStructureAfused}
\begin{tikzcd}[row sep=3.0ex]
    \Afused(b) \monprod[\Va] \Afused(c) = \SkCat{\SigmaFused}{\Aa}(b\rhd \Ofused, \Ofused) \monprod[\Va] \SkCat{\SigmaFused}{\Aa}(c\rhd \Ofused, \Ofused) 
    \\ \\ \SkCat{\SigmaFused}{\Aa}(b\rhd \Ofused, \Ofused) \monprod[\Va] \SkCat{\SigmaFused}{\Aa}(b\rhd c\rhd \Ofused, b\rhd \Ofused)
    \\ \\ \left(\Aa \coendtens{\Aa_1\catprod \Aa_2} \SkCat{\Sigma}{\Aa}\right) \left(b\otimes \OSigma, \OzPop\otimes \OSigma \right) \monprod[\Va]     \left(\Aa \coendtens{\Aa_1\catprod \Aa_2} \SkCat{\Sigma}{\Aa}\right) \left((b\otimes c) \otimes \OSigma, b\otimes \OSigma \right) 
    \\ \\
    \begin{tabular}{c}
    $\displaystyle \int^{(a_1, a_2), (a'_1, a'_2) \in \Aa_1\catprod\Aa_2} \Aa(b, \OzPop \lhd (a_1, a_2)) \monprod[\Va] \Aa(b\otimes c, b \lhd (a'_1, a'_2)) $
    \\ $\monprod[\Va]  \SkCat{\Sigma}{\Aa}((a_1, a_2) \rhd \OSigma, \OSigma) \monprod[\Va] \SkCat{\Sigma}{\Aa} ((a'_1, a'_2) \rhd \OSigma, \OSigma) $
    \end{tabular}
    \\  \\
    \begin{tabular}{c}
    $\displaystyle \int^{(a_1, a_2), (a'_1, a'_2) \in \Aa_1\catprod\Aa_2} \Aa\left(b \lhd \left(a'_1, a'_2\right), \OzPop \lhd \left(a_1, a_2\right)\lhd\left(a'_1, a'_2\right)\right) \monprod[\Va] \Aa\left(b\otimes c, b \lhd (a'_1, a'_2)\right) $
    \\ $\monprod[\Va] \SkCat{\Sigma}{\Aa}\left((a_1, a_2) \rhd \OSigma, \OSigma\right) \monprod[\Va] \SkCat{\Sigma}{\Aa} \left((a_1, a_2) \rhd (a'_1, a'_2) \rhd \OSigma, (a_1, a_2)\rhd\OSigma\right)$
    \end{tabular}
    \\ \\ 
    \displaystyle \int^{(a_1, a_2), (a'_1, a'_2) \in \Aa_1\catprod\Aa_2} \Aa(b\otimes c, \OzPop \lhd (a_1, a_2) \lhd(a'_1, a'_2)) \monprod[\Va] \SkCat{\Sigma}{\Aa}((a_1, a_2)\rhd (a'_1, a'_2) \rhd \OSigma, \OSigma)
    \\ \\  \displaystyle \int^{(a_1, a_2), (a'_1, a'_2) \in \Aa_1\catprod\Aa_2} \Aa(b\otimes c, \OzPop \lhd (a_1\otimes a'_1, a_2\otimes a'_2)) \monprod[\Va] \SkCat{\Sigma}{\Aa}((a_1\otimes a'_1, a_2\otimes a'_2) \rhd \OSigma, \OSigma)
    \\ \\  \displaystyle \int^{(a , a') \in \Aa_1\catprod \Aa_2} \Aa(b\otimes c, \OzPop \lhd (a\otimes a')) \monprod[\Va] \SkCat{\Sigma}{\Aa}(a \otimes a' \rhd \OSigma, \OSigma) = \Afused(b\otimes c).
  \arrow["{\id \monprod[\Va] b\,\rhd - }" ', from=1-1, to=3-1]
  \arrow["\ref{thm:Excision}", "{\cong}" ', from=3-1, to=5-1]
  \arrow["{\eqref{eq:CoendRelProd}}" , from=5-1, to=7-1]
  \arrow["{-\lhd(a'_1, a'_2) \monprod[\Va] \id \monprod[\Va] \id \monprod[\Va] (a_1, a_2)\rhd -}"', from=7-1, to=9-1]
  \arrow["{\circ_\Aa \monprod[\Va] \circ_{\SkCat{\Sigma}{\Aa}}}"', from=9-1, to=11-1]
  \arrow["\eqref{eq:ActTwiceToMult}", "{\id \catprod \beta \catprod \id \monprod[\Va] \id}"', from=11-1, to=13-1]
  \arrow["{ a, a' \mapsto a \otimes a'}"', from=13-1, to=15-1]
\end{tikzcd}
\end{align}
\end{center}
The first step corresponds to the first step of Equation \eqref{eq:lax_mon_str_intendoalg}. Afterwards we need to compose, and for this we first use \( \SigmaFused = \diskpoc \amalg_{\mathbb D_1\amalg \mathbb D_2} \Sigma\) and Theorem \ref{thm:Excision} to write the Hom-objects of \(\SkCat{\SigmaFused}{\Aa}\) as the coend relative product. In the third step we use Equation \eqref{eq:CoendRelProd} and the fact that \(\monprod[\Va]\) is a left adjoint to commute it with the coend. The fourth and fifth step follow from Equation \eqref{eq:CompositionCoendRelProd}, i.e.\ from how the coend relative tensor product composes. In the sixth step we have two objects of \(\Aa_1 \catprod \Aa_2\) acting on \(\Aa\), and we use Equation \eqref{eq:ActTwiceToMult} to relate this to the action of their tensor product. Finally, in the last step we define a map out of the coend by saying what it does on components: namely send them to the components of the bottom coend corresponding to \(a\otimes a' = (a_1, a_2) \otimes (a_1', a_2')\). 

\nparagraph{\textit{Algebra structure of \(\ASigmaHat\).}} 
We now unravel the lax monoidal structure of \(\ASigmaHat\) in a few steps. 
More explicitly, we first examine the algebra structure of \(\ASigmaHat\) which comes from the algebra structure of \(\ASigma\). Then we use Lemma \ref{lemma:LaxMonVsAlgStructure} to obtain the corresponding lax monoidal structure. 

Since the category \(\Aa_1 \catprod \Aa_2\) acts on \(\SkCat{\Sigma}{\Aa}\) we know from Section \ref{ssec:IntEndAlg} that this gives rise to the internal endomorphism algebra \(\ASigma \in \widehat{\Aa}_1 \boxtimes \widehat{\Aa}_2\) for the distinguished object \(\OSigma\). That is, \(\ASigma\) is a lax monoidal functor in \(\widehat{\Aa}_1 \boxtimes \widehat{\Aa}_2\), where Equation \eqref{eq:lax_mon_str_intendoalg} gives that for any two objects \((a_1, a_2), (a'_1, a'_2) \in \Aa_1 \catprod \Aa_2\) the lax monoidal structure is
\begin{center}
\begin{align}\label{eq:ASigmaLaxMon}
\begin{tikzcd}
	\ASigma(a_1, a_2) \monprod[\Va] \ASigma(a'_1, a'_2) = \SkCat{\Sigma}{\Aa} ((a_1, a_2)\rhd \OSigma, \OSigma) \monprod[\Va] \SkCat{\Sigma}{\Aa}((a'_1, a'_2)\rhd \OSigma, \OSigma)
	\\ \SkCat{\Sigma}{\Aa} ((a_1, a_2)\rhd \OSigma, \OSigma) \monprod[\Va] \SkCat{\Sigma}{\Aa}((a_1, a_2)\rhd (a'_1, a'_2) \rhd \OSigma, (a_1, a_2)\rhd \OSigma)
	\\ \SkCat{\Sigma}{\Aa}((a_1, a_2) \rhd (a'_1, a'_2) \rhd \OSigma, \OSigma) \cong \ASigma((a_1, a_2)\otimes (a'_1, a'_2))
	\arrow[from=1-1, to=2-1, "{\id \monprod[\Va] (a_1, a_2)\,\rhd -}"']
	\arrow[from=2-1, to=3-1, "{\circ_{\SkCat{\Sigma}{\Aa}}}"']
\end{tikzcd}
\end{align}
\end{center}

By Lemma \ref{lemma:LaxMonVsAlgStructure} the corresponding algebra structure is a map \(\mu_{\ASigma} \colon \ASigma \widehat{\otimes}_{\Aa_1 \catprod \Aa_2} \ASigma \ra \ASigma\), where \(\widehat{\otimes}_{\Aa_1 \catprod \Aa_2}\) is the completion of the monoidal product of \(\Aa_1 \catprod \Aa_2\) from  Equation \eqref{eq:MonProdC}. The corresponding algebra structure on \(\ASigmaHat\) is given by 
\begin{align}\label{eq:AlgStructureASigmaHat}
	\left(\ASigmaHat \right) \widehat{\otimes}\ \left(\ASigmaHat \right) \xrightarrow{\widehat{\id \catprod \beta \catprod \id}} \widehat{\otimes} \left( \ASigma \widehat{\otimes}_{\Aa_1 \catprod \Aa_2} \ASigma \right) 
	\xrightarrow{\widehat{\otimes}(\mu_{\ASigma}) } \widehat{\otimes} \ASigma,
\end{align}
where \(\widehat{\otimes}=\widehat{\otimes}_{\Aa}\) is the completed tensor product of \(\Aa\). We now use Lemma \ref{lemma:LaxMonVsAlgStructure}, or more explicitly Equation \eqref{eq:AlgToLaxFunctor}, to get the corresponding lax monoidal structure on \(\ASigmaHat\). 
For any objects \(b, c \in \Aa\) consider the computation in Equation~\eqref{eq:lax-mon-structure-otimes-internal-end-alg} below. 

The first step corresponds to the first step of Equation \eqref{eq:AlgToLaxFunctor}. In the second step we have used the dinatural transformation of the coend to get an expression we can directly compare to the lax monoidal structure of \(\Afused\) later. The third step is simply an application of the co-Yoneda lemma. Note that the coend expression we now have is exactly that of the left hand-side of Equation \eqref{eq:AlgStructureASigmaHat}, so we now first apply the braiding (combined with some identities) to get the coend corresponding to \(\widehat{\otimes} \left( \ASigma \widehat{\otimes}_{\Aa_1 \catprod \Aa_2} \ASigma \right) \). Then we use the algebra structure of \(\ASigma\) corresponding to the lax monoidal structure as spelled out in Equation \eqref{eq:ASigmaLaxMon} in step 5 and 6. As before, in the final step we have the induced map out of the coend coming from sending components of the coend to the components corresponding to \(a\otimes a'= (a_1, a_2)\otimes (a_1', a_2')\). 

\begin{center}
\begin{align}\label{eq:lax-mon-structure-otimes-internal-end-alg}
\begin{tikzcd}[row sep=3.0ex]
	\ASigmaHat(b) \monprod[\Va] \ASigmaHat(c)
	\\ \\ 
        \Aa(b\otimes c, b\otimes c) \monprod[\Va] \ASigmaHat(b) \monprod[\Va] \ASigmaHat(c)
	\\ \\ 
        \begin{tabular}{c}
        $\displaystyle \int^{b,c\in \Aa} \int^{(a_1,a_2), (a_1', a_2') \in \Aa_1\catprod \Aa_2} \Aa(b\otimes c, b\otimes c) \monprod[\Va] \Aa(b, a_1\otimes a_2) \monprod[\Va] \Aa(c, a_1'\otimes a_2') $
	\\ $\monprod[\Va] \SkCat{\Sigma}{\Aa}\left( \left(a_1,a_2\right)\rhd \OSigma, \OSigma \right)  \monprod[\Va] \SkCat{\Sigma}{\Aa}\left( \left(a'_1, a'_2\right) \rhd \OSigma, \OSigma \right)$
        \end{tabular}
	\\ \\ 
        \displaystyle \int^{(a_1,a_2), (a'_1, a'_2) \in \Aa_1 \catprod \Aa_2}  \Aa(b\otimes c, a_1\otimes a_2 \otimes a'_1 \otimes a'_2) \monprod[\Va] \SkCat{\Sigma}{\Aa}\left( \left(a_1,a_2\right)\rhd \OSigma, \OSigma \right)  \monprod[\Va] \SkCat{\Sigma}{\Aa}\left( \left(a'_1, a'_2\right) \rhd \OSigma, \OSigma \right)
	\\ \\ 
        \displaystyle \int^{(a_1,a_2), (a'_1, a'_2) \in \Aa_1 \catprod \Aa_2} \Aa(b\otimes c, a_1\otimes a'_1 \otimes a_2 \otimes a'_2) \monprod[\Va] \SkCat{\Sigma}{\Aa}\left( \left(a_1,a_2\right)\rhd \OSigma, \OSigma \right)  \monprod[\Va] \SkCat{\Sigma}{\Aa}\left( \left(a'_1, a'_2\right) \rhd \OSigma, \OSigma \right)
	\\ \\ 
        \begin{tabular}{c}
        $\displaystyle \int^{(a_1,a_2), (a'_1, a'_2) \in \Aa_1 \catprod \Aa_2} \Aa(b\otimes c, a_1\otimes a'_1 \otimes a_2 \otimes a'_2) \monprod[\Va] \SkCat{\Sigma}{\Aa}\left( \left(a_1,a_2\right)\rhd \OSigma, \OSigma \right)  $
	\\ $\monprod[\Va] \SkCat{\Sigma}{\Aa}\left( (a_1,a_2)\rhd \left(a'_1, a'_2\right) \rhd \OSigma, (a_1,a_2) \rhd \OSigma \right)$
        \end{tabular}
	\\ \\ 
        \displaystyle \int^{(a_1,a_2), (a'_1, a'_2) \in \Aa_1 \catprod \Aa_2} \Aa(b\otimes c, a_1\otimes a'_1 \otimes a_2 \otimes a'_2) \monprod[\Va] \SkCat{\Sigma}{\Aa}\left( (a_1 \otimes a'_1,a_2 \otimes a'_2) \rhd \OSigma, \OSigma \right)
	\\ \\ 
        \displaystyle \int^{(a , a')\in \Aa_1 \catprod \Aa_2} \Aa(b\otimes c, a \otimes a') \monprod[\Va] \SkCat{\Sigma}{\Aa}\left( a\otimes a' \rhd \OSigma, \OSigma \right) =  \ASigmaHat(b\otimes c).
	\arrow["{1_\Va \monprod[\Va] \id \monprod[\Va] \id}"', from=1-1, to=3-1]
	\arrow["{\iota_{b\otimes c}}"', from=3-1, to=5-1]
	\arrow["{\textrm{co-Yoneda}}", "\cong"', from=5-1, to=7-1]
	\arrow["{\id \catprod \beta \catprod \id \monprod[\Va] \id \monprod[\Va] \id}"', from=7-1, to=9-1]
	\arrow["{\id \monprod[\Va] \id \monprod[\Va] (a_1,a_2)\rhd -}"', from=9-1, to=11-1]
	\arrow["{\id \monprod[\Va] \circ_{\SkCat{\Sigma}{\Aa}}}"', from=11-1, to=13-1]
	\arrow["{a, a' \mapsto a\otimes a'}"',  from=13-1, to=15-1]
\end{tikzcd}
\end{align}
\end{center}

Upon inspection we see that the lax monoidal structure of \(\ASigmaHat\) is equivalent to the lax monoidal structure of \(\Afused\) as given in Equation \eqref{eq:LaxMonStructureAfused}. In particular, since the braiding is only applied to the Hom-objects of \(\Aa\) and the rest involves manipulating the Hom-objects of the skein category of \(\Sigma\) it does not matter that the order of doing so is opposite to that of Equation \eqref{eq:LaxMonStructureAfused}. \qedhere

\end{proof}

\begin{remark} \label{rem:fusion-to-ap2-cat-teaser}
    When we apply the above result to an $\Pois_2$-category, we will recover a generalization of fusion of quasi-Poisson structures of \cite{AKSM}. 
    We do this in Section~\ref{sec: Fusion and Poisson}.
\end{remark}

\newcommand{\SigmaTilde}{\widetilde{\Sigma}}
\newcommand{\VTilde}{\widetilde{V}}

\subsection{Intermezzo: \texorpdfstring{$\Ce$}{Ce}-modules and natural transformations}\label{sec:Cemod}
In this section we gather results about \(\Ce\)-modules and \(\Ce\)-natural transformations needed to make Remark \ref{rem:fusion-to-ap2-cat-teaser} precise, i.e.\ to apply the fusion result from Proposition~\ref{prop:MarkedFusionInternalEndAlgs} to \(\Pois_2\)-categories. 

Let $M$ be a $\Ce$-module. Choosing a splitting (in the category of complex vector spaces) of the short exact sequence
\begin{equation}
    \label{eq:SESCeMod}0 \to \operatorname{Ker} \varepsilon \cdot \to M \xrightarrow{\varepsilon \cdot} \varepsilon M \to 0 
\end{equation}
gives an isomorphism $M \cong \varepsilon M \oplus \operatorname{Ker} \varepsilon $. The action of $\varepsilon$ is the inclusion $\varepsilon M \hookrightarrow \operatorname{Ker}\varepsilon \cdot$.

Let us denote $M_0 \coloneqq M \otimes_\Ce \C  \cong M/\varepsilon M$. A $\Ce$-module is \emph{free} (isomorphic to $M_0\otimes_\C \Ce$) iff
$\varepsilon M = \operatorname{Ker} \varepsilon \cdot$.\footnote{This is easy to see from $M \cong \varepsilon M \oplus \operatorname{Ker} \varepsilon \cdot$. Yet another equivalent condition to freeness is flatness: if $M$ is flat, then the inclusion $\C \xrightarrow{\varepsilon \cdot} \Ce$ gives an inclusion $M/\varepsilon M \cong M\otimes_\Ce \C \xrightarrow{\varepsilon \cdot} M$; this being an inclusion means that $\operatorname{Ker}\varepsilon\cdot = \varepsilon M$.}  
The canonical isomorphism $\varepsilon M \cong M/\operatorname{Ker}\varepsilon \cdot$ from \eqref{eq:SESCeMod} becomes $\varepsilon M \cong M/\varepsilon M = M_0$. We now introduce notation for these maps. 

\begin{definition}
    Let $M$ be a free $\Ce$-module. Denote by $[-]_0\colon M \to M_0 = M/\varepsilon M$ the canonical projection map and by $[-]_1 \colon \varepsilon M \to M_0$ the canonical isomorphism from \eqref{eq:SESCeMod}. In other words, for $m\in \varepsilon M$, we define $[m]_1 := [n]_0$ in $M_0$ for any $n\in M$ such that $m = \varepsilon n$.
\end{definition}

If we choose a splitting $M \cong M_0 \otimes_\C \Ce$, the map $[-]_1$ sends $\varepsilon m_0 $ to $ m_0$. This is well-defined since different choices of splittings $M \cong M_0 \otimes_\C \Ce$ differ by automorphisms of $M_0 \otimes_\C \Ce$  of the form $\id + \lambda_{M} \otimes \varepsilon\cdot$, where $\lambda_M \colon M_0 \to M_0$. 

\begin{observation}
    If we have a $\Ce$-linear operation $\circ \colon M\otimes_\Ce M' \to M''$ between free $\Ce$-modules it follows that $[a\circ a']_0= [a]_0 \circ [a']_0$, for $a\in M$ and $a'\in M'$. Moreover, if $[a\otimes a']_0 = [a]_0 \otimes [a']_0 = 0$ holds we have that $[a\circ a']_1 = [a]_0 \circ_0 [a']_1 + [a]_1\circ_0 [a']_0$ (if only $[a\circ a']_0 = 0$ holds, then the previous equation has an additional term $[a]_0\circ_1 [a']_0$).
\end{observation}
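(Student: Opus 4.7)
The plan is to handle the two assertions by unwinding the splittings of $\Ce$-modules just introduced. The first identity $[a\circ a']_0 = [a]_0\circ [a']_0$ is essentially formal: applying the symmetric monoidal functor $-\otimes_\Ce \C$ to the $\Ce$-linear map $\circ$ yields an induced $\C$-linear operation $\circ_0\colon M_0\otimes M'_0\to M''_0$, and $[-]_0$ is by construction the canonical projection $M\twoheadrightarrow M\otimes_\Ce\C = M_0$, so naturality of this quotient gives the claim in one line.

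For the identities involving $[-]_1$, I would fix $\C$-linear splittings $M\cong M_0\otimes_\C \Ce$ and similarly for $M'$ and $M''$. Under these, every element decomposes uniquely as $a = a^{(0)} + \varepsilon\, a^{(1)}$ with $a^{(0)} = [a]_0$ and, whenever $a\in\varepsilon M$, $a^{(1)} = [a]_1$. The $\Ce$-linear operation $\circ$ is then completely encoded by two $\C$-linear maps $\circ_0, \circ_1 \colon M_0\otimes M'_0 \to M''_0$ via $x\circ y = x\circ_0 y + \varepsilon\,(x\circ_1 y)$ for $x\in M_0$, $y\in M'_0$, with $\varepsilon^2 = 0$ ensuring that this extends uniquely to a $\Ce$-bilinear map on $M\otimes_\Ce M'$. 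A direct bilinear expansion then yields
\begin{equation*}
a\circ a' \;=\; a^{(0)}\circ_0 a'^{(0)} \;+\; \varepsilon\bigl(a^{(0)}\circ_0 a'^{(1)} + a^{(1)}\circ_0 a'^{(0)} + a^{(0)}\circ_1 a'^{(0)}\bigr).
\end{equation*}
Reading off the $\varepsilon$-coefficient proves both remaining claims: when $[a]_0\otimes [a']_0 = 0$ at least one of $a^{(0)}, a'^{(0)}$ vanishes, killing the $\circ_1$ contribution and leaving exactly the stated formula; when only $[a\circ a']_0 = 0$, the term $a^{(0)}\circ_1 a'^{(0)}$ survives and accounts for the parenthetical extra summand $[a]_0\circ_1 [a']_0$.

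The only real content is independence of the chosen splittings. Two splittings of a free $\Ce$-module differ by an automorphism of the form $\id + \varepsilon\,\mu$ for a $\C$-linear map $\mu$, which induces a prescribed change both in the components $a^{(0)}, a^{(1)}$ and in the auxiliary first-order piece $\circ_1$; tracking these changes modulo $\varepsilon^2 = 0$ shows that the two sides of each identity transform identically. This splitting-independence check is the only step requiring verification, but it is mechanical bookkeeping rather than a genuine obstacle.
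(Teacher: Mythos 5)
Your verification is correct, and it is exactly the argument the paper intends: the statement is left as an unproved Observation, but the surrounding text has just set up the splittings $M \cong M_0 \otimes_\C \Ce$ and the change-of-splitting automorphisms $\id + \varepsilon\lambda$, so expanding $a\circ a'$ to first order in $\varepsilon$ and checking splitting-independence is precisely the omitted computation. The one point worth stating explicitly (which your expansion handles implicitly) is that $[a]_1$ or $[a']_1$ is only canonically defined when the corresponding $[-]_0$ vanishes, but in each such case that factor is paired with a vanishing $[-]_0$ on the other side, so every surviving term is well defined.
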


\begin{observation}
If $M$ and $N$ are both free modules, then $\Hom_{\Ce\lmod}(M, N)$ is free again, with $\Hom_{\Ce\lmod}(M, N)_0 \cong \Hom_{\C\lmod}(M_0, N_0)$. Thus, for a map $f \colon M \to N$, if $[f]_0=0$ we can define $[f]_1\colon M_0 \to N_0$.\footnote{More concretely, choosing splittings of $M$ and $N$ defines maps $[f]_0, [f]_1\colon M_0 \to N_0$ by
$f = [f]_0 + \varepsilon [f]_1$. Changing the splittings of $M$ and $N$ by $\lambda_M \colon M_0\to M_0$ and $\lambda_N \colon N_0 \to N_0$ keeps $[f]_0$ is unchanged, while $[f]_1$ changes to 
\[ [f]_1 \mapsto [f]_1 + \lambda_N \circ [f]_0 - [f]_0 \circ \lambda_M.\] Thus, if $[f]_0 = 0$, then $[f]_1$ is well defined. } 
If we have an additional map  $g \colon N \to O$ such that $[g\circ f]_0= [g]_0 \circ [f]_0=0$ it follows that $[g\circ f]_1 = [g]_0 \circ [f]_1 + [g]_1\circ [f]_0$.
\end{observation}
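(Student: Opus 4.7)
The plan is to prove each claim in the observation by reducing to explicit descriptions of $\Ce$-linear maps via splittings. First, I would fix splittings $M \cong M_0 \otimes_\C \Ce$ and $N \cong N_0 \otimes_\C \Ce$ (which exist because $M$ and $N$ are free) and note that a $\Ce$-linear map $f\colon M \to N$ is determined by its restriction to $M_0 \otimes 1$. Writing this restriction as $f(m_0 \otimes 1) = f_{(0)}(m_0) + \varepsilon f_{(1)}(m_0)$ with $f_{(0)}, f_{(1)} \colon M_0 \to N_0$ yields a $\Ce$-linear isomorphism
\[ \Hom_{\Ce\lmod}(M, N) \cong \Hom_\C(M_0, N_0) \otimes_\C \Ce, \]
which establishes both that $\Hom_{\Ce\lmod}(M,N)$ is free and, by setting $\varepsilon = 0$, that $\Hom_{\Ce\lmod}(M, N)_0 \cong \Hom_\C(M_0, N_0)$.

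Next I would address the definition of $[f]_1$ when $[f]_0 = 0$. Since $\Hom_{\Ce\lmod}(M,N)$ is free by the previous step, we have $\operatorname{Ker}(\varepsilon\cdot) = \varepsilon \cdot \Hom_{\Ce\lmod}(M,N) = \operatorname{Ker}[-]_0$, and so $[f]_0 = 0$ forces $f = \varepsilon f'$ for some $f' \in \Hom_{\Ce\lmod}(M,N)$. Setting $[f]_1 \coloneqq [f']_0$, well-definedness reduces to showing that the difference of any two such choices lies in $\operatorname{Ker}[-]_0$: if $\varepsilon f' = \varepsilon f''$ then $\varepsilon(f' - f'') = 0$, and freeness again gives $f' - f'' \in \varepsilon \cdot \Hom_{\Ce\lmod}(M,N) = \operatorname{Ker}[-]_0$, as needed. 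This matches the ``splitting-free'' description $[f]_1 = [n]_0$ for any $n$ with $f = \varepsilon n$ (thinking of $f$ as an element of the $\Ce$-module $\Hom_{\Ce\lmod}(M,N)$ rather than a morphism).

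For the Leibniz-type identity, I would pick splittings of $M$, $N$, $O$ and decompose $f = f_{(0)} + \varepsilon f_{(1)}$ and $g = g_{(0)} + \varepsilon g_{(1)}$, where $f_{(i)} \colon M_0 \to N_0$ and $g_{(i)} \colon N_0 \to O_0$ are extended $\Ce$-linearly. A direct calculation using $\varepsilon^2 = 0$ yields
\[ g \circ f = g_{(0)} \circ f_{(0)} + \varepsilon \bigl(g_{(0)} \circ f_{(1)} + g_{(1)} \circ f_{(0)}\bigr), \]
so $[g\circ f]_0 = g_{(0)} \circ f_{(0)} = [g]_0 \circ [f]_0$. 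Under the hypothesis that this vanishes, the same computation identifies the $\varepsilon$-coefficient as $[g\circ f]_1 = [g]_0 \circ [f]_1 + [g]_1 \circ [f]_0$, matching the claim.

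The only real subtlety is ensuring that $[f]_1, [g]_1, [g\circ f]_1$ are independent of the choice of splittings: under a change of splitting by automorphisms of the form $\id + \varepsilon\lambda$ on $M$ and $N$, the component $[f]_1$ shifts by $\lambda_N \circ [f]_0 - [f]_0 \circ \lambda_M$, so it is well-defined precisely when $[f]_0 = 0$. This is the content of the definition established above and explains why the Leibniz formula requires $[g]_0\circ [f]_0 = 0$ to be meaningful. Beyond this bookkeeping around the two-step filtration $0 \subset \varepsilon M \subset M$ intrinsic to free $\Ce$-modules, no deeper obstacle is expected — the proof is essentially a routine unpacking of definitions.
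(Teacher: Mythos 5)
Your proposal is correct and follows essentially the same route as the paper: the paper's justification is exactly the splitting argument in its footnote (write $f = [f]_0 + \varepsilon[f]_1$ after choosing splittings, check that a change of splitting shifts $[f]_1$ by $\lambda_N\circ[f]_0 - [f]_0\circ\lambda_M$, and expand $g\circ f$ using $\varepsilon^2=0$), combined with the intrinsic definition of $[-]_1$ on the free $\Ce$-module $\Hom_{\Ce\lmod}(M,N)$ given earlier in that section. Your extra check that the right-hand side of the Leibniz identity is splitting-independent under the hypothesis $[g]_0\circ[f]_0=0$ is a worthwhile piece of bookkeeping that the paper leaves implicit, but it does not change the approach.
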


We now turn towards \(\Ce\)-linear categories, and start with a basic definition. 
\begin{definition}\label{defn:has-free-hom-spaces}
    Let \(\Ba_\varepsilon\) be a \(\Ce\)-linear category. 
    We say that $\Ba_\varepsilon$ \emph{has free Hom-spaces} if $\Hom_{\Ba_\varepsilon}(b_1, b_2)$ is a free $\Ce$-module for all objects $b_1, b_2\in \Ba_\varepsilon$.
\end{definition}
Let \(\Aa_\varepsilon\) be a \(\Ce\)-linear category and let \(\Ba_\varepsilon\) be a \(\Ce\)-linear category which has free Hom-spaces as in Definition~\ref{defn:has-free-hom-spaces}. 
Let $F_\varepsilon, G_\varepsilon \colon \Aa_\varepsilon \to \Ba_\varepsilon$ be two $\Ce$-linear functors. 
Our goal is to study the $\Ce$-module of natural transformations $\operatorname{Nat}(F_\varepsilon, G_\varepsilon)$. Denote by $\Aa_0$ the $\C$-linear category obtained from $\Aa_\varepsilon$ by applying $-\otimes_\Ce \C$ on Hom-spaces, and by $F_0$ the functor obtained by applying $-\otimes_\Ce \C$ on maps between Hom-spaces.

\begin{remark}
    When taking $\Aa_0$ one has to be careful e.g.\ when considering presheaves. For example, one might think that $(\Ce\lmod)_0$ is equivalent to $\C\lmod$. However, this is \emph{false}. Consider the $\Ce$-modules $\C$ and $\Ce$ as objects of $(\Ce\lmod)_0$. It is straightforward to see that $\Hom_{(\Ce\lmod)_0}(\C,\Ce)\cong \Hom_{(\Ce\lmod)_0}(\Ce,\C) \cong \C $. Nevertheless, the composition map 
    \begin{equation}
        \Hom_{(\Ce\lmod)_0}(\Ce,\C)\otimes \Hom_{(\Ce\lmod)_0}(\C,\Ce) \to \Hom_{(\Ce\lmod)_0}(\C,\C)
    \end{equation}
    is 0. There are no $\C$-vector spaces which behave in the same way and hence $(\Ce\lmod)_0$ can not be equivalent to $\C\lmod$.

    In fact, the functor $(-)\otimes_{\Ce} \C \colon \Ce\lmod\to \C\lmod$ factors through $(\Ce\lmod)_0$. It sends all morphism in $\Hom_{(\Ce\lmod)_0}(\C,\Ce)$ to zero, but is an isomorphism on $\Hom_{(\Ce\lmod)_0}(\Ce,\C)$. However, for the subcategory of free $\Ce$-modules the functor $(\Ce\lmod^\textnormal{free})_0\to \C\lmod$ is an equivalence.
\end{remark}

\begin{definition}\label{def:sqbr1Nat}
    Let $\alpha\colon F_\varepsilon \Rightarrow G_\varepsilon$ be a natural transformation. Then $[\alpha]_0\colon F_0 \Rightarrow G_0$ is defined on components by 
    \begin{equation}
        ([\alpha]_0)_a = [\alpha_a]_0, \quad \forall a \in \Aa_0 \ . 
    \end{equation}
    If $[\alpha]_0$ is zero and $\Ba_\varepsilon$ has free Hom-spaces, we define $[\alpha]_1 \colon F_0 \Rightarrow G_0$ by
    \begin{equation} 
        ([\alpha]_1)_a = [\alpha_a]_1, \quad \forall a \in \Aa_0 \ .
    \end{equation}
\end{definition}

\begin{proposition} \label{prop:sqbracket01}
    In the setting of Definition~\ref{def:sqbr1Nat} both $[\alpha]_0$ and $[\alpha]_1$ are well-defined natural transformations. 
\end{proposition}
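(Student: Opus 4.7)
The plan is to verify both claims by applying the two functors $[-]_0$ and $[-]_1$ of the preceding observations component-wise to the naturality square of $\alpha$ and using the ``Leibniz-type'' compatibility of $[-]_1$ with composition. The components will be well-defined by the freeness hypothesis on the Hom-spaces of $\Ba_\varepsilon$, and naturality reduces in each case to applying a $\C$-linear map to an equation in $\Ce$-linear Hom-spaces.

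First I would treat $[\alpha]_0$. The component $[\alpha_a]_0$ is just $\alpha_a \otimes_\Ce \C \colon F_\varepsilon(a)\otimes_\Ce \C \to G_\varepsilon(a)\otimes_\Ce \C$, which is the morphism $F_0(a)\to G_0(a)$ in $\Ba_0$. For each $f \in \Hom_{\Aa_\varepsilon}(a,b)$, naturality of $\alpha$ reads
\begin{equation*}
G_\varepsilon(f)\circ \alpha_a \;=\; \alpha_b \circ F_\varepsilon(f),
\end{equation*}
and since $-\otimes_\Ce \C$ is a (symmetric monoidal) functor compatible with composition, tensoring both sides with $\C$ over $\Ce$ yields $G_0([f]_0)\circ [\alpha_a]_0 = [\alpha_b]_0 \circ F_0([f]_0)$. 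Since every morphism in $\Aa_0$ is of the form $[f]_0$, this proves naturality of $[\alpha]_0$.

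Next I would turn to $[\alpha]_1$, where the main (mild) subtlety is that the component $[\alpha_a]_1$ a priori depended on a choice of splitting of \eqref{eq:SESCeMod}. The hypothesis $[\alpha]_0 = 0$ says that each $\alpha_a$ lies in $\operatorname{Ker}([-]_0) = \operatorname{Ker}(\cdot \otimes_\Ce \C)$, which for the free $\Ce$-module $\Hom_{\Ba_\varepsilon}(F_\varepsilon a, G_\varepsilon a)$ coincides with its $\varepsilon$-part. Thus $[\alpha_a]_1$ is given by the canonical isomorphism $\varepsilon M \cong M_0$ from \eqref{eq:SESCeMod} applied to $\alpha_a$, and is therefore a well-defined $\C$-linear morphism $F_0(a)\to G_0(a)$ independent of any splitting.

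To conclude naturality of $[\alpha]_1$, I would apply $[-]_1$ to both sides of the naturality square of $\alpha$. Both composites $G_\varepsilon(f)\circ \alpha_a$ and $\alpha_b\circ F_\varepsilon(f)$ lie in the $\varepsilon$-part of $\Hom_{\Ba_\varepsilon}(F_\varepsilon a, G_\varepsilon b)$, since $[\alpha_a]_0 = [\alpha_b]_0 = 0$ forces $[G_\varepsilon(f)\circ \alpha_a]_0 = G_0([f]_0)\circ 0 = 0$ and symmetrically for the other composite. The Leibniz-type identity $[g\circ h]_1 = [g]_0 \circ [h]_1 + [g]_1 \circ [h]_0$ recalled in the second observation of this section (applicable because each factor individually has vanishing $[-]_0$ or is multiplied by one that does) then yields
\begin{equation*}
[G_\varepsilon(f)\circ \alpha_a]_1 \;=\; G_0([f]_0)\circ [\alpha_a]_1,\qquad [\alpha_b\circ F_\varepsilon(f)]_1 \;=\; [\alpha_b]_1 \circ F_0([f]_0),
\end{equation*}
since $[G_\varepsilon(f)]_1$ and $[F_\varepsilon(f)]_1$ get multiplied by $[\alpha_a]_0 = 0 = [\alpha_b]_0$. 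Equating these two expressions gives the naturality square for $[\alpha]_1$.

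I do not expect any genuine obstacle: the only point to be careful about is justifying that $[\alpha_a]_1$ is independent of auxiliary splittings, which is exactly where freeness of Hom-spaces of $\Ba_\varepsilon$ is used, and that the Leibniz identity for $[-]_1$ applies with the correct vanishing hypothesis. Both are essentially already recorded in the observations preceding Definition~\ref{def:sqbr1Nat}.
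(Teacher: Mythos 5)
Your proposal is correct and follows essentially the same route as the paper: take $[-]_0$ of the naturality square for the first claim, and for the second use that the vanishing of $[\alpha]_0$ kills the cross terms (the $[F_\varepsilon(f)]_1$, $[G_\varepsilon(f)]_1$ contributions and any $\varepsilon$-deformation of the composition) when taking $[-]_1$. Your extra care about the splitting-independence of $[\alpha_a]_1$ is already built into the paper's definition of $[-]_1$ for free modules, so nothing is missing.
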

\begin{proof}
    Both $[\alpha]_0$ and $[\alpha]_1$ are well defined since their components are well defined. The naturality of $\alpha$ is the equation
    \begin{equation} 
    G_\varepsilon(f) \circ \alpha_a = \alpha_{a'} \circ F_\varepsilon(f)
    \end{equation}
    for $f\colon a \to a'$.
    Taking $[-]_0$ of this equation gives the naturality of $[\alpha]_0$. If $[\alpha]_0$ vanishes (i.e.\ all of its components vanish), then taking $[-]_1$ of the naturality constraint gives
    \begin{equation} 
        [G_\varepsilon(f)]_0 \circ_0 [\alpha_a]_1= [\alpha_{a'}]_1 \circ_0 [F_\varepsilon(f)]_0 
    \end{equation}
    and we can use that, by definition, $[F_\varepsilon(f)]_0 = F_0([f]_0)$ to get the naturality of $[\alpha]_1$. Thanks to the vanishing of $[\alpha]_0$ the possible $\varepsilon$-linear deformations of $F_\varepsilon$, $G_\varepsilon$ and $\circ^\Ba_\varepsilon$ have no effect.
\end{proof}

\begin{remark}
Put differently, we have the following exact sequence
\begin{equation}
\begin{tikzcd}
    0 \arrow[r] & {\operatorname{Nat}(F_0, G_0)} \arrow[r, "\varepsilon \cdot"] & {\operatorname{Nat}(F_\varepsilon, G_\varepsilon)} \arrow[r, "{[-]_0}"] & {\operatorname{Nat}(F_0, G_0)}
\end{tikzcd}
\end{equation}
and $[-]_1$ is the induced isomorphism
\begin{equation} 
    \operatorname{Ker}[-]_0 \xrightarrow{\cong} \operatorname{Nat}(F_0, G_0) \ . 
\end{equation}
The image of the map $[-]_0\colon \operatorname{Nat}(F_\varepsilon, G_\varepsilon)_0 \to \operatorname{Nat}(F_0, G_0)$ is given by natural transformations  $\operatorname{Nat}(F_0, G_0)$ which can be extended to the first order. In other words, the injective map $\operatorname{Nat}(F_\varepsilon, G_\varepsilon)_0 \to {\operatorname{Nat}(F_0, G_0)}$ is \emph{not} surjective in general. 
Moreover, the $\Ce$-module $\operatorname{Nat}(F_\varepsilon, G_\varepsilon)$ is in general \emph{not} free. 
\footnote{A example where $\operatorname{Nat}(F_\varepsilon, G_\varepsilon)$ is \emph{not} free is as follows: Consider two functors $F_\varepsilon, G_\varepsilon \colon (\bullet, f) \to \Ce\lmod^\text{free}$, where the source category $(\bullet, f)$ has one object $\bullet$ and is freely generated by one non-identity morphism $f$. Let $F_\varepsilon(\bullet) = G_\varepsilon(\bullet) = \Ce$, $F_\varepsilon(f)=0$ and $G_\varepsilon(f)= \varepsilon r$ for $r \neq 0$. All natural transformations $\alpha \colon F_\varepsilon \Rightarrow G_\varepsilon$ are given by the component $\alpha_\bullet = b\varepsilon$, with $b \in \C$ arbitrary. In other words, $\operatorname{Nat}(F_\varepsilon, G_\varepsilon) \cong \C$ as $\Ce$-modules.}
Nevertheless, in Proposition \ref{prop:sqbracket01} we were able to define $[-]_1$ on components.
\end{remark}

We will apply Proposition \ref{prop:sqbracket01} in two, confusingly related, settings: with $\alpha$ a natural transformation between functors $\Aa_\varepsilon^{\catprod n} \to \Aa_\varepsilon$, and $\alpha$ a morphism in $\widehat{\Aa_\varepsilon}^\textnormal{free}$, i.e.\ a natural transformation between functors $\Aa_\varepsilon \to \Ce\lmod^\textnormal{free}$.

\begin{proposition}\label{prop:sqbr1_functors}
    Let $F_\varepsilon$ be a $\Ce$-linear functor $\Aa_\varepsilon \to \Ba_\varepsilon$ and let $\sigma_\varepsilon\colon A_\varepsilon \to A'_\varepsilon$ be a morphism in $\widehat{\Aa_\varepsilon}^\textnormal{free}$. Assume that $\widehat{F_\varepsilon}(A_\varepsilon)$ and $\widehat{F_\varepsilon}(A'_\varepsilon)$ are valued in free $\Ce$-modules. 
    Then $(\widehat{F_\varepsilon}(A_\varepsilon))_0 \cong \widehat{F_0}(A_0)$ and $[\widehat{F_\varepsilon}(\sigma_\varepsilon)]_0 \cong \widehat{F_0}([\sigma_\varepsilon]_0)$. Moreover, if $[\sigma_\varepsilon]_0 = 0$ then  
    \begin{equation} 
        [\widehat{F_\varepsilon}(\sigma_\varepsilon)]_1 \cong \widehat{F_0}([\sigma_\varepsilon]_1) \ . 
    \end{equation}
    Here, we are using the equivalence $(\Ce\lmod^\textnormal{free})_0\simeq\C\lmod$, i.e.\ defining  $A_0(a):= A_\varepsilon\otimes_{\Ce}\C$.
\end{proposition}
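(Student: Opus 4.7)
The plan is to unfold the three claims using the coend formula \eqref{eq:CompletionFunctorEquation} and then exploit the fact that change of scalars along $\Ce \twoheadrightarrow \C$ is a cocontinuous symmetric monoidal functor. Recall that for each $b \in \Ba_\varepsilon$,
\[
\widehat{F_\varepsilon}(A_\varepsilon)(b) \;=\; \int^{a\in\Aa_\varepsilon} \Ba_\varepsilon(b, F_\varepsilon(a)) \otimes_{\Ce} A_\varepsilon(a)
\]
is a specific colimit in $\Ce\lmod$, obtained as the coequalizer of a diagram whose terms are tensor products of Hom-objects. I will use throughout that $(-)\otimes_{\Ce}\C \colon \Ce\lmod \to \C\lmod$ preserves colimits (being a left adjoint) and sends $\otimes_\Ce$ to $\otimes_\C$ (being strong monoidal), and that $\Ba_\varepsilon(b, F_\varepsilon(a))\otimes_\Ce\C \cong \Ba_0(b, F_0(a))$ holds by the very definition of the reduced category $\Ba_0$ and reduced functor $F_0$.

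For the first statement, I apply $(-)\otimes_\Ce\C$ to the coend defining $\widehat{F_\varepsilon}(A_\varepsilon)(b)$. By cocontinuity and strong monoidality, the result is the coequalizer obtained by applying $(-)\otimes_\Ce \C$ term by term, which, using the two identifications just recalled, is precisely the coend presentation of $\widehat{F_0}(A_0)(b)$. Naturality of these identifications in $b$ gives the isomorphism of presheaves $(\widehat{F_\varepsilon}(A_\varepsilon))_0 \cong \widehat{F_0}(A_0)$.

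For the second statement, observe that the morphism $\widehat{F_\varepsilon}(\sigma_\varepsilon) \colon \widehat{F_\varepsilon}(A_\varepsilon) \to \widehat{F_\varepsilon}(A'_\varepsilon)$ is induced, on each component of the coend, by the $\Ce$-linear map $\id \otimes_\Ce (\sigma_\varepsilon)_a$. Applying $(-)\otimes_\Ce \C$ to this family and composing with the canonical isomorphism from the first claim identifies $[\widehat{F_\varepsilon}(\sigma_\varepsilon)]_0$ with the map induced by the family $\id \otimes_\C [(\sigma_\varepsilon)_a]_0$, which is exactly $\widehat{F_0}([\sigma_\varepsilon]_0)$. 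This is the compatibility of $[-]_0$ with $\widehat{(-)}$ at the level of morphisms.

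For the third statement, assume $[\sigma_\varepsilon]_0 = 0$. By freeness of the Hom-spaces of $A_\varepsilon$ and $A'_\varepsilon$, each component $(\sigma_\varepsilon)_a$ factors as $\varepsilon \cdot \tau_a$ for some (non-canonical) $\tau_a \colon A_\varepsilon(a) \to A'_\varepsilon(a)$ with $[\tau_a]_0 = [(\sigma_\varepsilon)_a]_1$. The induced map on each coend component is $\id \otimes_\Ce (\varepsilon \tau_a) = \varepsilon \cdot (\id \otimes_\Ce \tau_a)$ by $\Ce$-linearity of the tensor product. Hence the whole induced map out of the coend is of the form $\varepsilon \cdot \Phi_\varepsilon$, where $\Phi_\varepsilon$ is the morphism $\widehat{F_\varepsilon}(A_\varepsilon) \to \widehat{F_\varepsilon}(A'_\varepsilon)$ induced on coend components by $\id \otimes \tau_a$. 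This shows at once that $[\widehat{F_\varepsilon}(\sigma_\varepsilon)]_0 = 0$, so the left-hand side $[\widehat{F_\varepsilon}(\sigma_\varepsilon)]_1$ is defined, and it is equal to $[\Phi_\varepsilon]_0$. Finally, applying the computation of $[-]_0$ from the second paragraph to the (non-natural, but componentwise well-defined) family $(\tau_a)_a$ identifies $[\Phi_\varepsilon]_0$ with the map induced from $([\tau_a]_0)_a = [\sigma_\varepsilon]_1$, i.e.\ with $\widehat{F_0}([\sigma_\varepsilon]_1)$.

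The main subtlety — and the one I would be most careful about — is the last paragraph: the lifts $\tau_a$ of $(\sigma_\varepsilon)_a$ are not natural in $a$, so $\Phi_\varepsilon$ is \emph{not} a priori of the form $\widehat{F_\varepsilon}(\tau)$ for a morphism $\tau$ in $\widehat{\Aa_\varepsilon}$. The point is that one only needs $\Phi_\varepsilon$ componentwise on the coend diagram, and that taking $[-]_0$ restores naturality thanks to Proposition \ref{prop:sqbracket01}. Everything else is a routine chase of the coend formula against the cocontinuous strong monoidal functor $(-)\otimes_\Ce \C$.
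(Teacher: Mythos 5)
Your proof is correct and follows essentially the same route as the paper's: express $\widehat{F_\varepsilon}(A_\varepsilon)(b)$ via the coend formula, use that $-\otimes_\Ce\C$ is a cocontinuous strong monoidal functor to get the first two claims, and compute $[-]_1$ componentwise on the coend. The only organizational difference is in the last step: the paper applies the Leibniz-type rule $[g\circ f]_1=[g]_0\circ[f]_1+[g]_1\circ[f]_0$ directly to the identity $\widehat{F_\varepsilon}(\sigma_\varepsilon)_b\circ\mathrm{incl}_a=\mathrm{incl}_a\circ(\id\otimes(\sigma_\varepsilon)_a)$, whereas you factor $(\sigma_\varepsilon)_a=\varepsilon\tau_a$; both work. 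One point to tighten: since the lifts $\tau_a$ are not dinatural, your $\Phi_\varepsilon$ need not exist as a single morphism out of the coend, so the intermediate assertion $[\widehat{F_\varepsilon}(\sigma_\varepsilon)]_1=[\Phi_\varepsilon]_0$ should be replaced by the statement that $[\widehat{F_\varepsilon}(\sigma_\varepsilon)_b]_1\circ[\mathrm{incl}_a]_0=[\mathrm{incl}_a]_0\circ(\id\otimes[\tau_a]_0)$ for every $a$, which together with the joint epimorphy of the maps $[\mathrm{incl}_a]_0$ (they are the colimit inclusions for the coend in $\Aa_0$, by your first claim) and the dinaturality of $[\sigma_\varepsilon]_1$ pins down $[\widehat{F_\varepsilon}(\sigma_\varepsilon)]_1$ as $\widehat{F_0}([\sigma_\varepsilon]_1)$ — which is exactly how the paper phrases it.
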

\begin{proof}
    For $b\in \Ba_\varepsilon$, we have a coend formula for $\widehat{F_\varepsilon}$ yielding
    \begin{equation}
        \widehat{F_\varepsilon}(A_\varepsilon)(b) = \int^{a\in \Aa_\varepsilon} \Hom_{\Ba_\varepsilon}(F_\varepsilon(b), a) \otimes_\Ce A_\varepsilon(a) \ . 
    \end{equation}
    Applying $[-]_0 = - \otimes_\Ce \C$ gives
     \begin{equation}
        (\widehat{F_\varepsilon}(A_\varepsilon))_0(b) = \int^{a\in \Aa_\varepsilon} \Hom_{\Ba_\varepsilon}(F_\varepsilon(b), a)_0 \otimes_\C A(a)_0
     = \int^{a\in \Aa_0} \Hom_{\Ba_0}(F_0(b), a) \otimes_\C A_0(a) = \widehat{F_0}(A_0) (b) \, 
     \end{equation}
     where the first equality follows by expressing the coend as a conical colimit using Equation~\eqref{Eq: coend = coeq}. Then, we can move $[-]_0$ inside the colimit as it is a left adjoint. We also use that $\Aa_\varepsilon$ and $\Aa_0$ have the same objects. The proof that the two presheaves are equal on morphisms is similar, since they are defined component-wise as maps between coends.

    The morphism
    \begin{equation}  
        \widehat{F_\varepsilon}(A_\varepsilon) \xrightarrow{\widehat{F_\varepsilon}(\sigma_\varepsilon)} \widehat{F_\varepsilon}(A'_\varepsilon) 
    \end{equation}
    is specified by its components
    \begin{equation} 
        \Hom_{\Ba_\varepsilon}(F_\varepsilon(b), a) \otimes_\Ce A_\varepsilon(a) \xrightarrow{\text{incl}_a}\widehat{F_\varepsilon}(A_\varepsilon)(b) \xrightarrow{\widehat{F_\varepsilon}(\sigma_\varepsilon)_b} \widehat{F_\varepsilon}(A'_\varepsilon)(b) \ ,
    \end{equation}
    which is equal to 
    \begin{equation} 
        \Hom_{\Ba_\varepsilon}(F_\varepsilon(b), a) \otimes_\Ce A_\varepsilon(a) \xrightarrow{\id \otimes (\sigma_\varepsilon)_a}
     \Hom_{\Ba_\varepsilon}(F_\varepsilon(b), a) \otimes_\Ce A'_\varepsilon(a) \xrightarrow{\text{incl}_a} \widehat{F_\varepsilon}(A'_\varepsilon)(b) \ .
    \end{equation}
    Taking $[-]_1$ of this composition, we get that 
    \begin{equation} 
        \Hom_{\Ba_0}(F_0(b), a) \otimes_\C A_0(a) \xrightarrow{[\text{incl}_a]_0}\widehat{F_0}(A_0)(b) \xrightarrow{[\widehat{F_\varepsilon}(\sigma_\varepsilon)_b]_1} \widehat{F_0}(A'_0)(b) \ . 
    \end{equation}
    is equal to
    \begin{equation} 
        \Hom_{\Ba_0}(F_0(b), a) \otimes_\C A_0(a) \xrightarrow{\id \otimes [(\sigma_\varepsilon)_a]_1}
     \Hom_{\Ba_0}(F_0(b), a) \otimes_\C A'_0(a) \xrightarrow{\text{incl}_a} \widehat{F_0}(A'_0)(b)\ . 
    \end{equation}
    These are components of $(\widehat{F_0}([\sigma_\varepsilon]_1))_b$, since $[\text{incl}_a]_0$ is the inclusion into the coend in $\Aa_0$. The proof for taking $[-]_0$ works analogously.
\end{proof}

\begin{proposition}\label{prop:sqbr1_nattr}
    Let $\Aa_\varepsilon$, $\Ba_\varepsilon$ be $\Ce$-linear categories such that $\Ba_\varepsilon$ has free Hom-spaces, let $F_\varepsilon, G_\varepsilon \colon \Aa_\varepsilon \to \Ba_\varepsilon$ be $\Ce$-linear functors and let $\tau_\varepsilon \colon F_\varepsilon \Rightarrow G_\varepsilon$ be a natural transformation. 
    Let $A_\varepsilon \in \widehat{\Aa_\varepsilon}$ such that $\widehat{F_\varepsilon}(A_\varepsilon)$ and $\widehat{G_\varepsilon}(A_\varepsilon)$ are valued in $\Ce\lmod^\textnormal{free}$. Then 
    \begin{equation} 
        [(\widehat{\tau_\varepsilon})_{A_\varepsilon}]_0 = (\widehat{[\tau]_0})_{A_0} 
    \end{equation}
    and if $[\tau]_0 = 0$, then
    \begin{equation} 
        [(\widehat{\tau_\varepsilon})_{A_\varepsilon}]_1 = (\widehat{[\tau]_1})_{A_0} \ . 
    \end{equation}
\end{proposition}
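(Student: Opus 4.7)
The plan is to reduce the statement to componentwise computations in $\Va = \Ce\lmod$, in the same spirit as the proof of Proposition~\ref{prop:sqbr1_functors}. First, I would unpack what $(\widehat{\tau_\varepsilon})_{A_\varepsilon}$ is concretely: it is a morphism in $\widehat{\Ba_\varepsilon}$ from $\widehat{F_\varepsilon}(A_\varepsilon)$ to $\widehat{G_\varepsilon}(A_\varepsilon)$, and by the coend formula~\eqref{eq:CompletionFunctorEquation} its component at $b \in \Ba_\varepsilon$ is the $\Ce$-linear map induced on coends by the dinatural family
\begin{equation*}
\Ba_\varepsilon(b, F_\varepsilon(a)) \otimes_\Ce A_\varepsilon(a) \xrightarrow{((\tau_\varepsilon)_a \circ -) \,\otimes_\Ce\, \id} \Ba_\varepsilon(b, G_\varepsilon(a)) \otimes_\Ce A_\varepsilon(a) \xrightarrow{\mathrm{incl}_a} \widehat{G_\varepsilon}(A_\varepsilon)(b) \ .
\end{equation*}

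For the first claim, I would apply $[-]_0 = - \otimes_\Ce \C$ to this composite. Since $[-]_0$ is a left adjoint, it commutes with the coend, and on the summands above it replaces $\Ba_\varepsilon(b, F_\varepsilon(a))$, $\Ba_\varepsilon(b, G_\varepsilon(a))$ and $A_\varepsilon(a)$ by $\Ba_0(b, F_0(a))$, $\Ba_0(b, G_0(a))$ and $A_0(a)$ respectively, using Proposition~\ref{prop:sqbr1_functors} for the identifications $\widehat{F_\varepsilon}(A_\varepsilon)_0 \cong \widehat{F_0}(A_0)$ and similarly for $G$. Moreover, $[((\tau_\varepsilon)_a\circ -) \otimes \id]_0 = (([\tau]_0)_a \circ -) \otimes \id$ by Definition~\ref{def:sqbr1Nat}. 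Comparing with the corresponding coend formula for $(\widehat{[\tau]_0})_{A_0}$ gives the first equation.

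For the second claim, assume $[\tau]_0 = 0$, so that every component $(\tau_\varepsilon)_a$ factors through $\varepsilon \cdot \Ba_\varepsilon(F_\varepsilon(a), G_\varepsilon(a))$. Consequently the whole induced map on the coend lands in the $\varepsilon$-torsion part of $\widehat{G_\varepsilon}(A_\varepsilon)(b)$, and because this latter module is free by hypothesis, the operation $[-]_1$ of ``dividing by $\varepsilon$'' is well-defined on its image by Definition~\ref{def:sqbr1Nat}. Applying $[-]_1$ to the summands above replaces $((\tau_\varepsilon)_a\circ -)\otimes \id$ by $(([\tau]_1)_a\circ -)\otimes \id$, and the resulting induced map on the $\C$-linear coend is precisely the component of $(\widehat{[\tau]_1})_{A_0}$ at $b$.

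The main technical subtlety is justifying that the post-composition of $[-]_1$ with the coend inclusions gives a well-defined map out of the $\C$-linear coend $\widehat{F_0}(A_0)(b)$. This uses the freeness of $\widehat{F_\varepsilon}(A_\varepsilon)(b)$ and $\widehat{G_\varepsilon}(A_\varepsilon)(b)$ together with the vanishing of $[\tau]_0$ to ensure that the short exact sequence~\eqref{eq:SESCeMod} is compatible with the colimit in a way that lets us identify $[-]_1$ on the coend with the coend of the $[-]_1$ of the components; everything else is a routine unwinding of definitions.
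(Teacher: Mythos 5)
Your proposal is correct and follows essentially the same route as the paper: the paper's proof is a one-line reduction to Proposition~\ref{prop:sqbr1_functors}, observing that both sides are determined by their components on the coend summands, which is exactly the computation you carry out. Your version just spells out in more detail the step where $[-]_0$ (resp.\ $[-]_1$) is pushed through the coend using left-adjointness and the freeness hypotheses.
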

\begin{proof}
    The proof is analogous to that of Proposition~\ref{prop:sqbr1_functors} as the components of both sides can be computed by coends.  
\end{proof}

We now turn towards the main result in this section which uses the preliminary results on \(\Ce\)-linear categories and natural transformations in the setting of skein categories. 

\begin{proposition} \label{prop:freeskeinmodules}
    Let $\Sigma$ be a surface with non-empty boundary and $\Aa_\varepsilon$ be an $\Pois_2$-category such that
    \begin{itemize}
        \item as a category, $\Aa_\varepsilon \simeq \Aa_0 \otimes_\C \Ce$, and
        \item the monoidal product functor $\Aa_\varepsilon \catprod \Aa_\varepsilon \to \Aa_\varepsilon$ is a free $\Ce$-linear extension of the monoidal product on $\Aa_0$.
    \end{itemize}
    Then the Hom-spaces in the skein category $\SkCat{\Sigma}{\Aa_\varepsilon}$ are free $\Ce$-modules.
\end{proposition}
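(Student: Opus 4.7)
The plan is to show that $\SkCat{\Sigma}{\Aa_\varepsilon}(m,n) \cong \SkCat{\Sigma}{\Aa_0}(m,n) \otimes_\C \Ce$ as $\Ce$-modules for every pair of objects $m,n$, from which the claimed freeness follows immediately.

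Under the hypotheses, each $\Va$-object $\Aa_\varepsilon((\Gamma))$ attached to a $\Ca$-colored ribbon q-graph $\Gamma$ is canonically isomorphic to $\Aa_0((\Gamma)) \otimes_\C \Ce$ as a free $\Ce$-module, using both the free extension of Hom-spaces and of the tensor product on morphisms. The enriched Reshetikhin-Turaev functor $T^\varepsilon_{\Gamma\cap cb}$ (Proposition~\ref{prop:enrRT}) appearing in the defining coequalizer of Definition~\ref{defn:EnrichedSkeinCatnew} is $\Ce$-linear and admits a decomposition $T^\varepsilon = T^0_{\Ce} + \varepsilon T^1$, with $T^0_{\Ce}$ the free $\Ce$-linear extension of the classical Reshetikhin-Turaev functor of $\Aa_0$ and $T^1$ a $\C$-linear correction. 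Since $-\otimes_\Ce \C$ preserves colimits, applying it to this coequalizer already yields $\SkCat{\Sigma}{\Aa_\varepsilon}(m,n)\otimes_\Ce \C \cong \SkCat{\Sigma}{\Aa_0}(m,n)$; the task is to upgrade this to a genuine $\Ce$-module isomorphism.

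To use the hypothesis $\partial\Sigma\neq\emptyset$, I would iterate excision (Theorem~\ref{thm:Excision}) via a handle decomposition of $\Sigma$ consisting of 0- and 1-handles only, expressing $\SkCat{\Sigma}{\Aa_\varepsilon}$ as a finite iterated coend relative tensor product (Proposition~\ref{prp:CoendTensRelTens}) over $\Aa_\varepsilon$ built from copies of $\SkCat{\mathbb D^2}{\Aa_\varepsilon} \simeq \Aa_\varepsilon$, whose Hom-spaces are free by hypothesis. It then suffices to show that each coend
\[
\int^{a\in\Aa_\varepsilon}\Ma(m,m'\triangleleft a)\monprod[\Va]\Na(a\triangleright n,n')
\]
preserves freeness of Hom-spaces. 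Presenting it as a coequalizer of direct sums of tensor products of free $\Ce$-modules, the two parallel arrows involve the deformed module actions $\triangleleft,\triangleright$ and differ from their classical counterparts by $O(\varepsilon)$ corrections. The splittings and $[-]_0$, $[-]_1$ machinery recorded in Section~\ref{sec:Cemod}, combined with the assumption that the tensor product on $\Aa_\varepsilon$ is a free $\Ce$-linear extension, should then allow one to absorb these corrections into a $\Ce$-linear automorphism of the coequalizer presentation and lift any $\C$-basis of the classical coend to a $\Ce$-basis of the deformed one.

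The main obstacle is verifying that these $\varepsilon$-corrections to the module actions do not generate an $\varepsilon$-torsion relation in the coend; such torsion does indeed appear for closed surfaces, as seen in Example~\ref{Ex: torsion}. Ruling it out for $\partial\Sigma\neq\emptyset$ should reduce to the topological fact that every ribbon graph in $\Sigma\times[0,1]$ can be isotoped into a thickening of a spine of $\Sigma$, so no ``going-around'' isotopy is available to upgrade a trivial skein relation $x = x$ into a twist-squared relation $x = \theta^{2}x$, whose first-order part would produce the obstructing $\varepsilon$-torsion of the form $\varepsilon t_{X,X} = 0$.
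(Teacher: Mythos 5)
Your scaffolding (excision, the coend formula for Hom-objects, and the fact that $-\otimes_\C\Ce$ is a left adjoint and so preserves colimits) is the right one, but the proposal has a genuine gap exactly where you locate "the main obstacle." You assert that the two parallel arrows of the coequalizer presentation of the coend carry $O(\varepsilon)$ corrections and propose to absorb them into a $\Ce$-linear automorphism of the presentation; this absorption is not constructed, and it is precisely the step that can fail: a first-order deformation of the arrows of a coequalizer diagram of free $\Ce$-modules can change the colimit and create $\varepsilon$-torsion (this is exactly what happens in Example~\ref{Ex: torsion}). The topological argument you offer to rule this out (isotoping every ribbon graph into a spine) is a heuristic, not a proof, and it does not engage with the algebraic presentation you are actually computing the colimit of. The missing observation is that, if you present $\Sigma$ in a \emph{single} step as a gluing of $N$ disks over $M$ disks (possible because $\partial\Sigma\neq\emptyset$), then the coequalizer diagram \eqref{Eq: coend = coeq} for the Hom-objects of the coend relative tensor product uses only the Hom-spaces and composition of $\Aa_\varepsilon^{\times N}$ and $\Aa_\varepsilon^{\times M}$ together with the tensor product functor applied to morphisms; the coherence isomorphisms $a'\triangleright(a''\triangleright b)\cong(a'\otimes a'')\triangleright b$, which are where the deformed braiding and associator live, enter only the \emph{composition} of the relative tensor product category (Equation \eqref{eq:CompositionCoendRelProd}), not the diagram computing its Hom-objects. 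Under your two hypotheses, every object and every arrow of that diagram is therefore the image under $\C\lmod\xrightarrow{-\otimes_\C\Ce}\Ce\lmod$ of a diagram in $\C\lmod$: there are no $\varepsilon$-corrections to absorb, and freeness of the colimit is immediate.

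A second, related issue: iterating excision along a handle decomposition, as you propose, reintroduces the problem you are trying to avoid. The intermediate module categories are skein categories of sub-surfaces whose \emph{composition} is in general genuinely $\varepsilon$-deformed (it uses the coherence isomorphisms above), so at later stages the coequalizer arrows really would differ from their classical counterparts, and you would be back to needing the unproved absorption step. The one-step disk-over-disk presentation keeps all the categorical data undeformed and is what makes the argument close.
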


The above assumptions on $\Aa_\varepsilon$ are satisfied for the $\Pois_2$-categories constructed out of infinitesimally-braided monoidal categories, c.f.\ Example \ref{ex:iBMC}, as well as the infinitesimal version of the category of representations of the Drinfeld-Jimbo quantum group, c.f.\ Example \ref{Ex: P_2 from r}, by the Drinfeld-Kohno equivalence \cite[{Corollary~XIX.4.4}]{Kassel}. We collect a corollary of Proposition~\ref{prop:freeskeinmodules} before giving the proof.

\begin{corollary}\label{cor:IEAfree}
    Let $(\Sigma, V)$ be a marked surface and let $\Aa_\varepsilon$ be as in Proposition \ref{prop:freeskeinmodules}.
    Then the internal endomorphism algebra $\intEndAlg{\Sigma}{V}\in \widehat{\Aa_\varepsilon^{\catprod V}}$ is valued in $\Ce\lmod^\textnormal{free}$. 
    The same is true for $\widehat{\otimes} \intEndAlg{\Sigma}{V}$ if we apply the tensor product functor $\widehat{\otimes}\colon \widehat{\Aa_\varepsilon \catprod \Aa_\varepsilon} \to \widehat{\Aa_\varepsilon}$ on any pair of $\Aa_\varepsilon$-factors. 
    Finally, given two such internal endomorphism algebras $\intEndAlg{\Sigma_1}{V_1}$ and \(\intEndAlg{\Sigma_2}{V_2}\), the functor \(\intEndAlg{\Sigma_1}{V_1} \boxtimes \intEndAlg{\Sigma_2}{V_2}\in \widehat{\Aa_\varepsilon^{\catprod V_1 \sqcup V_2}}\) is also valued in $\Ce\lmod^\textnormal{free}$.
\end{corollary}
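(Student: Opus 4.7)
The plan is to derive all three statements directly from Proposition \ref{prop:freeskeinmodules}, using Proposition \ref{prop:MarkedFusionInternalEndAlgs} to handle the second claim and a short coend/co-Yoneda computation for the third. Crucially, a marked surface here has non-empty boundary (since $V \subset \partial \Sigma$ and we are implicitly in the setting of Proposition \ref{prop:freeskeinmodules}), and this property is preserved under both fusion and disjoint union, which is what makes the reduction work. I expect no serious obstacle: the real content sits in Proposition \ref{prop:freeskeinmodules}, and the rest is bookkeeping.

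For the first claim, I would fix a tuple $a = (a_v)_{v \in V} \in \Aa_\varepsilon^{\times V}$ and unfold the definition of the internal endomorphism algebra (Definition~\ref{defn:InternalEndAlg} and Equation~\eqref{eq:IEA}), giving
\begin{equation}
\intEndAlg{\Sigma}{V}(a) \;=\; \SkCat{\Sigma}{\Aa_\varepsilon}\bigl(a \rhd \Oa_\Sigma,\ \Oa_\Sigma\bigr)\,.
\end{equation}
Since $\Sigma$ has non-empty boundary, Proposition \ref{prop:freeskeinmodules} says that the right-hand side is a free $\Ce$-module, which is exactly the claim.

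For the second claim, I would apply Proposition \ref{prop:MarkedFusionInternalEndAlgs} to identify $\widehat{\otimes}\,\intEndAlg{\Sigma}{V}$ with $\intEndAlg{\Sigma^{\mathrm{f}}}{V^{\mathrm{f}}}$, where $\Sigma^{\mathrm{f}}$ is obtained from $\Sigma$ by gluing in a pair of chaps at the two chosen marked points $v_1, v_2$ and adding a new marked point $v_3$. The key topological observation is that $\Sigma^{\mathrm{f}}$ still has non-empty boundary (the third cuff of the pair of chaps becomes a new boundary component carrying $v_3$). Iterating if one wishes to fuse more pairs, the first claim applied to $(\Sigma^{\mathrm{f}}, V^{\mathrm{f}})$ yields that $\intEndAlg{\Sigma^{\mathrm{f}}}{V^{\mathrm{f}}}$ is valued in $\Ce\lmod^{\mathrm{free}}$.

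For the third claim, I would use the coend formula for the Deligne tensor product together with the enriched co-Yoneda lemma (as in Section~\ref{Sec:enriched coends}) to compute, for $(c_1, c_2) \in \Aa_\varepsilon^{\times V_1} \times \Aa_\varepsilon^{\times V_2}$,
\begin{equation}
\bigl(\intEndAlg{\Sigma_1}{V_1} \boxtimes \intEndAlg{\Sigma_2}{V_2}\bigr)(c_1, c_2) \;\cong\; \intEndAlg{\Sigma_1}{V_1}(c_1) \otimes_\Ce \intEndAlg{\Sigma_2}{V_2}(c_2)\,.
\end{equation}
By the first claim each factor is a free $\Ce$-module, and a tensor product over $\Ce$ of two free $\Ce$-modules is free. (Alternatively, one can argue via multiplicativity of factorization homology, identifying the left-hand side with $\intEndAlg{\Sigma_1 \sqcup \Sigma_2}{V_1 \sqcup V_2}$ and then invoking the first claim on $\Sigma_1 \sqcup \Sigma_2$, which has non-empty boundary whenever each $\Sigma_i$ does.) As noted, the only nontrivial step in this whole proof is already contained in Proposition~\ref{prop:freeskeinmodules}; the obstacle is merely to verify that fusion and the Deligne tensor product interact cleanly with freeness, which the arguments above do.
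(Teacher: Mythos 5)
Your proposal is correct and follows essentially the same route as the paper: evaluate $\intEndAlg{\Sigma}{V}$ on objects to reduce the first claim to Proposition \ref{prop:freeskeinmodules}, and reduce the second and third claims to the first via Proposition \ref{prop:MarkedFusionInternalEndAlgs} (fusion) and the identification of $\boxtimes$ with the disjoint union of surfaces (Proposition \ref{prop:disksbuilidingblock}). Your primary argument for the third claim — computing the external tensor product pointwise and using that a tensor product of free $\Ce$-modules is free — is a harmless, equally valid variant of the paper's disjoint-union argument, which you also note as an alternative; your explicit remark that the fused surface retains non-empty boundary is a point the paper leaves implicit.
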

\begin{proof}
        The internal endomorphism algebra from Equation~\eqref{eq:IEA} evaluated on an object $a$ is given by a skein module $\SkCat{\Aa}{\Sigma}(a \triangleright \mathcal O, \mathcal O)$, which is free by Proposition \ref{prop:freeskeinmodules}.
        
        The claims for $\widehat{\otimes} \intEndAlg{\Sigma}{V}$ and $\intEndAlg{\Sigma_1}{V_1}\times \intEndAlg{\Sigma_2}{V_2} \in \widehat{\Aa_\varepsilon^{\catprod V_1 \sqcup V_2}}$ follow from the first one, as they correspond to internal endomorphism algebras of a fused and disjoint union of surfaces, respectively. See Proposition \ref{prop:MarkedFusionInternalEndAlgs} and Proposition \ref{prop:disksbuilidingblock}.\footnote{This proposition appears in the later section for organizational reasons, and does not depend on the freeness result we are proving here.} 
\end{proof}
\begin{proof}[{Proof of Proposition \ref{prop:freeskeinmodules}}]
    We know from Theorem \ref{thm:Excision} that the skein category for a surface can be computed using the relative tensor product. Since the boundary of our surface is non-empty, we can represent it as a gluing of a collection of $N$ disks over a collection of $M$ disks. Using the coend formula \eqref{eq:CoendRelProd} for Hom-spaces of the skein category, we see that the Hom-spaces are given by a colimit over a diagram in $\Ce\lmod$. The functor of which we compute the coend uses the action of $\Aa^{\catprod M}$ on $\Aa^{\catprod N}$, given by tensoring of objects and morphisms. However, it does not use the natural isomorphisms $a'\triangleright(a''\triangleright b) \cong (a'\otimes a'')\triangleright b$ which use the (potentially $\varepsilon$-deformed) braiding of $\Aa$. 
    
    The coend can be expressed as a colimit of the diagram \eqref{Eq: coend = coeq}. By assumption, the category $\Aa$ and the monoidal product are free extensions of a $\C$-linear monoidal category. Thus, we can see this diagram as an image of a diagram in $\C\lmod$ via the functor $\C \lmod \xrightarrow{\otimes \Ce} \Ce \lmod$. 
    This functor is left adjoint, therefore it preserves colimits and we can thus compute the Hom-space in $\C\lmod$ and apply $- \otimes \Ce$.
\end{proof}

\begin{remark}
    The proof of Proposition \ref{prop:freeskeinmodules} shows that surfaces with boundary are, heuristically, one-dimensional enough so that the skein Hom-spaces only depend on the underlying category and the monoidal product. By Equation~\eqref{eq:CompositionCoendRelProd} the braiding and the associator appears in the isomorphisms $a'\triangleright(a''\triangleright b) \cong (a'\otimes a'')\triangleright b$ defining the composition of the skein category; the composition is therefore, in general, deformed. 

    The annulus is special in this regard, as it can be written as a gluing of two disks over two disks (i.e.\ the fatgraph representing the annulus has no vertices of valence $\ge 3$), and hence there is no braiding involved in the action. Thus, the skein category of infinitesimally braided categories over the annulus has undeformed composition implying that the endomorphism algebra is undeformed.
    \footnote{This is just the fact that factorization homology of an $\E_2$ algebra over $S^1\times \mathbb R$ is the Hochschild homology of any of the two (isomorphic) $\E_1$-structures \cite[Theorem~3.51]{HLTetal}.} 
    The internal endomorphism algebra is deformed in general since the action of $\Aa_\varepsilon$ on $\SkCat{\Ann}{\Aa_\varepsilon}$ uses the braiding
    
    The proof of the above statement also shows why the Hom-spaces of skein categories of closed surfaces are not free $\Ce$-modules in general -- see Example \ref{Ex: torsion}. There we need to take a relative tensor product over the skein category of the annulus. This category will have, in general, $\Ce$-deformed monoidal structure and action, and thus the assumptions of Proposition \ref{prop:freeskeinmodules} is not satisfied. 
\end{remark}

\subsection{Fusion and Poisson structures}\label{sec: Fusion and Poisson}
We will now explain how to recover Poisson structures on classical internal endomorphism algebras from the formalism developed in this article. 

Let $(\Aa_\varepsilon, \Aa_0, \Psi_\Aa) \in \PCat{2}$ and let $\IEAsymbol_{\varepsilon} \coloneqq \IEAlong{\Aa_\varepsilon}{\mathcal O} \in 
(\widehat\Aa_\varepsilon)^{\boxtimes V}$ and $\IEAsymbol_0 \coloneqq \IEAlong{\Aa_0}{\mathcal O} \in (\widehat\Aa_0)^{\boxtimes V}$ denote the internal endomorphism algebras associated to $(\Sigma, V \subset \partial \Sigma)$ and the actions of $\Aa_\varepsilon$ and $\Aa_0$ on the corresponding factorization homologies over $\Sigma$ (as in Section~\ref{sect:internal-endo-algebras-and-fusion}).
Explicitly, $(\IEAsymbol_\varepsilon, \mu)$ is a lax monoidal functor $\Aa_\varepsilon ^{\catprod V} \to \Ce\lmod$, where $\mu_{X, Y}\colon \IEAsymbol_\varepsilon(X) \otimes \IEAsymbol_\varepsilon(Y) \to \IEAsymbol_\varepsilon(X\otimes Y)$ is the lax structure. 

From Proposition~\ref{Prop: Defor skein alg} and Corollary~\ref{cor:IEAfree} we know that $\IEAsymbol_\varepsilon$ lands in $\Ce\lmod^\textnormal{free}$ and agrees, after setting $\varepsilon=0$, with the symmetric lax monoidal functor $\IEAsymbol_0\colon \Aa_0^{\catprod V} \to \C\lmod$. We can therefore measure the failure of $\IEAsymbol_\varepsilon$ being \emph{braided} lax monoidal using the operation $[-]_1$ from Definition \ref{def:sqbr1Nat}. 
\begin{definition}[{Generalizing \cite[Section~2]{SeveraCenters}\footnote{This definition is a direct generalization of \v{S}evera's $\sigma$-tensor defined for the infinitesimally braided category $U(\mathfrak{g})\lmod$, with $\mathfrak{g}$ being a metric Lie algebra.}}] \label{def:of_p}
    Let $(\Sigma, V)$ be a marked surface and $(\Aa_\varepsilon, \Aa_0, \Psi_\Aa) \in \PCat{2}$. 
    Define $\sigma_{\Sigma, V}$ as the natural transformation $\IEAsymbol_0 \widehat{\otimes} \IEAsymbol_0 \to \IEAsymbol_0$ given by
    \begin{equation}\label{eq:defPoiss}
        \sigma_{\Sigma, V} :=  [\mu_{\Sigma, V} - (\mu_{\Sigma, V})^{\op_-}]_1 :=[\mu_{\Sigma, V} - \mu_{\Sigma, V} \circ \widehat{\beta}^{-1}_{\IEAsymbol_\varepsilon, \IEAsymbol_\varepsilon}]_1 \ , 
    \end{equation}
    i.e.\ its \emph{components}\footnote{In general, a natural transformation $\alpha\colon \widehat{\bigotimes} F_i \to G$ can equivalently be encoded by its so-called components $\alpha_{X_1, \dots, X_n}\colon \bigotimes F_i(X_i) \to G(\bigotimes X_i)$ defined analogously to \eqref{eq:AlgToLaxFunctor}.}
    are
    \begin{equation}\label{eq:defPoisscomponents}
        (\sigma_{\Sigma, V})_{X, Y} := [\mu_{X, Y} - \IEAsymbol_\varepsilon(\beta^{-1}_{X, Y}) \circ \mu_{Y, X} \circ \beta_{\IEAsymbol_\varepsilon(X), \IEAsymbol_\varepsilon(Y)}]_1\colon \IEAsymbol_0(X)\otimes \IEAsymbol_0(Y) \to \IEAsymbol_0(X\otimes Y)
    \end{equation}
    for all $X, Y \in \Aa_0^{\catprod V}$. Here, $\beta^-_{X, Y} = (\beta_{Y, X}^{-1})$ denotes the components of the opposite braiding (i.e.\ undercrossing) of $\Aa_\varepsilon^{\catprod V}$, while $\beta_{\IEAsymbol_\varepsilon(X), \IEAsymbol_\varepsilon(Y)}$ are components of the symmetry of the enriching category $\Ce\lmod$.
\end{definition}
We often abbreviate \(\sigma_{\Sigma, V}\) as above by \(\sigma\). 
The maps $\sigma$ are analogous to the (almost) Poisson bracket defined by a star product as in Equation \eqref{Eq: DQ}. We now show that $\sigma$ is a biderivation but not necessarily an antisymmetric one.
Thus, its antisymmetrization will give an almost Poisson bracket; see Remark \ref{rem:furtherpropertiessigma} for a description of the symmetric part and Section \ref{sec:CS} for examples.
\begin{proposition}
    The natural transformation  $\sigma_{\Sigma, V} \colon \IEAsymbol_0 \Daytimes \IEAsymbol_0 \to \IEAsymbol_0$ is a derivative, in each of its arguments, of the commutative algebra $\IEAsymbol_0\in \widehat{\Aa_0}^{\boxtimes V}$.
\end{proposition}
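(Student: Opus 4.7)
The strategy is to exploit the associativity of $\mu_\varepsilon := \mu_{\Sigma,V}$ together with the commutativity of $E_0$ and the symmetry condition on $\beta_0$, and then extract the derivation identity by applying the bracket $[-]_1$ of Definition~\ref{def:sqbr1Nat}.

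First I would verify that $\sigma$ is a well-defined natural transformation. By Proposition~\ref{prop:sqbr1_nattr} (whose hypotheses are satisfied because of the freeness established in Corollary~\ref{cor:IEAfree}), this reduces to checking that
\begin{equation*}
[\mu_\varepsilon - \mu_\varepsilon \circ \widehat{\beta}^{-1}_{E_\varepsilon, E_\varepsilon}]_0 = 0.
\end{equation*}
But $\beta_0$ is the symmetry of $\widehat{\Aa_0}^{\boxtimes V}$, hence $\beta_0^{-1} = \beta_0$, and $E_0$ being a commutative algebra exactly means $\mu_0 \circ \beta_0 = \mu_0$; so the right-hand side equals $\mu_0 - \mu_0 \circ \beta_0 = 0$.

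Second, set $\phi := \mu_\varepsilon - \mu_\varepsilon \circ \widehat{\beta}^{-1}_{E_\varepsilon, E_\varepsilon}$, so that $\sigma = [\phi]_1$. Using associativity of $\mu_\varepsilon$, naturality of $\widehat{\beta}^{-1}$ and the braided hexagon $\widehat{\beta}^{-1}_{E_\varepsilon \widehat{\otimes} E_\varepsilon, E_\varepsilon} = (\id \widehat{\otimes} \widehat{\beta}^{-1}_{E_\varepsilon, E_\varepsilon}) \circ (\widehat{\beta}^{-1}_{E_\varepsilon, E_\varepsilon} \widehat{\otimes} \id)$, I obtain an identity of the form
\begin{equation*}
\phi \circ (\mu_\varepsilon \widehat{\otimes} \id) \;=\; \mu_\varepsilon \circ (\id \widehat{\otimes} \phi) \;+\; R_\varepsilon
\end{equation*}
in $\widehat{\Aa_\varepsilon}^{\boxtimes V}$, where $R_\varepsilon$ is an explicit composite built from $\mu_\varepsilon$, $\mu_\varepsilon^-$ and a factor $(\id - \widehat{\beta}^{-1}) \widehat{\otimes} \id$. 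One then checks that $[R_\varepsilon]_0 = 0$: this amounts to the identity $\mu_0 \circ (\id \widehat{\otimes} \mu_0) \circ ((\id - \beta_0) \widehat{\otimes} \id) = 0$, which holds because $\mu_0$ is both associative and commutative.

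Third, apply $[-]_1$ to both sides using Proposition~\ref{prop:sqbr1_nattr}. The left-hand side becomes $\sigma \circ (\mu_0 \widehat{\otimes} \id)$; the first term on the right becomes $\mu_0 \circ (\id \widehat{\otimes} \sigma)$; and after tracking the only surviving first-order term in $R_\varepsilon$ (which comes from $[\widehat{\beta}^{-1}]_1$ combined with $\mu_0$), one recognises $[R_\varepsilon]_1$ as $\mu_0 \circ (\sigma \widehat{\otimes} \id)$ composed with an appropriate permutation using $\beta_0$. This produces the derivation identity for $\sigma$ in its first argument. The derivation property in the second argument then follows by running the same argument starting from $\phi \circ (\id \widehat{\otimes} \mu_\varepsilon)$ and using the mirror-image hexagon.

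The main obstacle will be bookkeeping: tracking the associators and the various braidings introduced by the hexagon relation, and checking that the two-step expansion into $[-]_0$ and $[-]_1$ correctly isolates the classical derivation identity without contamination from higher-order corrections. This is kept under control by the fact that $\beta_0$ is an involution and $\mu_0$ is both commutative and associative, which collapses the many \emph{a priori} distinct classical expressions into a single answer, together with the freeness of the Hom-spaces of $\SkCat{\Sigma}{\Aa_\varepsilon}$ from Corollary~\ref{cor:IEAfree}, which is exactly what validates the use of $[-]_1$ componentwise throughout.
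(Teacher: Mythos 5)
Your argument is correct and is essentially the paper's own proof: both are the standard star-product computation that extracts the Leibniz rule from associativity of $\mu_\varepsilon$ together with commutativity of $\mu_0$ by passing to the $\varepsilon$-linear part — the paper phrases this as taking $[-]_1$ of a vanishing telescoping alternating sum of three instances of the lax-associativity constraint (conjugated by explicit permutation braids), whereas you first isolate a Leibniz-with-remainder identity $\phi\circ(\mu_\varepsilon\widehat{\otimes}\id)=\mu_\varepsilon\circ(\id\widehat{\otimes}\phi)+R_\varepsilon$ and then take $[-]_1$ termwise, but the surviving terms match one for one. The well-definedness of $\sigma$ via Corollary~\ref{cor:IEAfree} and Proposition~\ref{prop:sqbracket01} is likewise how the paper justifies Definition~\ref{def:of_p}, so there is no gap.
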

\begin{proof}
We prove this by working component-wise. 
Since $\IEAsymbol_\varepsilon$ is a lax monoidal functor, we have that 
\begin{equation}
    \underbrace{\IEAsymbol_\varepsilon(\phi_{X, Y, Z}) \circ \mu_{X\otimes Y, Z} \circ \mu_{X, Y}\otimes \id_Z}_{A_{X, Y, Z}:=} - \underbrace{\mu_{X, Y\otimes Z} \circ \id_X\otimes \mu_{Y, Z} \circ \phi_{\IEAsymbol_\varepsilon(X), \IEAsymbol_\varepsilon(Y), \IEAsymbol_\varepsilon(Z)}}_{B_{X, Y, Z}:=}= 0 \ . 
\end{equation}
The following linear combination thus also vanishes: 
\begin{align*}
    A_{X, Y, Z} - &\underline{\underline{B_{X, Y, Z}}} \\
    -\IEAsymbol_\varepsilon \left( \pics{perm1inv.pdf}  \right) \circ (\underline{A_{X, Z, Y}} - & \underline{\underline{B_{X, Z, Y}}}) \circ  \pics{perm1.pdf}  \\
     +\IEAsymbol_\varepsilon \left( \pics{perm2inv.pdf}  \right) \circ ( \underline{A_{Z, X, Y}} - & B_{Z, X, Y}) \circ  \pics{perm2.pdf} \ .  
\end{align*}
After taking $[-]_1$ of this equation\footnote{Of course, this is a version of the usual proof that the antisymmetrization of the $\hbar$-linear part of a star product is a derivative: Consider the vanishing expression 
\begin{align}
    (XY)Z - & \underline{\underline{X(YZ)}} \\
    -\underline{(XZ)Y} + & \underline{\underline{X(ZY)}}\\
    +\underline{(ZX)Y} - & Z(XY)
\end{align} and take the $\hbar$-linear part. The terms combine to give $\{X\cdot Y, Z\}-\{X, Z\}\cdot Y - X\cdot\{Y, Z\} = 0$.} 
the two underlined terms combine to $-(\mu_0)_{X\otimes Z, Y}\circ (\sigma_{\Sigma, V})_{X, Z} \otimes \id_{\IEAsymbol_0(Y)}$ up to a permutation and rebracketing which moves $Z$ next to $X$. Similarly, the double-underlined terms combine to $-(\mu_0)_{X, Y
\otimes Z}\circ\id_{\IEAsymbol_0(X)}\otimes (\sigma_{\Sigma, V})_{Y, Z}$ and the remaining two terms combine to $(\sigma_{\Sigma, V})_{X\otimes Y, Z} \circ (\mu_0)_{X, Y} \otimes \id_{\IEAsymbol_0(Z)}$. 
\end{proof}
An easy case, for which the first order deformation $\sigma$ vanishes, is the disk with two marked points on the boundary.

\begin{proposition}\label{prop:diskcomm}
    Let $\Sigma$ be a disk and $V=\{v_1, v_2\} \subset \partial \Sigma$ be two marked points on the boundary. Let $\Aa$ be a $\Va$-enriched ribbon category. Then the internal endomorphism algebra $\IEAsymbol \coloneqq \intEndAlg{\mathbb D}{V}$ seen as a functor
    \begin{equation}  
        (\Aa \catprod \Aa^{\textnormal{br.\,op}})^{\op} \to \Va 
    \end{equation}
    is braided lax monoidal. As a consequence, for $\Aa = \Aa_\varepsilon$ as in Definition \ref{def:of_p} we have that the natural transformation $\sigma_{\Sigma, V}$ vanishes.
\end{proposition}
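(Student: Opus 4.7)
\emph{Proof plan.} First I will identify $E$ explicitly. By the contractibility of the disk, the skein category $\SkC{\mathbb{D}}$ is equivalent to $\Aa$, with the distinguished object $\Oa$ corresponding to $1_{\Aa}$, and the two boundary marked points make this equivalence $(\Aa \times \Aa)$-equivariant via $c \triangleleft (a_1, a_2) \cong a_1 \otimes c \otimes a_2$ (placing $v_1$ and $v_2$ on opposite arcs of $\partial \mathbb{D}$). Hence $E(x_1, x_2) \cong \Aa(x_1 \otimes x_2, 1_{\Aa})$, and the lax product $\mu_{X,Y}$ from \eqref{eq:lax_mon_str_intendoalg} is pictorially the vertical stacking in $\mathbb{D} \times [0, 1]$, encoded up to associators by a single braiding of the two pairs of strands entering near the marked points.

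Second, to establish the braided lax monoidal axiom, I will compare $\mu_{X,Y}$ with $\mu_{Y,X} \circ \beta^{\Va}_{E(X), E(Y)}$, which corresponds to the opposite order of vertical stacking. Topologically, the two skein pictures differ by a half-rotation of $\mathbb{D} \times [0,1]$ that drags two pairs of strands past each other: one pair near $v_1$ and one near $v_2$. Since $v_1$ and $v_2$ lie on opposite arcs of $\partial \mathbb{D}$, these two local crossings happen with \emph{opposite} over/under conventions, producing $\beta^{\Aa}$ on the first factor and $(\beta^{\Aa})^{-1}$ on the second, i.e.\ the braiding $\widetilde{\beta}$ of $\Aa \times \Aa^{\mathrm{br.op}}$. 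This yields the identity
\[
E\bigl(\widetilde{\beta}_{X,Y}\bigr) \circ \mu_{Y,X} \;=\; \mu_{X,Y} \circ \beta^{\Va}_{E(X),E(Y)},
\]
which is exactly the (contravariant) braided lax monoidal axiom for the functor $E \colon (\Aa \times \Aa^{\mathrm{br.op}})^{\op} \to \Va$.

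Third, I derive the vanishing of $\sigma_{\Sigma, V}$. Rewriting the braided identity gives $\mu_{X,Y} = E(\widetilde{\beta}_{X,Y}) \circ \mu_{Y,X} \circ \beta^{\Va}$, and substituting into \eqref{eq:defPoisscomponents} yields
\[
(\sigma_{\Sigma,V})_{X,Y} \;=\; \bigl[E_\varepsilon\bigl(\widetilde{\beta}_{X,Y} - \beta^{-1}_{X,Y}\bigr) \circ \mu_{Y,X} \circ \beta^{\Va}\bigr]_1.
\]
In components, $\widetilde{\beta}_{X,Y}$ and $\beta^{-1}_{X,Y}$ agree on the second factor (both equal the opposite braiding of $\Aa_\varepsilon$ there), so the difference $\delta \coloneqq \widetilde{\beta}_{X,Y} - \beta^{-1}_{X,Y}$ is concentrated entirely in the first component, and $[\delta]_0 = 0$ since both morphisms reduce to the symmetry at $\varepsilon = 0$. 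By Proposition~\ref{prop:sqbr1_functors}, $[E_\varepsilon(\delta)]_1 = E_0([\delta]_1)$; and since $E_0$ sends a pair $(f_1, f_2)$ of morphisms in $\Aa_0$ to precomposition with $f_1 \otimes f_2$, any morphism with a zero component is sent to zero. Hence $E_0([\delta]_1) = 0$, and propagating this vanishing through the composition gives $(\sigma_{\Sigma,V})_{X,Y} = 0$.

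The main obstacle is the topological half-rotation argument in the second paragraph: within the parenthesized enriched skein category of Definition~\ref{defn:EnrichedSkeinCatnew}, identifying the two local crossings with $\beta^{\Aa}$ and $(\beta^{\Aa})^{-1}$ respectively requires careful bookkeeping of associators, parenthesizations, and orientation conventions. Once this topological identification is in place, the algebraic deduction of $\sigma_{\Sigma,V} = 0$ is a clean consequence of the fact that $E_0$ factors through the tensor product $\otimes \colon \Aa_0 \times \Aa_0 \to \Aa_0$.
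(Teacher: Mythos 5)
Your first two steps (identifying $E(x_1,x_2)\cong\Aa(x_1\otimes x_2,1_\Aa)$ and establishing the braided lax monoidal square by an isotopy that drags one skein past the other, picking up an overcrossing at one marked point and an undercrossing at the other) are exactly the paper's argument. The problem is in your third step, and it is a genuine error, not a presentational one. Once braided lax monoidality is established with respect to the braiding of $\Aa_\varepsilon\times\Aa_\varepsilon^{\textnormal{br.\,op}}$, the conclusion $\sigma_{\Sigma,V}=0$ is immediate: the morphism $\mu-\mu^{\op_-}$, with $\op_-$ formed from that same braided structure (which is what Definition~\ref{def:of_p} prescribes once $\Aa_\varepsilon^{\times V}$ carries the $\textnormal{br.\,op}$ braiding), vanishes \emph{identically}, so $\sigma=[0]_1=0$ with no first-order computation. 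Your detour instead compares $\widetilde{\beta}=(\beta,\beta^{-1})$ against the undercrossing $(\beta^{-1},\beta^{-1})$ of the \emph{product} braiding, producing a nonzero $\delta$, and then argues $E_0([\delta]_1)=0$.

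That last claim is false. A morphism of $\Aa_\varepsilon\times\Aa_\varepsilon$ is an element of a tensor product of Hom-spaces, and $[-]_1$ does not act componentwise on it: the Leibniz-type rule gives $[f\otimes g]_1=[f]_1\otimes[g]_0+[f]_0\otimes[g]_1$. For $\delta=(\beta-\beta^{-1})\otimes\beta^{-1}$ this yields
\begin{equation}
[\delta]_1=[\beta-\beta^{-1}]_1\otimes[\beta^{-1}]_0=\bigl((\beta_{\Aa_0})_{X_1,Y_1}\circ t_{X_1,Y_1}\bigr)\otimes(\beta_{\Aa_0})^{-1}_{Y_2,X_2}
\end{equation}
by Equation~\eqref{eq:overminusunder}, which has no zero component, so $E_0([\delta]_1)$ is precomposition with a nonzero morphism. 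Indeed, the term you are discarding is precisely $\mu_0\circ\widehat{t}$, i.e.\ the nonzero answer of Corollary~\ref{prop:disksbuilidingblock} for the $\Aa\times\Aa$ structure; if your reasoning were valid it would equally prove that $\sigma$ vanishes there, contradicting that corollary. The fix is to drop the detour entirely and observe that the braiding appearing in the braided lax monoidal identity is the same one entering the definition of $\sigma$ for the $\textnormal{br.\,op}$ structure, so the difference of which you take $[-]_1$ is zero on the nose.
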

\begin{proof}
    The lax structure $\mu$ on $\IEAsymbol\colon \Aa\catprod \Aa \to \Va$ is described in Figure \ref{fig:entire-figure-stacking} (for the annulus). It is braided if the following diagram commutes
    \begin{equation}
    \begin{tikzcd}[column sep = 1in]
	{\IEAsymbol(X_1, X_2)\otimes \IEAsymbol(Y_1, Y_2)} & {\IEAsymbol((X_1, X_2)\otimes (Y_1, Y_2))} \\
	{\IEAsymbol(Y_1, Y_2) \otimes \IEAsymbol(X_1, X_2)} & {\IEAsymbol((Y_1, Y_2)\otimes (X_1, X_2))}
	\arrow["{\mu_{(X_1, X_2), (Y_1, Y_2)}}", from=1-1, to=1-2]
	\arrow["{\IEAsymbol(\beta^{[\Aa\catprod \Aa^\textnormal{br.\,op}]^{\op}}_{(X_1, X_2), (Y_1, Y_2)})}", from=1-2, to=2-2]
	\arrow["{\beta_{\IEAsymbol(X_1, X_2), \IEAsymbol(Y_1, Y_2)}}"', from=1-1, to=2-1]
	\arrow["{\mu_{(Y_1, Y_2), (X_1, X_2)}}"', from=2-1, to=2-2]
\end{tikzcd} \ . 
\end{equation}
By convention, the braiding in the opposite category is given by overcrossings. Thus, 
on the level of skeins commutativity of this diagrams corresponds to the below equality which holds by isotopy invariance.
\begin{center}
  \begin{overpic}[scale=1.3,tics=10]{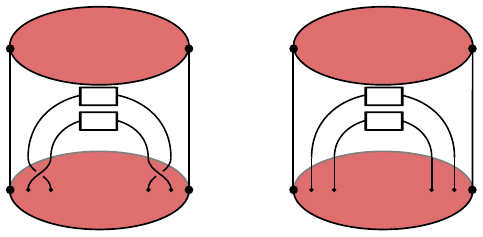}
  	\put(49,25){$=$}
     \put(4.8,6.4){$Y_1$}
     \put(9,6.4){$X_1$}
     \put(29,6.4){$X_2$}
     \put(33.8,6.4){$Y_2$}                                                        
     \put(19.3, 23.2){$g$}
     \put(19.3, 28){$f$}   
     \put(63,6.4){$Y_1$}
     \put(67.4,6.4){$X_1$}
     \put(87.4,6.4){$X_2$}
     \put(92.7,6.4){$Y_2$}                                                        
     \put(78.7, 23.0){$f$}
     \put(78.7, 28.3){$g$}
	\end{overpic} 
\end{center}
\end{proof}

The above proposition also appears in  \cite[Proposition~2.11]{safronovQMM} and \cite[Section~1.5]{GJS} phrased using the language of right adjoints to the tensor product. 
We will now show that the symmetric part of the braiding on $\Aa_\varepsilon$ induces an infinitesimal braiding on $\Aa_0$. 
\begin{definition}\label{def:braidingP2}
    Let $\Aa_\varepsilon$ be an $\Pois_2$-category such that the Hom-objects in $\Aa_\varepsilon$ are free $\Ce$-modules. Define the following set of maps 
    $t_{X, Y} \colon X\otimes Y \to X \otimes Y$ in $\Aa_0$ by
    \begin{equation}
    t_{X, Y} := [\beta^2_{X, Y} - \id_{X\otimes Y}]_1 \ . 
    \end{equation}
\end{definition}

\begin{proposition}
    The maps $t_{X,Y}$ are well-defined and define an infinitesimal braiding on $\Aa_0$. 
\end{proposition}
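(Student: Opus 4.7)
The proof decomposes into four steps: (i) well-definedness of $t_{X,Y}$, (ii) naturality, (iii) the symmetry identity $\sigma_{X,Y}\,t_{X,Y} = t_{Y,X}\,\sigma_{X,Y}$, and (iv) the infinitesimal hexagon identities. Step (i) follows from the $\Pois_2$-structure: the $\E_2$-equivalence $(\Aa_\varepsilon)_0 \simeq \Aa_0$ identifies $\beta \bmod \varepsilon$ with the symmetry $\sigma$, so $[\beta^2_{X,Y} - \id]_0 = \sigma_{Y,X}\sigma_{X,Y} - \id = 0$, and together with freeness of the Hom-spaces this makes $[-]_1$ applicable. Step (ii) is an immediate consequence of Proposition~\ref{prop:sqbracket01} applied to the natural endomorphism $\beta^2 - \id$ of the tensor product functor.

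For (iii), one has in $\Aa_\varepsilon$ the identity
\[ \beta_{X,Y}\circ(\beta^2_{X,Y}-\id) \;=\; \beta_{X,Y}\beta_{Y,X}\beta_{X,Y}-\beta_{X,Y} \;=\; (\beta^2_{Y,X}-\id)\circ \beta_{X,Y}. \]
Writing $\beta^2-\id = \varepsilon u$ in a splitting of the free Hom-module (with $[u]_0 = t$) and invoking $\Ce$-bilinearity, one has $\beta\circ_\varepsilon(\varepsilon u) = \varepsilon\cdot([\beta]_0\circ_0[u]_0) \pmod{\varepsilon^2}$: any first-order deformation of the composition of $\Aa_\varepsilon$ contributes only at order $\varepsilon^2$ and drops out. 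Applying $[-]_1$ thus gives $\sigma_{X,Y}\circ t_{X,Y} = t_{Y,X}\circ\sigma_{X,Y}$.

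For (iv), the braid hexagon in $\Aa_\varepsilon$ yields $\beta^2_{X,Y\otimes Z} = (\beta_{Y,X}\otimes\id_Z) \circ (\id_Y\otimes\beta^2_{X,Z}) \circ (\beta_{X,Y}\otimes\id_Z)$, absorbing associators (which, in the $\Pois_2$-examples of interest such as the Drinfeld-type deformations, have trivial first-order part, the Drinfeld associator only starting at order $\hbar^2$). Substituting $\beta^2_{X,Z} = \id + \varepsilon t_{X,Z} \pmod{\varepsilon^2}$ and using $\Ce$-linearity and unit-preservation of $\otimes$ gives $\id_Y\otimes\beta^2_{X,Z} = \id + \varepsilon(\id_Y\otimes t_{X,Z}) \pmod{\varepsilon^2}$; functoriality of $\otimes$ combines the outer factors into $\beta^2_{X,Y}\otimes\id_Z = \id + \varepsilon(t_{X,Y}\otimes\id_Z) \pmod{\varepsilon^2}$; and the cross term is treated exactly as in (iii), since it carries an explicit factor of $\varepsilon$. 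Taking $[-]_1$ produces the 4T-relation
\[ t_{X,Y\otimes Z} \;=\; t_{X,Y}\otimes\id_Z + (\sigma_{Y,X}\otimes\id_Z)\circ(\id_Y\otimes t_{X,Z})\circ(\sigma_{X,Y}\otimes\id_Z), \]
and the symmetric computation with the other hexagon yields the companion identity. A direct expansion modulo $\varepsilon^2$ verifies that these two identities, together with (iii), are equivalent to the condition that $\sigma\circ(1+\varepsilon t)$ satisfies the hexagon axioms in $\Aa_0[\varepsilon]$ required by Example~\ref{ex:iBMC}.

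The main subtle point, and the principal obstacle, is bookkeeping the first-order deformations of the composition, tensor product, and associator in $\Aa_\varepsilon$, all of which may a priori be nonzero for a general $\Pois_2$-category. The argument is arranged so that in every application of $[-]_1$ one of the factors being composed lies in $\varepsilon\cdot\Hom$; this kills all such deformation contributions for dimensional ($\varepsilon^2 = 0$) reasons and reduces the analysis to the classical braid identity computed formally in $\Aa_0$.
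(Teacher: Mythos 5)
Your proof is correct and follows essentially the same route as the paper's (much terser) argument: well-definedness and naturality from $[\beta^2-\id]_0=0$ plus freeness, symmetry by conjugating $\beta^2-\id$ with $\beta$, and the additivity/hexagon relation from the braid identities, with all first-order deformations of composition, tensor product and associator dropping out because in each application of $[-]_1$ one factor lies in $\varepsilon\cdot\Hom$. One small remark: your parenthetical restricting the associator claim to ``Drinfeld-type examples'' is unnecessary and slightly at odds with your own closing paragraph --- the associator's first-order part is irrelevant for a general $\Pois_2$-category for exactly the $\varepsilon^2=0$ reason you state, so the argument needs no such restriction.
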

\begin{proof}
    The fact that $t_{X, Y}$ is well-defined and natural follows from the fact that $[\beta^2_{X, Y} - \id_{X\otimes Y}]_0$ vanishes. The symmetry of $t_{X,Y}$ follows from
    \[ \beta^{\Aa_0}_{Y, X} \circ t_{Y, X} \circ \beta^{\Aa_0}_{X, Y} = [ (\beta^{-1})^{\Aa_\varepsilon}_{Y, X}(\beta^2_{Y, X} - \id_{Y\otimes X}) \circ \beta^{\Aa_\varepsilon}_{X, Y} ]_1 = t_{X, Y}\]
    where, in the first equality, we could choose the powers of $\beta$ arbitrarily as different choices differ by terms which vanish modulo $\varepsilon$. Finally, that $t_{X\otimes Y, Z} \approx t_{X, Z} + t_{Y, Z}$ (see \cite[Eq.~(21)]{Cartier1993} for the precise equation) follows easily by using graphical calculus. 
\end{proof}
\begin{remark}\label{rem:furtherpropertiessigma}
    The infinitesimal braiding given by the maps \(t_{X, Y}\) can be used to express some useful properties of $\sigma \equiv \sigma_{\Sigma, V}$. Firstly, the symmetrization of $\sigma$ satisfies
    \begin{equation} 
        \sigma + \sigma\circ \beta^{\Aa_0}_{\IEAsymbol_0, \IEAsymbol_0} = \mu_0 \circ \widehat{t}_{\IEAsymbol_0, \IEAsymbol_0} \ , 
    \end{equation}
    or in components
    \begin{equation} 
        \sigma_{X, Y} + \IEAsymbol_0(\beta^{\Aa_0}_{X, Y})\circ\sigma_{Y, X} \circ \beta^{\C\lmod}_{\IEAsymbol_0(X), \IEAsymbol_0(Y)} = \IEAsymbol_0(t_{X, Y})\circ(\mu_0)_{X, Y} \ . 
    \end{equation}
    Similarly, one could define $\sigma^+$ by replacing the undercrossing in \eqref{eq:defPoiss} by an \emph{over}crossing. Then
    \begin{equation}
        \sigma^+ - \sigma = - \mu_0 \circ \widehat{t}_{\IEAsymbol_0, \IEAsymbol_0} \ . 
    \end{equation}
\end{remark}
Considering $\Aa \catprod \Aa$ instead of $\Aa \catprod \Aa^\textnormal{br.\,op}$ in Proposition \ref{prop:diskcomm}, we get the following easy observation.
\begin{corollary}
\label{prop:disksbuilidingblock}
    The internal endomorphism algebra
    $\IEAsymbol \colon{(\Aa\catprod \Aa)} \to  \Va$ assigned to a disk $\mathbb D$ with two marked points \(V=\{v_1, v_2\}\), has
    \begin{equation} \label{eq:sigmadisk}
        \sigma_{\mathbb D, \{v_1, v_2\}} = \mu_0 \circ (\, \widehat{t}_{24}) = \mu_0 \circ (\, \widehat{t}_{13}) = \frac 12 \mu_0 \circ (\, \widehat{t}_{13}) + \frac 12 \mu_0 \circ (\, \widehat{t}_{24})\ , 
    \end{equation}
    or in components
    \begin{equation}
        (\sigma_{\mathbb D, \{v_1, v_2\}})_{(X, X'), (Y, Y')} = \IEAsymbol_0(t_{X', Y'})\circ (\mu_0)_{(X, X'), (Y, Y')} \ . 
    \end{equation}
     For two marked surfaces $(\Sigma_1, V_1)$ and $(\Sigma_2, V_2)$, we have 
    \begin{equation}
        \intEndAlg{\Sigma_1\sqcup \Sigma_2}{V_1\sqcup V_2} = \intEndAlg{\Sigma_1}{V_1} \boxtimes \intEndAlg{\Sigma_2}{V_2} 
    \end{equation}
     as algebras.
\end{corollary}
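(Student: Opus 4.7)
My proof begins with the disjoint union identity, which is immediate: by the symmetric monoidality of factorization homology (Theorem \ref{Thm: quant FH}), $\int_{\Sigma_1 \sqcup \Sigma_2} \Aa_\varepsilon \simeq \int_{\Sigma_1} \Aa_\varepsilon \boxtimes \int_{\Sigma_2} \Aa_\varepsilon$ as module categories over $\Aa_\varepsilon^{\times V_1} \times \Aa_\varepsilon^{\times V_2}$, and the distinguished object of the disjoint union is the tensor product of the distinguished objects. Unpacking Definition \ref{defn:InternalEndAlg} via the coend formula from Section \ref{Sec:coend_tensor_product} then yields $\intEndAlg{\Sigma_1 \sqcup \Sigma_2}{V_1 \sqcup V_2} \cong \intEndAlg{\Sigma_1}{V_1} \boxtimes \intEndAlg{\Sigma_2}{V_2}$ in $\widehat{\Aa_\varepsilon^{\times V_1 \sqcup V_2}}$ as algebras, with the lax structure inherited componentwise.

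For the disk formula, the key input is Proposition \ref{prop:diskcomm}: the internal endomorphism algebra $E_\varepsilon$ is braided lax monoidal when viewed as a functor on $(\Aa_\varepsilon \times \Aa_\varepsilon^{\textnormal{br.\,op}})^{\op}$. This provides an alternative expression
\begin{equation*}
\mu_{(X,X'),(Y,Y')} \;=\; E_\varepsilon\bigl(\beta_{X,Y} \otimes \beta^{-1}_{X',Y'}\bigr) \circ \mu_{(Y,Y'),(X,X')} \circ \beta^{\C\lmod}_{E_\varepsilon(X,X'),E_\varepsilon(Y,Y')},
\end{equation*}
using an overcrossing on the first factor and an undercrossing on the second. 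Since the definition \eqref{eq:defPoisscomponents} of $\sigma$ uses undercrossings on both factors, the second-factor braiding agrees in both expressions, so the difference factors as $E_\varepsilon\bigl((\beta_{X,Y} - \beta^{-1}_{X,Y}) \otimes \beta^{-1}_{X',Y'}\bigr) \circ \mu_{(Y,Y'),(X,X')} \circ \beta^{\C\lmod}$. Since $\Aa_0$ is symmetric we have $[\beta - \beta^{-1}]_0 = 0$, so the operation $[-]_1$ from Section \ref{sec:Cemod} applies; using Definition \ref{def:braidingP2} ($\beta^2 - \id = \varepsilon t + O(\varepsilon^2)$) gives $[\beta - \beta^{-1}]_1$ as the composite of $t_{X,Y}$ with a classical symmetry. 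Combining with the symmetric lax structure of $\mu_0$, which absorbs the residual classical braidings and the symmetry $\beta^{\C\lmod}$, this produces the components formula $(\sigma)_{(X,X'),(Y,Y')} = E_0(t_{X,Y}) \circ (\mu_0)_{(X,X'),(Y,Y')}$, equivalently $\sigma_{\mathbb D, \{v_1, v_2\}} = \mu_0 \circ \widehat{t}_{13}$.

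The second equality $\sigma = \mu_0 \circ \widehat{t}_{24}$ (which matches the components formula as stated) follows by the symmetric argument: the skein-theoretic proof of Proposition \ref{prop:diskcomm} is insensitive to which of the two marked points is treated as ``br.\,op'', so $E_\varepsilon$ is equally braided lax monoidal on $(\Aa_\varepsilon^{\textnormal{br.\,op}} \times \Aa_\varepsilon)^{\op}$, and the analogous calculation localizes the infinitesimal braiding on the second factor. The third expression is then just the arithmetic mean of the first two.

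The main obstacle will be carefully tracking conventions for the braiding and its ``inverse'' across the various opposite and product categories -- in particular distinguishing the literal inverse $(\beta_{Y,X})^{-1}$ from the undercrossing natural transformation $\beta^{-1}_{X,Y}$ -- and verifying that the residual classical symmetries produced when applying Proposition \ref{prop:diskcomm} cancel correctly against the symmetry in $\beta^{\C\lmod}$ and the symmetric lax structure on $\mu_0$, leaving the clean formula involving only $t$.
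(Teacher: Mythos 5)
Your proposal is correct and follows essentially the same route the paper intends: the paper gives no separate proof of this corollary beyond the remark that it follows by ``considering $\Aa\times\Aa$ instead of $\Aa\times\Aa^{\textnormal{br.\,op}}$ in Proposition \ref{prop:diskcomm}'', and your argument is precisely a fleshing-out of that observation, combining the braided lax monoidality of Proposition \ref{prop:diskcomm} with Equation \eqref{eq:overminusunder} and Definition \ref{def:braidingP2} to localize the over-minus-under discrepancy in a single tensor factor, together with the standard monoidality of factorization homology for the disjoint-union claim.
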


Finally, let us turn to fusion. Explicitly, we will relate the maps $\sigma:= \sigma_{\Sigma, V = \{v_1, v_2\}}$ to the maps $\sigma_f := \sigma_{\Sigma^f, V=\{v\}}$, where $\Sigma^f$ is obtained by gluing $\Sigma$ along the small boundary intervals around $v_1$ and $v_2$ as illustrated in Figure~\ref{fig:FusionLocalPic}. 
Let us denote by $(\IEAsymbol, \mu) \in \operatorname{Alg}(\widehat{\Aa_\varepsilon \catprod \Aa_\varepsilon})$ the internal endomorphism algebra associated with the \emph{unfused surface} \(\Sigma\), and by $(\IEAsymbol_f, \mu_f)\in \operatorname{Alg}(\widehat{\Aa_\varepsilon})$ the internal endomorphism algebra assigned to the fused surface \(\SigmaFused\). 

We know from Proposition \ref{prop:MarkedFusionInternalEndAlgs} that $\IEAsymbol_f \cong \widehat{\otimes} \IEAsymbol$ and $\mu_f \cong \widehat{\otimes}(\mu) \circ J_{\IEAsymbol, \IEAsymbol}$, where $J$ is the natural transformation $[\widehat{\otimes}(\IEAsymbol)] \widehat{\otimes} [\widehat{\otimes}(\IEAsymbol)] \to \widehat{\otimes}(\IEAsymbol \widehat{\otimes}_{\widehat{\Aa \catprod \Aa}} \IEAsymbol)$ coming from the parenthesized tangle switching the middle two strands
\begin{equation}
    J := \vcenter{\hbox{\includegraphics[scale=0.7]{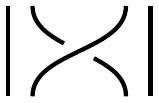}}} \ . 
\end{equation}
In other words, $J$ is a monoidal structure on the tensor product functor, and Proposition \ref{prop:MarkedFusionInternalEndAlgs} says that the algebra $(\IEAsymbol_f, \mu_f)$ is the image of the algebra $(\IEAsymbol, \mu)$ under the tensor product functor $(\widehat{\otimes}, J)$. 
It turns out that for the first-order deformations $\sigma$ and $\sigma_f$, such a relation is modified by a ``fusion'' term. 

\begin{theorem}\label{Thm: gluing Poisson}
    The first-order deformations $\sigma$ and $\sigma_f$ from Definition \ref{def:of_p} are related by
    \begin{equation}\label{eq:fusionIEA} 
        \sigma_f = \widehat{\otimes_0}(\sigma) \circ (J_0)_{\IEAsymbol_0, \IEAsymbol_0} + (\mu_f)_0 \circ \widehat{t}_{2,3} \ , 
    \end{equation}
    where $\widehat{t}_{2,3} = (\widehat{\id \otimes t \otimes \id})_{\IEAsymbol_0, \IEAsymbol_0}$ is the $\IEAsymbol_0\catprod \IEAsymbol_0$-component of the natural endomorphism of the 4-fold tensor product functor coming from the infinitesimal braiding of $\Aa_0$ acting on the second and the third component.
\end{theorem}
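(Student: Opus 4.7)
The starting point is Proposition~\ref{prop:MarkedFusionInternalEndAlgs}, which yields $E_f\cong\widehat{\otimes}E$ and $\mu_f\cong\widehat{\otimes}(\mu)\circ J_{E,E}$. Substituting this into the definition of $\sigma_f$, the plan is to rewrite $J_{E,E}\circ\widehat{\beta}^{-1}_{E_f,E_f}$ using the braid identity
\begin{equation}\label{eq:plan_braid}
J_{E,E}\circ\widehat{\beta}^{-1}_{E_f,E_f}
\;=\;\widehat{\otimes}\bigl(\widehat{\beta}^{-1}_{E,E}\bigr)\circ J_{E,E}\circ\widehat{\beta}^{-2}_{2,3},
\end{equation}
where $\widehat{\beta}^{-2}_{2,3}$ denotes the Day convolution of the double inverse braiding applied to the two inner $\Aa_\varepsilon$-factors of the 4-fold tensor product. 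Adding and subtracting $\widehat{\otimes}(\mu\circ\widehat{\beta}^{-1}_{E,E})\circ J_{E,E}$ then produces the decomposition
\begin{equation}\label{eq:plan_decomp}
\mu_f-\mu_f\circ\widehat{\beta}^{-1}_{E_f,E_f}
=\widehat{\otimes}\bigl(\mu-\mu\circ\widehat{\beta}^{-1}_{E,E}\bigr)\circ J_{E,E}
+\widehat{\otimes}\bigl(\mu\circ\widehat{\beta}^{-1}_{E,E}\bigr)\circ J_{E,E}\circ\bigl(\id-\widehat{\beta}^{-2}_{2,3}\bigr).
\end{equation}
Both summands on the right have vanishing classical limit: the first because $\mu_0$ is commutative, so $\mu_0-\mu_0\circ\widehat{\beta}^{-1}_{E_0,E_0}=0$; the second because $(\widehat{\beta}^{-2}_{2,3})_0=\id$. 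This makes $[-]_1$ applicable term by term.

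For the first summand, Proposition~\ref{prop:sqbr1_functors} gives $[\widehat{\otimes}(\mu-\mu\circ\widehat{\beta}^{-1}_{E,E})]_1=\widehat{\otimes_0}(\sigma)$ straight from the definition of $\sigma$, and composing with $J_{E,E}$ at its classical limit $(J_0)_{E_0,E_0}$ yields the first term of \eqref{eq:fusionIEA}. For the second summand, Proposition~\ref{prop:sqbr1_nattr} together with Definition~\ref{def:braidingP2} identifies $[\id-\widehat{\beta}^{-2}_{2,3}]_1$ with $\widehat{t}_{2,3}$, while the prefactor $\widehat{\otimes}(\mu\circ\widehat{\beta}^{-1}_{E,E})\circ J_{E,E}$ reduces at $\varepsilon=0$ to $\widehat{\otimes_0}(\mu_0)\circ(J_0)_{E_0,E_0}=(\mu_f)_0$, again by commutativity of $\mu_0$. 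Summing the two contributions produces exactly \eqref{eq:fusionIEA}. Finally, Corollary~\ref{cor:IEAfree} guarantees that all functors involved are valued in $\Ce\lmod^\textnormal{free}$, so that every instance of $[-]_0$ and $[-]_1$ is well-defined.

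\textbf{Main obstacle.} The crux of the argument is establishing \eqref{eq:plan_braid} as an equality of natural transformations between iterated Day convolutions. Diagrammatically it is a statement about four-strand braids: both sides reduce, in braid generators, to the word $\sigma_1^{-1}\sigma_3^{-1}\sigma_2^{-1}$, reflecting that overcrossing the pair $(X_1\otimes X_2)$ bodily over $(Y_1\otimes Y_2)$ decomposes as the pairwise crossing $\beta^{-1}_{X_1,Y_1}\otimes\beta^{-1}_{X_2,Y_2}$ conjugated by the middle swap $J$, with one surplus double crossing on the two middle strands. Promoting this to a precise identity of morphisms in $\widehat{\Aa_\varepsilon^{\boxtimes 4}}$ requires careful bookkeeping of associators, parenthesizations, and the interplay between the categorical braiding of $\Aa_\varepsilon$ and the symmetry of the enriching category $\Ce\lmod$. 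The cleanest route is to interpret both sides as parenthesized ribbon tangles via the enriched skein calculus of Theorem~\ref{Thm: FH=Sk} and then appeal to isotopy invariance in the four-punctured disk.
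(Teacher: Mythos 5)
Your proposal is correct and follows essentially the same route as the paper: substitute $\mu_f = \widehat{\otimes}(\mu)\circ J_{E,E}$, use a four-strand braid identity to rewrite $\mu_f\circ\widehat{\beta}^{-1}_{E_f,E_f}$, add and subtract $\widehat{\otimes}(\mu\circ\widehat{\beta}^{-1}_{E,E})\circ J_{E,E}$ so that each summand vanishes modulo $\varepsilon$, and then apply Propositions~\ref{prop:sqbr1_functors} and~\ref{prop:sqbr1_nattr} together with Definition~\ref{def:braidingP2} and the freeness from Corollary~\ref{cor:IEAfree}. The paper packages your identity \eqref{eq:plan_braid} as $(T(\mu)\circ J)^{\op_-} = T(\mu^{\op_-})\circ J^-$ with $J^-$ the undercrossing monoidal structure and $J-J^- = J^-\circ(J^2-\id)$, which is the same statement since $J^- = J\circ\widehat{\beta}^{-2}_{2,3}$, and it likewise verifies it by comparing braid diagrams.
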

\begin{proof}
    Let $\widetilde{\sigma} \coloneqq \mu - \mu^{\op_-}$, i.e.\ $\sigma = [\widetilde{\sigma}]_1$, and similarly for $\widetilde{\sigma}_f \coloneqq \mu_f - \mu_f^{\op_-}$. Let us also denote the completed tensor product $\widehat{\otimes}$ by $T$ to lighten some of the notation; i.e.\ we have $T(\IEAsymbol) = \IEAsymbol_f$. Using Proposition~\ref{prop:MarkedFusionInternalEndAlgs} we can write
    \begin{equation}
    \label{eq:tsigmafused}
        \widetilde{\sigma}_f = T(\mu)\circ J_{\IEAsymbol,\IEAsymbol} - (T(\mu)\circ J_{\IEAsymbol, \IEAsymbol})^{\op_-} = \mu_f - (T(\mu)\circ J_{\IEAsymbol, \IEAsymbol})^{\op_-} \ , 
    \end{equation}
    where the $\op_-$ is computed in $\widehat{\Aa_\varepsilon}$. In analogue with the relation $\mu_f = T(\mu) \circ J_{\IEAsymbol,\IEAsymbol}$, let us compare Equation~\eqref{eq:tsigmafused} with 
    \begin{align}\label{eq:tsigmaunfused}
        T(\tilde{\sigma}) \circ J_{\IEAsymbol, \IEAsymbol} &= T(\mu - \mu^{\op_-}) \circ J_{\IEAsymbol, \IEAsymbol} \nonumber \\
        &= \mu_f - T(\mu^{\op_-})\circ J_{\IEAsymbol, \IEAsymbol} \ .
    \end{align}    
    We need to compare $\left(T(\mu)\circ J_{\IEAsymbol, \IEAsymbol}\right)^{\op_-}$ and $T(\mu^{\op_-})\circ J_{\IEAsymbol, \IEAsymbol}$, i.e.\ understand the compatibility of the monoidal functor $(T, J)$ with taking $\op_-$. The answer is the following interesting relation.\footnote{Since Equation~\eqref{eq:tensorandop} follows from an equality of braids, this is a general fact about $\E_2$-algebras in symmetric monoidal bicategories.} 
    Denote by $J^-$ the other canonical monoidal structure on $T$ given by undercrossing
    \begin{equation}
        J^- := \vcenter{\hbox{\includegraphics[scale=0.7]{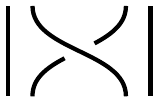}}} \ \  . 
    \end{equation}
    Then we have
    \begin{equation}\label{eq:tensorandop}
        \left(T(\mu)\circ J_{\IEAsymbol, \IEAsymbol}\right)^{\op_-} = T(\mu^{\op_-})\circ J^-_{\IEAsymbol, \IEAsymbol}
    \end{equation}
    which is proven simply by comparing the braids on both sides
    \begin{equation} 
        \vcenter{\hbox{\includegraphics[scale=0.7]{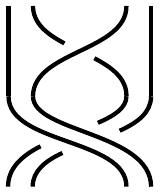}}} \quad = \quad \vcenter{\hbox{\includegraphics[scale=0.7]{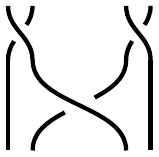}}} \ \  . 
    \end{equation}
    Therefore, the difference between Eq.\ \eqref{eq:tsigmafused} and Eq.\ \eqref{eq:tsigmaunfused} is only in the crossings of $J$ and $J^-$. 
    Concretely, we have
    \begin{align} \label{eq:fusedsigma}
        T(\tilde{\sigma})\circ J_{\IEAsymbol, \IEAsymbol} &= \mu_f - T(\mu^{\op_-})  \circ J_{\IEAsymbol,\IEAsymbol} \\
        &=\mu_f - T(\mu^{\op_-}) \circ (J^- + J - J^-)_{\IEAsymbol, \IEAsymbol} \\
        &=\mu_f - (T(\mu)\circ J_{\IEAsymbol, \IEAsymbol})^{\op_-} \;\;  - T(\mu^{\op_-}) \circ (J- J^-)_{\IEAsymbol, \IEAsymbol} \\
        &= \tilde{\sigma}_f - T(\mu^{\op_-}) \circ (J^-\circ (J^2- \id))_{\IEAsymbol, \IEAsymbol} \ , 
    \end{align}
    where we have used Equation~\eqref{eq:tensorandop} to get the third line and Equation~\eqref{eq:tsigmafused} to get the fourth line.

    We now take $[-]_1$ of both sides of \eqref{eq:fusedsigma} and use the results of Section \ref{sec:Cemod}. As all three terms of \eqref{eq:fusedsigma} vanish modulo $\varepsilon$, we get (using $[\widetilde{\sigma}_f]_1 = \sigma_f$) 
    \begin{equation} \label{eq:sigmafusionmiddlestep} 
        \sigma_f = [T(\widetilde{\sigma})\circ J_{\IEAsymbol, \IEAsymbol}]_1 + [T(\mu^{\op_-}) \circ (J^-\circ (J^2- \id))_{\IEAsymbol, \IEAsymbol} ]_1 \ .
    \end{equation}
    In $[T(\widetilde{\sigma})\circ J_{\IEAsymbol, \IEAsymbol}]_1$, the first term of the composition involving $\widetilde{\sigma}$ vanishes modulo $\varepsilon$. Using Proposition~\ref{prop:sqbr1_functors} we thus get that the first term is equal to $T_0(\sigma)\circ [J_{\IEAsymbol,\IEAsymbol}]_0$. Here, $T_0 = \widehat{\otimes_0}$ is the completion of the tensor product on $\Aa_0$. As the natural transformation $J$ is the extension of a natural transformation between uncompleted tensor products, we can use Proposition~\ref{prop:sqbr1_nattr} to get 
    \begin{equation}
        [T(\widetilde{\sigma})\circ J_{\IEAsymbol, \IEAsymbol}]_1 = T_0(\sigma)\circ (J_0)_{\IEAsymbol_0, \IEAsymbol_0} \ . 
    \end{equation}
    Similarly, in $[T(\mu^{\op_-}) \circ (J^-\circ (J^2- \id))_{\IEAsymbol, \IEAsymbol} ]_1$ (the second term of Equation~\eqref{eq:sigmafusionmiddlestep})
    the last term vanishes modulo $\varepsilon$. The first two terms give, using Proposition~\ref{prop:sqbr1_functors},
    $T_0(\mu_0) \circ (J_0)_{\IEAsymbol_0, \IEAsymbol_0}$, where we use the fact that $\mu_0$ is a symmetric monoidal functor. Finally, the last term satisfies $[(J^2-\id)_{\IEAsymbol, \IEAsymbol}]_1 = \widehat{(\id \otimes t \otimes 1)}_{\IEAsymbol_0, \IEAsymbol_0}$ by Proposition~\ref{prop:sqbr1_nattr} and Definition~\ref{def:braidingP2}.
    Combining these two calculations reduces Equation~\eqref{eq:sigmafusionmiddlestep} to the desired statement.
\end{proof}

\begin{remark}
    We can also write Equation~\eqref{eq:fusionIEA} component-wise. Since it is an equality of morphisms $\IEAsymbol_{0, f} \widehat{\otimes} \IEAsymbol_{0, f} \to \IEAsymbol_{0, f}$ and $\IEAsymbol_{0, f}$ is itself isomorphic to a tensor product $\IEAsymbol_{0, f}\cong \widehat{\otimes} \IEAsymbol_0$ an obvious extension of Lemma~\ref{lemma:LaxMonVsAlgStructure} tells us that both sides of Equation~\eqref{eq:fusionIEA} are characterized by a collection of natural morphisms indexed by quadruples of objects of $\Aa_0$
    \begin{equation}
        \IEAsymbol_0(X_1, X_2) \otimes \IEAsymbol_0(Y_1, Y_2) \to \IEAsymbol_{0, f}((X_1\otimes X_2)\otimes (Y_1\otimes Y_2)) \ . 
    \end{equation}
    The left hand side is given simply by $(\sigma_f)_{X_1\otimes X_2, Y_1\otimes Y_2}$, precomposed with the obvious morphisms $\iota\colon \IEAsymbol_0(X_1, X_2) \to \IEAsymbol_{0, f}(X_1 \otimes X_2)$ given on skeins by the embedding of the unfused surface into the fused surface.\footnote{These are just the components of the isomorphism $\widehat{\otimes}{\IEAsymbol_0} \cong \IEAsymbol_{0, f}$.} The right hand side uses the same morphisms, together with $\sigma_{(X_1, X_2), (Y_1, Y_2)}$, $\IEAsymbol_f((J_0)_{X_1, Y_1, X_2, Y_2})$ and $\IEAsymbol_{0,f}((t_{2,3})_{X_1, X_2, Y_1, Y_2})$; we leave the details to the reader as an exercise in coend calculus.
\end{remark}

\subsection{Poisson brackets and skeins}\label{sect:poisson-brackets-and-skeins}
In this section we compute the deformation $\sigma$ for a general marked surface $(\Sigma, V)$ in a skein-theoretic way. 
Looking at Equation~\eqref{eq:defPoiss} we can expect a difference $(\Gamma_X \text{ over } \Gamma_Y)  - (\Gamma_X \text{ under } \Gamma_Y)$ for two skeins $\Gamma_{X/Y}$. Let us record the following simple calculation of a difference of an overcrossing and an undercrossing:
\begin{equation}\label{eq:overminusunder}
    [\beta_{X,Y} - \beta_{Y, X}^{-1}]_1 = [\beta_{X, Y} \circ (\beta^2_{X, Y} - \id_{X\otimes Y})]_1 = [\beta_{X,Y}]_0 \circ [\beta^2_{X, Y} - \id_{X\otimes Y})]_1 = (\beta_{\Aa_0})_{X, Y} \circ t_{X, Y} \ , 
\end{equation}
where we used Proposition~\ref{prop:sqbr1_nattr}. 

\begin{proposition}\label{prop:goldman}
    Let $(\Sigma, V)$ be a marked surface and let $\sigma := \sigma_{\Sigma, V} \colon \IEAsymbol_0 \widehat{\otimes} \IEAsymbol_0 \to \IEAsymbol_0$ denote the morphism from Definition \ref{def:of_p}. 
    For \(X, Y \in \Aa_0^{\lvert V \rvert}\), where \(\Aa_0^{\lvert V\rvert}\) denotes \(\lvert V \rvert\) many copies of \(\Aa_0\), and $\Gamma_X \in \IEAsymbol_0(X)$, $\Gamma_Y \in \IEAsymbol_0(Y)$ we can compute $\sigma_{X, Y}(\Gamma_X, \Gamma_Y) \in \IEAsymbol_0(X\otimes Y)$ by the following steps: 
    \begin{enumerate}
        \item Since $\Aa_0$ is symmetric we can view the skeins $\Gamma_X$ and $\Gamma_Y$ as immersed graphs lying on the surface $\Sigma$ (as opposed to graphs embedded in $\Sigma \times [0,1]$).
        \item Choose representatives for the isotopy classes of $\Gamma_{X/Y}$ such that 
        \begin{enumerate}
            \item[(i)] $\Gamma_X$ starts to the left of $\Gamma_Y$ at each $v\in V$. 
                \begin{center}
 \begin{overpic}[scale=1,tics=10]{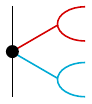}
  	\put(80,70){\color{contrastred}$\Gamma_X$}
        \put(80,15){\color{contrastblue}$\Gamma_Y$}
        \put(-10, 45){$v$}
    \end{overpic} 
                \end{center}
            \item[(ii)] $\Gamma_X$ and $\Gamma_Y$ only intersect with their strands (not coupons) at a finite number of transverse double intersection points.
        \end{enumerate} 
        \item Then the components of the natural transformation $\sigma$ are given by summing over the interior intersection points of $\Gamma_X$ and $\Gamma_Y$, inserting a coupon decorated with the infinitesimal braiding $t$:
         \begin{equation} \label{eq:sumintersections}
            \sigma_{X, Y}(\Gamma_X, \Gamma_Y) = \sum_{p \in (\Gamma_X \cap \Gamma_Y) \setminus V} \vcenter{\hbox{\begin{overpic}[scale=0.7, tics=10]{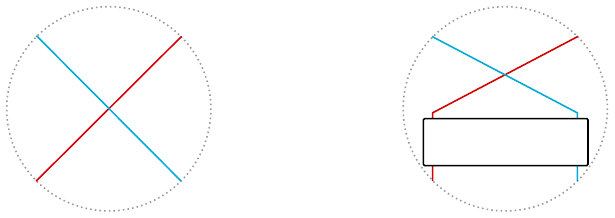}
            \put(47, 17){$\mapsto$}
            \put(75, 11.5){$t_{m_X^{\varepsilon}, m_Y^{\varepsilon'}}$}
            \put(2, 2){$m_X$}
            \put(28, 2){$m_Y$}
            \put(67.5, 2){$m_X$}
            \put(92, 2){$m_Y$}
            \put(12, 17){$p$}
            \put(16.4, 16.5){$\bullet$}
            \end{overpic}}}
        \end{equation}
        Here, $m_X$ and $m_Y$ are the objects of $\Aa_0$ labeling the ribbons of $\Gamma_X$ and $\Gamma_Y$ at the intersection $p$, while $\varepsilon$, $\varepsilon'$ denote a potential dual, depending on the orientation of the strands with respect to the coupon $t_{\bullet, \bullet}$.
    \end{enumerate}
\end{proposition}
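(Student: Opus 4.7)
The strategy is to exhibit $\mu_{X,Y}(\Gamma_X,\Gamma_Y)$ and $\mu^{\op_-}_{X,Y}(\Gamma_X,\Gamma_Y)$ as two specific ribbon graphs in $\Sigma\times[0,1]$ which agree away from the interior intersections, and then to extract the $\varepsilon$-linear part of their difference using the local identity \eqref{eq:overminusunder}. By Corollary~\ref{cor:IEAfree}, $E_\varepsilon$ is valued in free $\Ce$-modules, so $\sigma_{X,Y}$ is well-defined on representatives by $[-]_1$; Propositions~\ref{prop:sqbr1_functors} and \ref{prop:sqbr1_nattr} will then let us commute $[-]_1$ past the completions and past the natural transformations arising from the lax structure of $E_\varepsilon$.

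I would first reduce to good representatives. Using isotopy invariance of the skein category together with Proposition~\ref{prop:freeskeinmodules}, choose immersed ribbon graph representatives of $\Gamma_X$ and $\Gamma_Y$ satisfying (i) and (ii): condition (i) is a local adjustment near each $v\in V$, while (ii) is achieved generically by a transversality argument. I would then realize $\mu_{X,Y}(\Gamma_X,\Gamma_Y)$ explicitly as the ribbon graph obtained by lifting $\Gamma_X$ into $\Sigma\times[\tfrac12,1]$ and $\Gamma_Y$ into $\Sigma\times[0,\tfrac12]$ and gluing along the marked collars; under (i), the tensor order $X_v\otimes Y_v$ at each marked point is automatically compatible with the left-right order of $\Gamma_X$ and $\Gamma_Y$ on $\Sigma$, so no extra crossing near $v$ is created, and in the $\Sigma$-projection $\Gamma_X$ passes \emph{over} $\Gamma_Y$ at every interior intersection. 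For $\mu^{\op_-}_{X,Y}(\Gamma_X,\Gamma_Y) = \Alg_\varepsilon(\beta^{-1}_{X,Y})\circ\mu_{Y,X}\circ \beta$, the same construction yields $\Gamma_Y$ above $\Gamma_X$; the local coupons $\beta^{-1}_{X_v,Y_v}$ then restore the order $X_v\otimes Y_v$ at $v$ inside the boundary collar and, by condition (i), can be absorbed into that collar without creating interior crossings. The resulting ribbon graph is identical to $\mu_{X,Y}(\Gamma_X,\Gamma_Y)$ away from the interior intersections, where now $\Gamma_X$ passes \emph{under} $\Gamma_Y$.

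Next I would linearize. Modulo $\varepsilon$ the two skeins coincide because $\Aa_0$ is symmetric and over/under crossings are identified. Writing the two ribbon graphs as the image of a common underlying classical skein under $n$ choices of crossings $\beta$ respectively $\beta^{-1}$ (where $n$ is the number of interior intersections), and expanding to first order in $\varepsilon$, I obtain
\begin{equation*}
    [\mu_{X,Y}-\mu^{\op_-}_{X,Y}]_1(\Gamma_X,\Gamma_Y) \;=\; \sum_{p\in(\Gamma_X\cap\Gamma_Y)\setminus V} \bigl[\, \text{skein with $\beta_0$ at every other intersection and } (\beta-\beta^{-1}) \text{ at } p \,\bigr]_1 \ .
\end{equation*}
At each fixed $p$ the local factor $(\beta-\beta^{-1})|_p$ vanishes modulo $\varepsilon$, so by Equation~\eqref{eq:overminusunder} applied in the form of Proposition~\ref{prop:sqbr1_nattr} it contributes a coupon decorated by $\beta_0\circ t_{m_X^{\varepsilon},m_Y^{\varepsilon'}}$; the remaining crossings, being governed by $\beta_0$, are the symmetry of $\Aa_0$ and therefore transparent in the immersed graph picture on $\Sigma$. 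Finally the $\beta_0$ emerging from the distinguished crossing is also absorbed into this immersed picture, leaving just the $t$-coupon of the statement, and summing over $p$ reproduces \eqref{eq:sumintersections}.

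The main obstacle is the isotopy analysis in the second paragraph: showing that condition (i) really eliminates the boundary contributions of $\beta^{-1}_{X,Y}$ in $\mu^{\op_-}$. Concretely, this amounts to checking that after absorbing the $\beta^{-1}_{X_v,Y_v}$ coupons into a collar of $\partial\Sigma$ near $v$, one can further isotope the ribbon graph so that it differs from $\mu_{X,Y}(\Gamma_X,\Gamma_Y)$ only at the interior crossings, with all over/under flipped. A secondary technical point is the order-by-order expansion of the skein with $n$ crossings, which I would justify by applying Proposition~\ref{prop:sqbr1_functors} to a fixed presentation of the skein as an iterated composition/tensor of generating crossings, so that the $\Ce$-linear expansion becomes purely algebraic.
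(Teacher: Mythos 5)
Your proposal is correct and follows essentially the same route as the paper: after isotoping the two skeins of Definition \ref{def:of_p} so that they differ only by flipping over/under at the interior intersections, the paper writes their difference as a telescopic sum over crossings, which is exactly your ``first-order expansion'' with $(\beta-\beta^{-1})$ isolated at one crossing and $\beta_0$ elsewhere, and then applies Equation \eqref{eq:overminusunder} at each crossing.
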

\begin{proof}
    By Definition \ref{def:of_p} we know that $\sigma_{X, Y}(\Gamma_X, \Gamma_Y)$ is the $[-]_1$-part of an $\Aa_\varepsilon$-skein which near $v\in V$ looks as follows:
    \begin{center}
  \begin{overpic}[scale=1,tics=10]{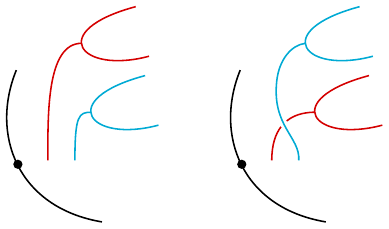}
  	\put(49,25){$-$}
  	\put(35,50){\color{contrastred}$\Gamma_X$}
   \put(35,32){\color{contrastblue}$\Gamma_Y$}
  	\put(92,32){\color{contrastred}$\Gamma_X$}
   \put(92,50){\color{contrastblue}$\Gamma_Y$}
  	\put(10,13){\color{contrastred}$X$}
   \put(17,13){\color{contrastblue}$Y$}
  	\put(67.5,13){\color{contrastred}$X$}
   \put(75,13){\color{contrastblue}$Y$}
   \put(2, 12){$v$}
   \put(59, 12){$v$}
	\end{overpic}  \ . 
\end{center}
Moving the two skeins closer to each other via isotopy, we get
    \begin{center}
  \begin{overpic}[scale=1,tics=10]{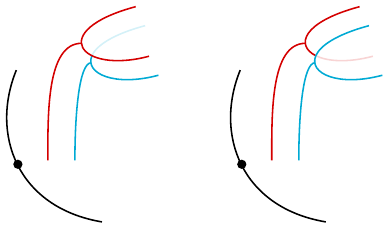}
  	\put(49,25){$-$}
  	\put(35,50){\color{contrastred}$\Gamma_X$}
   \put(35,32){\color{contrastblue}$\Gamma_Y$}
  	\put(94,55){\color{contrastred}$\Gamma_X$}
   \put(92,44){\color{contrastblue}$\Gamma_Y$}
  	\put(10,13){\color{contrastred}$X$}
   \put(17,13){\color{contrastblue}$Y$}
  	\put(67.5,13){\color{contrastred}$X$}
   \put(75,13){\color{contrastblue}$Y$}
   \put(2, 12){$v$}
   \put(59, 12){$v$}
	\end{overpic} \ . 
\end{center}
Identifying these skeins with an element of $\IEAsymbol_\varepsilon(X\otimes Y)$ by connecting the two strands labelled $X$ and $Y$, we get
    \begin{equation} \label{eq:goldmanproof}
  \begin{overpic}[scale=1,tics=10]{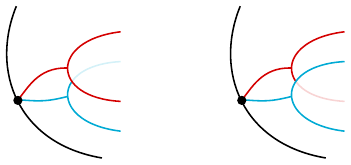}
  	\put(49,25){$-$}
   \put(30,30){\color{contrastred}$\Gamma_X$}
   \put(30,12){\color{contrastblue}$\Gamma_Y$}
   \put(98,32.5){\color{contrastred}$\Gamma_X$}
   \put(98,15){\color{contrastblue}$\Gamma_Y$}
  	\put(-15,19){$X\otimes Y$}
   \put(49,19){$X\otimes Y$}
   \put(2, 12){$v$}
   \put(66, 12){$v$}
   \end{overpic} \ . 
    \end{equation}
By assumption, $\Gamma_X$ and $\Gamma_Y$ intersect only at a finite number of transverse double points, which we (arbitrarily) enumerate as $p_1, \dots, p_N$. In the first term of \eqref{eq:goldmanproof} above, at all crossings $p_i$, $\Gamma_X$ is above $\Gamma_Y$, while in the second term $\Gamma_Y$ is above. We can thus write the difference \eqref{eq:goldmanproof} as the following telescopic sum
\begin{equation}
    \sum_{i = 1\dots N} \begin{pmatrix}\text{at crossings $p_1, \dots, p_{i-1}$, $\Gamma_X$ is under $\Gamma_Y$}\\\text{at crossings $p_{i}, \dots, p_{N}$, $\Gamma_X$ is over $\Gamma_Y$} \end{pmatrix} - \begin{pmatrix}\text{at crossings $p_1, \dots, p_{i}$, $\Gamma_X$ is under $\Gamma_Y$}\\\text{at crossings $p_{i+1}, \dots, p_{N}$, $\Gamma_X$ is over $\Gamma_Y$} \end{pmatrix}.
\end{equation}
Upon taking $[-]_1$ each of the summands contributes a term with a chord at the crossing $t$ according to Equation \eqref{eq:overminusunder}. In other words, we exactly get Equation \eqref{eq:sumintersections}. 
\end{proof}

\begin{remark}\label{rem:recover-goldman-et-al}
    Of course, for the (quasi-)Poisson bracket on the moduli spaces of flat connections (see Section \ref{sec:CS}), the formula \eqref{eq:sumintersections} is well known. It was first written down by  Goldman \cite{Goldman86} for classical Lie algebras. Generalizations for arbitrary metric Lie algebras and marked points on $\partial \Sigma$ later appeared in \cite{AMR1, RocheSzenes02, MassuyeauTuraev2012, LiBlandSevera, Nie2013, SeveraCenters}. 
    The formula \eqref{eq:sumintersections} usually contains a sign given by the orientation of the two strands at the intersection relative to the orientation of the surface. We recover this rule if we try to orient both strands in \eqref{eq:sumintersections} upwards, as each partial transposition of $t_{\bullet, \bullet}$ changes the sign.

    Choosing a deformation quantization of $\Aa_\varepsilon$, we obtain a deformation of the internal endomorphism algebra $(\IEAsymbol_0, \sigma)$. In the case of quantum groups, a quantization in terms of skeins appeared in \cite{AMR2} and \cite{RocheSzenes02}.
\end{remark}

\subsection{Moduli stacks of flat principal bundles}\label{sec:CS}
In this section we apply the ideas and concepts developed above to the study of deformation quantization of the moduli stack of flat principal bundles. 
Let $G$ be a reductive algebraic group with Lie algebra $\mathfrak{g}$. For an oriented surface $\Sigma$ the \emph{representation variety} $\operatorname{Rep}_G(\Sigma)$ is the variety of group homomorphisms $\pi_1(\Sigma)\to G$. The group $G$ acts on $\operatorname{Rep}_G(\Sigma)$ by conjugation. The \emph{character stack} is the quotient stack $\mathsf{Ch}_G(\Sigma) \coloneqq \operatorname{Rep}_G(\Sigma) / G$. Through the usual holonomy map it is equivalent to the moduli stack of flat $G$-bundles on $\Sigma$. The character variety is the affine GIT quotient $\operatorname{Rep}_G(\Sigma) / G$ and agrees with the variety of flat $G$-bundles modulo gauge transformations.

In the following we want to describe Poisson structures on the character stack $\mathsf{Ch}_G(\Sigma)$ and their deformation quantization, which are local in nature. 
Concretely, we will apply the formalism of this paper to deformations of categories of vector bundles or quasi-coherent sheaves on $\mathsf{Ch}_G(\Sigma)$. 
For an affine algebraic variety $X$ described by an algebra $\mathcal{O}(X)$ the category of quasi-coherent sheaves can be identified with the category $\QCoh(X) \coloneqq \mathcal{O}(X)\lmod$. 
This definition can be extended to more general stacks via right Kan-extension. 
For the character stack one finds locally that $\QCoh(\mathsf{Ch}_G(\mathbb{D}^2)) = \Rep(G)$ as a symmetric monoidal category. 
The category $\QCoh(\mathsf{Ch}_G(\Sigma))$ is local in the sense that it can be constructed from the value on disks via factorization homology (see \cite[Theorem~7.1]{BZBJIntegrating} for punctured surfaces and \cite[Sec.~2.3, Prop~4.5]{kinnear2024nonsemisimplecraneyettertheoryvarying} for a recent overview). Explicitly, we have
\begin{equation}
    \QCoh(\mathsf{Ch}_G(\Sigma)) = \int_\Sigma^{\Pres} \operatorname{Rep}(G) \,.
\end{equation}
Note that the factorization homology above is computed in $\Pr^L$. The category $\operatorname{Rep}(G)$ is the free cocompletion of the category $U(\mathfrak{g})\lmod^{\operatorname{f.d.}}$ of finite dimensional modules for the universal enveloping algebra $U(\mathfrak{g})$. 
This implies 
\begin{equation}\label{eq:completionFH}
    \int_\Sigma^{\Pres} \Rep(G) \cong \widehat{\int_\Sigma^{\Cat_\C } U(\mathfrak{g})\lmod^{\operatorname{f.d.}}}   \ \ . 
\end{equation}
On the right-hand side above factorization homology is computed in $\C$-linear categories, and hence can be computed via enriched skein categories. 
A direct consequence of what we discussed so far is that functions on the character variety are given by the (ordinary) skein algebra. 

The elements of the (enriched) skein category \(\SkCat{\Sigma}{U(\mathfrak{g})\lmod^{\operatorname{f.d.}}}\) define compact projective objects in $\QCoh(\mathsf{Ch}_G(\Sigma))$, i.e.\ vector bundles. Or, in more algebraic geometric terms, perfect sheaves. However, in general these are not all perfect sheaves. 
We think of the elements of factorization homology as a preferred generating set consisting of perfect sheaves which can be constructed via factorization homology. 
In the following we will study deformations of this subcategory which are closely related to Poisson structures on $\mathsf{Ch}_G(\Sigma)$.

For closed surfaces the character stack admits a symplectic structure going back to the work of Atiyah and Bott~\cite{AB}. In modern language it is the transgression of the natural 2-shifted symplectic structure on the classifying stack $BG$~\cite{SafronovPLasShifted}. In the case where $\Sigma$ has a non-empty boundary $\partial \Sigma$ the character stack still admits a Poisson structure which, in the setting of derived algebraic geometry, can be constructed by noting that the restriction $\mathsf{Ch}_G(\Sigma) \to \mathsf{Ch}_G(\partial \Sigma)$ is Lagrangian in the 1-shifted symplectic manifold $\mathsf{Ch}_G(\partial \Sigma)$ (the symplectic structure is again given by the AKSZ-construction~\cite{AKSZ}). Lagrangian submanifolds are in particular 1-shifted coisotropic, which in the derived setting contains a 0-shifted Poisson structure~\cite{MelaniSafronov1, MelaniSafronov2}. These are local in the sense that they are part of a fully extended topological field theory~\cite{CHS}. 

There are more traditional classical geometric constructions which are expected to describe this Poisson structure. For us two will be of particular interest: the quasi-Poisson structures constructed in~\cite{AKSM, LiBlandSevera} and those defined by Fock--Rosly~\cite{FR} which induce the same Poisson structure on the character variety.  

Both construction take advantage of the fact that for a choice of marked points $V\coloneqq\{v_1,\dots, v_k\} \subset \partial \Sigma$ the character stack can be written as a quotient of the framed moduli space $\mathsf{Ch}_G^{\operatorname{fr}}(\Sigma,V)$ of principal $G$-bundles on $\Sigma$, together with a trivialization at $V$ by the action of gauge transformations on the trivialization. The framed moduli space has the advantage of being an affine variety instead of a stack. The algebra of functions on\footnote{Here, $\chi(\Sigma)$ denotes the Euler characteristic.} $\mathsf{Ch}_G^{\operatorname{fr}}(\Sigma,V)\cong G^{k-\chi_\Sigma}$ together with its $G^k$-action can be identified with the internal endomorphism algebra $\intEndAlg{\Sigma}{V}\in \Rep G^{k}$. There are two natural first order deformations of $U(\mathfrak{g})\lmod^{\operatorname{f.d.}}$ relevant in this case:
\begin{itemize}
\item 
The deformation of $U(\mathfrak{g})\lmod^{\operatorname{f.d.}}$ provided by $U(\mathfrak{g})\lmod^{\operatorname{f.d.}}\otimes \Ce$ equipped with the infinitesimal braiding $t$ induced by the Killing form. This leads to a deformation of $\intEndAlg{\Sigma}{V}$ to a non-commutative algebra in $U(\mathfrak{g})\lmod^{\operatorname{f.d.}}\otimes \Ce$. We can directly conclude from Theorem~\ref{Thm: gluing Poisson} and Example~\ref{ex:defnPoisson} that this reproduces the $\sigma$-tensor of Ševera, see \cite[Eq.~(4)]{SeveraCenters}. The antisymmetrization of $\sigma$ then agrees with the quasi-Poisson structures constructed in~\cite{AKSM, LiBlandSevera}.   
\item The deformation $U_\varepsilon(\mathfrak{g})\lmod^{\operatorname{f.d.}}$, also leading to a first order deformation of $\intEndAlg{\Sigma}{V}$.  By Example~\ref{ex:defnPoisson} we can identify this deformation with an almost Poisson structure on $G^{k-\chi(\Sigma)}$ such that the action of $G^k$ is Poisson-Lie. The computations from~\cite{BZBJIntegrating} show that this Poisson structure agrees with the one introduced by Fock and Rosly in the case of one marked point. 

Using Theorem \ref{Thm: gluing Poisson}, we can also get a more direct comparison to the Fock-Rosly Poisson structure. If we decompose $r = r_\text{a} + t$ into antisymmetric and symmetric parts, the infinitesimal braiding on $U_\varepsilon(\mathfrak{g})\lmod$ is given by $2t$ (compare Example \ref{Ex: P_2 from r} and Definition \ref{def:braidingP2}). The forgetful functor $U_\varepsilon(\mathfrak{g})\lmod\to \Ce\lmod$ is monoidal but not braided, and the maps $\sigma$ differ by the action of the classical $r$-matrix. 
\begin{equation}\label{eq:sigmadifferenceforget}
    \sigma^{\Ce\lmod} = \sigma^{U_\varepsilon(\mathfrak{g})\lmod^{\operatorname{f.d.}}} - t + r_\text{a} \ . 
\end{equation}
On a general surface obtained by fusion at
vertices $V$, we thus have 
\begin{equation}
\sigma^{\Ce\lmod} = \sum_{v \in V} \quad 
\underbrace{\sum_{i \, @\, v} t_{i i'}}_{\eqref{eq:sigmadisk}} + \underbrace{\sum_{i < j \,@ \, v} 2 t_{j, i'}}_{\eqref{eq:fusionIEA}} + \underbrace{\sum_{i, j \,@ \, v} - t_{i, j'} + (r_\text{a})_{i, j'}}_{ \eqref{eq:sigmadifferenceforget}}
\end{equation}
where the three terms come from the contributions from disks, fusion and the forgetful functor $U_\varepsilon(\mathfrak{g})\lmod \to \Ce\lmod$. The inner sums run over disk ends (edge ends in the terminology of \cite{FR}) meeting the vertex $v$, and the unprimed and primed indices refer to the action on the two factors $\intEndAlg{\Sigma}{V}\widehat{\otimes} \intEndAlg{\Sigma}{V}$. In the above formula, the terms with $t$ combine to the second term of \cite[Eq.~(4.8)]{FR}, while the term involving $r_\text{a}$ is equal to the first term of \cite[Eq.~(4.8)]{FR}.
\end{itemize}
Note that a priori these are only almost Poisson structures. However, since both admit deformation quantizations we can already at this point conclude that they are actual (quasi-)Poisson structures. 

The factorization homology approach to the construction of these Poisson structures has the advantage of automatically being independent of the choice of combinatorial decompositions of the surface \(\Sigma\)
such as gluing patterns or pair of pants decompositions. 
The equivalence $U(\mathfrak{g})\lmod^{\operatorname{f.d.}}\otimes \Ce \cong U_\varepsilon(\mathfrak{g})\lmod^{\operatorname{f.d.}}$ of $\C[\varepsilon]$-linear ribbon categories (which is the identity modulo $\varepsilon$) relates the two first order deformations. 
This recovers the equivalence of categories between the category of Poisson $(G,r)$-spaces and equivariant Poisson maps and the category of quasi-Poisson spaces and equivariant maps respecting the quasi Poisson bivectors from~\cite{ManinPairsAKS, Mouquin}.

\subsubsection{Deformation quantization of moduli spaces of flat connections}
Recall that the two first order deformations used above have quantizations in terms of $U(\mathfrak{g})\lmod^\phi[[\hbar]]$ and $U_\hbar(\mathfrak{g})\lmod$, respectively. It follows from Proposition~\ref{prop:MarkedFusionInternalEndAlgs} and the computations in~\cite{BZBJIntegrating} that for one marked boundary point $U_\hbar(\mathfrak{g})\lmod$ recovers the equivariant deformation quantization constructed by Alekseev, Grosse, and Schomerus~\cite{AGS95}. 

On the other hand $U(\mathfrak{g})\lmod^\phi[[\hbar]]$ recovers, via Proposition~\ref{prop:MarkedFusionInternalEndAlgs}, the fusion procedure of Li-Bland and \v{S}evera \cite[Thm.~3~(2)]{LBSQ1}. To get that their star product is equal to the lax monoidal structure on the internal endomorphism algebra, it remains to compare the building block, i.e.\ the disk with two marked points. Under the identification \eqref{eq:completionFH}, this holds if correct convention is chosen for the isomorphisms $x \triangleright (y\triangleright a) \cong (x\otimes y) \triangleright a$, which enter the definition of the internal endomorphism algebra in \eqref{eq:lax_mon_str_intendoalg}. Concretely, we use an identification as in \eqref{eq:goldmanproof} at the ``left'' marked point, and braiding in the ``right'' marked point. This matches the star product of \cite{LBSQ1} for the disk with 2 points marked by $+$, which is described by a Kontsevich integral as in \cite[Corollary~3.2.2]{JanThesis}. Different conventions for the action at marked points lead to isomorphic deformation quantizations; with an added braiding, the isomorphism is given by $e^{\hbar C/4}$, the square root of the ribbon element of  $U(\mathfrak{g})\lmod^\phi[[\hbar]]$. The variant of the construction in \cite[Section~3]{LBSQ1} with an even associator, where their star product has no quantum corrections, again produces an isomorphic deformation quantization of functions on $G$: the isomorphism is given by the Kontsevich integral of the unknot.
The connection to factorization homology makes the independence of the choice of combinatorial structures manifest which, in the quantum case, is hard to verify otherwise~\cite{JanThesis}, \cite{JanWIP}.  
By functoriality of factorization homology the equivalence $U_\hbar(\mathfrak{g})\lmod \xrightarrow{\alpha^*} U(\mathfrak{g})\lmod^\Phi[[\hbar]]$ (see Theorem~\ref{Thm: equivalence}) of $\C[[h]]$-linear ribbon categories maps these deformation quantizations to each other. Establishing such a relation by hand seems hopelessly complicated.

\newpage
\appendix

\section{Background material on \texorpdfstring{$\VCat$}{V-Cat}} \label{app:Background-VCat}
In this appendix we will introduce some background material on \(\VCat\), as well as prove some basic results needed in the main text. 
First we recall the adjunction between \(\Va\)-enriched categories and \(\Va\)-graphs in Section \ref{appendix:VCatVGrph}, before briefly explaining how to impose relations between morphisms in the enriched setting in Section \ref{appendix:VCatGenRel}. 
In Section \ref{Sec:cocompleteness VCat} and \ref{sect:VCat-has-all-bicolimits} we  prove that \(\VCat\) has a bicolimits. 
Lastly, in Section \ref{app:reltens} we give the full list of relations imposed for the enriched Tambara relative tensor product, before proving that it is equivalent to the truncated bar construction.

Throughout this section we assume $\Va$ to be a cocomplete symmetric monoidal closed category. 

\subsection{\texorpdfstring{$\VCat$}{V-Cat} and \texorpdfstring{$\VGph$}{V-Gph}}\label{appendix:VCatVGrph}
We start by defining the notion of a \(\Va\)-graph before explaining how to obtain certain ``free'' \(\Va\)-categories from such \(\Va\)-graphs. 
The reference for this is \cite{Wolff}. 

\begin{definition}
A \emph{$\Va$-graph $\Omega$} consists of a set of objects $\Obj(\Omega)$ and for every pair $A,B \in \Obj(\Omega)$ an object $\Omega(A,B) \in \Va$.
Given two $\Va$-graphs $\Omega_1$ and $\Omega_2$, a \emph{$\Va$-graph morphism} $\Fa \colon \Omega_1 \to \Omega_2$ consists of a function $\Fa \colon \Obj(\Omega_1) \to \Obj(\Omega_2)$ together with maps 
$$
\Fa_{A,B} \colon \Omega_1(A,B) \longrightarrow \Omega_2(\Fa(A),\Fa(B)) 
$$
in $\Va$ for all $A,B \in \Obj(\Omega_1)$. We write $\VGph$ for the (1-)category of $\Va$-graphs and $\Va$-graph morphisms.
\end{definition}

There is a natural functor
$$
\VCat \xrightarrow{U} \VGph
$$
which forgets composition and identities. 

\begin{proposition}\cite[Proposition 2.2]{Wolff} \label{prop:FreeVCat}
The forgetful functor $U$ has a left adjoint $$
\Free \colon \VGph \longrightarrow \VCat
$$
sending a $\Va$-graph $\Omega$ to the free $\Va$-category $\Free(\Omega)$ defined as follows: 
\begin{itemize} 
    \item $\Obj(\Free(\Omega)) = \Obj(\Omega)$
    \item For $A,B \in \Obj(\Omega)$ let $\Delta_{A,B}$ be the set of all finite sequences $(A, X_1, \dots, X_n, B)$ with $X_i \in \Obj(\Omega)$ for $1 \leq i \leq n$. Define: 
    \begin{align} \label{eq:FreeVCatMorp}
    \Free(\Omega)(A,B) = \begin{cases}
     \coprod_{\Delta_{A,B}} \Omega(A,X_1) \otimes^\Va \Omega(X_1, X_2) \otimes^\Va \dots \otimes^\Va \Omega(X_n, B), \quad & \text{if }A \neq B \\
     \big(\coprod_{\Delta_{A,B}} \Omega(A,X_1) \otimes^\Va \Omega(X_1, X_2) \otimes^\Va \dots \otimes^\Va \Omega(X_n, B) \big) \coprod 1_\Va, \quad & \text{if } A=B
    \end{cases}
    \end{align}
\end{itemize}
\end{proposition}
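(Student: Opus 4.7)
The plan is to verify the two halves of an adjunction: first that the formula \eqref{eq:FreeVCatMorp} really produces a $\Va$-category, and second that it satisfies the expected universal property with respect to $\Va$-graph morphisms into $U\Ca$.

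For the first step, I would give $\Free(\Omega)$ a composition and identities directly. The identity at $A$ is the inclusion $1_\Va \hookrightarrow \Free(\Omega)(A,A)$ of the distinguished summand in the $A=B$ case of \eqref{eq:FreeVCatMorp}. Composition $\Free(\Omega)(B,C) \otimes^\Va \Free(\Omega)(A,B) \to \Free(\Omega)(A,C)$ is defined, using the fact that $\otimes^\Va$ is a left adjoint and hence commutes with coproducts, by concatenation of sequences: on the summand indexed by $(B,Y_1,\dots,Y_m,C)$ and $(A,X_1,\dots,X_n,B)$ one sends the tensor product to the summand indexed by $(A,X_1,\dots,X_n,B,Y_1,\dots,Y_m,C)$ via the associator of $\Va$ (together with the evident reordering of tensor factors); the summands containing the unit $1_\Va$ in the $A=B$ or $B=C$ case are handled by the left and right unitors of $\Va$. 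Associativity and unit axioms reduce to the corresponding coherence axioms in $\Va$ on each summand.

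For the adjunction, given a $\Va$-graph morphism $\Fa \colon \Omega \to U\Ca$, I would construct a $\Va$-functor $\widetilde{\Fa} \colon \Free(\Omega) \to \Ca$ which agrees with $\Fa$ on objects, and whose action on a length-$(n+1)$ summand
\[
\Omega(A,X_1)\otimes^\Va\cdots\otimes^\Va\Omega(X_n,B)\longrightarrow \Ca(\Fa(A),\Fa(B))
\]
is the iterated composition in $\Ca$ applied after $\Fa_{X_i,X_{i+1}}$ on each factor, and which sends the distinguished $1_\Va$ summand (when $A=B$) to $\id_{\Fa(A)} \colon 1_\Va \to \Ca(\Fa(A),\Fa(A))$. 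Functoriality of $\widetilde{\Fa}$ with respect to composition is immediate from the associativity of $\circ^\Ca$ and the way composition in $\Free(\Omega)$ is defined by concatenation; compatibility with identities is the definition of how the $1_\Va$ summand is sent. Conversely, restriction along the canonical $\Va$-graph morphism $\eta_\Omega \colon \Omega \to U\Free(\Omega)$ (the inclusion of each $\Omega(A,B)$ as the length-$0$ summand) recovers $\Fa$ from any $\Va$-functor $\Free(\Omega)\to\Ca$, and the two constructions are mutually inverse because a $\Va$-functor out of $\Free(\Omega)$ is uniquely determined by its restriction to generators, thanks to the inductive description of \eqref{eq:FreeVCatMorp}.

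The only mildly delicate point is the bookkeeping around the unit summand $1_\Va$ in the $A=B$ case, since it makes composition a case distinction rather than a single concatenation formula; apart from that, the argument is a direct unwinding of definitions and the universal property of coproducts. Naturality of the bijection in both $\Omega$ and $\Ca$ is automatic from the pointwise description.
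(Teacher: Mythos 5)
Your proposal is correct and follows the same route the paper takes (and that Wolff's original proof takes): composition by concatenation of summands using that $\otimes^\Va$ distributes over coproducts, identities via the distinguished $1_\Va$ summand with unitors handling the degenerate cases, and the adjunction bijection given by iterated composition in $\Ca$ versus restriction along the unit $\eta_\Omega$. The paper itself only records the construction of the composition and identities and defers the verification to \cite{Wolff}, so your sketch actually supplies slightly more detail than the text does.
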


Composition in \(\Free(\Omega)\) is defined in the following way. First, note that by assumption we have that $V \otimes^\Va -$ preserves colimits. Thus, we have: 
\begin{align*}
    \Free(\Omega)(A,B) & \otimes^\Va \Free(\Omega)(B,C) \\
    & \cong \coprod_{\Delta_{A,B} \times \Delta_{B,C}} \big( \Omega(A,X_1) \otimes^\Va \dots \otimes^\Va \Omega(X_n, B)\big) \otimes^\Va \big( \Omega(B,Y_1) \otimes^\Va \dots \otimes^\Va \Omega(Y_m, C) \big)
\end{align*}
when $A \neq B$ , $B \neq C$. If $A = B$ or $B = C$ we get a similar expression (including an extra copy of $1_\Va$). Denote by $\iota_{A,B}^\tau$ the canonical maps into the coproduct for each $\tau \in \Delta_{A,B}$. Composition $\circ_{A,B,C} \colon \Free(\Omega)(A,B) \otimes^\Va \Free(\Omega)(B,C) \to \Free(\Omega)(A,C)$ is defined on components by 
$$
(\circ_{A,B,C}) \circ (\iota^\tau_{A,B} \otimes \iota^{\tau'}_{B,C}) = \iota^{\tau\tau'}_{A,C} 
$$
where $\tau\tau' = (A, X_1, \dots, X_n, B, X'_1, \dots, X'_{n'}, C )$ is the concatenation of the sequences $\tau$ and $\tau'$. 

Define $j_A \colon 1_\Va \to \Free(\Omega)(A,A)$ to be $\iota^{1_\Va}_{A,A}$. In the case $A = B$, define 
$$
(\circ_{A,A,B}) \circ (\iota^{1_\Va}_{A,A} \otimes \iota^\tau_{A,B}) ) = \iota^\tau_{A,B} \circ l^\tau_{A,B}
$$
where $l^\tau_{A,B}$ is the left unitor 
$$
1_\Va \otimes \big(\Ga(A,X_1) \otimes \dots \otimes \Ga(X_n,B)\big) \xrightarrow{l^{\tau}_{A,B}} \big(\Ga(A,X_1) \otimes \dots \otimes \Ga(X_n,B)\big)
$$
in $\Va$. The case $B = C$ is analogous. 

\subsection{Generators and relations}\label{appendix:VCatGenRel}

Let $\Ca$ be a $\Va$-category and suppose we have two morphisms $f,g \colon 1_\Va \to \Ca(x,y)$ that we wish to identify in $\Ca$. We can do so by defining the following $\Va$-category 
$$
\Ca / f\sim g = \begin{cases} \Obj(\Ca/f \sim g) & =~~\Obj(C) \\
(\Ca / f\sim g) (a,b) & =~~\text{coequalizer in } \Va \text{ of the diagram } \eqref{diag:relationsVCat}
\end{cases} 
$$

\begin{equation}\label{diag:relationsVCat}
\begin{tikzcd}[column sep=huge]
\Ca(y,b) \otimes^\Va 1_\Va \otimes^\Va \Ca(a,x) \arrow[r,yshift=-1ex,"\id_\Va \otimes^\Va f \otimes^\Va \id_\Va",swap] \arrow[r,yshift=1ex,"\id_\Va \otimes^\Va g \otimes^\Va \id_\Va"] & \Ca(y,b) \otimes^\Va \Ca(x,y) \otimes^\Va \Ca(a,x) \arrow[r,yshift=-1ex,"-\circ-\circ-",swap] \arrow[r,yshift=1ex,"-\circ-\circ-"] &  \Ca(a,b)
\end{tikzcd}
\end{equation}

\subsection{Cocompleteness of \texorpdfstring{$\VCat$}{V-Cat}}\label{Sec:cocompleteness VCat}
We will first discuss the stricter notion of $\Cat$-weighted colimits in the sense of ordinary enriched category theory. Existence of strict $\Cat$-weighted colimits in $\VCat$ will then serve as an intermediary to prove existence of (weak) 2-colimits in $\VCat$.

\subsubsection{Strict 2-colimits}
We first recall the definition of a strict 2-colimit in \(\VCat\), before proving that we have all of these in \(\VCat\). 

\begin{definition}
A \emph{strict 2-colimit in $\VCat$} is a $\Cat$-weighted colimit where both the weight $W$ and the diagram $X$ are 2-functors indexed by a 2-category $\Da$ such that for any $\Cat$-enriched category $\Ea$ we have a natural \emph{isomorphism} of categories:
$$
\Hom_{\VCat}(\colim^W X, \Ea) \cong \Hom_{[\Da^{\op},\Cat]_{\operatorname{strict}}}(W, \Hom_{\VCat}(X(-), \Ea))
$$ 
where $[\Da^{\op},\Cat]_{\operatorname{strict}}$ is the 2-category of 2-functors, 2-natural transformations and modifications. 
\end{definition}

In order to show existence of all strict $\Cat$-weighted colimits in \(\VCat\), we first need two specific $\Cat$-(co)limits, namely the tensor and cotensor over $\Cat$. 
If $\Va$ has coproducts, the underlying set functor $\Va(1_\Va, - ) \colon \Va \to \mathsf{Set}$ has a left adjoint $(-) \cdot 1_\Va \colon \mathsf{Set} \to \Va$, where $X \cdot 1_\Va$ is the tensor of the set $X$ with the monoidal unit in $\Va$. This induces an adjunction
$$
(-)_\Va : \Cat \leftrightarrows \VCat : (-)_0 \ .
$$
Recall from Definition \ref{defn:underlying-ordinary-category} that for each \(\Ca\in \VCat\) the underlying (ordinary) category \(\Ca_0\) has the same objects and morphisms are given by $f \colon 1_\Va \to \Ca(c,c')$. For $C \in \Cat$, the free $\Va$-category $C_\Va$ also has the same objects as $C$ and the Hom-objects are $C_\Va(c,c') = C(c,c') \cdot 1_\Va = \coprod_{C(c,c')} 1_\Va$. \par 

Using the above, we define the tensoring over $\Cat$ by the functor
\begin{align*}
 \Cat \times \VCat & \xrightarrow{- \otimes -} \VCat \\
(C,\Da) & \mapsto \Ca \otimes \Da  = \Ca_\Va \catprod \Da
\end{align*}
where $\catprod$ on the right hand side is the tensor product in $\VCat$ as in Definition \ref{definition:tensor-product-of-Vcats}. 
This indeed turns $\VCat$ into a category tensored over $\Cat$ since we have natural isomorphisms 
\begin{align*}
    \Hom_{\VCat}(\Ca \otimes \Da, \Ea) & \cong \Hom_{\VCat}(\Ca_\Va, [\Da,\Ea]) \\
    & \cong \Hom_{\Cat}(\Ca, [\Da,\Ea]_0) \\
    & \cong \Hom_{\Cat}(\Ca, \Va\mhyphen\text{Fun}(\Da,\Ea))
\end{align*}
where in the first line we used that $\VCat$ is a symmetric monoidal closed category, in the second line we used the adjunction $(-)_\Va \dashv (-)_0$ and the last line follows from
$$
\Va(1_\Va, \Va\mhyphen\textbf{Nat}(F,G)) \cong \Va\mhyphen\text{Nat}(F,G)
$$
for each pair of functors $F,G \colon \Da \to \Ea$, where we used again that $(-)_0$ is right adjoint and thus preserves limits. \par  
Similarly, the cotensor is defined by the functor 
\begin{align*}
\Cat^{\op} \times \VCat & \xrightarrow{-\pitchfork-} \VCat \\
(\Ca,\Ea) & \mapsto \Ca \pitchfork \Da = [\Ca_\Va, \Da]  \ .
\end{align*}

\begin{proposition}\label{prp:strictcolimVCat}
$\VCat$ has strict $\Cat$-weighted colimits.
\end{proposition}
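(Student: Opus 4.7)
The plan is to invoke the standard reduction theorem from enriched category theory (Kelly, \emph{Basic Concepts of Enriched Category Theory}, §3.5): a $\Cat$-enriched category admits all $\Cat$-weighted colimits precisely when it admits small conical colimits (equivalently: small coproducts and coequalizers) together with all tensors by objects of $\Cat$. The tensoring $-\otimes-\colon \Cat\times\VCat\to\VCat$ has already been exhibited in the paragraph preceding the proposition, so it suffices to construct small coproducts and coequalizers in $\VCat$ and then to observe that the reduction theorem yields the conclusion.

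For coproducts, given a family $(\Ca_i)_{i\in I}$ in $\VCat$, I define $\coprod_i \Ca_i$ to have object set $\coprod_i \Obj(\Ca_i)$ and hom-objects $\Ca_i(c,c')$ when $c,c'$ lie in the same component $\Ca_i$, and the initial object $\emptyset_\Va$ otherwise. Composition and units on each component are inherited from the $\Ca_i$ and extended trivially using that $-\otimes^{\Va}-$ preserves the initial object in each variable (which follows from closedness of $\Va$). The universal property is then a direct verification from the fact that $\Va$-functors out of a coproduct are determined by their restrictions to each component.

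For coequalizers of a parallel pair $F, G \colon \Ca \to \Da$, I will use the adjunction $\Free\dashv U \colon \VCat \to \VGph$ from Proposition \ref{prop:FreeVCat}. By results of Wolff, $U$ is monadic, so colimits in $\VCat$ can be computed in the category of algebras. Coequalizers in $\VGph$ are formed pointwise using cocompleteness of $\Va$: the object set is the set-theoretic coequalizer $q\colon \Obj(\Da)\twoheadrightarrow \Obj(\Ea)$, and for $e,e'\in \Ea$ the hom-object $\Ea(e,e')$ is the colimit in $\Va$ of the diagram of hom-objects $\Da(d,d')$ for $d,d'$ in the preimages of $e,e'$, together with arrows induced by $F$ and $G$ on $\Ca(c,c')$. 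The $\Va$-category structure on the coequalizer is then produced automatically via the monad $U\Free$, or, equivalently, one may present it concretely as the free $\Va$-category on the coequalized graph modulo the relations expressing associativity and unitality of composition using the generators-and-relations construction of Appendix \ref{appendix:VCatGenRel}.

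With small coproducts and coequalizers in hand, all conical colimits exist in $\VCat$; combined with the tensoring by $\Cat$ already constructed, the Kelly reduction yields all strict $\Cat$-weighted colimits, proving the proposition. The main obstacle will be ensuring that the coequalizer construction respects composition at the level of hom-objects, where one can easily end up with a mere $\Va$-graph rather than a $\Va$-category; the monadicity of $U$ is precisely what bypasses this difficulty, since algebras for the monad $U\Free$ are by definition $\Va$-categories, and coequalizers in a category of algebras exist as soon as the base category is cocomplete and reflexive coequalizers in the base lift to algebras.
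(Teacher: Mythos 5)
Your proposal is correct and takes essentially the same route as the paper: the paper likewise reduces strict $\Cat$-weighted colimits to conical colimits plus the tensoring over $\Cat$ (citing Riehl, Corollary 7.6.4, where you cite Kelly \S 3.5) and simply cites Wolff for the ordinary cocompleteness of $\VCat$, which you partially reconstruct via the monadicity of $U\colon \VCat\to\VGph$. The explicit coproduct and coequalizer constructions you supply are precisely the content of Wolff's theorem that the paper invokes as a black box.
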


\begin{proof}
It is shown in \cite{Wolff} that $\VCat$ has all ordinary colimits. Since $\VCat$ is tensored and cotensored over $\Cat$ we conclude that it has all strict $\Cat$-weighted colimits, see for example \cite[Corollary 7.6.4.]{Riehl}.
\end{proof}

\newcommand{\TwoCatNew}{2\mathrm{Cat}} 
\newcommand{\BiCatNew}{\mathrm{BiCat}} 

\subsubsection{Bicolimits} \label{sect:VCat-has-all-bicolimits}
We now leverage the result that \(\VCat\) has all strict \(\Cat\)-weighted colimits to also prove that it has all bicolimits, a notion we first define. 
 
\begin{definition}
Let $[\Da^{\op},\Cat]_{\operatorname{bicat}}$ denote the 2-category of homomorphisms between bicategories. More precisely it is the 2-category of pseudo-functors $\Da^{\op} \to \Cat$, pseudo-natural transformations and modifications.
\end{definition}

\begin{definition} \label{defn:bicolimit-in-VCat}
Let \(\Da\) be a bicategory and $W \colon \Da \to \Cat$ and $X \colon \Da \to \VCat$ be pseudo-functors. 
The \emph{\(W\)-weighted 2-colimit of the diagram \(X\)} is an object $\colim^W X \in \VCat$ with the following universal property: 
Given any other $\Va$-category $\Ea$, there is an equivalence
\begin{align}
\Hom_{\VCat}(\colim^W X, \Ea) \cong \Hom_{[\Da^{\op},\Cat]_{\operatorname{bicat}}}(W, \Hom_{\VCat}(X(-), \Ea))
\end{align}
which is pseudo-natural in $\Ea$.
We for short call these colimits for \emph{bicolimits}.
\end{definition}

The inclusion of the (ordinary) category $\TwoCatNew$ of 2-categories and 2-functors into the (ordinary) category $\BiCatNew$ of bicategories and pseudo-functors has a left adjoint known as \emph{strictification}:
\begin{equation}
    \st : \BiCatNew \leftrightarrows \TwoCatNew : \iota \ .
\end{equation}
Moreover, the components of the unit map $\Aa \to \st(\Aa)$ are equivalences of bicategories \cite[Section 4.10]{CoherenceTriCats}. Higher categorical aspects of the strictification adjunction were studied in \cite{Campbell}, where in particular the following higher universal property of strictification is proven: For every bicategory $\Aa$ and 2-category $\Ca$ there is an isomorphism of 2-categories
\begin{equation}\label{Strictification}
[\st(\Aa), \Ca]_{\operatorname{pseudo}} \cong [\Aa, \Ca]_{\operatorname{bicat}}
\end{equation}
where $[-, -]_{\operatorname{pseudo}}$ is the 2-category of 2-functors, pseudo-natural transformations and modifications, see \cite[Corollary 3.6]{Campbell}. We denote by $\Fa'$ the image of the pseudo-functor $\Fa \colon \Aa \to \Ca$ under the above isomorphism. \par 

Notice that the 1-morphisms in $[-, -]_{\operatorname{pseudo}}$ are only pseudo-natural transformations. However, it is shown in \cite{Kelly2limits} that for any small 2-category $\Da$ and cocomplete 2-category $\Ca$ the inclusion 2-functor
\begin{equation}\label{AdjStrictPseudoNatTrans}
    [\Da, \Ca]_{\operatorname{strict}} \hookrightarrow [\Da, \Ca]_{\operatorname{pseudo}} 
\end{equation}
has a left adjoint that we will denote by $\widetilde{(-)}$.

\begin{proposition}\label{Prp:bicolimits VCat}
    $\VCat$ has all bicolimits.
\end{proposition}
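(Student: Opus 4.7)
The plan is to reduce the existence of bicolimits in $\VCat$ to that of strict $\Cat$-weighted colimits, which is guaranteed by Proposition \ref{prp:strictcolimVCat}. Given a small bicategory $\Da$ together with pseudo-functors $W \colon \Da \to \Cat$ and $X \colon \Da \to \VCat$, I would replace $(W, X)$ by a pair of strict $2$-functors from the $2$-category $\st(\Da)$ in such a way that a strict $\Cat$-weighted colimit computed in $\VCat$ automatically exhibits the universal property of the bicolimit $\colim^W X$.

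The first step is to apply the higher universal property of strictification \eqref{Strictification}. It gives isomorphisms of $2$-categories $[\Da,\Cat]_{\text{bicat}} \cong [\st(\Da),\Cat]_{\text{pseudo}}$ and $[\Da,\VCat]_{\text{bicat}} \cong [\st(\Da),\VCat]_{\text{pseudo}}$, so that $W$ and $X$ correspond to strict $2$-functors $W' \colon \st(\Da) \to \Cat$ and $X' \colon \st(\Da) \to \VCat$. Because strictification commutes with taking opposites (i.e.\ $\st(\Da^{\op}) \cong \st(\Da)^{\op}$ as $2$-categories), the same reasoning produces
\[ \Hom_{[\Da^{\op}, \Cat]_{\text{bicat}}}\bigl(W, \Hom_{\VCat}(X(-), \Ea)\bigr) \;\simeq\; \Hom_{[\st(\Da)^{\op}, \Cat]_{\text{pseudo}}}\bigl(W', \Hom_{\VCat}(X'(-), \Ea)\bigr). \]

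Next I would invoke the adjunction \eqref{AdjStrictPseudoNatTrans} with $\Ca = \Cat$ (using that $\Cat$ is $2$-cocomplete and $\st(\Da)^{\op}$ is small) to replace the weight $W'$ by its ``cofibrant'' version $\widetilde{W'}$, obtaining an equivalence of categories
\[ \Hom_{[\st(\Da)^{\op}, \Cat]_{\text{pseudo}}}(W', G) \;\simeq\; \Hom_{[\st(\Da)^{\op}, \Cat]_{\text{strict}}}(\widetilde{W'}, G) \]
natural in any strict $2$-functor $G \colon \st(\Da)^{\op} \to \Cat$. Specializing to $G = \Hom_{\VCat}(X'(-), \Ea)$ and combining with the defining isomorphism of the strict $\Cat$-weighted colimit $\colim^{\widetilde{W'}} X'$ furnished by Proposition \ref{prp:strictcolimVCat}, I obtain a chain of natural equivalences
\[ \Hom_{\VCat}(\colim^{\widetilde{W'}} X', \Ea) \;\simeq\; \Hom_{[\Da^{\op}, \Cat]_{\text{bicat}}}\bigl(W, \Hom_{\VCat}(X(-), \Ea)\bigr) \]
which is pseudo-natural in $\Ea$, so that $\colim^{\widetilde{W'}} X'$ realizes the bicolimit $\colim^W X$ in the sense of Definition \ref{defn:bicolimit-in-VCat}.

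The main obstacle I anticipate is the bookkeeping rather than any conceptual subtlety: tracking variances between $\Da$ and $\Da^{\op}$, checking that strictification indeed commutes with taking opposites of bicategories, and verifying that both $\Hom_{\VCat}(X'(-), \Ea)$ and its $\Da$-variant genuinely land in strict (rather than merely pseudo) $2$-functors into $\Cat$ so that the adjunction \eqref{AdjStrictPseudoNatTrans} and Proposition \ref{prp:strictcolimVCat} both apply. Once these compatibilities are verified, the three equivalences above compose to give the desired universal property.
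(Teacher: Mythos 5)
Your proposal is correct and follows essentially the same route as the paper: strictify $W$ and $X$ via the universal property \eqref{Strictification}, pass from the pseudo-natural to the strict mapping 2-category using the adjunction \eqref{AdjStrictPseudoNatTrans} to replace $W'$ by $\widetilde{W'}$, and then invoke Proposition \ref{prp:strictcolimVCat}. The only cosmetic difference is that the paper strictifies the composite $\Hom_{\VCat}(X(-),\Ea)$ and then checks via a commuting triangle that it agrees with $\Hom_{\VCat}(X'(-),\Ea)$, whereas you strictify $X$ first and note the bookkeeping concerns explicitly.
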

\begin{proof}
Given any bicategory $\Da$ and pseudo-functors $W \colon \Da \to \Cat$ and $X \colon \Da \to \VCat$, we want to show that the $W$-weighted 2-colimit of the diagram $X$ exists. To that end, we use the strictification result \eqref{Strictification} and the adjunction in \eqref{AdjStrictPseudoNatTrans}:
\begin{align*}
\Hom_{[\Da^{\op},\Cat]_{\operatorname{bicat}}}(W, \Hom_{\VCat}(X(-),\Ea)) & \cong \Hom_{[\st(\Da)^{\op},\Cat]_{\operatorname{pseudo}}}(W', \Hom_{\VCat}(X(-), \Ea)') \\
& \cong \Hom_{[\st(\Da)^{\op},\Cat]_{\operatorname{strict}}}(\widetilde{W'}, \Hom_{\VCat}(X(-), \Ea)')
\end{align*}
We observe that in the diagram 
\begin{center}
\begin{tikzcd}[column sep = huge]
\st(\Da) \arrow[dr, "X'"] \arrow[drr, "\Hom_{\VCat}(X(-){,}\Ea)'", bend left = 15] & & \\
\Da \arrow[u,"\cong"] \arrow[r,"X",swap] & \VCat \arrow[r,"\Hom_{\VCat}(-{,}\Ea)",swap] & \Cat
\end{tikzcd}
\end{center}
the left triangle as well as the outer diagram commute by definition of the strictification of a pseudo-functor. 
It follows that also the right triangle commutes since the unit of the strictification adjunction is an equivalence. Finally, by Proposition \ref{prp:strictcolimVCat} we know that the $\widetilde{W'}$-weighted colimit of $X'$ exists. 
\end{proof}

Along the same line, we can also prove existence of bicolimits in the 2-category of locally presentable enriched categories. The proof is based on \cite{CFJ}, where 2-cocompleteness of $\mathsf{Pres}$ is proven for the $\Va = \mathsf{Set}$ case.

\begin{proposition}\label{prp:VPres_colimits}
$\VPres$ has all bicolimits.
\end{proposition}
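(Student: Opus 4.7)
The plan is to follow the same strategy as in the proof of Proposition \ref{Prp:bicolimits VCat}, adapted to the $\VRex$-setting. First, use the 2-equivalence $\VRex \simeq \VPres$ recalled in Section \ref{sect:the-2cat-VPres} to reduce the problem to exhibiting bicolimits in $\VRex$, whose objects are small $\Va$-categories and are therefore better behaved size-wise.

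Given a bicategory $\Da$ together with pseudo-functors $W \colon \Da^{\op} \to \Cat$ and $X \colon \Da \to \VRex$, apply the strictification isomorphism \eqref{Strictification} to pass to 2-functors $W', X'$ indexed by the strictification $\st(\Da)$, and then apply the left adjoint $\widetilde{(-)}$ of \eqref{AdjStrictPseudoNatTrans} to obtain $\widetilde{W'}$. Just as in the proof of Proposition \ref{Prp:bicolimits VCat}, this reduces the computation of the bicolimit to that of a strict $\Cat$-weighted colimit $\colim^{\widetilde{W'}} X'$ in $\VRex$.

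The remaining task, analogous to Proposition \ref{prp:strictcolimVCat}, is to show that $\VRex$ admits all strict $\Cat$-weighted colimits. This reduces to producing, within $\VRex$, ordinary 1-colimits as well as tensors and cotensors over $\Cat$. The tensor is given by $C \otimes \Da := C_\Va \boxtimes \Da$, where $\boxtimes$ is Kelly's tensor product from Section \ref{sect:the-2cat-VPres}; it lands in $\VRex$ because $\boxtimes$ preserves $\alpha$-small colimits in each variable by construction. The cotensor $C \pitchfork \Da$ is the full sub-$\Va$-category of $[C_\Va, \Da]$ spanned by those $\Va$-functors which preserve $\alpha$-small colimits, equipped with the pointwise colimit structure inherited from $\Da$.

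The main obstacle is constructing the ordinary 1-colimits in $\VRex$ and verifying their universal property with respect to the restricted class of morphisms preserving $\alpha$-small colimits, since the forgetful functor $\VRex \to \VCat$ is not full and so 1-colimits cannot simply be inherited from $\VCat$. This is precisely the setting addressed in \cite{CFJ} for $\Va = \mathsf{Set}$, where pseudo-coequalizers and pseudo-products in $\mathsf{Pres}$ are produced via explicit constructions; these arguments extend to the $\Va$-enriched setting using the closed symmetric monoidal structure on $\Va$ together with the enriched accessibility machinery of \cite{KellyLFP, BQR}.
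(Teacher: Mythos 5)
Your overall scaffolding (strictify the weight and diagram as in Proposition \ref{Prp:bicolimits VCat}, then reduce to strict $\Cat$-weighted colimits, which in turn reduce to ordinary 1-colimits plus tensors and cotensors over $\Cat$) is sound, but it is not the route the paper takes, and the step you flag as "the main obstacle" is precisely where your argument stops being a proof. The existence of 1-colimits (equivalently, of pseudo-coequalizers/coinserters/coequifiers) in $\VRex$ or $\VPres$ is the entire content of the proposition; deferring it to "the arguments of \cite{CFJ} extend to the enriched setting using the closed symmetric monoidal structure on $\Va$ together with \cite{KellyLFP, BQR}" is an appeal to a nontrivial enrichment of an already delicate construction, not a proof. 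The paper avoids this construction altogether by dualizing: since 1-cells of $\VPres$ are left adjoints, $\VPres^{\op}$ is the 2-category of locally $\alpha$-presentable $\Va$-categories and \emph{right} adjoints, and Bird's theorem \cite[Theorem 6.10]{Bird} says this 2-category has all products, inserters and equifiers, \emph{computed in $\VCat$}. By \cite[Proposition 5.2]{Kelly2limits} PIE-limits give all pseudo-limits, and the same strictification argument as in Proposition \ref{Prp:bicolimits VCat} upgrades these to bilimits of $\VPres^{\op}$, i.e.\ bicolimits of $\VPres$. The payoff of that route is that no new universal object has to be built: everything is inherited from the bicompleteness of $\VCat$ already established in Proposition \ref{prp:strictcolimVCat}. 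Your covariant route would, if completed, give a more explicit description of the colimits, but completing it is genuinely harder.

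Two smaller points in your construction of the tensor and cotensor. The free $\Va$-category $C_\Va$ on an ordinary category $C$ does not have $\alpha$-small colimits, so it is not an object of $\VRex$ and the expression $C_\Va \boxtimes \Da$ is not defined as written; you must first complete $C_\Va$ under $\alpha$-small colimits before applying Kelly's $\boxtimes$. Dually, in your proposed cotensor the restriction to $\alpha$-small-colimit-preserving functors out of $C_\Va$ is vacuous (there are no colimits in $C_\Va$ to preserve), so the cotensor is simply the full functor category $[C_\Va,\Da]$ with pointwise colimits; the phrasing suggests a confusion between the source and target of the preservation condition.
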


\begin{proof}
Let $\VPres^\text{op}$ be the category whose objects are locally $\alpha$-presentable $\Va$-categories, 1-morphisms are right adjoints and 2-morphisms are enriched natural transformations. In \cite[Theorem 6.10]{Bird} it was shown that $\VPres^\text{op}$ has all products, inserters and equifiers and that they are computed in $\VCat$. It then follows from \cite[Proposition 5.2]{Kelly2limits} that $\VPres^\text{op}$ has all pseudo-limits. We conclude by the same arguments as in the proof of Proposition \ref{Prp:bicolimits VCat} that $\VPres^\text{op}$ has all bilimits and thus $\VPres$ has all bicolimits.
\end{proof}

\subsection{Relative tensor products}\label{app:reltens}
In this section we first complete the definition for the enriched Tambara relative tensor product from Definition \ref{df:TambaraTensProd} by spelling out the complete list of relations. This list of relations is tailored such that giving a functor out of the Tambara tensor product of two $\Va$-enriched categories is equivalent to giving a \emph{balanced functor} (see Definition \ref{df:BalancedFunctor}) out of the naive $\Va$-enriched tensor product. Finally, we prove that the Tambara relative tensor product agrees with the colimit of the truncated bar construction from Section \ref{sect:bar-construction}. 

\paragraph{Enriched Tamabara relative tensor product}

The following is the complete list of relations we impose in Definition \ref{df:TambaraTensProd}.

\begin{itemize}
\item Isomorphism:
\end{itemize}

\begin{center}
\begin{tikzcd}
1_\Va \cong 1_\Va \otimes^\Va 1_\Va \arrow[d,"(\iota_{m \triangleleft a, n}^{m ,a \triangleright n})^{-1} \otimes^\Va \iota_{m \triangleleft a, n}^{m ,a \triangleright n} "]  \\
\Free(\Omega)((m,a \triangleright n),(m \triangleleft a ,n)) \otimes^{\Va} \Free(\Omega)((m \triangleleft a,n),(m, a \triangleright n)) \arrow[d,"-\circ-"]  \\
\Free(\Omega)((m \triangleleft a,n),(m \triangleleft a,n))  
\end{tikzcd}        
\end{center}
\begin{equation}\label{reln:iso}
\sim 
\end{equation}
\begin{center}
\begin{tikzcd}
 1_\Va \arrow[d,"\id_{(m \triangleleft a,n)}"] \\
\Free(\Omega)((m \triangleleft a,n),(m \triangleleft a,n))
\end{tikzcd}  
\end{center}

Similarly, we impose the relation $\iota_{m \triangleleft a, n}^{m ,a \triangleright n} \circ (\iota_{m \triangleleft a, n}^{m ,a \triangleright n})^{-1} \sim \id_{(m, a \triangleright n)}$.

\begin{itemize}
\item Naturality:
\end{itemize}

\begin{center}
\begin{tikzcd}
1_\Va \cong 1_\Va \otimes^\Va 1_\Va \otimes^\Va  1_\Va \otimes^\Va 1_\Va \arrow[d,"f \otimes^\Va u \otimes^\Va g \otimes^\Va \iota_{m \triangleleft a, n}^{m, a \triangleright n}"] \\
\Ma(m,m') \otimes^\Va \Aa(a,a') \otimes^\Va \Na(n,n') \otimes^\Va \Free(\Omega)((m \triangleleft a,n),(m,a \triangleright n)) \arrow[d,"\id_\Va \otimes^\Va \triangleright_{(a,n),(a',n')} \otimes^\Va \id_\Va"] \\
\Free(\Omega)((m,a \triangleright n), (m',a' \triangleright n')) \otimes^\Va \Free(\Omega)((m \triangleleft a,n),(m,a \triangleright n)) \arrow[d,"-\circ-"] \\
\Free(\Omega)((m \triangleleft a,n),(m',a' \triangleright n'))
\end{tikzcd}    
\end{center}
\begin{equation}\label{reln:naturality}
\sim
\end{equation}

\begin{center}
\begin{tikzcd}
1_\Va \cong 1_\Va \otimes^\Va 1_\Va \otimes^\Va  1_\Va \otimes^\Va 1_\Va  \arrow[d,"\iota^{m',a'\triangleright n'}_{m'\triangleleft a',n'} \otimes^\Va f \otimes^\Va u \otimes^\Va g"] \\
\Free(\Omega)((m' \triangleleft a',n'),(m',a' \triangleright n')) \otimes^\Va \Ma(m,m') \otimes^\Va \Aa(a,a') \otimes^\Va \Na(n,n') \arrow[d,"\id_\Va \otimes^\Va \triangleleft_{(m,a),(m',a')} \otimes^\Va \id_\Va"] \\
\Free(\Omega)((m' \triangleleft a',n'),(m',a' \triangleright n')) \otimes^\Va \Free(\Omega)((m \triangleleft a,n),(m'\triangleleft a',n'))  \arrow[d,"-\circ-"]\\
\Free(\Omega)((m \triangleleft a,n),(m',a' \triangleright n'))
\end{tikzcd}
\end{center}

\begin{itemize}
\item Compatibility with the associator:
\end{itemize}

\begin{center}
\begin{tikzcd}
1_\Va \cong 1_\Va \otimes^\Va 1_\Va \otimes^\Va 1_\Va  \arrow[d,"(\id_m \otimes^\Va \beta^\Na_{a,b,n}) \otimes^\Va \iota_{m \triangleleft a,b \triangleright n}^{m,a \triangleright b \triangleright n} \otimes^\Va \iota_{m \triangleleft a \triangleleft b, n}^{m \triangleleft a, b \triangleright n}"] \\
\begin{tabular}{c}
$\Free(\Omega)((m,a \triangleright (b \triangleright n)),(m,(a\otimes b) \triangleright n)) \otimes^\Va \Free(\Omega)((m \triangleleft a, b \triangleright n),(m,a \triangleright(b\triangleright n)))$\\
$\otimes^\Va \Free(\Omega)((m \triangleleft a) \triangleleft b, n), (m \triangleleft a,b \triangleright n))$
\end{tabular} \arrow[d,"- \circ - \circ -"] \\
\Free(\Omega)(((m \triangleleft a) \triangleleft b,n),(m,(a \otimes b) \triangleright n))
\end{tikzcd}
\end{center}
\begin{equation}\label{reln:associatior}
\sim
\end{equation}
\begin{center}
\begin{tikzcd}
1_\Va \cong 1_\Va \otimes^\Va 1_\Va \arrow[d,"\iota_{m \triangleleft a \otimes b, n }^{m, a \otimes b \triangleright n} \otimes^\Va (\beta^\Ma_{m,a,b} \otimes^\Va \id_n)"] \\
\Free(\Omega)((m \triangleleft (a \otimes b),n),(m,(a \otimes b) \triangleright n)) \otimes^\Va \Free(\Omega)(((m \triangleleft a) \triangleleft b,n),(m\triangleleft(a \otimes b), n)) \arrow[d,"-\circ -"] \\
\Free(\Omega)(((m \triangleleft a) \triangleleft b,n),(m,(a \otimes b) \triangleright n))
\end{tikzcd}
\end{center}

\begin{itemize}
\item Compatibility with unitors:
\end{itemize}

\begin{center}
\begin{tikzcd}
1_\Va \cong 1_\Va \otimes^\Va 1_\Va \arrow[d,"(\id_m \otimes^\Va \eta^L_n) \otimes^\Va \iota_{m \triangleleft 1, n}^{m,1 \triangleright n}"] \\
\Free(\Omega)((m,1_\Aa \triangleright n),(m,n)) \otimes^\Va \Free(\Omega)((m \triangleleft 1_\Aa, n),(m , 1_\Aa \triangleright n))\arrow[d,"-\circ-"] \\
\Free(\Omega)((m \triangleleft 1_\Aa,n),(m,n))
\end{tikzcd}
\end{center}
\begin{equation}\label{reln:unitor}
\sim
\end{equation}
\begin{center}
\begin{tikzcd}
 1_\Va \arrow[d,"\eta^R_m \otimes^\Va \id_n"] \\
 \Free(\Omega)((m \triangleleft 1_\Aa, n),(m,n))
\end{tikzcd}
\end{center}

\paragraph{Balanced functors}
We here give the definition of balanced functors and natural transformations. 

\begin{definition}\label{df:BalancedFunctor}
A $\Va$-functor $\Fa \colon \Ma \catprod \Na \to \Da$ is called \emph{$\Aa$-balanced} if it is equipped with a natural isomorphism 
\begin{center}
    \begin{tikzcd}[sep = small]
    & \Ma \catprod \Na \arrow[dr,"\Fa"] \arrow[dd, Rightarrow, shorten >=10pt, shorten <=10pt,"\alpha"] \\
    \Ma \catprod \Aa \catprod \Na \arrow[ur,"\triangleleft \catprod \id_\Na"r] \arrow[dr,"\id_\Ma \catprod \triangleright",swap] & & \Da \\
    & \Ma \catprod \Na \arrow[ur, "\Fa",swap]
    \end{tikzcd}
\end{center}
such that 
\begin{center}
\begin{equation} 
\begin{tikzcd}[row sep=20pt, column sep=13pt] 
    && M \catprod A \catprod A \catprod N \arrow[d, swap, "\id_{M\catprod A} \catprod \triangleright ", ""{name=A}] \arrow[dr, bend left=20, "\id \catprod \otimes^\Aa \catprod \id", ""{name=U}]  \arrow[dll, bend right=15, swap, "\triangleleft \catprod \id_{A\catprod N}"] &
    \\ M\catprod A \catprod N \arrow[d, swap, "\triangleleft \catprod \id_N"] \arrow[dr, "\id_M \catprod \triangleright", ""{name=B}] &&M \catprod A \catprod N \arrow[Rightarrow, to=U, shorten >=10pt, shorten <=10pt, "\beta^N"] \arrow[d, swap, "\triangleleft \catprod \id_N"] \arrow[dr, "\id_M \catprod \triangleright", ""{name=E}] &M \catprod A \catprod N \arrow[d, "\id_M \catprod \triangleright"]
    \\ M \catprod N \arrow[drr, swap, bend right=30, "\Fa", ""{name=C}] & M\catprod N \arrow[dr, bend right=0, swap, "\Fa", ""{name=D}]& M\catprod N \arrow[d, "\Fa"]  \arrow[to=E, Rightarrow, shorten >=7pt, shorten <=7pt, "\alpha"] &M \catprod N \arrow[dl, bend left=15, "\Fa"]
    \\ &&\Da
    \arrow[from=B, to=A, Rightarrow, shorten >=35pt, shorten <=35pt, "\id"] \arrow[from=C, to=D, Rightarrow,  shorten >=10pt, shorten <=10pt, "\alpha"]
\end{tikzcd}
\end{equation}
\begin{equation}\label{diag:balancedfunctor} 
\  = \ 
\begin{tikzcd}
	&M\catprod A \catprod A \catprod N \arrow[dl, bend right=20, swap,  "\triangleleft \catprod \id_{A \catprod N}"] \arrow[d, "\id \catprod \otimes^\Aa \catprod \id"]
	\\ M \catprod A \catprod N \arrow[dr, swap, bend right=20, "\triangleleft \catprod \id_N"] \arrow[r, Rightarrow,  shorten >=8pt, shorten <=8pt, "\beta^M"]& M\catprod A \catprod N \arrow[d,  "\triangleleft \catprod \id_N"] \arrow[dr, bend left=20, "\id_M \catprod \triangleright"]
	\\ &M \catprod N  \arrow[d, swap, "\Fa"] \arrow[r, Rightarrow,  shorten >=10pt, shorten <=10pt, "\alpha"] & M\catprod N \arrow[dl, bend left=20, "\Fa" ]
	\\ & \Da
\end{tikzcd}
\end{equation}
\end{center}

An \emph{$\Aa$-balanced natural transformation} is a natural transformation $\gamma \colon \Fa \to \Ga$ between $\Aa$-balanced functors such that 

\begin{center}
\begin{tikzcd}[column sep=huge] 
	\Ma \catprod \Aa \catprod \Na \arrow[r, "\lhd \catprod \id_{\Na}"] \arrow[rd, bend right=20, swap, "\id \catprod \rhd"]  & \Ma \catprod \Na \arrow[r, "\Fa"] \arrow[d, Rightarrow, shorten >=2pt, shorten <=2pt, "\alpha_{\Fa}"] & \Da
	\\ &  \Ma \catprod \Na \arrow[ru, bend right=10, "\Fa"{name=F}] \arrow[ru, bend right=60, swap, "\Ga"{name=G}] \arrow[Rightarrow, from=F, to=G, shorten <=6pt, shorten >=6pt, "\gamma"]
\end{tikzcd}
$$
\sim
$$
\begin{tikzcd}[column sep=huge]
    \Ma \catprod \Aa \catprod \Na \arrow[r, "\lhd\catprod \id_{\Na}"] \arrow[rdd, bend right=20, swap, "\id\catprod \rhd"] & \Ma \catprod \Na \arrow[r, "\Fa"{name=F}] \arrow[dd, Rightarrow, shorten <=8pt, shorten >=8pt, "\alpha_{\Ga}"] \arrow[r, bend right=50, swap, pos=0.52354, "\Ga"{name=G}] \arrow[Rightarrow, from=F, to=G, shorten <=5pt, shorten >=5pt, "\gamma"] & \Da
    \\ \\ & \Ma \catprod \Na \arrow[ruu, bend right=20, swap, "\Ga"]
\end{tikzcd}
\end{center}
\end{definition}
The category of \(\Aa\)-balanced functors and natural transformations is denoted \(\BalFunA(\Ma\catprod \Na, \Da)\).

\subsubsection{Proof of equivalence of tensor products}\label{app:eqTambaraBar}
We now turn to the proof that the Tambara relative tensor product agrees with the colimit of the truncated bar construction. 

Before we give the proof of Theorem \ref{thm:eqTambaraBar}, recall from Definition \ref{defn:bicolimit-in-VCat} that given a diagram $X$ of shape $\Da$ in $\VCat$, its bicolimit is an object $\colim X$ such that for every $\Ea \in \VCat$ there is an equivalence
$$
\Hom_{\VCat}(\colim X, \Ea) \cong [\Da,\VCat](X, \Delta(\Ea))
$$
(pseudo)natural in $\Ea$, where $\Delta(\Ea)$ is the constant diagram. 
By \cite[Proposition 2.25]{Cooke19}, the category of cocones over $X$ with summit $\Ea$ has a concise description we recall now. 
By slight abuse of notation we also write $A,B,C$ for the image of $A,B,C \in \Da$ under $X$, and similarly for the morphisms. 
Objects $\sigma$ in $ [\Da,\VCat](X, \Delta(\Ea))$ are given by the data
\begin{center}
\begin{tikzcd}[sep = large]
A \arrow[r,yshift=1ex] \arrow[r] \arrow[r,yshift=-1ex] \arrow[drr,bend right=30,"\sigma_A"{name=A},swap]& B \arrow[dr,bend right=30,"\sigma_B"{name=B},swap] \arrow[r,yshift=0.5ex] \arrow[r,yshift=-0.5ex]\arrow[Rightarrow, shorten >=10pt, shorten <=10pt, from=A,"\sigma_{G_i}"]  & C \arrow[d,"\sigma_C"]\arrow[Rightarrow, shorten >=10pt, shorten <=10pt, from=B,"\sigma_{F_i}"] \\
&& \Ea
\end{tikzcd}
\end{center}
where 
\begin{equation}
\sigma_{G_i} \colon \sigma_A \Rightarrow \sigma_B \circ G_i~~(i = 0,1,2) \quad \mathrm{and} \quad \sigma_{F_i} \colon \sigma_B \Rightarrow \sigma_C \circ F_i~~(i = 0,1)
\end{equation}
satisfy the following relations
\begin{align}
\begin{split}
\sigma_C \kappa_1 & = \Delta_{02}\Delta_{10}^{-1} \\
\sigma_C \kappa_2 & = \Delta_{01}\Delta_{00}^{-1} \\
\sigma_C \kappa_3 & = \Delta_{11}\Delta_{12}^{-1}
\end{split}
\label{eq:Kappas}
\end{align}
with $\Delta_{ij} = (\sigma_{F_i} G_j)\sigma_{G_j}$. The morphisms in $[\Da,\VCat](X, \Delta(\Ea))$ are natural isomorphisms 
\begin{center}
\begin{tikzcd}[sep = large]
C \arrow[r, bend left, "\sigma_C"{name=A}]
   \arrow[r, swap, bend right, "\eta_C"{name=B}]
 & \Ea \arrow[Rightarrow, shorten >=2pt, shorten <=2pt, from=A, to=B, "\Xi_C"]
\end{tikzcd}
\end{center}
satisfying the relations 
\begin{align}
\eta^{-1}_{F_0} (\Xi F_0) \sigma_{F_0} & = \eta^{-1}_{F_1} (\Xi F_1) \sigma_{F_1} \label{eq:Xi1}\\
(\Delta^\eta_{ij})^{-1}(\Xi_C F_i G_j)\Delta^\sigma_{ij} & =  (\Delta^\eta_{kl})^{-1} (\Xi_C F_k G_l) \Delta^\sigma_{kl} \label{eq:Xi2}
\end{align}
for $i,k \in \{0,1\}$ and $j,l \in \{0, 1, 2\}$.

\begin{proof}[Proof of Theorem \ref{thm:eqTambaraBar}]
Let $X$ denote the truncated bar construction. We abbreviate 
$$
A = \Ma \catprod \Aa \catprod \Aa \catprod \Na, \quad B = \Ma \catprod \Aa \catprod \Na, \quad C = \Ma \catprod \Na \ \ .
$$
Following the proof of \cite[Theorem 2.27]{Cooke19}, we will show that there is an equivalence 
$$
[\Da,\VCat](X, \Delta(\Ea)) \xrightarrow{\Ia_\Ea} \BalFunA(\Ma \catprod \Na,\Ea)
$$
between cocones over $X$ with summit $\Ea$ and $\Aa$-balanced functors $\Ma \catprod \Na \to \Ea$. 
We define the functor $\Ia_\Ea$ on objects $\sigma \in [\Da,\VCat](X,\Delta(\Ea))$ by 
$$
\Ia_\Ea(\sigma) = \sigma_C \text{ with balancing } \alpha = \sigma_{\id_\Ma \catprod \triangleright} \sigma_{\triangleleft \catprod \id_\Na}^{-1} \colon \sigma_C (\triangleleft \catprod \id_\Na) \Rightarrow \sigma_C (\id_\Ma \catprod \triangleright) \ \  . 
$$
On morphisms we define $\Ia_\Ea(\Xi) = \Xi$.

\paragraph{$\Ia_\Ea$ is well-defined.}
The pair \(\left(\Ia_\Ea(\sigma), \alpha \right)\) is a balanced functor due to equations \eqref{eq:Kappas} in a similar fashion to the proof of \cite[Theorem 2.27]{Cooke19}. \(\Ia_\Ea (\Xi)\) is a natural transformation of balanced functors due to \eqref{eq:Xi1}.

\paragraph{$\Ia_\Ea$ is surjective on objects.}

Let $H \colon \Ma \catprod \Na \to \Ea$ be an $\Aa$-balanced $\Va$-functor with balancing $\alpha$. We define an element $\sigma \in [\Da,\VCat](X,\Delta(\Ea))$ by the following diagram
\begin{center}
\begin{tikzcd}[sep = large]
A \arrow[r,yshift=2ex,"\triangleleft"] \arrow[r, "\otimes"] \arrow[r,yshift=-2ex, "\triangleright"] \arrow[drr,bend right=40,"H \circ \triangleleft \circ \otimes"{name=A},swap]& B \arrow[dr,bend right=30,"H \circ \triangleleft"{name=B},swap] \arrow[r,yshift=1ex,"\triangleleft"] \arrow[r,yshift=-1ex,"\triangleright"]\arrow[Rightarrow, shorten >=10pt, shorten <=10pt, from=A,"\sigma_{G_i}"]  & C \arrow[d,"H"]\arrow[Rightarrow, shorten >=10pt, shorten <=10pt, from=B,"\sigma_{F_i}"] \\
&& \Ea
\end{tikzcd}
\end{center}
Note that for the sake of lighter notation we abbreviated for example $\triangleleft \catprod \id_{\Na} = \triangleleft$. The 2-morphisms for the above diagram are defined by $\sigma_{F_0} = \id$, $\sigma_{F_1} = \alpha$, $\sigma_{G_0} = \eqref{eq:sigmaG0}$, $\sigma_{G_1} = \id$, and $\sigma_{G_2} = \eqref{eq:sigmaG2}$: 

\begin{equation}
\begin{tikzcd}
& B \arrow[dr,"\triangleleft", bend left = 30] && \\
 A \arrow[ur, "\triangleleft", bend left = 30] \arrow[r,"\otimes",swap] & B \arrow[r,"\triangleleft",swap] \arrow[Rightarrow,u, shorten >=2.5pt, shorten <=2.5pt, "\beta_\Ma^{-1}",swap] & C \arrow[r,"H"] & \Ea  
\end{tikzcd} 
\label{eq:sigmaG0} 
\end{equation}

\begin{equation}
\begin{tikzcd}[column sep = large, row sep = tiny]
&& C \arrow[ddr,"H", bend left = 20] & \\
& B \arrow[ur,"\triangleleft",bend left =20] \arrow[dr,"\triangleright",bend left = 10,swap] && \\
A \arrow[ur, "\triangleright", bend left = 10,swap] \arrow[dr, "\otimes",bend right = 10] && C \arrow[uu,Rightarrow, shorten >=5pt, shorten <=5pt,"\alpha^{-1}",swap] \arrow[r,"H"] & \Ea \\
& B \arrow[ur,"\triangleright",bend right = 10] \arrow[uu, Rightarrow, shorten >=2.5pt, shorten <=2.5pt, "\beta_\Na^{-1}",swap] \arrow[dr,"\triangleleft",bend right = 10,swap] && \\
&& C \arrow[uu,Rightarrow,shorten >=5pt, shorten <=5pt,"\alpha",swap] \arrow[uur,bend right = 20,"H",swap] &
\label{eq:sigmaG2} 
\end{tikzcd}
\end{equation}
It is immediate that $\Ia_\Ea (\sigma) = (H, \alpha)$. However, we need to check that $\sigma$ is indeed an element in $[\Da,\VCat](X,\Delta(\Ea))$, i.e.~that the Equations \eqref{eq:Kappas} are satisfied. The equation for $\kappa_1$ is satisfied due to \eqref{diag:balancedfunctor} for the balancing. The equations for $\kappa_2$ and $\kappa_3$ follows directly from the definitions of the $\sigma_{F_i}$ and $\sigma_{G_i}$.

\paragraph{$\Ia_\Ea$ is fully faithful.}

Let $\gamma \colon H \to K$ be a map of $\Aa$-balanced functors. Let $\sigma$ and $\eta$ be cocones defined as in the previous paragraph such that $\Ia_\Ea(\sigma) = (H,\alpha_H)$ and $\Ia_\Ea(\eta) = (K,\alpha_K)$. Define the natural isomorphism 
\begin{center}
\begin{tikzcd}[sep = large]
C \arrow[r, bend left, "\sigma_C = H"{name=A}]
   \arrow[r, swap, bend right, "\eta_C = K"{name=B}]
 & \Ea \arrow[Rightarrow, shorten >=2.5pt, shorten <=2.5pt,from=A, to=B, "\Xi_C"]
\end{tikzcd}
\end{center}
by $\Xi_C = \gamma$. We have to show that $\gamma$ is well-defined. Equation \eqref{eq:Xi1} reads 
\begin{center}
\begin{tikzcd}[column sep = large, row sep = normal]
B \arrow[r,"\triangleleft"] & A \arrow[r,"H"{name=1}] \arrow[r,"K"{name=2},bend right = 80,swap] & \Ea & = & B \arrow[ddr,"\triangleleft",swap,bend right = 40] \arrow[dr,"\triangleright",bend right = 20] \arrow[r,"\triangleleft"] & A \arrow[d,Rightarrow,shorten >=2.5pt, shorten <=2.5pt,"\alpha_H",swap]\arrow[r,"H"] & \Ea \\
&&&&& A \arrow[d,Rightarrow,shorten >=2.5pt, shorten <=2.5pt,"\alpha_K^{-1}"] \arrow[ur,"H"{name=3},near start,bend left = 20] \arrow[ur,"K"{name=4},bend right = 30,swap] & \\
&&&&& A  \arrow[uur,"K",bend right = 40,swap]
\arrow[Rightarrow,shorten >=5pt, shorten <=5pt,from=1,to=2,"\gamma"]
\arrow[Rightarrow,shorten >=5pt, shorten <=5pt,from=3,to=4,"\gamma"]
\end{tikzcd}
\end{center}
which holds by definition of a $\Aa$-balanced natural transformation. For Equations \eqref{eq:Xi2} one needs to show that $(\Delta^\eta_{ij})^{-1}(\gamma F_i G_j) \Delta^\sigma_{ij}$ does not depend on the values of $i$ and $j$. Independence of $i$ is again a direct consequence of $\eta$ being a $\Aa$-balanced natural transformation. For a detailed proof that the expression is also independent of $j$, we refer to the proof given in \cite[Theorem 2.27]{Cooke19}, which directly carries over to the enriched setting.
\end{proof}

\section{Pullbacks of bicategories} \label{appendix:pullback}
In Section \ref{sec:DefQuantPullback} we defined a pullback of symmetric monoidal bicategories ${\E_\infty(\C\mhyphen\mathsf{Cat})}$ and ${\E_i(\Ce\mhyphen\mathsf{Cat})}$ over ${\E_i(\C\mhyphen\mathsf{Cat})}$. In this appendix we consider slightly more general symmetric monoidal bicategories and prove some useful results about their pullbacks.
\par

Let us first specify what kind of symmetric monoidal bicategories (SMBCs) are relevant to us. Our motivating example is the bicategory $R\mhyphen\mathsf{Cat}$ of $R$-linear categories, for $R$ a commutative ring. As a bicategory, it is strict. Its ``monoidal product'' homomorphism $\catprod$ is strict, i.e. the components $\phi^\times$ in \cite[Def.~2.3]{SchommerPriesThesis} are identities. Still using the notation of loc.cit., the transformations $\alpha, \ell, r, \beta$ are isomorphisms, with identities as 2-cells, and the modifications $\pi, \mu, \lambda, \rho$ are identities as well. In short, $R\mhyphen\mathsf{Cat}$ has no non-trivial coherence 2-cells in its definition. 
\begin{notation}
    We call symmetric monoidal bicategories as above \emph{2-strict}. 
    Further, we only consider such 2-strict SMBCs here, and henceforth suppress 2-strict from the notation. 
\end{notation}
Note that this differs from a notion of (symmetric) Gray monoid \cite[Def.~2.26]{SchommerPriesThesis}, as the transformations $\alpha, r, \ell$ are not identities. One can moreover check that for \(\Caa\) a 2-strict SMBC, $\E_1(\Caa)$ is again a 2-strict SMBC. In this case the 2-cells giving $\E_1$-structure on the 1-cells $\alpha, \ell, r, \beta$ can be chosen as identities.
\par

Similarly, our prototype symmetric monoidal homomorphism is $-\otimes_R S \colon R\mhyphen \mathsf{Cat} \to S\mhyphen \mathsf{Cat}$ coming from a ring homomorphism $R\to S$. Using the notation of \cite[Def.~2.5]{SchommerPriesThesis}, the transformations $\chi$ are isomorphisms, the 2-cells $\chi_{fg}$ are identities and the modifications $\omega, \gamma, \delta, u$ are identities as well. Again, we will assume that our symmetric monoidal homomorphisms are 2-strict in this sense. If $H\colon \Caa \to \Daa$ is 2-strict, then so is the $\E_1$-extension $\E_1(H)\colon \E_1(\Caa) \to \E_1(\Daa)$. 

\begin{definition}\label{def:pullback}
Consider a diagram of of 2-strict symmetric monoidal bicategories and 2-strict symmetric monoidal homomorphisms  
\[  \Caa_1  \xrightarrow{H_1} \mathsf{\Daa} \xleftarrow{H_2} \Caa_2. \]
Then the \emph{pullback $\Caa_1 \times_{\Daa} \Caa_2$} is defined by cells
    \begin{align*}
        \text{a $0$-cell $\Ca$: } &(\Ca_1 \in \Caa_1, \Ca_2 \in \Caa_2, \psi_\Ca \colon H_1\Ca_1 \xrightarrow{\cong} H_2 \Ca_2 \in \Daa) \\
        \text{a $1$-cell $F\colon \Ca \to \Da$: } &(F_1\colon \Ca_1\to \Da_1 \in \Caa_1, F_2\colon \Ca_2\to\Da_2 \in \Caa_2, \varepsilon_F \colon \psi_\Da \circ H_1F_1 \Rightarrow H_2F_2 \circ \psi_\Ca \in \Daa)\\
        \text{a $2$-cell $\alpha\colon F \Rightarrow G$: } &(\alpha_1\colon F_1 \Rightarrow G_1 \in \Caa_1, \alpha_2\colon F_2\Rightarrow G_2 \in \Caa_2)
    \end{align*}
    where $\alpha_i$ intertwine $\varepsilon_F$ and $\varepsilon_G$ as in \eqref{eq:pullback2cell}, i.e.~
\begin{equation}\label{eq:pullback2cellgen}\begin{tikzcd}[sep=large]
	{H_1\Ca_1} & {H_2\Ca_2} \\
	{H_1\Da_1} & {H_2\Da_2}
	\arrow["{\psi_\Ca}", from=1-1, to=1-2]
	\arrow["{\psi_\Da}"', from=2-1, to=2-2]
	\arrow[""{name=0, anchor=center, inner sep=0}, "{H_1G_1}"{pos=0.4}, curve={height=-18pt}, from=1-1, to=2-1]
	\arrow["{H_2G_2}", curve={height=-18pt}, from=1-2, to=2-2]
	\arrow[""{name=1, anchor=center, inner sep=0}, "{H_1F_1}"'{pos=0.4}, curve={height=18pt}, from=1-1, to=2-1]
	\arrow["{\varepsilon_G}"{description}, shift right=3, shorten <=7pt, Rightarrow, from=2-1, to=1-2]
	\arrow["{H_1\alpha_1}", shorten <=7pt, shorten >=7pt, Rightarrow, from=1, to=0]
\end{tikzcd} \quad =
\quad
\begin{tikzcd}[sep=large]
	{H_1\Ca_1} & {H_2\Ca_2} \\
	{H_1\Da_1} & {H_2\Da_2}
	\arrow["{\psi_\Ca}", from=1-1, to=1-2]
	\arrow["{\psi_\Da}"', from=2-1, to=2-2]
	\arrow[""{name=0, anchor=center, inner sep=0}, "{H_2G_2}"{pos=0.6}, curve={height=-18pt}, from=1-2, to=2-2]
	\arrow["{H_1F_1}"', curve={height=18pt}, from=1-1, to=2-1]
	\arrow[""{name=1, anchor=center, inner sep=0}, "{H_2F_2}"'{pos=0.6}, curve={height=18pt}, from=1-2, to=2-2]
	\arrow["{\varepsilon_F}"{description, pos=0.4}, shift left=3, shorten >=7pt, Rightarrow, from=2-1, to=1-2]
	\arrow["{H_2\alpha_2}", shorten <=7pt, shorten >=7pt, Rightarrow, from=1, to=0]
\end{tikzcd}.
\end{equation}
The composition of 1-cells $G\circ F$, with $F\colon \Ca \to \Da$ and $G \colon \Da \to \Ea$, is defined by
\begin{equation}
    G\circ F = \left(G_1\circ F_1, G_2 \circ F_2, \begin{tikzcd}
	{H_1 \Ca_1} & {H_1 \Da_1} & {H_1 \Ea_1} \\
	{H_2 \Ca_2} & {H_2 \Da_2} & {H_2 \Ea_2}
	\arrow["{H_1 F_1}", from=1-1, to=1-2]
	\arrow["{H_1 G_1}", from=1-2, to=1-3]
	\arrow["{\psi_\Ca}"', from=1-1, to=2-1]
	\arrow["{H_2 F_2}"', from=2-1, to=2-2]
	\arrow["{H_2 G_2}"', from=2-2, to=2-3]
	\arrow["{\psi_\Da}"{description}, from=1-2, to=2-2]
	\arrow["{\psi_\Ea}", from=1-3, to=2-3]
	\arrow["{\varepsilon_F}"', shorten <=4pt, shorten >=4pt, Rightarrow, from=1-2, to=2-1]
	\arrow["{\varepsilon_G}"', shorten <=4pt, shorten >=4pt, Rightarrow, from=1-3, to=2-2]
\end{tikzcd}\right)
\end{equation}
For 2-cells, the vertical and horizontal composition is defined by the respective compositions in $\Caa_i$. 
\end{definition}
\begin{notation}
    We will call the cells $\Ca_i$, $F_i$ and $\alpha_i$ of $\Caa_i$ the \emph{components} of the cells $\Ca$, $F$ and $\alpha$, respectively. 
\end{notation}

\begin{definition}[Symmetric monoidal structure on the pullback]
    Let $\Ca, \Da$ be two 0-cells of $\Caa_1 \times_{\Daa} \Caa_2$. Their product is defined by
    \[ \Ca\catprod \Da = (\Ca_1 \catprod \Da_1, \Ca_2 \catprod \Da_2, H_1(\Ca_1\catprod \Da_1)\cong H_1 \Ca_1 \catprod H_1\Da_1 \xrightarrow{\psi_\Ca \catprod \psi_\Da} H_2\Ca_2\catprod H_2\Da_2 \cong H_2(\Ca_2\catprod \Da_2)  ). \]
    For two 1-cells $F\colon \Ca \to \Ca'$ and $G\colon \Da \to \Da'$, the product $F\catprod G$ is defined by having components $F_i \catprod G_i$ and the $\varepsilon_{F\catprod G}$ 2-cell is given by 
    \[\begin{tikzcd}
	{H_1(\Ca_1 \catprod \Da_1)} &&[20pt]& {H_1(\Ca_1'\catprod \Da_1')} \\
	& {H_1\Ca_1 \catprod H_1\Da_1} & {H_1\Ca_1' \catprod H_1\Da_1'} \\[10pt]
	& {H_2\Ca_2 \catprod H_2\Da_2} & {H_2\Ca_2' \catprod H_2\Da_2'} \\
	{H_2(\Ca_2 \catprod \Da_2)} &&& {H_2(\Ca_2' \catprod \Da_2')}
	\arrow["{H_1(F_1 \catprod G_1)}", from=1-1, to=1-4]
	\arrow[from=1-1, to=4-1]
	\arrow[from=2-2, to=2-3]
	\arrow["{H_2F_2\catprod H_2 G_2}"', from=3-2, to=3-3]
	\arrow["{\psi_\Ca\catprod \psi_\Da}"{description}, from=2-2, to=3-2]
	\arrow["{H_2(F_2\catprod G_2)}"', from=4-1, to=4-4]
	\arrow["{\psi_\Ca'\catprod \psi_\Da'}"{description}, from=2-3, to=3-3]
	\arrow["{H_1F_1\catprod H_1 G_1}", from=2-2, to=2-3]
	\arrow["\cong"', from=1-1, to=2-2]
	\arrow["\cong"', from=4-1, to=3-2]
	\arrow["\cong"', from=1-4, to=2-3]
	\arrow["\cong"', from=4-4, to=3-3]
	\arrow[from=1-4, to=4-4]
	\arrow["{\varepsilon_F\catprod \varepsilon_G}"'{pos=0.4}, shorten <=8pt, shorten >=8pt, Rightarrow, from=2-3, to=3-2]
\end{tikzcd}\]
The product of 2-cells is given by the corresponding product of their components.
\end{definition}
\begin{proposition}\label{prop:SMBCpullback}
    The above structure endows the pullback $\Caa_1 \times_{\Daa} \Caa_2$ with the structure of a 2-strict symmetric monoidal bicategory.
\end{proposition}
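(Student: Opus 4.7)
The plan is to transport all the bicategorical and symmetric monoidal structure from the components $\Caa_1, \Caa_2$ pointwise, using the 2-strictness of $H_1, H_2$ to manage the extra compatibility data $\psi, \varepsilon$. I proceed in four steps.

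\textbf{Step 1: Bicategory structure.} First I would verify that $\Caa_1 \times_\Daa \Caa_2$ is a bicategory. Composition of 1-cells is already defined, with the $\varepsilon$-component obtained by pasting the two squares of $\varepsilon_F$ and $\varepsilon_G$. The identity 1-cell on $(\Ca_1, \Ca_2, \psi_\Ca)$ is $(\id_{\Ca_1}, \id_{\Ca_2}, \id_{\psi_\Ca})$. The associator and unitors are defined by pairs of the associators/unitors in $\Caa_1$ and $\Caa_2$. One must check that these pairs satisfy the coherence condition \eqref{eq:pullback2cellgen}; this reduces to a pasting equation in $\Daa$, which holds since $H_1$ and $H_2$ are 2-strict homomorphisms (they send associators/unitors of $\Caa_i$ to the associators/unitors of $\Daa$ strictly). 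The pentagon and triangle axioms then follow from their componentwise versions.

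\textbf{Step 2: Monoidal product.} Next I would extend the definition of $\times$ on 0- and 1-cells to 2-cells (componentwise) and to identities, and verify functoriality of $\times$ on each Hom-bicategory. The 2-cell $\varepsilon_{F\times G}$ spelled out in the statement is well-defined because $H_i$ preserves the product strictly (up to the fixed isomorphism $H_i(- \times -) \cong H_i(-) \times H_i(-)$), so the outer rectangle in the diagram commutes. The compatibility of $\varepsilon_{F\times G}$ under vertical and horizontal composition of 2-cells is again a pasting diagram in $\Daa$ that reduces to the corresponding identity in $\Caa_1$ and $\Caa_2$.

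\textbf{Step 3: Coherence data and axioms.} The unit object is $(1_{\Caa_1}, 1_{\Caa_2}, \psi_1)$, where $\psi_1$ comes from $H_1(1_{\Caa_1}) \cong 1_\Daa \cong H_2(1_{\Caa_2})$ (using 2-strictness of the $H_i$). The monoidal associator, unitors, and braiding are defined componentwise; I need to check in each case that the component pair is a 1-cell of the pullback, i.e.\ that the square \eqref{eq:pullback2cellgen} commutes. This is the main mechanical check but is essentially automatic: since $H_1,H_2$ are 2-strict symmetric monoidal, they send $\alpha, \ell, r, \beta$ to the corresponding data in $\Daa$, and the relevant 2-cells $\chi_{fg}$ are identities. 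The higher coherence modifications $\pi, \mu, \lambda, \rho$ in $\Caa_1\times_\Daa \Caa_2$ are the pairs of the corresponding modifications in $\Caa_1, \Caa_2$; the axioms (pentagonator, etc.) hold componentwise.

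\textbf{Step 4: 2-strictness.} Finally I would observe that since the structural 2-cells $\phi^\times, \chi_{fg}$ and the modifications $\pi, \mu, \lambda, \rho$ in $\Caa_1$ and $\Caa_2$ are all identities, so are the corresponding data in $\Caa_1\times_\Daa \Caa_2$, giving 2-strictness.

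The main obstacle, though not conceptually deep, is purely bookkeeping: for each piece of structure one must carefully check condition \eqref{eq:pullback2cellgen} by pasting diagrams in $\Daa$, and the economy of the argument rests on the assumption that $H_1, H_2$ are 2-strict (so they send all relevant transformations and modifications to identities or to the specified structural data of $\Daa$). Without this strictness hypothesis one would need to keep track of a thicket of additional coherence 2-cells, which is precisely what 2-strictness is designed to avoid.
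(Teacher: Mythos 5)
Your proof is correct and follows essentially the same route as the paper, which simply observes that 2-strictness of the $\Caa_i$ and $H_i$ forces all structural 2-cells of the pullback (including the $\varepsilon$'s attached to $\alpha,\ell,r,\beta$) to be identities, so the SMBC axioms hold trivially. Your sketch just spells out the componentwise bookkeeping that the paper declares "straightforward to check."
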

\begin{proof}
    Using the notation of \cite[Def.~2.3]{SchommerPriesThesis} and our assumption on the 2-strictness of our bicategories and homomorphisms, it is straightforward to check that the pullback $\Caa_1 \times_{\Daa} \Caa_2$ is again 2-strict. Moreover, the $2$-cells $\varepsilon$ associated to the $1$-cells $\alpha, \ell, r, \beta$ of $\Caa_1 \times_{\Daa} \Caa_2$ are identities. The axioms for a symmetric monoidal bicategory are then trivially satisfied, as all the relevant 2-cells are identities.
\end{proof}
\begin{proposition}\label{prop:SMBCpullbackmap}
    Consider the following diagram of 2-strict SMBCs, symmetric monoidal homomorphisms and symmetric monoidal transformations (i.e symmetric monoidal transformations $X_i$ with all 2-cell components equal to identity)
\[\begin{tikzcd}
	{\Caa_1} & \Daa & {\Caa_2} \\
	{\tilde{\Caa}_1} & \tilde\Daa & {\tilde{\Caa}_2}
	\arrow["{H_1}", from=1-1, to=1-2]
	\arrow["{H_2}"', from=1-3, to=1-2]
	\arrow["{\varphi_1}"', from=1-1, to=2-1]
	\arrow["{\tilde{H}_1}"', from=2-1, to=2-2]
	\arrow["{\varphi_2}", from=1-3, to=2-3]
	\arrow["{\tilde{H}_2}", from=2-3, to=2-2]
	\arrow["{\varphi_D}"{description}, from=1-2, to=2-2]
	\arrow["{X_1}"{description}, draw=none, from=2-1, to=1-2]
	\arrow["{X_2}"{description}, draw=none, from=2-2, to=1-3]
\end{tikzcd}\]
Then there is a 2-strict symmetric monoidal homomorphism $\Phi :=\varphi_1\times_{\varphi_D}\varphi_2 \colon \Caa_1\times_\Daa \Caa_2 \to \tilde{\Caa}_1 \times_{\tilde{\Daa}}\tilde{\Caa}_2$   given by  sending a 0-cell $(\Ca_1, \Ca_2, \psi\colon H_1\Ca_1 \to H_2 \Ca_2)$ to
\[    (\varphi_1(\Ca_1), \varphi_2(\Ca_2), \tilde H_1 \varphi_1 \Ca_1 \xrightarrow{(X_1)_{\Ca_1}} \varphi_\Daa H_1 \Ca_1 \xrightarrow{\varphi_\Daa(\psi)} \varphi_\Daa H_2 \Ca_2 \xrightarrow{(X_2)_{\Da_2}^{-1}} \tilde{H}_2 \phi_2 \Ca_2),\]
sending a 2-cell $(F_1, F_2, \varepsilon_F \colon \psi_\Da H_1 F_1 \Rightarrow H_2 F_2 \psi_\Ca)$ to $(\varphi_1 F_1, \varphi_2 F_2, \varepsilon_{\varphi F})$ with $\varepsilon_{\varphi F}$ given by
\[\begin{tikzcd}
	{\tilde{H}_1 \varphi_1 \Ca_1} &&& {\tilde{H}_2\varphi_2 \Ca_2} \\
	& {\varphi_\Daa H_1 \Ca_1} & {\varphi_\Daa H_2 \Ca_2} \\
	& {\varphi_\Daa H_1 \Da_1} & {\varphi_\Daa H_2 \Da_2} \\
	{\tilde{H}_1 \varphi_1 \Da_1} &&& {\tilde{H}_2\varphi_2 \Da_2}
	\arrow["{(X_1)_{\Ca_1}}"{description}, from=1-1, to=2-2]
	\arrow["{\varphi_\Daa \psi_\Ca}", from=2-2, to=2-3]
	\arrow["{\varphi_\Daa H_2 F_2}", from=2-3, to=3-3]
	\arrow["{\varphi_\Daa H_1 F_1}"', from=2-2, to=3-2]
	\arrow["{\varphi_\Daa \psi_\Da}"', from=3-2, to=3-3]
	\arrow["{\tilde{H}_1 \varphi_1 F_1}"{description}, from=1-1, to=4-1]
	\arrow["{\psi_{\varphi\Da}}"{description}, from=4-1, to=4-4]
	\arrow["{\psi_0{\varphi\Ca}}"{description}, from=1-1, to=1-4]
	\arrow["{\tilde{H}_2\varphi_2 F_2}"{description}, from=1-4, to=4-4]
	\arrow["{(X_1)_{\Da_1}}"{description}, from=4-1, to=3-2]
	\arrow["{(X_2)_{\Da_2}}"{description}, from=4-4, to=3-3]
	\arrow["{(X_2)_{\Ca_2}}"{description}, from=1-4, to=2-3]
	\arrow["{\varphi_\Daa \varepsilon^F}"{description}, draw=none, from=3-2, to=2-3]
\end{tikzcd}\]
and on 2-cells it is given by applying $\varphi_i$ to its components.
\end{proposition}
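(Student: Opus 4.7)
\medskip

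My plan is to construct $\Phi$ cell-by-cell following the formulas already provided in the statement, and then verify in turn: (i) that these formulas produce well-defined cells of $\tilde\Caa_1 \times_{\tilde\Daa} \tilde\Caa_2$, (ii) that $\Phi$ is a 2-strict homomorphism of bicategories, and (iii) that it carries the symmetric monoidal structure produced by Proposition \ref{prop:SMBCpullback}. The formulas on cells are already given, so the work is entirely verification. Throughout the argument I will use that $\varphi_\Daa$ is a 2-strict symmetric monoidal homomorphism (hence preserves isomorphisms and comes equipped with $\chi^{\varphi_\Daa}$) and that $X_1, X_2$ are symmetric monoidal transformations whose component $2$-cells are identities on $1$-cells of the form $\alpha, \ell, r, \beta$.

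\medskip

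First I would check well-definedness. For a $0$-cell $(\Ca_1, \Ca_2, \psi_\Ca)$ the composite $\psi_{\Phi\Ca} := (X_2)_{\Ca_2}^{-1} \circ \varphi_\Daa(\psi_\Ca) \circ (X_1)_{\Ca_1}$ is invertible: $\psi_\Ca$ is invertible, a homomorphism preserves invertibility, and the components of $X_1, X_2$ are invertible because they are components of symmetric monoidal transformations. For a $1$-cell $F$, the pasting defining $\varepsilon_{\varphi F}$ is manifestly an invertible $2$-cell for the same reason. The nontrivial point is that $2$-cells $\alpha = (\alpha_1, \alpha_2)$ satisfying \eqref{eq:pullback2cellgen} in the source are sent to a pair satisfying \eqref{eq:pullback2cellgen} in the target: this is proved by applying $\varphi_\Daa$ to \eqref{eq:pullback2cellgen} and then pasting on the naturality squares for $X_1$ at $\alpha_1$ and for $X_2$ at $\alpha_2$ on each side of the resulting identity; the two resulting pastings collapse precisely to the two sides of \eqref{eq:pullback2cellgen} for $\Phi\alpha$.

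\medskip

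Next I would verify that $\Phi$ is a homomorphism. Functoriality on $2$-cells (vertical and horizontal) is immediate from the componentwise definition and the fact that each $\varphi_i$ is a homomorphism. The coherence $2$-cells $\phi^\Phi_{GF}\colon \Phi(G)\circ\Phi(F) \Rightarrow \Phi(G\circ F)$ and $\phi^\Phi_\Ca \colon \id_{\Phi\Ca} \Rightarrow \Phi(\id_\Ca)$ are defined componentwise by $\phi^{\varphi_1}, \phi^{\varphi_2}$. The only thing to check is that the resulting componentwise $2$-cells are compatible with the $\varepsilon$-data, i.e.\ intertwine the $\varepsilon$-cells of $\Phi(G\circ F)$ and $\Phi(G)\circ\Phi(F)$ in the sense of \eqref{eq:pullback2cellgen}. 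This reduces, after applying $\varphi_\Daa$ and pasting with the invertible $X_i$-components at the intermediate object $\Da_i$, to the coherence for $\varphi_\Daa$ together with naturality of $X_1, X_2$ at the $1$-cells $F_i, G_i$; concretely one uses that the middle instance of $(X_i)_{\Da_i}$ in the composite for $\Phi(G)\circ\Phi(F)$ cancels against its inverse when comparing to $\Phi(G\circ F)$. 2-strictness of $\Phi$ (in the sense relevant to 2-strict SMBCs) then follows because each ingredient ($\varphi_1, \varphi_2, \varphi_\Daa, X_1, X_2$) is 2-strict.

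\medskip

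For the symmetric monoidal structure I would define $\chi^\Phi$ componentwise by $\chi^{\varphi_1}\times \chi^{\varphi_2}$, and check that this is a well-defined $1$-cell of the target pullback by writing down the required $\varepsilon$-compatibility $2$-cell, which is built from $\chi^{\varphi_\Daa}$ and the monoidal-transformation structure on $X_1, X_2$. The unit, associator, braiding, etc.\ coherence data for $\Phi$ as a symmetric monoidal homomorphism are similarly defined componentwise from those of $\varphi_1, \varphi_2$; their axioms (the modifications $\omega, \gamma, \delta, u$ of \cite[Def.~2.5]{SchommerPriesThesis}) hold componentwise because they hold for $\varphi_1$ and $\varphi_2$, and the compatibility of the $\varepsilon$-components reduces, as above, to coherence for $\varphi_\Daa$ plus monoidality of $X_1, X_2$.

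\medskip

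The main obstacle will be the bookkeeping in the previous paragraphs: each equation one has to verify is a commutativity of a large diagram in $\tilde\Daa$ whose cells are built from $\varphi_\Daa$ applied to a cell in $\Daa$, together with components of $X_1, X_2$ inserted at prescribed objects. The 2-strictness hypothesis makes the coherence isomorphisms of $\varphi_\Daa$ identities on the problematic associator/unitor/braiding $1$-cells, so all of the required identities reduce to naturality of $X_1, X_2$ and to the (easy) coherence axioms for symmetric monoidal transformations. No novel categorical input is needed; the content is in correctly organizing these naturality squares so that the middle $X_i$-insertions cancel as intended.
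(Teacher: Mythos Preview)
Your proposal is correct and follows essentially the same approach as the paper. The paper's own proof is far terser: it simply records the comparison $1$-cell $\chi_{\Ca,\Da}$ componentwise as $(\chi^{\varphi_1}_{\Ca_1,\Da_1}, \chi^{\varphi_2}_{\Ca_2,\Da_2})$ with $\varepsilon = \id$, and then asserts that the symmetric monoidal homomorphism axioms follow from $2$-strictness; your write-up is the natural unpacking of that sketch, with the minor difference that you describe the $\varepsilon$-component of $\chi^\Phi$ as built from $\chi^{\varphi_\Daa}$ and the monoidality of $X_1,X_2$ rather than observing directly that under the $2$-strictness hypotheses it is the identity.
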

\begin{proof}
    It is straightforward to check that the resulting functor $\Phi:= \varphi_1 \times_{\varphi_\Daa} \varphi_2$ is 2-strict, with the comparison 1-cell
    $\chi_{\Ca, \Da}\colon \Phi(\Ca)\catprod \Phi(\Da) \to \Phi(\Ca\catprod \Da)$ given by
\[\begin{tikzcd}
	{(\varphi_1 \Ca_1\catprod \varphi_1 \Da_1,} & {\varphi_2 \Ca_2\catprod \varphi_2 \Da_2,} & {\psi_{\Phi\Ca \catprod \Phi\Da})} \\
	{(\varphi_1 (\Ca_1\catprod\Da_1),} & {\varphi_2 (\Ca_2\catprod\Da_2),} & {\psi_{\Phi(\Ca\catprod \Da)})}
	\arrow["{\chi^{\varphi_1}_{\Ca_1, \Da_1}}"{description}, from=1-1, to=2-1]
	\arrow["{\chi^{\varphi_2}_{\Ca_2, \Da_2}}"{description}, from=1-2, to=2-2]
	\arrow["{\varepsilon = \text{id}}"{description}, no head, from=1-3, to=2-3]
\end{tikzcd}\]
From the 2-strictness, the axioms of a symmetric monoidal homomorphism follow.
\end{proof}

\begin{proposition}\label{prop:Eiandpullback} There is a strict symmetric monoidal 2-equivalence  
    \[ \E_n(\Caa_1 \times_{\Daa} \Caa_2) \cong \E_n\Caa_1 \times_{\E_n\mathsf \Daa} \E_n\Caa_2. \]
\end{proposition}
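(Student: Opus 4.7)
The plan is to unpack what an $\E_n$-algebra in $\Caa_1\times_\Daa\Caa_2$ is, using the explicit cell-by-cell description of the pullback from Definition~\ref{def:pullback}, and to check that the resulting data is precisely an object of $\E_n\Caa_1\times_{\E_n\Daa}\E_n\Caa_2$. Since an $\E_n$-algebra in a symmetric monoidal bicategory $\Caa$ is a symmetric monoidal homomorphism $\mathrm{Disk}_n^{\sqcup}\to\Caa$, we will define the comparison 2-functor as post-composition with the two projections. More precisely, set
\begin{equation*}
    \Theta\colon \E_n(\Caa_1\times_\Daa\Caa_2)\longrightarrow \E_n\Caa_1\times_{\E_n\Daa}\E_n\Caa_2,\qquad
    F\longmapsto (\pi_1\circ F,\;\pi_2\circ F,\;\mathrm{id}\circ F),
\end{equation*}
and similarly on 1- and 2-cells, where $\pi_i\colon \Caa_1\times_\Daa\Caa_2\to\Caa_i$ and the tautological 2-cell coming from the definition of 1-cells in the pullback provides the required isomorphism $H_1\pi_1 F\cong H_2\pi_2 F$ in $\E_n\Daa$. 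The symmetric monoidality of $\Theta$ is immediate from Proposition~\ref{prop:SMBCpullbackmap} applied to $\varphi_i=\pi_i$.

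The main task is to construct a strict inverse $\Xi$. Given $(A_1,A_2,\psi)$ with $A_i\colon \mathrm{Disk}_n^{\sqcup}\to\Caa_i$ a symmetric monoidal homomorphism and $\psi\colon H_1A_1\Rightarrow H_2A_2$ a symmetric monoidal isomorphism in $\E_n\Daa$, define $\Xi(A_1,A_2,\psi)\colon \mathrm{Disk}_n^{\sqcup}\to\Caa_1\times_\Daa\Caa_2$ cell-by-cell: on a 0-cell $U\in\mathrm{Disk}_n$ it returns $(A_1(U),A_2(U),\psi_U)$, on a 1-cell $\iota$ it returns $(A_1(\iota),A_2(\iota),\varepsilon_\iota)$ where $\varepsilon_\iota$ is the naturality 2-cell of $\psi$ at $\iota$, and on a 2-cell $\alpha$ it returns the pair $(A_1(\alpha),A_2(\alpha))$. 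The pullback compatibility condition~\eqref{eq:pullback2cellgen} is exactly the naturality of $\psi$, so the result lies in the pullback bicategory. The symmetric monoidal comparison 1-cells $\chi^\Xi_{U,V}$ are constructed from those of $A_1$ and $A_2$ together with the monoidality of $\psi$, using Proposition~\ref{prop:SMBCpullback} to check that these assemble to a symmetric monoidal homomorphism.

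The 2-functors $\Theta$ and $\Xi$ are mutually strict inverses essentially by construction: both the projection data $(\pi_1 F,\pi_2 F)$ and the pullback's own description of 1-cells as pairs plus a 2-cell force each cell of a homomorphism into the pullback to be recoverable from its two component homomorphisms and the tautological compatibility 2-cells. The main (mild) obstacle is bookkeeping the coherence 2-cells: one must verify that the modifications $\omega,\gamma,\delta,u$ attached to the reconstructed $\Xi(A_1,A_2,\psi)$ are compatible with those of $A_1$ and $A_2$ simultaneously under $H_1,H_2$. This is where the 2-strictness hypothesis on our SMBCs and homomorphisms is essential: all of these modifications are identities, so the coherence conditions reduce to pasting equalities of 2-cells in $\Caa_1$, $\Caa_2$ and their images in $\Daa$, which hold by the 2-strict monoidality of $\psi$.

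Finally, strict symmetric monoidality of both $\Theta$ and $\Xi$ follows from the componentwise definition of $\times$ on the pullback: a product $F\times G$ in $\E_n(\Caa_1\times_\Daa\Caa_2)$ has components $\pi_i(F\times G)=\pi_i F\times \pi_i G$, and the monoidality 2-cell is the identity thanks to 2-strictness, matching exactly the product in $\E_n\Caa_1\times_{\E_n\Daa}\E_n\Caa_2$. This gives the desired strict symmetric monoidal 2-equivalence.
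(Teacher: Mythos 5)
Your argument is correct in outline, but it takes a genuinely different route from the paper. You work uniformly in $n$ with the model "$\E_n$-algebra $=$ symmetric monoidal homomorphism $\mathrm{Disk}_n^{\sqcup}\to\Caa$" and build the equivalence by projection and cell-by-cell reassembly, observing that the pullback's compatibility 2-cells are exactly the naturality/monoidality data of the transformation $\psi$. The paper instead reduces to $n=1$ via Dunn additivity and then matches the \emph{algebraic} presentations directly: an $\E_1$-object of the pullback is a tuple $(\Ca_i,\psi,p_{\Ca_i},\varepsilon_{p},m_{\Ca_i},\varepsilon_{m},\phi,\rho,\lambda)$, an object of the pullback of $\E_1$'s is the same underlying data with $(\pi_\psi,\mu_\psi)$ making $\psi$ a monoidal 1-cell, and the identification is the explicit dictionary $\pi_{\psi}=\varepsilon_{p}$, $\mu_{\psi}=\varepsilon_{m}$, with condition~\eqref{eq:pullback2cellgen} for $(\phi,\rho,\lambda)$ translating into the monoidal-1-cell axioms for $\psi$. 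What each buys: your approach avoids the induction and treats all $n$ at once, and makes the "universal property" flavour of the statement transparent; the paper's approach avoids having to say precisely what a symmetric monoidal homomorphism out of the $(\infty,1)$-category $\mathrm{Disk}_n^{\sqcup}$ into a bicategory means for $n=2,\infty$ (configuration-space coherence), which your uniform treatment quietly assumes. That is the one place where your route is genuinely weaker: for $n\ge 2$ you should either pass to a suitable truncation of $\mathrm{Disk}_n$ or, as the paper does, fall back on Dunn additivity to reduce to the $n=0,1$ cases where an explicit algebraic description is available. With that caveat addressed, the remaining coherence bookkeeping you defer to 2-strictness is exactly the same reduction the paper makes, and your construction of the strict inverse is sound.
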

\begin{proof}
    From the case $i=1$, the cases $i>1$ follow by Dunn's additivity, while the case $i=0$ easily follows from the computations for $i=1$. Let us therefore sketch the $i=1$ case, and moreover consider only strict SM (2, 1)-categories similar to $\C \mhyphen\Cat$ (e.g. the associator adjoint equivalence is an isomorphism). 
    
    An $\E_1$-algebra in $\mathsf{C}_{1}\times_{\mathsf N} \mathsf{C}_2$ is thus given by a tuple 
    \[(\Ca_i, \psi_\Ca \colon H_1 \Ca_1 \to H_2 \Ca_2,  p_{\Ca_i}\colon I_{\mathsf C_i}\to \Ca_i, \varepsilon_{p_\Ca}, m_{\Ca_i}\colon \Ca_i^2\to \Ca_i, \varepsilon_{m_\Ca}, \phi_{\Ca_i}, \rho_{\Ca_i},\lambda_{\Ca_i}),\] 
    where the $2$-cells $\phi, \rho, \lambda$ are compatible with the $2$-cells $\varepsilon$ as above in \eqref{eq:pullback2cellgen}.

    On the other hand, a 0-cell in the pullback  $\E_1\mathsf{C}_{1}\times_{\E_1\mathsf N} \E_1\mathsf{C}_2$ is given by 
    \[(\Ca_i, p_{\Ca_i}\colon I_{\mathsf C_i}\to \Ca_i,  m_{\Ca_i}\colon \Ca_i^2\to \Ca_i, \phi_{\Ca_i}, \rho_{\Ca_i},\lambda_{\Ca_i}, \psi_{\Ca}\colon H_1 \Ca_1 \to H_2 \Ca_2, \pi_{\psi_\Ca}, \mu_{\psi_{\Ca}}),\] 
    where $\pi_{\psi_{\Ca}}, \mu_{\psi_{\Ca}}$ are two-cells making the 1-cell $\psi_{\Ca'}$ a 1-cell in $\E_1\Daa$ (i.e. the pointing and the monoidal constraint on $\psi_\Ca$). 
    
    We can identify these 0-cells by declaring $\pi_{\psi_\Ca} =\varepsilon_{p_\Ca}$ and $\mu_{\psi_\Ca} = \varepsilon_{m_\Ca} $, using canonical isomorphisms such as $H_1(\Ca_1^2)\cong (H_1\Ca_1)^2$. Then, one can check that \eqref{eq:pullback2cellgen} for 2-cells $(\phi_{\Ca_i}, \rho_{\Ca_i},\lambda_{\Ca_i})$ are equivalent to the conditions saying that $\psi_{\Ca'}, \pi_{\psi_{\Ca'}}, \mu_{\psi_{\Ca'}}$ is an $\E_1$ (i.e. monoidal) 1-cell. This gives a bijection between 0-cells of  $\E_1(\mathsf{C}_{1}\times_{\mathsf N} \mathsf{C}_2)$ and  $\E_1\mathsf{C}_{1}\times_{\E_1\mathsf N} \E_1\mathsf{C}_2$.

    For a 1-cell $((F, \varepsilon_{F}), \pi_{F}, \mu_{F}) \in \E_1(\mathsf{C}_{1}\times_{\mathsf N} \mathsf{C}_2)$, the condition \eqref{eq:pullback2cellgen} is equivalent to the condition for $((F, \pi_F, \mu_F), \varepsilon_F) \in \E_1\mathsf{C}_{1}\times_{\E_1\mathsf N} \E_1\mathsf{C}_2$ saying that the 2-cell $\varepsilon_F$ is an $\E_1$ (monoidal) 2-cell.  Finally, 2-cells in both categories are pairs of 2-cells $\alpha_i\colon F_i \Rightarrow G_i$, and their compatibilities with other structures are equivalent. 

    This way, we obtain a bijection of cells between these pullback 2-categories. It is easy to check that this bijection is strictly compatible with composition. Finally, one checks that it respects the unit 0-cells, products and symmetry of these 2-categories in a strict way (\cite[Def.~2.5]{SchommerPriesThesis}).
    \end{proof}

\newpage
\addcontentsline{toc}{section}{References}
\printbibliography

@misc{stacks-project,
  author       = {The {Stacks project authors}},
  title        = {The Stacks project},
  howpublished = {\url{https://stacks.math.columbia.edu}},
  year         = {2021},
}

@article{Po20,
    author = {Leonid Positselski},
    title = {Contraadjusted modules, contramodules and reduced cotorsion modules},
    year = {2020},
}

@article{BQR,
    author = {Borceux, Francis and Quinteiro, Carmen and Rosický, Jiří},
    title = {A theory of enriched sketches},
    year = {1998},
    journal={Theory and Applications of Categories},
    volume = "4",
    number = "3",
    pages = "47-72",
}

@article{gepner2020integral,
  title={Integral representation theorems for DQ-modules},
  author={Gepner, David and Petit, Francois},
  journal={arXiv preprint arXiv:2004.10176},
  year={2020}
}

@book{CoendCalc,
   title={(Co)end Calculus},
   ISBN={9781108746120},
   url={http://dx.doi.org/10.1017/9781108778657},
   DOI={10.1017/9781108778657},
   publisher={Cambridge University Press},
   author={Loregian, Fosco},
   year={2021},
   month=jun,
    eprint={1501.02503},
    archivePrefix={arXiv},
    primaryClass={math.CT}
}

@article{AQFTCat,
    author = "Benini, Marco and Perin, Marco and Schenkel, Alexander and Woike, Lukas",
    title = "{Categorification of algebraic quantum field theories}",
    eprint = "2003.13713",
    archivePrefix = "arXiv",
    primaryClass = "math-ph",
    reportNumber = "ZMP-HH/20-8, Hamburger Beitraege zur Mathematik Nr. 830",
    doi = "10.1007/s11005-021-01371-8",
    journal = "Lett. Math. Phys.",
    volume = "111",
    number = "2",
    pages = "35",
    year = "2021"
}

@article {Wolff,
    AUTHOR = {Wolff, Harvey},
     TITLE = {{$V$}-cat and {$V$}-graph},
   JOURNAL = {J. Pure Appl. Algebra},
  FJOURNAL = {Journal of Pure and Applied Algebra},
    VOLUME = {4},
      YEAR = {1974},
     PAGES = {123--135},
      ISSN = {0022-4049,1873-1376},
   MRCLASS = {18D10},
  MRNUMBER = {346029},
MRREVIEWER = {F.\ E. J. Linton},
       DOI = {10.1016/0022-4049(74)90018-8},
}

@book{Riehl,
  title = {Categorical Homotopy Theory},
  ISBN = {9781107261457},
  DOI = {10.1017/cbo9781107261457},
  publisher = {Cambridge University Press},
  author = {Riehl,  Emily},
  year = {2014},
  month = may 
}

@article{AFfh,
author = {Ayala, David and Francis, John},
title = {Factorization homology of topological manifolds},
journal = {Journal of Topology},
volume={8},
number={4},
pages = {1045--1084},
year = {2015}
}

@article {AFT-stratified,
    AUTHOR = {Ayala, David and Francis, John and Tanaka, Hiro Lee},
     TITLE = {Factorization homology of stratified spaces},
   JOURNAL = {Selecta Math. (N.S.)},
  FJOURNAL = {Selecta Mathematica. New Series},
    VOLUME = {23},
      YEAR = {2017},
    NUMBER = {1},
     PAGES = {293--362},
      ISSN = {1022-1824,1420-9020},
   MRCLASS = {57P05 (55N40 57N80 57R40)},
  MRNUMBER = {3595895},
MRREVIEWER = {Jason\ Stuart\ Hanson},
       DOI = {10.1007/s00029-016-0242-1},
}

@book{HLTetal,
   title={Lectures on Factorization Homology, $\infty$-Categories, and Topological Field Theories},
   ISBN={9783030611637},
   ISSN={2197-1765},
   DOI={10.1007/978-3-030-61163-7},
   journal={SpringerBriefs in Mathematical Physics},
   publisher={Springer International Publishing},
   year={2020},
    author={Araminta Amabel and Artem Kalmykov and Lukas Müller and Hiro Lee Tanaka},
    year={2019},
    eprint={1907.00066},
    archivePrefix={arXiv},
    primaryClass={math.AT}}

@article{ABSV,
  title = {Infinitesimal braidings and pre-Cartier bialgebras},
  ISSN = {1793-6683},
  DOI = {10.1142/s0219199724500299},
  journal = {Communications in Contemporary Mathematics},
  publisher = {World Scientific Pub Co Pte Ltd},
  author = {Ardizzoni,  Alessandro and Bottegoni,  Lucrezia and Sciandra,  Andrea and Weber,  Thomas},
  year = {2024},
  month = aug 
}

@article{HV,
  title = {Bosonization of curved Lie bialgebras},
  volume = {30},
  ISSN = {1370-1444},
  DOI = {10.36045/j.bbms.221202},
  number = {5},
  journal = {Bulletin of the Belgian Mathematical Society - Simon Stevin},
  publisher = {The Belgian Mathematical Society},
  author = {Heckenberger,  Istvan and Vendramin,  Leandro},
  year = {2023},
  month = dec,
    eprint={2209.02115},
    archivePrefix={arXiv},
    primaryClass={math.QA},
    eprint={2306.00558},
    archivePrefix={arXiv},
    primaryClass={math.QA}
}

@book{CoherenceTriCats,
author = {Gordon,  Robert and Power,  Anthony J. and Street, Ross},
  title     = "Coherence for Tricategories",
  publisher = "American Mathematical Society",
  series    = "Memoirs of the American Mathematical Society",
  month     =  sep,
  year      =  1995,
  address   = "Providence, RI"
}

@article{Campbell,
  title = {How strict is strictification?},
  volume = {223},
  ISSN = {0022-4049},
  DOI = {10.1016/j.jpaa.2018.10.004},
  number = {7},
  journal = {Journal of Pure and Applied Algebra},
  publisher = {Elsevier BV},
  author = {Campbell,  Alexander},
  year = {2019},
  month = jul,
  pages = {2948–2976},
    eprint={1802.07538},
    archivePrefix={arXiv},
    primaryClass={math.CT}
}

@article{Kelly2limits,
author = {Kelly, Max },
title = {Elementary observations on 2-categorical limits},
year = {1989}
}

@article{KellyVCat,
author = {Kelly, Max},
title = {Basic concepts of enriched category theory},
journal = {Reprints in Theory and Applications of Categories},
number = {10},
year = {2005}
}

@Book{MSSOperads,
  author    = {Markl, Martin and Shnider, Steve and Stasheff, James D.},
  publisher = {American Mathematical Society},
  title     = {Operads in Algebra, Topology and Physics},
  year      = {2002},
  isbn      = {9780821843628},
  series    = {Mathematical surveys and monographs},
  lccn      = {2002016342},
}

@Article{DayStreet1997,
  author   = {Day, Brian and Street, Ross},
  journal  = {Advances in Mathematics},
  title    = {Monoidal {Bicategories} and {Hopf} {Algebroids}},
  year     = {1997},
  issn     = {0001-8708},
  month    = jul,
  number   = {1},
  pages    = {99--157},
  volume   = {129},
  doi      = {10.1006/aima.1997.1649},
}

@Article{JoyalStreet1993,
  author  = {Joyal, André and Street, Ross},
  journal = {Advances in Mathematics},
  title   = {Braided {Tensor} {Categories}},
  year    = {1993},
  issn    = {0001-8708},
  month   = nov,
  number  = {1},
  pages   = {20--78},
  volume  = {102},
  doi     = {10.1006/aima.1993.1055},
}

@article{BZBJIntegrating,
  title = {Integrating quantum groups over surfaces},
  volume = {11},
  ISSN = {1753-8424},
  DOI = {10.1112/topo.12072},
  number = {4},
  journal = {Journal of Topology},
  publisher = {Wiley},
  author = {Ben‐Zvi,  David and Brochier,  Adrien and Jordan,  David},
  year = {2018},
  month = aug,
  pages = {874–917},
    eprint={1501.04652},
    archivePrefix={arXiv},
    primaryClass={math.QA}
}

@book {CruttwellThesis,
    AUTHOR = {Cruttwell, Geoff  S. H.},
     TITLE = {Normed spaces and the change of base for enriched categories},
      NOTE = {Thesis (Ph.D.)--Dalhousie University (Canada)},
 PUBLISHER = {ProQuest LLC, Ann Arbor, MI},
      YEAR = {2009},
     PAGES = {143},
      ISBN = {978-0494-50056-9},
   MRCLASS = {99-05},
  MRNUMBER = {2713532},
       URL = {https://www.reluctantm.com/gcruttw/publications/thesis4.pdf},
}

@article{Cooke19,
  title = {Excision of skein categories and factorisation homology},
  volume = {414},
  ISSN = {0001-8708},
  DOI = {10.1016/j.aim.2022.108848},
  journal = {Advances in Mathematics},
  publisher = {Elsevier BV},
  author = {Cooke,  Juliet},
  year = {2023},
  month = feb,
  pages = {108848},
    eprint={1910.02630},
    archivePrefix={arXiv},
    primaryClass={math.QA}
}

@Article{KasselTuraev1998,
  author    = {Christian Kassel and Vladimir Turaev},
  journal   = {Duke Mathematical Journal},
  title     = {{Chord diagram invariants of tangles and graphs}},
  year      = {1998},
  number    = {3},
  pages     = {497 -- 552},
  volume    = {92},
  doi       = {10.1215/S0012-7094-98-09215-8},
  publisher = {Duke University Press},
}

@Article{Cartier1993,
  author   = {Cartier, Pierre},
  journal  = {Les rencontres physiciens-mathématiciens de Strasbourg -RCP25},
  title    = {Construction combinatoire des invariants de {Vassiliev}-{Kontsevich} des nœuds},
  year     = {1993},
  pages    = {1--10},
  volume   = {45},
  file     = {Full Text PDF:http\://www.numdam.org/item/RCP25_1993__45__1_0.pdf:application/pdf},
  language = {fr},
  url      = {http://www.numdam.org/item/RCP25_1993__45__1_0/},
}

@article{Kontsevich2003,
  title = {Deformation Quantization of Poisson Manifolds},
  volume = {66},
  ISSN = {0377-9017},
  DOI = {10.1023/b:math.0000027508.00421.bf},
  number = {3},
  journal = {Letters in Mathematical Physics},
  publisher = {Springer Science and Business Media LLC},
  author = {Kontsevich,  Maxim},
  year = {2003},
  month = dec,
  pages = {157–216},
    eprint={q-alg/9709040},
    archivePrefix={arXiv},
    primaryClass={q-alg}
}

@Article{DrinfeldGal1990,
  author   = {Drinfeld, Vladimir G.},
  journal  = {Algebra i Analiz},
  title    = {On quasitriangular quasi-{Hopf} algebras and on a group that is closely connected with {$\mathrm{Gal}(\overline{\mathbb {Q}}/\mathbb{Q})$}},
  year     = {1990},
  issn     = {0234-0852},
  note     = {(\textit{translation in} Leningrad Mathematical Journal, 1991, 2:4, 829–860)},
  number   = {4},
  pages    = {149--181},
  volume   = {2},
  mrnumber = {1080203},
}

@article{DrinQG,
author = {Drinfeld, Vladimir G.},
title = {Quantum groups},
journal = {Journal of Soviet Mathematics},
volume = {41},
pages = {898--915},
year = {1988}
}

@Article{DrinfeldQuasiHopf,
  author   = {Drinfeld, Vladimir G.},
  journal  = {Algebra i Analiz},
  title    = {Quasi-{Hopf} algebras},
  year     = {1989},
  issn     = {0234-0852},
  note     = {(\textit{translation in} Leningrad Mathematical Journal, 1990, 1:6, 1419–1457},
  number   = {6},
  pages    = {114--148},
  volume   = {1},
  mrnumber = {1047964},
  url      = {https://mathscinet.ams.org/mathscinet-getitem?mr=1047964},
}

@incollection{DrinKZ,
author = {Drinfeld, Vladimir G.},
title = {Quasi-{H}opf algebras and {K}nizhnik-{Z}amolodchikov equations},
book = {Problems of Modern Quantum Field Theory},
series = {Research Reports in Physics},
publisher = {Springer},
pages = {1--13},
year = {1989}
}

@book{Kassel,
author = {Kassel, Christian},
title = {Quantum groups},
publisher = {Springer},
address = {New York, N.Y.},
year = {1995}
}

@Article{LeMurakamiParallel,
  author  = {Le, Thang T. Q. and Murakami, Jun},
  journal = {Journal of Pure and Applied Algebra},
  title   = {Parallel version of the universal {Vassiliev}-{Kontsevich} invariant},
  year    = {1997},
  issn    = {0022-4049},
  month   = oct,
  number  = {3},
  pages   = {271--291},
  volume  = {121},
  doi     = {10.1016/S0022-4049(96)00054-0},
}

@book{daSilvaWeinstein,
  title={Geometric models for noncommutative algebras},
  author={Da Silva, Ana Cannas and Weinstein, Alan},
  volume={10},
  year={1999},
  publisher={American Mathematical Soc.},
  isbn = {978-0-8218-0952-5},
  series = { Berkeley Mathematics Lecture Notes}
}

@article{Knudson,
 ISSN = {00029947},
 author = {Knudson, David W. },
 journal = {Transactions of the American Mathematical Society},
 pages = {55--70},
 publisher = {American Mathematical Society},
 title = {On the Deformation of Commutative Algebras},
 volume = {140},
 year = {1969},
 doi = {10.2307/1995122}
}

@Article{HKR,
  author   = {Hochschild, Gerhard and Kostant, Bertram and Rosenberg, Alex},
  journal  = {Transactions of the American Mathematical Society},
  title    = {Differential forms on regular affine algebras},
  year     = {1962},
  issn     = {0002-9947, 1088-6850},
  number   = {3},
  pages    = {383--408},
  volume   = {102},
  doi      = {10.1090/S0002-9947-1962-0142598-8},
}

@article{GuttRawnsley,
title = {Equivalence of star products on a symplectic manifold; an introduction to Deligne's Čech cohomology classes},
journal = {Journal of Geometry and Physics},
volume = {29},
number = {4},
pages = {347-392},
year = {1999},
issn = {0393-0440},
doi = {https://doi.org/10.1016/S0393-0440(98)00045-X},
author = {Gutt, Simone  and Rawnsley, John},
keywords = {Differential geometry, Quantum mechanics, Star products, Symplectic manifolds},
}

@article{Deligne95,
  title={D{\'e}formations de l'alg{\`e}bre des fonctions d'une vari{\'e}t{\'e} symplectique: comparaison entre {Fedosov} et {De Wilde}, {Lecomte}},
  author={Deligne, Pierre},
  journal={Selecta Mathematica},
  volume={1},
  number={4},
  pages={667--697},
  year={1995},
  publisher={Birkh{\"a}user-Verlag}
}

@misc{LimitNlab, 
    title={(\(\infty\),1)-limit}, 
    author =   {{nLab authors}},
    howpublished={\url{https://ncatlab.org/nlab/show/\%28\%E2\%88\%9E\%2C1\%29-limit\#InOvercategories}}, 
    shorthand = {{nLab}}
    }

@article{ANPS,
  title = {Batalin-Vilkovisky structures on moduli spaces of flat connections},
  volume = {443},
  ISSN = {0001-8708},
  DOI = {10.1016/j.aim.2024.109580},
  journal = {Advances in Mathematics},
  publisher = {Elsevier BV},
  author = {Alekseev,  Anton and Naef,  Florian and Pulmann,  Ján and Ševera,  Pavol},
  year = {2024},
  month = may,
  pages = {109580},
    eprint={2210.08944},
    archivePrefix={arXiv},
    primaryClass={math.QA}
}

@article{CW2015,
  title = {Triviality of the higher formality theorem},
  volume = {143},
  ISSN = {1088-6826},
  DOI = {10.1090/proc/12670},
  number = {12},
  journal = {Proceedings of the American Mathematical Society},
  publisher = {American Mathematical Society (AMS)},
  author = {Calaque,  Damien and Willwacher,  Thomas},
  year = {2015},
  month = apr,
  pages = {5181–5193},
    eprint={1310.4605},
    archivePrefix={arXiv},
    primaryClass={math.QA}
}

@phdthesis{CorinaThesis,
  TITLE = {{Generalized character varieties and quantization via factorization homology}},
  AUTHOR = {Keller, Corina},
  URL = {https://theses.hal.science/tel-04397332},
  NUMBER = {2023UMONS021},
  SCHOOL = {{Universit{\'e} de Montpellier}},
  YEAR = {2023},
  MONTH = Feb,
  KEYWORDS = {Topological field theory ; Factorization homology ; Deformation quantization ; Th{\'e}orie des champs topologiques ; Homologie {\`a} factorisation ; Quantification par d{\'e}formation},
  TYPE = {Theses},
  PDF = {https://theses.hal.science/tel-04397332v1/file/KELLER_2023_archivage.pdf},
  HAL_ID = {tel-04397332},
  HAL_VERSION = {v1},
}

@misc{Dag3Lurie,
    title={Derived Algebraic Geometry III: Commutative Algebra},
    author={Lurie, Jacob},
    year={2007},
    eprint={math/0703204},
    archivePrefix={arXiv},
    primaryClass={math.CT}
}

@misc{DAGXLurie,
  author = {Lurie, Jacob},
  title = {Derived Algebraic Geometry X: Formal Moduli Problems},
  url = {https://people.math.harvard.edu/~lurie/papers/DAG-X.pdf},
  year = {2011},
}

@article{CPTVV,
   title={Shifted Poisson structures and deformation quantization},
   volume={10},
   ISSN={1753-8424},
   DOI={10.1112/topo.12012},
   number={2},
   journal={Journal of Topology},
   publisher={Wiley},
   author={Calaque, Damien and Pantev, Tony and Toën, Bertrand and Vaquié, Michel and Vezzosi, Gabriele},
   year={2017},
   month=apr, pages={483–584},
    eprint={1506.03699},
    archivePrefix={arXiv},
    primaryClass={math.AG}}

@misc{SchommerPriesThesis,
    title={The Classification of Two-Dimensional Extended Topological Field Theories},
    author={Christopher J. Schommer-Pries},
    year={2011},
    eprint={1112.1000},
    archivePrefix={arXiv},
    primaryClass={math.AT}
}

@Article{LeMurakamiCategory,
  author    = {Le, Thang T. Q. and Murakami, Jun},
  journal   = {Communications in Mathematical Physics},
  title     = {Representation of the category of tangles by {Kontsevich's} iterated integral},
  year      = {1995},
  number    = {3},
  pages     = {535--562},
  volume    = {168},
  comment   = {Komentar},
  file      = {:C\:/Users/jan/Dropbox/thesis/literature/[LeMurakami]Category_of_Tangles.pdf:PDF},
  keywords  = {57M25},
  publisher = {Springer, Berlin/Heidelberg},
  zbl       = {0839.57008},
}

@Book{Weibel,
  author    = {Weibel, Charles A.},
  publisher = {Cambridge University Press},
  title     = {An {Introduction} to {Homological} {Algebra}},
  year      = {1994},
  address   = {Cambridge},
  isbn      = {9780521559874},
  series    = {Cambridge {Studies} in {Advanced} {Mathematics}},
  doi       = {10.1017/CBO9781139644136},
}

@misc{HTT,
 author =  {Lurie, Jacob},
 title = {Higher topos theory},
 howpublished = {\url{https://www.math.ias.edu/~lurie/papers/HTT.pdf}},
 year = {2017},
}

@misc{GJS,
  title = {The finiteness conjecture for skein modules},
  volume = {232},
  ISSN = {1432-1297},
  url = {http://dx.doi.org/10.1007/s00222-022-01167-0},
  DOI = {10.1007/s00222-022-01167-0},
  number = {1},
  journal = {Inventiones mathematicae},
  publisher = {Springer Science and Business Media LLC},
  author = {Gunningham,  Sam and Jordan,  David and Safronov,  Pavel},
  year = {2022},
  month = dec,
  pages = {301–363},
      eprint={1908.05233},
      archivePrefix={arXiv},
      primaryClass={math.QA}
}

@misc{kinnear2024nonsemisimplecraneyettertheoryvarying,
      title={Non-semisimple Crane-Yetter theory varying over the character stack}, 
      author={Patrick Kinnear},
      year={2024},
      eprint={2404.19667},
      archivePrefix={arXiv},
      primaryClass={math.QA},
  doi = {10.48550/ARXIV.2404.19667},
}

@incollection{FR,
author = {Fock, Vladimir~V. and Rosly, Alexei~A.}, 
title= {Poisson structure on moduli of flat connections on {R}iemann surfaces and the $r$-matrix}, 
booktitle = {Moscow Seminar in Mathematical Physics},
publisher = {American Mathematical Society},
address = {Providence, RI},
volume = {191},
series = {Amer. Math. Soc. Transl. Ser. 2},
pages = {67--86},
year = {1999}
}

@article{AB,
author = {Atiyah, Michael~F. and Bott, Raoul},
title = {The {Y}ang--{M}ills equations over {R}iemann surfaces},
journal = {Philosophical Transactions of the Royal Society of London. Series A, Mathematical and Physical Sciences},
volume = {308},
number = {1505},
pages = {523--615},
year = {1983}
}

@article{LiBlandSevera,
author = {Li-Bland, David and {\v{S}}evera, Pavol},
   sortname = {LiBland},
title = {Moduli spaces for quilted surfaces and {P}oisson structures},
journal = {Documenta Mathematica},
volume = {20},
pages = {1071--1135},
year = {2015}
}

@article{SeveraCenters,
   title={Left and right centers in quasi-Poisson geometry of moduli spaces},
   volume={279},
   ISSN={0001-8708},
   DOI={10.1016/j.aim.2015.01.023},
   journal={Advances in Mathematics},
   publisher={Elsevier BV},
   author={Ševera, Pavol},
   year={2015},
   month=jul, pages={263–290},
    eprint={1402.2322},
    archivePrefix={arXiv},
    primaryClass={math.SG}}

@article{AKSM,
author = {Alekseev, Anton and Kosmann-Schwarzbach, Yvette and Meinrenken, Eckhard},
title = {Quasi-{P}oisson manifolds},
journal = {Canadian Journal of Mathematics},
volume =  {54}, 
number = {1}, 
pages = {3--29},
year = {2000}
}

@article{Mouquin,
author = {Mouquin, Victor},
title = {The {F}ock–{R}osly {P}oisson structure as defined by a quasi-triangular r-matrix},
journal = {Symmetry, Integrability and Geometry: Methods and Applications},
volume = {13},
year = {2017}
}

@article{ManinPairsAKS,
author = {Alekseev, Anton and Kosmann-Schwarzbach, Yvette},
title = {Manin pairs and moment maps},
journal = {Journal of Differential Geometry},
volume = {56},
number = {1},
year = {2000}
}

@Book{Turaev,
  author    = {Turaev, Vladimir G.},
  publisher = {De Gruyter},
  title     = {Quantum {Invariants} of {Knots} and 3-{Manifolds}},
  year      = {2016},
  isbn      = {9783110435221},
  month     = jul,
  doi       = {10.1515/9783110435221},
  file      = {Full Text PDF:https\://www.degruyter.com/document/doi/10.1515/9783110435221-fm/pdf:application/pdf},
  journal   = {Quantum Invariants of Knots and 3-Manifolds},
  keywords  = {Knotentheorie, Topologie, Topologische Algebra, Topologische Gruppe, Mannigfaltigkeit},
}

@Article{RT,
  author  = {Reshetikhin, Nikolai Y. and Turaev, Vladimir G.},
  journal = {Communications in Mathematical Physics},
  title   = {Ribbon graphs and their invariants derived from quantum groups},
  year    = {1990},
  issn    = {1432-0916},
  month   = jan,
  number  = {1},
  pages   = {1--26},
  volume  = {127},
  doi     = {10.1007/BF02096491},
}

@article{CHR2014HomotopyPullback,
  title={Bicategorical homotopy pullbacks},
  author={Cegarra,  Antonio M. and Heredia, Benjamín A. and Remedios, Josué},
  journal={Theory and Applications of Categories},
  volume={30},
  number={6},
  pages={147--205},
  year={2015},
      eprint={1404.2715},
    archivePrefix={arXiv},
}

@article {EK,
    AUTHOR = {Edwards, Robert D. and Kirby, Robion C.},
     TITLE = {Deformations of spaces of imbeddings},
   JOURNAL = {Ann. of Math. (2)},
  FJOURNAL = {Annals of Mathematics. Second Series},
    VOLUME = {93},
      YEAR = {1971},
     PAGES = {63--88},
      ISSN = {0003-486X},
   MRCLASS = {57.01},
  MRNUMBER = {283802},
MRREVIEWER = {K.\ Lamotke},
       DOI = {10.2307/1970753},
}

@article{safronovQMM,
  title={A categorical approach to quantum moment maps},
  author={Safronov, Pavel},
  journal={Theory \& Applications of Categories},
  volume={37},
  year={2021},
    eprint={1901.09031},
    archivePrefix={arXiv},
    primaryClass={math.QA}
}

@misc{PSVQv2,
  author        = {Pulmann, Ján and \v{S}evera, Pavol},
  title = {Quantization of Poisson Hopf algebras (version 2)},
   archiveprefix = {arXiv},
  eprint        = {1906.10616v2},
  primaryclass  = {math.QA},
}

@article{FGS,
  title = {Davydov–Yetter cohomology and relative homological algebra},
  volume = {30},
  ISSN = {1420-9020},
  DOI = {10.1007/s00029-024-00917-7},
  number = {2},
  journal = {Selecta Mathematica},
  publisher = {Springer Science and Business Media LLC},
  author = {Faitg,  Matthieu and Gainutdinov,  Azat M. and Schweigert,  Christoph},
  year = {2024},
  month = feb,
    eprint={2202.12287},
    archivePrefix={arXiv},
    primaryClass={math.QA}
}

@article{CY,
     author = {Crane, Louis and Yetter, David N.},
     title = {Deformations of (bi)tensor categories},
     journal = {Cahiers de Topologie et G\'eom\'etrie Diff\'erentielle Cat\'egoriques},
     pages = {163--180},
     publisher = {Dunod \'editeur, publi\'e avec le concours du CNRS},
     volume = {39},
     number = {3},
     year = {1998},
     mrnumber = {1641842},
     zbl = {0916.18005},
     language = {en},
    eprint={q-alg/9612011},
    archivePrefix={arXiv},
    primaryClass={q-alg}
}

@article{LVdB2,
  title = {Hochschild cohomology of abelian categories and ringed spaces},
  volume = {198},
  ISSN = {0001-8708},
  DOI = {10.1016/j.aim.2004.11.010},
  number = {1},
  journal = {Advances in Mathematics},
  publisher = {Elsevier BV},
  author = {Lowen,  Wendy and Van den Bergh,  Michel},
  year = {2005},
  month = dec,
  pages = {172–221},
    eprint={math/0405227},
    archivePrefix={arXiv},
    primaryClass={math.KT}
}

@unpublished{JanWIP,
    title = {Triangulations and Drinfeld associators in deformation quantization of representation varieties},
    author = {Pulmann, Ján},
    note = {in preparation}
}

@misc{CHS,
      title={The AKSZ Construction in Derived Algebraic Geometry as an Extended Topological Field Theory}, 
      author={Calaque, Damien and Haugseng, Rune and Scheimbauer, Claudia},
      year={2022},
      eprint={2108.02473},
      archivePrefix={arXiv},
      primaryClass={math.CT},
}

@article{MelaniSafronov1,
  title = {Derived coisotropic structures I: affine case},
  volume = {24},
  ISSN = {1420-9020},
  DOI = {10.1007/s00029-018-0406-2},
  number = {4},
  journal = {Selecta Mathematica},
  publisher = {Springer Science and Business Media LLC},
  author = {Melani,  Valerio and Safronov,  Pavel},
  year = {2018},
  month = mar,
  pages = {3061–3118},
    eprint={1608.01482},
    archivePrefix={arXiv},
    primaryClass={math.AG}
}

@inbook{Theo,
  title = {Heisenberg-Picture Quantum Field Theory},
  ISBN = {9783030781484},
  ISSN = {2296-505X},
  DOI = {10.1007/978-3-030-78148-4_13},
  booktitle = {Representation Theory,  Mathematical Physics,  and Integrable Systems},
  publisher = {Springer International Publishing},
  author = {Johnson-Freyd,  Theo},
  year = {2021},
  pages = {371–409},
      eprint={1508.05908},
      archivePrefix={arXiv},
      primaryClass={math-ph}, 
}

@article{SafronovPLasShifted,
  title = {Poisson-Lie structures as shifted Poisson structures},
  volume = {381},
  ISSN = {0001-8708},
  DOI = {10.1016/j.aim.2021.107633},
  journal = {Advances in Mathematics},
  publisher = {Elsevier BV},
  author = {Safronov,  Pavel},
  year = {2021},
  month = apr,
  pages = {107633},
    eprint={1706.02623},
    archivePrefix={arXiv},
    primaryClass={math.AG}
}

@article{MelaniSafronov2,
  title = {Derived coisotropic structures II: stacks and quantization},
  volume = {24},
  ISSN = {1420-9020},
  DOI = {10.1007/s00029-018-0407-1},
  number = {4},
  journal = {Selecta Mathematica},
  publisher = {Springer Science and Business Media LLC},
  author = {Melani,  Valerio and Safronov,  Pavel},
  year = {2018},
  month = mar,
  pages = {3119–3173},
    eprint={1704.03201},
    archivePrefix={arXiv},
    primaryClass={math.AG}
}

@article{AKSZ,
  title = {The Geometry of the Master Equation and Topological Quantum Field Theory},
  volume = {12},
  ISSN = {1793-656X},
  DOI = {10.1142/s0217751x97001031},
  number = {07},
  journal = {International Journal of Modern Physics A},
  publisher = {World Scientific Pub Co Pte Lt},
  author = {Alexandrov,  Mikhail and Schwarz,  Albert and Zaboronsky,  Oleg and Kontsevich,  Maxim},
  year = {1997},
  month = mar,
  pages = {1405–1429},
    eprint={hep-th/9502010},
    archivePrefix={arXiv},
    primaryClass={hep-th}
}

@misc{Davidov,
    title={Twisting of monoidal structures},
    author={Davydov, Alexei A. },
    year={1997},
    eprint={q-alg/9703001},
    archivePrefix={arXiv},
    primaryClass={q-alg}
}

@article{DijkgraafWitten,
  title = {Topological gauge theories and group cohomology},
  volume = {129},
  ISSN = {1432-0916},
  DOI = {10.1007/bf02096988},
  number = {2},
  journal = {Communications in Mathematical Physics},
  publisher = {Springer Science and Business Media LLC},
  author = {Dijkgraaf,  Robbert and Witten,  Edward},
  year = {1990},
  month = apr,
  pages = {393–429}
}

@phdthesis{JacobsThesis,
    author = {Jacobs, Andrew D.},
    title = {Nonstandard quantum groups: twisting constructions and noncommutative differential geometry},
    school = {University of St Andrews},
    url = {https://research-repository.st-andrews.ac.uk/handle/10023/13693},
    year = 1998
}

@Article{Goldman86,
  author   = {Goldman, William M.},
  journal  = {Inventiones mathematicae},
  title    = {Invariant functions on {Lie} groups and {Hamiltonian} flows of surface group representations},
  year     = {1986},
  issn     = {1432-1297},
  month    = jun,
  number   = {2},
  pages    = {263--302},
  volume   = {85},
  doi      = {10.1007/BF01389091},
}

@article{MassuyeauTuraev2012,
  title = {Quasi-Poisson Structures on Representation Spaces of Surfaces},
  volume = {2014},
  ISSN = {1073-7928},
  DOI = {10.1093/imrn/rns215},
  number = {1},
  journal = {International Mathematics Research Notices},
  publisher = {Oxford University Press (OUP)},
  author = {Massuyeau,  Gwénaël and Turaev,  Vladimir},
  year = {2012},
  month = oct,
  pages = {1–64},
    eprint={1205.4898},
    archivePrefix={arXiv},
    primaryClass={math.GT}
}

@article{Nie2013,
   title={The quasi-Poisson Goldman formula},
   volume={74},
   ISSN={0393-0440},
   DOI={10.1016/j.geomphys.2013.06.010},
   journal={Journal of Geometry and Physics},
   publisher={Elsevier BV},
   author={Nie, Xin},
   year={2013},
   month=dec, pages={1–17},
    eprint={1301.5231},
    archivePrefix={arXiv},
    primaryClass={math.DG}}

@article{BFFLS,
  title = {Deformation theory and quantization. I. Deformations of symplectic structures},
  volume = {111},
  ISSN = {0003-4916},
  DOI = {10.1016/0003-4916(78)90224-5},
  number = {1},
  journal = {Annals of Physics},
  publisher = {Elsevier BV},
  author = {Bayen,  François and Flato,  Moshé and Fronsdal,  Christian and Lichnerowicz,  André and Sternheimer,  Daniel},
  year = {1978},
  month = mar,
  pages = {61–110}
}

@Article{RocheSzenes02,
  author    = {Roche, Philippe and Szenes, András},
  journal   = {Advances in Mathematics},
  title     = {Trace Functionals on Noncommutative Deformations of Moduli Spaces of Flat Connections},
  year      = {2002},
  issn      = {0001-8708},
  month     = jun,
  number    = {2},
  pages     = {133–192},
  volume    = {168},
  doi       = {10.1006/aima.2001.2045},
  publisher = {Elsevier BV},
    eprint={math/0008149},
    archivePrefix={arXiv},
    primaryClass={math.QA}
}

@Article{AMR1,
  author   = {Andersen, Jørgen E. and Mattes, Josef and Reshetikhin, Nicolai},
  journal  = {Topology},
  title    = {The poisson structure on the moduli space of flat connections and chord diagrams},
  year     = {1996},
  issn     = {0040-9383},
  month    = oct,
  number   = {4},
  pages    = {1069--1083},
  volume   = {35},
  abstract = {We Introduce the notion of chord diagrams on arbitrary compact (possibly punctured) oriented surfaces. In the case of the 2-spheres these are just the usual chord diagrams used in test study of Vassiliev invariants of links. We consider the algebra of chord diagrams on a surface and prove that this algebra has a natural Poisson structure. Suppose now that G is a Lie group with an invariant bilinear form on g = Lie(G). We can associate to each chord diagram (coloured by representations of G) a function on the moduli space of flat G-connections on the surface. Our main result states that this map is a Poisson algebra homomorphism. Moreover, for most classical groups we prove that any algebraic function on moduli space can be obtained this way and we conjecture that this holds for all simple groups. In this way we obtain a universal description of the Poisson algebra of the moduli space, decoupling the Lie group in question.},
  doi      = {10.1016/0040-9383(95)00059-3},
}

@Article{AMR2,
  author    = {Andersen, Jørgen E. and Mattes, Josef and Reshetikhin, Nicolai},
  journal   = {Mathematical Proceedings of the Cambridge Philosophical Society},
  title     = {Quantization of the algebra of chord diagrams},
  year      = {1998},
  issn      = {1469-8064, 0305-0041},
  month     = nov,
  number    = {3},
  pages     = {451--467},
  volume    = {124},
  doi       = {10.1017/S0305004198002813},
  publisher = {Cambridge University Press},
}

@article {Tambara,
    AUTHOR = {Tambara, Daisuke},
     TITLE = {A duality for modules over monoidal categories of
              representations of semisimple {H}opf algebras},
   JOURNAL = {J. Algebra},
  FJOURNAL = {Journal of Algebra},
    VOLUME = {241},
      YEAR = {2001},
    NUMBER = {2},
     PAGES = {515--547},
      ISSN = {0021-8693,1090-266X},
   MRCLASS = {16W30 (16D90 16S40 18D10)},
  MRNUMBER = {1843311},
MRREVIEWER = {Bodo\ Pareigis},
       DOI = {10.1006/jabr.2001.8771},
}

@book {CG1,
    AUTHOR = {Costello, Kevin and Gwilliam, Owen},
     TITLE = {Factorization algebras in quantum field theory. {V}ol. 1},
    SERIES = {New Mathematical Monographs},
    VOLUME = {31},
 PUBLISHER = {Cambridge University Press, Cambridge},
      YEAR = {2017},
     PAGES = {ix+387},
      ISBN = {978-1-107-16310-2},
   MRCLASS = {81-01 (17B69 18D50 53D55 81R05 81R10)},
  MRNUMBER = {3586504},
MRREVIEWER = {Domenico\ Fiorenza},
       DOI = {10.1017/9781316678626},
}

@book {CG2,
    AUTHOR = {Costello, Kevin and Gwilliam, Owen},
     TITLE = {Factorization algebras in quantum field theory. {V}ol. 2},
    SERIES = {New Mathematical Monographs},
    VOLUME = {41},
 PUBLISHER = {Cambridge University Press, Cambridge},
      YEAR = {2021},
     PAGES = {xiii+402},
      ISBN = {978-1-107-16315-7},
   MRCLASS = {81-02 (18M99 70S10 81R10 81T70)},
  MRNUMBER = {4300181},
MRREVIEWER = {Domenico\ Fiorenza},
       DOI = {10.1017/9781316678664},
    ANNOTE = {For {V}ol. {I} see [3586504]},
}

@incollection {primer,
    AUTHOR = {Ayala, David and Francis, John},
     TITLE = {A factorization homology primer},
 BOOKTITLE = {Handbook of homotopy theory},
    SERIES = {CRC Press/Chapman Hall Handb. Math. Ser.},
     PAGES = {39--101},
 PUBLISHER = {CRC Press, Boca Raton, FL},
      YEAR = {2020},
      ISBN = {978-0-815-36970-7},
   MRCLASS = {55N40 (57N35 57R56)},
  MRNUMBER = {4197982},
}

@article{SafronovBracesPoissonAdditivity,
   title={Braces and Poisson additivity},
   volume={154},
   ISSN={1570-5846},
   DOI={10.1112/s0010437x18007212},
   number={8},
   journal={Compositio Mathematica},
   publisher={Wiley},
   author={Safronov, Pavel},
   year={2018},
   month=jul, pages={1698–1745},
    eprint={1611.09668},
    archivePrefix={arXiv},
    primaryClass={math.AG}
}

@article{GwilliamRejzner2017,
   title={Relating Nets and Factorization Algebras of Observables: Free Field Theories},
   volume={373},
   ISSN={1432-0916},
   DOI={10.1007/s00220-019-03652-9},
   number={1},
   journal={Communications in Mathematical Physics},
   publisher={Springer Science and Business Media LLC},
   author={Gwilliam, Owen and Rejzner, Kasia},
   year={2020},
   month=jan, pages={107–174},
    eprint={1711.06674},
    archivePrefix={arXiv},
    primaryClass={math-ph}}

@article{BZBJ2,
   title={Quantum character varieties and braided module categories},
   volume={24},
   ISSN={1420-9020},
   DOI={10.1007/s00029-018-0426-y},
   number={5},
   journal={Selecta Mathematica},
   publisher={Springer Science and Business Media LLC},
   author={Ben-Zvi, David and Brochier, Adrien and Jordan, David},
   year={2018},
   month=jul, pages={4711–4748},
    eprint={1606.04769},
    archivePrefix={arXiv},
    primaryClass={math.QA}
}

@Article{LBSQ1,
  author        = {Li-Bland, David and \v{S}evera, Pavol},
    sortname = {LiBland},
  journal       = {International Mathematics Research Notices},
  title         = {On Deformation Quantization of Poisson–Lie Groups and Moduli Spaces of Flat Connections: Fig. 1.},
  year          = {2014},
  issn          = {1687-0247},
  month         = sep,
  number        = {15},
  pages         = {6734 - 6751},
  volume        = {2015},
  archiveprefix = {arXiv},
  doi           = {10.1093/imrn/rnu154},
  eprint        = {1307.2047},
  primaryclass  = {math.QA},
  publisher     = {Oxford University Press},
}

@unpublished{Walker,
    author = {Walker, Kevin},
    title = {TQFTs (early incomplete draft)},
    note = {unpublished} , 
    url = {https://canyon23.net/math/tc.pdf}  , 
}

@incollection {Yetter92,
    AUTHOR = {Yetter, David N.},
     TITLE = {Tangles in prisms, tangles in cobordisms},
 BOOKTITLE = {Topology '90 ({C}olumbus, {OH}, 1990)},
    SERIES = {Ohio State Univ. Math. Res. Inst. Publ.},
    VOLUME = {1},
     PAGES = {399--443},
 PUBLISHER = {de Gruyter, Berlin},
      YEAR = {1992},
      ISBN = {3-11-012598-6},
   MRCLASS = {57M25 (57N10)},
  MRNUMBER = {1184424},
}

@article {MW11,
    AUTHOR = {Morrison, Scott and Walker, Kevin},
     TITLE = {Higher categories, colimits, and the blob complex},
   JOURNAL = {Proc. Natl. Acad. Sci. USA},
  FJOURNAL = {Proceedings of the National Academy of Sciences of the United
              States of America},
    VOLUME = {108},
      YEAR = {2011},
    NUMBER = {20},
     PAGES = {8139--8145},
      ISSN = {0027-8424,1091-6490},
   MRCLASS = {18D20 (18D05 57R56)},
  MRNUMBER = {2806651},
MRREVIEWER = {Theo\ Johnson-Freyd},
       DOI = {10.1073/pnas.1018168108},
}

@book {Gabriel-Ulmer,
    AUTHOR = {Gabriel, Peter and Ulmer, Friedrich},
     TITLE = {Lokal pr\"asentierbare {K}ategorien},
    SERIES = {Lecture Notes in Mathematics},
    VOLUME = {Vol. 221},
 PUBLISHER = {Springer-Verlag, Berlin-New York},
      YEAR = {1971},
     PAGES = {v+200},
   MRCLASS = {18AXX},
  MRNUMBER = {327863},
MRREVIEWER = {J.\ R.\ Isbell},
}

@article {KellyLFP,
    AUTHOR = {Kelly, Max },
     TITLE = {Structures defined by finite limits in the enriched context.
              {I}},
      NOTE = {Third Colloquium on Categories, Part VI (Amiens, 1980)},
   JOURNAL = {Cahiers Topologie G\'eom. Diff\'erentielle},
  FJOURNAL = {Cahiers de Topologie et G\'eom\'etrie Diff\'erentielle},
    VOLUME = {23},
      YEAR = {1982},
    NUMBER = {1},
     PAGES = {3--42},
      ISSN = {0008-0004},
   MRCLASS = {18D15 (18D20)},
  MRNUMBER = {648793},
MRREVIEWER = {Walter\ Tholen},
}

@article {CFJ,
    AUTHOR = {Chirvasitu, Alex and Johnson-Freyd, Theo},
     TITLE = {The fundamental pro-groupoid of an affine 2-scheme},
   JOURNAL = {Appl. Categ. Structures},
  FJOURNAL = {Applied Categorical Structures. A Journal Devoted to
              Applications of Categorical Methods in Algebra, Analysis,
              Order, Topology and Computer Science},
    VOLUME = {21},
      YEAR = {2013},
    NUMBER = {5},
     PAGES = {469--522},
      ISSN = {0927-2852,1572-9095},
   MRCLASS = {18D05 (14F35 18D10)},
  MRNUMBER = {3097055},
MRREVIEWER = {Leovigildo\ M.\ Alonso Tarrio},
       DOI = {10.1007/s10485-011-9275-y},
}

@phdthesis{Bird,
  title        = {Limits in 2-categories of locally-presented categories},
  author       = {Bird, Gregory J.},
  year         = {1984},
  note         = {Available at \url{http://science.mq.edu.au/~street/BirdPhD.pdf}},
  school       = {University of Sydney},
  type         = {PhD thesis}
}

@phdthesis{JanThesis,
  doi = {10.13097/ARCHIVE-OUVERTE/UNIGE:154218},
  author = {Pulmann,  Ján},
  keywords = {info:eu-repo/classification/ddc/510,  Quantization,  Flat connections,  Drinfeld associator,  Konstevich integral,  Surfaces,  Hopf algebras},
  language = {en},
  title = {On Quantization of Moduli Spaces and Poisson-Hopf Algebras},
  publisher = {Université de Genève},
  year = {2021},
  copyright = {Free access}
}

@Article{AGS95,
  author    = {Alekseev, Anton and Grosse, Harald and Schomerus, Volker},
  journal   = {Communications in Mathematical Physics},
  title     = {Combinatorial quantization of the Hamiltonian Chern-Simons theory I},
  year      = {1995},
  number    = {2},
  pages     = {317--358},
  volume    = {172},
  publisher = {Springer},
}

@misc{AraujoGuuHudson2025,
      title={Skein Construction of Balanced Tensor Products}, 
      author={Manuel Araújo and Jin-Cheng Guu and Skyler Hudson},
      year={2025},
      eprint={2501.05747},
      archivePrefix={arXiv},
      primaryClass={math-ph},
}

\end{document}